\setlist{itemsep=1pt,parsep=0pt,topsep=2pt,partopsep=0pt}  
\def\abc{\textup{(\alph{*})}}
\def\endofFact{\hfill\scalebox{.6}{$\Box$}}
\let\subset\subseteq  
\let\eps\varepsilon 
\let\rho\varrho 
\newcommand{\rev}[1]{\overleftarrow{#1}}
\def\cC{\mathcal{C}}
\def\cE{\mathcal{E}}
\def\cH{\mathcal{H}}
\def\cP{\mathcal{P}}
\def\cQ{\mathcal{Q}}
\def\cJ{\mathcal{J}}
\def\cR{\mathcal{R}}
\def\EE{\mathbb{E}}
\newcommand{\Part}{\cP}
\newcommand{\Qart}{\cQ}
\newtheorem{theorem}{Theorem}
\newtheorem{lemma}[theorem] {Lemma}    
\newtheorem{corollary}[theorem] {Corollary}    
\newtheorem{proposition}[theorem] {Proposition}
\newtheorem{definition}[theorem] {Definition}  
\newtheorem{claim}[theorem] {Claim}  
\theoremstyle{remark}
\newcommand{\oldqed}{}
\newenvironment{claimproof}[1][Proof of Claim]{
  \renewcommand{\oldqed}{\qedsymbol}
  \renewcommand{\qedsymbol}{\endofFact}
  \begin{proof}[#1]
}{
  \end{proof}
  \renewcommand{\qedsymbol}{\oldqed}
} 
\newcommand{\sublem}[1] {{\mbox{\tiny{L\ref{#1}}}}}
\newcommand{\subcor}[1] {{\mbox{\tiny{C\ref{#1}}}}}
\newcommand{\NATS}{\mathbb{N}}
\newcommand{\s}[1]{\left\lvert #1 \right\rvert}
\newcommand{\Pres}{P_\mathrm{res}}
\newcommand{\Palm}{P_\mathrm{almost}}
\newcommand{\Pcover}{P_\mathrm{cover}}
\newcommand{\cross}{\mathrm{Cross}}
\newcommand{\nures}{\nu_\mathrm{res}}
\newcommand{\EMAIL}[1]{  \textit{E\nobreakdash-mail}: \texttt{#1} }
\newcommand{\tpl}[1]{\mathbf{#1}}
\DeclareMathOperator{\Bin}{Bin}
\def\vp#1{}
\renewcommand{\vp}[1]{\footnote{\textcolor{green!40!black}{\textbf{VP: }#1}}} 
\title{Resilience for tight Hamiltonicity}
  \author[P. Allen]{Peter Allen*}
  \author[O. Parczyk]{Olaf Parczyk*}
  \author[V. Pfenninger]{Vincent Pfenninger\dag}
 \thanks{
 	*
 	Department of Mathematics, London School of Economics, Houghton Street,
 	London, WC2A 2AE, U.\ K.\ \\
 	\EMAIL{p.d.allen@lse.ac.uk, o.parczyk@lse.ac.uk}
 }
  \thanks{
    \dag School of Mathematics, University of Birmingham, Edgbaston, Birmingham, B15 2TT, U.\ K.\ \\
   \EMAIL{vxp881@bham.ac.uk}
  }
  \thanks{
  PA was supported by EPSRC, EP/P032125/1.
  OP was supported by the DFG (Grant PA 3513/1-1).
  We would like to thank the Heilbronn Institute for Mathematical Research, and EPSRC (grant number EP/P032125/1) for supporting the workshop `Structure and Randomness in Hypergraphs' where this work was started.
  }
\date{\today}
\begin{document}
	
\begin{abstract}
 We prove that random hypergraphs are asymptotically almost surely resiliently Hamiltonian. Specifically, for any $\gamma>0$ and $k\ge3$, we show that asymptotically almost surely, every subgraph of the binomial random $k$-uniform hypergraph $G^{(k)}\big(n,n^{\gamma-1}\big)$ in which all $(k-1)$-sets are contained in at least $\big(\tfrac12+2\gamma\big)pn$ edges has a tight Hamilton cycle. This is a cyclic ordering of the $n$ vertices such that each consecutive $k$ vertices forms an edge.
\end{abstract}
\maketitle

\section{Introduction}

The study of Hamilton cycles in graphs is one of the oldest topics in graph theory. In extremal graph theory, Dirac~\cite{Dirac} in 1952 proved the sharp result that an $n$-vertex graph with minimum degree at least $\tfrac{n}{2}$ contains a Hamilton cycle. In random graph theory, P\'osa~\cite{Posa} and Korshunov~\cite{Kor76,Kor77} independently showed in the 1970s that Hamilton cycles first appear in the random graph $G(n,p)$ --- that is, the $n$-vertex graph where edges are present independently with probability $p$ --- at a threshold $p=\Theta\big(\tfrac{\log n}{n}\big)$. Koml\'os and Szemer\'edi~\cite{KomSzem} showed that the sharp threshold for Hamiltonicity coincides with that for minimum degree $2$, and Bollob\'as~\cite{Boll} strengthened this by showing a hitting time version: if edges are added one by one, the edge which causes minimum degree $2$ will asymptotically almost surely\footnote{Asymptotically almost surely (a.a.s.)~is with probability tending to $1$ as $n$ tends to infinity.} also cause Hamiltonicity.

Combining these areas, Sudakov and Vu~\cite{SudVu} introduced the term \emph{resilience} (though the same concept appears earlier in work of Alon, Capalbo, Kohayakawa, R\"odl, Ruci\'nski and Szemer\'edi~\cite{ACKRRS}). They proved that for each $\gamma>0$, the random graph $\Gamma=G(n,p)$ is a.a.s.~$\big(\tfrac12+\gamma\big)$-resiliently Hamiltonian whenever $p\gg n^{-1}\log^4n$; that is, every subgraph of $\Gamma$ with minimum degree at least $\big(\tfrac12+\gamma\big)pn$ has a Hamilton cycle. This result is sharp in the minimum degree, for the same reason as Dirac's theorem, but the probability can be improved. This was done over a succession of papers: Lee and Sudakov~\cite{LeeSud} showed that $p$ can be reduced to the threshold $\Omega(n^{-1}\log n)$, and 	very recently Montgomery~\cite{Montgomery} showed the hitting time version of this result (for which one needs to be a little more careful with edge deletion: it is permitted to delete only a $\big(\tfrac12-\gamma\big)$-fraction of the edges at any given vertex).

\medskip

Hamilton cycles in hypergraphs have only much more recently been attacked. There are several natural notions of paths and cycles in hypergraphs: the one that will concern us here is that of tight paths and cycles in $k$-uniform hypergraphs. That is, we work with hypergraphs in which all edges have uniformity $k$. We say that a given linear ordering of some vertices is a \emph{tight path} if each consecutive $k$-set of vertices forms an edge; a given cyclic ordering of some vertices with the same condition forms a tight cycle. The $k=2$ case of this definition reduces to the usual paths and cycles in graphs. For brevity, in what follows we write \emph{$k$-graph} for $k$-uniform hypergraph.

In terms of extremal results, there are again several reasonable questions --- one should place some form of `minimum degree' condition for tight Hamilton cycles, but this can take the form of insisting that every $j$-set of vertices is in sufficiently many edges, where one can choose $j$ between $1$ and $k-1$. This leads to several significantly different problems (and even more if one considers other notions of cycle). We refer the reader to the comprehensive survey of K\"uhn and Osthus~\cite{KuhOst} for details, and focus on the version of minimum degree we want to work with. This is the case $j=k-1$, sometimes called \emph{codegree}. Here, the Hamiltonicity problem is resolved. R\"odl, Ruci\'nski and Szemer\'edi~\cite{RRS1,RRS2}, first for $3$-uniform and then for general uniformity, showed that if $n$ is sufficiently large, any $n$-vertex $k$-graph with minimum codegree at least $\big(\tfrac12+\gamma\big)n$ (i.e.\ every $(k-1)$-set is in at least that many edges) contains a tight Hamilton cycle. For $3$-graphs, they~\cite{RRS3} were also able to give the exact result for sufficiently large $n$ (finding exactly what should replace the error term $\gamma n$).

In random hypergraphs, Dudek and Frieze~\cite{DudFriLoose,DudFriTight} found for several different notions of `cycle' the threshold for Hamiltonicity in the binomial random hypergraph $G^{(k)}(n,p)$, that is the $n$-vertex $k$-graph in which $k$-sets are edges independently with probability $p$. In particular, in~\cite{DudFriTight} they showed by the second moment method that for $k=3$ the threshold is $\omega\big(n^{-1}\big)$, and for $k\ge4$ the sharp threshold is at $en^{-1}$. Narayanan and Schacht~\cite{NarSch} strengthened these results, in particular showing that $en^{-1}$ is also the sharp threshold for $k=3$.

Combining these (and answering a question of Frieze~\cite{FriezeSurvey}), we prove the following corresponding codegree resilience statement.

\begin{theorem}\label{thm:main}
 Given any $\gamma>0$ and $k\ge3$, if $p\ge n^{-1+\gamma}$, we show that $\Gamma=G^{(k)}(n,p)$ a.a.s.\ satisfies the following. Let $G$ be any $n$-vertex subgraph of $\Gamma$ such that $\delta_{k-1}(G)\ge\big(\tfrac12+2\gamma\big) pn$. Then $G$ contains a tight Hamilton cycle.
\end{theorem}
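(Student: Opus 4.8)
The plan is to prove Theorem~\ref{thm:main} by the absorption/reservoir method, with the pseudorandomness of $\Gamma$ entering only through a short list of properties that $G^{(k)}(n,p)$ enjoys a.a.s.\ when $p\ge n^{-1+\gamma}$. First I would record these: every $(k-1)$-set lies in $(1\pm o(1))pn$ edges; between any two families $\mathcal F$ of $(k-1)$-sets and $U$ of vertices that are not too small, the number of edges $f\cup\{u\}$ (with $f\in\mathcal F$, $u\in U$) is $(1\pm o(1))p|\mathcal F||U|$; and codegrees into a uniformly random vertex set are essentially as expected. All of these follow from Chernoff bounds and a union bound because the expected codegree $pn\ge n^{\gamma}$ tends to infinity, and — since a randomly chosen set is generic — they let me set aside a uniformly random reservoir $R\subseteq V$ (and know that the tiles and leftover sets produced later are also generic), while keeping, for $G$, the working bound $|N^{+}_{G}(S)\cap T|\ge(\tfrac12+\gamma)p|T|$ for every $(k-1)$-set $S$ and every fixed generic set $T$ that we use.

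The first main tool is a Connecting Lemma: there is $\ell_{0}=\ell_{0}(\gamma,k)$ so that any two disjoint ``typical'' ordered $(k-1)$-tuples can be joined by a tight path of bounded length whose interior lies in a prescribed large portion of $R$, and the lemma can be iterated because each connector may be chosen at random from many candidates, so that the set of consumed reservoir vertices never becomes dense around any $(k-1)$-set. The proof is the reachability argument in the pseudorandom setting: iterating the codegree bound inside $R$ shows that the ordered $(k-1)$-tuples reachable from a fixed end by a tight path of $\Theta(k/\gamma)$ steps form a positive proportion of all such tuples in $R$, pseudorandomness controls the branching so this set is well spread, and the analogous set approaching the other end must meet it even after deleting the $O(1)$ forbidden vertices — here the codegree surplus $\tfrac12+2\gamma$ over $\tfrac12$ is used, and this is the point where the resilience threshold $\tfrac12$ is sharp. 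A variant, routing a connector through a prescribed vertex using $O(k)$ extra reservoir vertices, will be needed to incorporate leftover vertices.

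Next I would cover almost all of $V\setminus R$ by vertex-disjoint tight paths of a fixed length $L=L(\gamma)$, via a random-greedy / R\"odl-nibble tiling: a fractional near-perfect tiling exists by the codegree bound, and taking $L$ a sufficiently large constant makes the nibble leave only a polynomially small set $W_{0}$ uncovered, while keeping the residual set generic throughout. Connecting the $\Theta(n/L)$ tiles cyclically through $R$ by the Connecting Lemma — with $|R|$ a small constant fraction of $n$ and $L$ large so that $R$ is barely depleted — yields a tight cycle of $G$ that misses only $W:=W_{0}\cup(R\setminus\text{used})$. It remains to swallow $W$: one iteratively deletes a generic segment of the cycle and re-connects its ends through a tight path running through a batch of vertices of $W$, so that each move covers more vertices of $W\cup R$ than it releases; since $W\setminus R$ is tiny and $R$ is generic (so $G[R]$ has good codegree), this can be driven down, with a final short step handling the last $n^{o(1)}$ vertices, until all of $V$ is covered.

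The main obstacle, and where any genuinely new ideas are needed, is this endgame — absorbing the leftover in the sparse regime. The familiar vertex-absorbing gadgets fail here: inserting a vertex into a tight cycle forces $k$ new edges of $G$, an event of probability only about $p^{k}$, so no bounded-length gadget of $G$ can be responsible for many vertices, and building such gadgets directly runs into ``double codegree'' constraints that are typically unsatisfiable once $p\ll 1$. Making the reservoir simultaneously robust enough for all $\Theta(n/L)$ connections and flexible enough to absorb the leftover by iterated rerouting, and controlling the genericity of the sets produced along the way, is the delicate core of the argument; by comparison, the pseudorandom versions of the Connecting Lemma and of the tiling step, while requiring care in the sparse regime, are conceptually routine.
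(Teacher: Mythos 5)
Your high-level shape (reservoir method: build a reservoir path, cover almost everything, absorb the leftover through the reservoir) matches the paper's. But there is a genuine gap in the central connecting step, and it is exactly the step the paper identifies as the main difficulty — one that you dismiss as ``conceptually routine.''

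You argue that, starting from each of two $(k-1)$-tuples $\tpl{x}$ and $\tpl{y}$, expansion for $\Theta(k/\gamma)$ steps inside $R$ reaches a \emph{positive proportion} of all ordered $(k-1)$-tuples, and that ``the analogous set approaching the other end must meet it.'' This inference is unsupported. In the sparse regime the number of endpoints reachable after $\ell$ steps is, after careful counting (the paper's Lemma~\ref{lem:goodexpansion}, proved via Kim--Vu polynomial concentration on the number of `paired' paths), only about $\tfrac{\mu^{2\ell}}{8(2\ell)!}n^{k-1}$ — a fixed, but tiny, positive fraction. Two small positive-density subsets of $\binom{V}{k-1}$ need not intersect, and after adversarial edge deletion there is no a-priori reason they should. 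This is quite unlike the dense $\delta_{k-1}\geq(\tfrac12+\gamma)n$ argument, where a one-step neighbourhood already covers a majority of vertices and a parity/pigeonhole step closes the connection. The codegree surplus over $\tfrac12$ does not give you that here: you cannot ``cover more than half'' in boundedly many sparse steps. The paper bridges this gap by invoking a strengthened sparse Strong Hypergraph Regularity Lemma (Lemma~\ref{lem:ssshrl}), showing that the reduced multicomplex of $G$ is robustly \emph{tightly linked} (Lemma~\ref{lem:goodconnected}), and then steering the small reachable set of endpoints into a single $(k-1)$-cell and following a tight link in the reduced complex from that cell to a cell near $\tpl{y}$'s reachable set (Lemmas~\ref{lem:connecting} and~\ref{lem:connectpartition}, the latter needing an interplay between a coarse and a fine regular partition). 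The tight-linkedness is a global structural fact that comes from the minimum-codegree condition applied to the regular partition — not from pseudorandomness of $\Gamma$ — and it, not the endgame, is where the $\tfrac12$ threshold is genuinely used and where the proof's real work lies.

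Two further, smaller mismatches. First, after the reservoir path $\Pres$ is built, you propose tiling $V\setminus R$ by a R\"odl nibble and then cyclically connecting the tiles. The paper instead extends $\Pres$ into a single almost-spanning path $\Palm$ by walking through the reduced complex, and to decide how many vertices to consume in each cluster it extracts a fractional matching on the reduced complex via LP duality (Lemma~\ref{lem:fractional}); this is needed because $\Pres$ interacts unpredictably with the clusters of the regular partition. Without this or an equivalent bookkeeping device, your tiling step has no mechanism for ensuring the almost-spanning structure avoids $\Pres$ and still leaves a usable remainder. Second, your absorption endgame — ``delete a generic segment of the cycle and re-connect through a batch of $W$'' — has no specified mechanism for making the cycle shorter again: each rerouting move through $W$ and $R$ consumes further vertices, and you never explain how the process terminates at a Hamilton cycle rather than overshooting. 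The reservoir-path property (any subset $R'\subseteq R$ can be deleted from $\Pres$ without breaking it) is precisely what makes this termination clean, and your sketch does not construct or invoke such a structure.
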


Observe that this theorem is sharp in the minimum degree requirement, but it is presumably not sharp in the probability.
More precisely, when $p = \Omega (\log n/n)$ then a.a.s.\  in~$\Gamma$ there is an $n$-vertex subgraph $G$ such that $\delta_{k-1}(G) \ge (1/2-\gamma) pn$ and $G$ does not contain a tight Hamilton cycle.
When $p=o(\log n/n)$, there a.a.s.\ are $(k-1)$-tuples in $\Gamma$ that are not contained in any edges and, therefore, no $G$ as required by the theorem exists.
For this regime the resilience condition needs to be adjusted, perhaps as explained above for the hitting time results in graphs from~\cite{Montgomery}.
We certainly need $p\ge 2en^{-1}$ for any statement of this kind to be true, otherwise randomly deleting half of the edges from $\Gamma$ would a.a.s.\ destroy the tight Hamiltonicity.

This is the first resilience statement for tight Hamilton cycles in sparse random hypergraphs to the best of our knowledge; however for Berge cycles, Clemens, Ehrenm\"uller and Person~\cite{CEP16} proved a resilience statement which is both tight in the minimum degree and has only a polylogarithmic gap in the probability.
For perfect matchings it was shown by Ferber and Hirschfeld~\cite{Ferber_resilience} that the same codegree resilience as in Theorem~\ref{thm:main} holds with $p = \Omega (\log n/n)$, which is significantly above the threshold for the appearance of perfect matchings, but optimal for the same reasons as discussed above.
More generally, Ferber and Kwan~\cite{FerberKwan} studied the transference of results for perfect matchings in dense hypergraphs into resilience statements in random hypergraphs.

It would be interesting to investigate this transference for other types of Hamilton cycles and other degree conditions.
For example, in the case of $3$-graphs Reiher, R{\"o}dl, Ruci{\'n}ski, Schacht, and Szemer{\'e}di~\cite{reiher2019minimum} show that any $n$-vertex $3$-graph with minimum vertex degree $(\tfrac 59 + \gamma) \binom{n}{2}$ contains a tight Hamilton cycle.
Can this be extended to a resilience statement in random $3$-graphs?
More precisely, can the condition $\delta_2(G) \ge (\tfrac 12 + \gamma) pn$ in Theorem~\ref{thm:main} for $k=3$ be replaced by $\delta_1(G) \ge (\tfrac 59 + \gamma) p\binom{n}{2}$?
The bound on the minimum degree would again be sharp.

\subsection{Ideas of the proof, and outline of the paper}

Our proof strategy for Theorem~\ref{thm:main} uses the \emph{reservoir method}, which was previously used in~\cite{TightCycle} and~\cite{BetterTightCycle}, in a similar way to the use we will make here, to give polynomial-time algorithms that find tight Hamilton cycles in $\Gamma$ itself for broadly similar values of $p$. Very briefly, the reservoir method is as follows.

In a first step, we identify a \emph{reservoir set} $R$, which contains a small (but bounded away from~$0$) fraction of the vertices of $G$. We construct a \emph{reservoir path} $\Pres$, which is a tight path that contains all the vertices of $R$ and in addition for any subset $R'$ of $R$, there is a tight path with the same ends as $\Pres$ whose vertex set is $V(\Pres)\setminus R'$.

In a second step, we extend $\Pres$ to an almost-spanning tight path $\Palm$. In the final step we re-use some vertices of $R$ to extend $\Palm$ further to a structure which is `almost' a tight Hamilton cycle, except that some vertices $R'$ of $R$ are used twice. Finally we apply the reservoir property of $\Pres$ to obtain the desired tight Hamilton cycle.

\bigskip

In~\cite{BetterTightCycle}, in the random hypergraph, there are two main tools needed to put this plan into action. First, for any given ordered $(k-1)$-tuple $\tpl{x}$ of vertices and set $S$ of `unused' vertices which is not too small, there will be lots of ways to start a tight path from $\tpl{x}$ and continuing with vertices of $S$. Second, for any given pair of ordered $(k-1)$-tuples $\tpl{x}$ and $\tpl{y}$, and any given set $S$ of unused vertices which is not too small, it is possible to find a tight path from $\tpl{x}$ to $\tpl{y}$ in $S$.\footnote{To be accurate, these statements will be true for all the sets $S$ that actually appear in the proof, by a careful revealing argument; they are not true for every $S$.}

Neither of these statements is true in the resilience setting. Instead, we make use of hypergraph regularity to help us. In the following section~\ref{sec:tools} we state our main tools, and prove some of them. We first introduce spike paths, which we need to construct our reservoir structure (much as in~\cite{BetterTightCycle}). 

We give the notational setup for hypergraph regularity, and state a sparse, strengthened version of the Strong Hypergraph Regularity Lemma, Lemma~\ref{lem:ssshrl}, which may be of independent interest. We show that the output of this Regularity Lemma is, for $k$-graphs with our minimum degree condition, a structure which is robustly \emph{tightly linked}: this is a version of connectivity appropriate for tight paths.

We show that the random hypergraph has certain nice properties: in particular, once one removes a small fraction of $(k-1)$-tuples, for any remaining $(k-1)$-tuple $\tpl{x}$ and set $S$ which is reasonably small (it cannot contain more than $n/2$ vertices) there are lots of ways to start constructing a tight path from $\tpl{x}$ \emph{avoiding} $S$ (Lemma~\ref{lem:goodtuples}), and if we do so for a sufficiently large (but independent of $n$) number of steps, we reach a positive fraction of all $(k-1)$-tuples. This statement (Lemma~\ref{lem:goodexpansion}) is one of the key points in our proof: most of the time, we can expand in a few steps from any given $(k-1)$-tuple to a positive density of $(k-1)$-tuples (and a similar statement holds for spike paths).

Using Lemma~\ref{lem:goodexpansion}, regularity and tight linkedness, we can prove a Connecting Lemma (Lemma~\ref{lem:connecting}) which states that for any reasonably small set $S$ and most pairs $\tpl{x}$ and $\tpl{y}$ of $(k-1)$-tuples, there is a short tight path from $\tpl{x}$ to $\tpl{y}$ which avoids $S$.

These tools are enough to prove a Reservoir Lemma~\ref{lem:respath}, which (much as in~\cite{BetterTightCycle}) constructs $\Pres$ mentioned above. However again at this point difficulties arise. In the random hypergraph of~\cite{BetterTightCycle}, the vertices outside $\Pres$ have no particular structure. In our setting, $\Pres$ interacts in some rather unpredictable way with the existing structure provided by the Regularity Lemma. To deal with this, we use LP-duality in Lemma~\ref{lem:fractional} to find a fractional matching which will tell us how many vertices we should use in each part of our regular partition in order to obtain $\Palm$. We also at this point run into the difficulty that we can only guarantee expansion from the minimum degree when we are avoiding less than $n/2$ vertices, yet $\Palm$ is supposed to cover almost all of the vertices; it is here that we need the `strengthened' property of our Regularity Lemma.

In Section~\ref{sec:proofmain}, we give the proof of Theorem~\ref{thm:main}, assuming the so far unproved lemmas.

In Section~\ref{sec:proofconnecting} we prove the Connection Lemma, Lemma~\ref{lem:connecting}, and also Lemma~\ref{lem:connectpartition} which shows how we can use the strengthened regularity to continue extending a tight path even when most vertices have been used.

In Section~\ref{sec:proofreservoir} we prove the Reservoir Lemma, Lemma~\ref{lem:respath}.

Finally, we defer the proof of our Regularity Lemma, Lemma~\ref{lem:ssshrl}, together with various more-or-less standard facts about dense hypergraph regularity, to Appendix~\ref{sec:proofreg}. Although some of these results are new and Lemma~\ref{lem:ssshrl} may well be useful in future, the ideas needed to prove them are not new.

\section{Tools}\label{sec:tools}

\subsection{Spike paths}

To build our reservoir structure we need spike paths, which are the following variant of a tight path that changes orientation every $(k-1)$ steps. We will only consider spike paths with a number of vertices divisible by $k-1$.

\begin{definition}[Spike path]
	In an $k$-uniform hypergraph, a spike path with $t$ spikes consists of a sequence of $t$ pairwise disjoint $(k-1)$-tuples $\tpl{a}_1,\dots,\tpl{a}_t$, where $\tpl{a}_i=(a_{i,1},\dots,a_{i,k-1})$ for all $i$, with the property, that the edges $\{ a_{i,k-j},\dots,a_{i,1},a_{i+1,1},\dots,a_{i+1,j} \}$ are present for all $i=1,\dots,t-1$ and $j=1,\dots,k-1$.
	We call $\tpl{a}_i$ the $i$th spike.
\end{definition}

\subsection{Notation}

A \emph{$k$-complex} is a hypergraph $H$ all of whose edges have size at most $k$, which is down-closed, i.e.\ if $e\in E(H)$ and $e'\subseteq e$ then $e'\in E(H)$. The \emph{layers} of a $k$-complex are, for each $0\le i\le k$, the $i$-uniform hypergraph $H^{(i)}$ on the same vertex set, where $E\big(H^{(i)}\big)=\{e\in E(H):|e|=i\}$.

A \emph{$k$-multicomplex} is, informally, a $k$-complex in which multiple edges of any size between $2$ and $k$ are permitted, together with a map \emph{boundary} $\partial$ identifying the $(i-1)$-edges which support a given $i$-edge. Formally, a $k$-multicomplex $H$ consists of a \emph{vertex set} $V(H)$, together with a set of \emph{edges} $E(H)$, a \emph{vertices} map $\mathrm{vertices}:E\to\mathcal{P}(V)$ such that $\mathrm{vertices}(e)$ is a set of size between $0$ and $k$ for each $e\in E(H)$, and a \emph{boundary} map $\partial:E\setminus\{\emptyset\}\to \mathcal{P}(E)$ such that $\partial e$ contains exactly one edge whose vertices are $\mathrm{vertices}(e)\setminus \{v\}$ for each $v\in\mathrm{vertices}(e)$, and no other edges. We further insist on the following consistency condition: if $2\le i\le k$, and $S$ is a set of $i$ edges each with $i-1$ vertices, such that $\big|\bigcup_{f\in S}\partial f\big|>\binom{i}{i-2}$, then there are no edges $e\in H$ such that $\partial e=S$. We say that the \emph{uniformity} of an edge $e$ is $|\mathrm{vertices}(e)|$, and we may write that $e$ is an edge \emph{on the set $\mathrm{vertices}(e)$}, or that $e$ is \emph{a $|\mathrm{vertices}(e)|$-edge}. We will also say, given a set $S$ consisting of $i$ edges of uniformity $(i-1)$, that $e$ is \emph{supported} on $S$ if $\partial e=S$.

Note that the boundary of a $1$-edge is necessarily $\{\emptyset\}$, and that `down-closure' is forced by the condition of the boundary map. To better understand the consistency condition, consider the following. If $e$ is an edge of $H$ with at least two vertices, and $x$ and $y$ are distinct vertices of $e$, let $e_x$ and $e_y$ be the edges in $\partial e$ whose vertices do not contain respectively $x$ and $y$. There is an edge $e_{xy}$ in $\partial e_x$, and an edge $e_{yx}$ in $\partial e_y$, on  $\mathrm{vertices}(e)\setminus\{x,y\}$. The consistency condition is equivalent to insisting that for any $e$, $x$ and $y$ we have $e_{yx}=e_{xy}$.

Observe that a $k$-complex is a $k$-multicomplex, where the vertices of each edge are simply its members as a set, and the boundary map is the usual boundary $\partial e=\big\{e\setminus \{v\}:v\in e\big\}$ (which is in this case the only possible boundary map for the given vertices map). However in general, for a given ground set, edge set and vertices map, there may be several different boundary maps which fit the definition of $k$-multicomplex; these return different multicomplexes. The idea here is that we will need to think of a given edge (say with vertices $\{1,2,3\}$) as containing specific edges with vertices $\{1,2\}$, $\{1,3\}$ and $\{2,3\}$, and the map $\partial$ tells us which edges these are. We should stress that it is possible to have a $k$-multicomplex in which there are two different edges which have the same boundary and vertices, and indeed the multicomplexes we consider in this paper will have this property for edges of uniformity two and above (though for us a $1$-edge will always be the unique $1$-edge on a given vertex).

Given a vector $\mathbf{d}=(d_2,\dots,d_k)$ where $1/d_i\in\NATS$ for each $i$, we say that a $k$-multicomplex $H$ is \emph{$\mathbf{d}$-equitable} if there is exactly one $1$-edge on each vertex, and furthermore for any $2\le i\le k$ and $i$-set $X$ of vertices the following holds. Whenever $S$ is a collection of $i$ edges of uniformity $i-1$ in $H$, one on the vertices $X\setminus\{x\}$ for each $x\in X$, if the union $\bigcup_{f\in S}\partial f$ has exactly $\binom{i}{i-2}$ edges then the number of $i$-edges in $H$ supported on $S$ is exactly $1/d_i$. We refer to $\mathbf{d}$ as the \emph{density vector} of the multicomplex.

Finally, we need a notion of connectedness for multicomplexes.

\begin{definition}[tight link, tightly linked]

Given a $k$-multicomplex $\cR$, and two $(k-1)$-edges $u,v$ of $\cR$, let $\mathbf{u}$ be $u$ together with an ordering $(u_1,\dots,u_{k-1})$ of its vertices, and similarly let $\mathbf{v}$ be $v$ together with an ordering $(v_1,\dots,v_{k-1})$ of its vertices. A \emph{tight link from $\mathbf{u}$ to $\mathbf{v}$ in $\cR$} is the following collection of (not necessarily distinct) vertices and edges of $\cR$.

For each $1\le j\le k-1$, there is a vertex $w_j$. There are $k$-edges $e_{1,u}$ and $e_{1,v}$ of $\cR$, where $e_{1,u}$ is on vertices $\{u_1,\dots,u_{k-1},w_1\}$ and $u\in\partial e_{1,u}$, and $e_{1,v}$ is on vertices $\{v_1,\dots,v_{k-1},w_1\}$ and $v\in\partial e_{1,v}$. For each $2\le j\le k-1$, there are $k$-edges $e_{j,u}$ and $e_{j,v}$ of $\cR$, where $e_{j,u}$ is on vertices $\{u_j,\dots,u_{k-1},w_1,\dots,w_j\}$ and $\partial e_{j-1,u}\cap\partial e_{j,u}\neq\emptyset$, and $e_{j,v}$ is on vertices $\{v_j,\dots,v_{k-1},w_1,\dots,w_j\}$ and $\partial e_{j-1,v}\cap\partial e_{j,v}\neq\emptyset$. Finally $\partial e_{k-1,u}\cap\partial e_{k-1,v}\neq\emptyset$. 
 
 We say that a $k$-multicomplex $\cR$ is \emph{tightly linked} if for any two $(k-1)$-edges in $\cR$, and any orderings of their vertices, $\mathbf{u}$ and $\mathbf{v}$, there is a tight link from $\mathbf{u}$ to $\mathbf{v}$ in $\cR$.
\end{definition}

The precise sequence of vertices and edges is not critical (it is simply a particular structure we can easily construct). However it will be convenient to note that the $k$-edges of a tight link are in fact a spike path with three spikes. Note that there is $\ell\in\mathbb{N}$ and a permutation $\rho$ on $[k-1]$ such that for any $\tpl{u}$ and $\tpl{v}$, if there is a tight link from $\tpl{u}$ to $\tpl{v}$ then there is a homomorphism from the $\ell$-vertex tight path to $\cR$, using only the $k$-edges of the tight link, where the first $k-1$ vertices of the tight path are sent to $\tpl{u}$ in order and the last $k-1$ vertices to the vertices of $\tpl{v}$ in the order $\rho$.

\subsection{Sparse hypergraph regularity}

We need a strengthened version of the Strong Hypergraph Regularity Lemma for sparse hypergraphs. The Strong Hypergraph Regularity Lemma was first proved by R\"odl and Skokan~\cite{RSk} and Gowers~\cite{Gowers}; we use a version due to R\"odl and Schacht~\cite{RSch}, from which we deduce a strengthened version by a standard method. We then use a weak sparse regularity lemma of Conlon, Fox and Zhao~\cite{CFZ} to transfer this strengthened version to a sparse version, following~\cite{ADS}.

In order to state our regularity lemma, we need quite a few definitions. These are either standard definitions for the dense ($p=1$) case, or the natural sparse versions of the same, as taken from~\cite{ABCM}.

Let $\Part$ partition a vertex set $V$ into parts $V_1, \dots, V_s$. We say
that a subset $S \subseteq V$ is \emph{$\Part$-partite} if $|S \cap V_i| \leq 1$ for
every $i \in [s]$ and the \emph{index} of a $\Part$-partite set $S \subseteq V$ is $i(S) := \{i \in [s] : |S \cap V_i| = 1\}$.
For any $A \subseteq [s]$ we write $V_A$ for $\bigcup_{i \in A} V_i$.
Similarly, we say that a hypergraph $H$ is \emph{$\Part$-partite} if
all of its edges are $\Part$-partite. In this case we refer to the parts of
$\Part$ as the \emph{vertex classes} of $H$.
Moreover, we say that 
a hypergraph $H$ is \emph{$s$-partite} if there is some partition $\Part$ of $V(H)$ into $s$ parts for which $H$ is $\Part$-partite.

Let $i \geq 2$, let $H_i$ be any $i$-partite $i$-graph, and let $H_{i-1}$ be any $i$-partite
$(i-1)$-graph, on a common vertex set $V$ partitioned into $i$ common vertex classes. We denote by
$K_i(H_{i-1})$ the $i$-partite $i$-graph on $V$ whose edges are all
$i$-sets in $V$ which are supported on $H_{i-1}$ (i.e.~induce a copy of the complete $(i-1)$-graph~$K_i^{i-1}$
on~$i$ vertices in~$H_{i-1}$). Given $p\in(0,1]$, the \emph{$p$-density of $H_i$ with respect to
$H_{i-1}$} is then defined to be \[ d_p(H_i|H_{i-1}):= \frac{|K_i(H_{i-1})\cap
H_i|}{p|K_i(H_{i-1})|}\]
 if
$|K_i(H_{i-1})|>0$. For convenience we take
$d_p(H_i|H_{i-1}):=0$ if $|K_i(H_{i-1})| = 0$, and we assume $H_1$ is the complete $1$-graph on $V$, whose edge set is $V$. So $d_p(H_i|H_{i-1})$ is the proportion of copies of
$K^{i-1}_i$ in $H_{i-1}$ which are also edges of $H_i$, scaled by $p$. When $H_{i-1}$ is clear from the
context,
we simply refer to $d_p(H_i | H_{i-1})$ as the \emph{relative $p$-density of $H_i$}.
We say that $H_i$ is \emph{$(d_i,\eps,p)$-regular with respect
to~$H_{i-1}$} if we have $d_p(H_i|H'_{i-1}) = d_i \pm \eps $ for every subgraph $H'_{i-1}$ of $H_{i-1}$ such that $|K_i(H'_{i-1})| > \eps |K_i(H_{i-1})|$.  Given an $i$-graph $G$
whose vertex set contains that of $H_{i-1}$, we say that $G$ is
\emph{$(d_i,\eps,p)$-regular with respect to~$H_{i-1}$} if the $i$-partite subgraph of
$G$ induced by the vertex classes of $H_{i-1}$ is $(d_i,\eps,p)$-regular with respect to
$H_{i-1}$. Finally, we say $G$ is \emph{$(\eps,p)$-regular with respect to $H_{i-1}$} if there exists $d_i$ such that $G$ is $(d_i,\eps,p)$-regular with respect to $H_{i-1}$. Similarly as before, when $H_{i-1}$ is clear from the context, we refer to the relative density of this $i$-partite subgraph of $G$ with respect to $H_{i-1}$ as the \emph{relative $p$-density of~$G$}.

Now let $H$ be an $s$-partite $k$-complex on
vertex classes $V_1, \dots, V_s$, where $s \geq k \geq 3$. Recall that,
since~$H$ is a complex, if $e \in H$ and $e' \subseteq e$ then $e' \in H$. So if $e \in
H^{(i)}$ for some $2 \leq i \leq k$, then the vertices of $e$ induce a copy of
$K^{i-1}_i$ in $H^{(i-1)}$. We say that~$H$ is \emph{$(d_k,\dots,d_2,\eps_k,\eps,p)$-regular} if
\begin{enumerate}[label=\abc]
 \item for any $2 \leq i \leq k-1$ and any $A \in
  \binom{[s]}{i}$, the induced subgraph $H^{(i)}[V_A]$ is
  $(d_i,\eps,1)$-regular with respect to~$H^{(i-1)}[V_A]$, and
 \item for any $A \in \binom{[s]}{k}$, the induced subgraph 
  $H^{(k)}[V_A]$ is $(d_k, \eps_k,p)$-regular with respect to~$H^{(k-1)}[V_A]$.
\end{enumerate}
So each constant $d_i$ approximates the relative density of each subgraph
$H^{(i)}[V_A]$ for $A \in \binom{[s]}{i}$. 
For a $(k-1)$-tuple $\mathbf{d} = (d_k, \dots, d_2)$ we write
$(\mathbf{d},\eps_k,\eps,p)$-regular to mean
$(d_k,\dots,d_2,\eps_k,\eps,p)$-regular.

The definition of a $(\mathbf{d},\eps_k,\eps,p)$-regular complex $H$ is the `right' generalisation of an $\eps$-regular pair $(X,Y)$ in dense graphs to sparse hypergraphs. The Szemer\'edi Regularity Lemma states that there is a partition of the vertices of any graph into boundedly many parts such that most pairs of parts are regular; now our aim is to define a generalisation of `partition' in order to say that we can partition any  $k$-uniform hypergraph $G$ such that most $k$-sets lie in regular complexes. As one can guess from the phrasing, the $k$-layer of each complex will consist of (all) edges of $G$ supported by the complex. The lower layers will be in the `partition', and we now set up the notation to define this.

Fix $k \geq 3$, and let $\Part$ partition a vertex set $V$ into parts $V_1,
\dots, V_t$.
For any $A \subseteq [t]$, we denote by $\cross_A(\Part)$ the
collection of $\Part$-partite subsets $S \subseteq V$ of index $i(S) = A$.
Likewise, we denote by $\cross_j(\Part)$ the union of $\cross_A$ for each $A \in
\binom{[t]}{j}$, so $\cross_j(\Part)$ contains all $\Part$-partite subsets $S
\subseteq V$ of size $j$. When $\Part$ is clear from the context, we write simply $\cross_A$ and $\cross_j$.
For each $2 \leq j \leq k-1$ and $A \in \binom{[t]}{j}$ let $\Part_A$ be a
partition of $\cross_A$. For consistency of notation we also define the trivial partitions $\Part_{\{s\}} := \{V_s\}$ for $s \in [t]$ and $\Part_\emptyset := \{\emptyset\}$. Let $\Part^*$ consist of the partitions $\Part_A$ for each $A\in\binom{[t]}{j}$ and each $0 \le j\le k-1$. We say that $\Part^*$ is a \emph{$(k-1)$-family of partitions
on $V$} if whenever $S, T \in \cross_A$ lie in the same part of $\Part_A$ and $B
\subseteq A$, then $S \cap \bigcup_{j\in B} V_j$ and $T \cap
\bigcup_{j\in B} V_j$ lie in the same part of $\Part_B$. In other words, given $A \in \binom{[t]}{j}$, if we specify one part of each $\Part_B$ with $B \in \binom{A}{j-1}$, then we obtain a subset of $\cross_A$ consisting of all $S\in\cross_A$ whose $(j-1)$-subsets are in the specified parts. We say that this subset of $\cross_A$ is the subset \emph{supported} by the specified parts of $\Part_B$. In general, we say that a $j$-set $e$ is \emph{supported} by a collection $S$, with $|S|=j$, of $(j-1)$-graphs if exactly one $(j-1)$-subset of $e$ is in each member of $S$, and we say a set of $j$-edges $E$ is supported by $S$ if each edge of $E$ is supported by $S$.

Thus the partitions $\Part_B$ give a natural partition of $\cross_A$, and we are saying that $\Part_A$ must refine it.

We refer to the parts
of each member of $\Part^*$ as \emph{cells}. Also, we refer to $\Part$ as the \emph{ground partition} of $\Part^*$, and the parts
of $\Part$ (i.e.~the vertex classes $V_i$) as the \emph{clusters} of $\Part^*$. For
each $0 \leq j \leq k-1$ let $\Part^{(j)}$ denote the partition of $\cross_j$
formed by the parts (which we call \emph{$j$-cells}) of each of the partitions 
$\Part_A$ with $A \in \binom{[t]}{j}$ (so in particular $\Part^{(1)} = \Part$).

Observe that a $(k-1)$-family of partitions $\Part^*$ naturally form the edges of a $k$-multicomplex, whose vertex set is the (set of parts of the) ground partition, whose edges of uniformity $j\le k-1$ are the $j$-cells, with the vertices map identifying the $j$ parts of the ground partition which contain a given $j$-cell, and where the boundary operator $\partial e$ identifies the $(|e|-1)$-cells supporting $e$. So far we have described a $(k-1)$-multicomplex; we extend this to a $k$-complex by adding, for each set $S$ of $k$ edges of uniformity $k-1$ which can be a boundary (i.e.\ which is such that $\big|\bigcup_{f\in S}\partial f\big|=\binom{k}{k-2}$ ) one edge of uniformity $k$ whose boundary is $S$. When we refer to the \emph{multicomplex of the family of partitions $\cP^*$} we mean this multicomplex. Note that we have defined the word `support' both in terms of multicomplexes and in terms of a family of partitions: but these definitions are consistent, i.e.\ that a given $j$-cell is supported by some $(j-1)$-cells means the same thing whether one reads `support' in terms of the family of partitions or its multicomplex.

For any $0 \leq j \leq k-1$, any $A \in
\binom{[t]}{j}$ and any $Q' \in \cross_A$, let $C_{Q'}$ denote the cell of
$\Part_{A}$ which contains $Q'$. 
Then the fact that $\Part^*$ is a family of partitions implies that 
for any $Q \in \cross_k$ the union 
$\cJ(Q) := \bigcup_{Q' \subsetneq Q} C_{Q'}$ 
of cells containing subsets of $Q$ is a
$k$-partite $(k-1)$-complex.
We say that the $(k-1)$-family of partitions $\Part^*$ is \emph{$(t_0,
t_1, \eps)$-equitable} if 
\begin{enumerate}[label=\abc]
  \item\label{equitfam:a} $\Part$ partitions~$V$ into~$t$ clusters of equal size, where
  $t_0 \leq t \leq t_1$,
  \item\label{equitfam:b} for each $2 \leq j \leq k-1$, $\Part^{(j)}$ partitions
  $\cross_j$ into at most $t_1$ cells,
  \item\label{equitfam:c} there exists ${\mathbf{d}}=( d_{k-1},
  \dots, d_2)$ such that for each $2 \leq j \leq k-1$ we have $d_j \geq
  1/t_1$ and $1/d_j \in \NATS$, and for every $Q \in \cross_k$ the $k$-partite
  $(k-1)$-complex 
$\cJ(Q)$ 
is $({\bf d},\eps,\eps,1)$-regular.
\end{enumerate}
Note that conditions~\ref{equitfam:a} and~\ref{equitfam:c} imply that $\cJ(Q)$ is a $(1, t_1,
\eps)$-equitable $(k-1)$-complex (with the same density vector $\mathbf{d}$) for any $Q \in \cross_k$.

Next, for any $\Part$-partite set $Q$ with $2 \leq |Q| \leq k$, define $\hat{P}(Q; \Part^*)$
to be the $|Q|$-partite $(|Q|-1)$-graph on $V_{i(Q)}$ with edge set $\bigcup_{Q' \in
\binom{Q}{|Q|-1}} C_{Q'}$. We refer to $\hat{P}(Q; \Part^*)$ as a \emph{$|Q|$-polyad}; when the family of partitions $\Part^*$ is clear from the context, we write simply $\hat{P}(Q)$ rather than $\hat{P}(Q; \Part^*)$. 
Note that the condition for $\Part^*$ to be a $(k-1)$-family of partitions can then be rephrased as saying that if $2 \leq |Q| \leq k-1$ then the cell $C_Q$ is supported on $\hat{P}(Q)$, and in the multicomplex corresponding to $\cP^*$ we have edges corresponding to the cells of each uniformity from $1$ to $k-1$ inclusive, together with edges corresponding to the $k$-polyads supported by $\cP^*$. 
As shown in~\cite[Claim~32]{ABCM}, if $\Part^*$ is $(t_0, t_1, \eps)$-equitable for sufficiently small $\eps$, then for any $2 \leq j \leq k-1$ and any $Q \in \cross_j$ the number of $j$-cells of $\Part^*$ supported on $\hat{P}(Q)$ is precisely equal to $1/d_j$. More specifically, if $\big(d_j^{-1}-1\big)(d_j+\eps)<1$, and $\big(d_j^{-1}+1\big)(d_j-\eps)>1$, then by definition necessarily there are exactly $d_j^{-1}$ cells supported; it suffices to choose $\eps\ll d_j^{2}$
to ensure these two inequalities. In other words, the multicomplex corresponding to $\Part^*$ is $\mathbf{d}$-equitable.

Now let~$G$ be a $k$-graph on~$V$, and let $\Part^*$ be a $(k-1)$-family of
partitions on~$V$. Let $Q\in\cross_k$, so the polyad $\hat{P}(Q)$ is a $k$-partite $(k-1)$-graph.
We say that~$G$
is \emph{$(\eps_k,p)$-regular with respect to~$\Part^*$} if there are at most
$\eps_k \binom{|V|}{k}$ sets $Q \in \cross_k$ for which $G$ is not
$(\eps_k,p)$-regular with respect to the polyad $\hat{P}(Q)$. That is,
at most an $\eps_k$-proportion of subsets of $V$ of size $k$ yield polyads with respect to which $G$ is not regular (though some subsets of $V$ of size $k$ do
not yield any polyad due to not being members of $\cross_k$).

At this point we have the setup to state the Strong Hypergraph Regularity Lemma, which says that for any $k$-uniform hypergraph $G$ there is a $(k-1)$-family of partitions $\Part^*$, which is $(t_0,t_1,\eps)$-equitable for some $t_1$ independent of $|V(G)|$, such that $G$ is regular with respect to $\Part^*$. However for this paper we need a stronger version, which is not standard (the dense graph version, called the Strengthened Regularity Lemma, is due to Alon, Fischer, Krivelevich and Szegedy~\cite{AFKS}, and it is folklore that the hypergraph version we now state should exist). To that end, given two families of partitions $\Part^*$ and $\Qart^*$ on the same vertex set, we say that $\Part^*$ \emph{refines} $\Qart^*$ if every cell of $\Part^*$ is a subset of some cell of $\Qart^*$.

\begin{definition}
	Given a $k$-uniform hypergraph $G$, we call a pair of families of partitions $(\cP^*_c,\cP^*_f)$ on $V(G)$ a $(t_0,t_1,t_2,\eps_k,\eps,f_k,f,p)$-strengthened pair for $G$ if the following are true.
	\begin{enumerate}[label=\textup{(S\arabic*)}]
	 \item\label{S:refine} $\cP^*_f$ refines $\cP^*_c$.
	 \item\label{S:cequit} $\cP^*_c$ is $(t_0,t_1,\eps)$-equitable.
	 \item\label{S:creg} $G$ is $(\eps_k,p)$-regular with respect to $\cP^*_c$.
	 \item\label{S:fequit} $\cP^*_f$ is $(t_0,t_2,f)$-equitable.
	 \item\label{S:freg} $G$ is $(f_k,p)$-regular with respect to $\cP^*_f$.
	 \item\label{S:dens} For all but at most $\eps_k^2\binom{|V(G)|}{k}$ elements $Q$ of $\cross_k(\cP_c)$, we have $d_p\big(G\big|\hat{\cP}(Q,\cP_c^*)\big)=d_p\big(G\big|\hat{\cP}(Q,\cP_f^*)\big)\pm\eps_k$.
	\end{enumerate}
\end{definition}

We refer to $\cP^*_c$ as the \emph{coarse partition} and $\cP^*_f$ as the \emph{fine partition}. Slightly extending the usual definition, we say a $k$-polyad $\hat{P}(Q;\cP^*_c)$ is \emph{irregular} (with respect to $G$) if any one of the following three things occurs:
\begin{enumerate}[label=(\roman*)]
 \item $G$ is not $(\eps_k,p)$-regular with respect to $\hat{P}(Q;\cP^*_c)$,
 \item for more than an $\eps_k$-fraction of the $k$-sets $Q'$ supported on $\hat{P}(Q;\cP^*_c)$, $G$ is not $\big(f_k,p\big)$-regular with respect to $\hat{P}(Q';\cP^*_f)$, or
 \item for more than an $\eps_k$-fraction of the $k$-sets $Q'$ supported on $\hat{P}(Q;\cP^*_c)$, we have $d_p\big(G\big|\hat{P}(Q';\cP^*_f)\big)\neq d_p\big(G\big|\hat{P}(Q;\cP^*_c)\big)\pm\eps_k$.
\end{enumerate}
If a polyad of $\cP^*_c$ is not irregular, we say it is \emph{regular}.

We will always choose $f_k$ such that $f_k\le\eps_k^2$, and $\eps$ small enough that every $k$-polyad supports very close to the same number of $k$-edges. Under this assumption, it is straightforward to check that at most a $4\eps_k$-fraction of polyads in $\cP^*_c$ are irregular (we will prove this in Appendix~\ref{sec:proofreg}, Proposition~\ref{prop:fewirreg}).

We need one more definition. Given any (not necessarily distinct) subsets $E_1,\dots,E_k$ in $\binom{[n]}{k-1}$, we say a $k$-set $S\subset [n]$ is \emph{rainbow} for the $E_i$ if there is an injective labelling of the $(k-1)$-subsets of $S$ with the numbers $1,\dots,k$ such that the $(k-1)$-subset labelled $i$ is in $E_i$. We write $K_k(E_1,\dots,E_k)$ for the set of rainbow $k$-sets in $[n]$. We say that a graph $G$ on $[n]$ is \emph{$(\eta,p)$-upper regular} if the following holds. For any $E_1,\dots,E_k$, we have
\[\big|E(G)\cap K_k(E_1,\dots,E_k)\big|\le p\big|K_k(E_1,\dots,E_k)\big|+p\eta n^k\,.\]

Finally, we are in a position to state our strengthened sparse version of the Strong Hypergraph Regularity Lemma. Informally, what this says is that we can find $\cP^*_c$ and $\cP^*_f$ which are simultaneously a strengthened pair for $s$ edge-disjoint graphs, for any (fixed) regularity $\eps_k$ of $\cP^*_c$, where $\eps$ and $f$ can be as small as desired depending on the number of parts in $\cP^*_c$ and $\cP^*_f$ respectively, and furthermore the regularity $f_k$ of $\cP^*_f$ can depend arbitrarily on the number of parts of $\cP^*_c$.

\begin{lemma}[Strengthened Sparse Strong Hypergraph Regularity Lemma]\label{lem:ssshrl}
	Given integers $k\ge2$ and $t_0$ and $s$, real $\eps_k>0$ and functions $\eps,f_k,f:\mathbb{N}\to(0,1]$, there exists a real $\eta>0$ and integers $T$ and $n_0$ such that the following holds for all $n\ge n_0$ with $T!|n$. Let $V$ be a vertex set of size $n$, suppose that $G_1,\dots,G_s$ are $k$-uniform hypergraphs on $V$, and suppose $\cQ^*$ is a family of partitions on $V$ which is $(1,t_0,\eta)$-equitable. Suppose furthermore that for each $1\le i\le s$ there is a real $p_i\in(0,1]$ such that $G_i$ is $(\eta,p_i)$-upper regular. Then there are integers $t_1,t_2$ with $t_0\le t_1\le t_2\le T$, and families of partitions $\cP^*_c$ and $\cP^*_f$, both refining $\cQ^*$, such that for each $1\le i\le s$, the pair $(\cP^*_c,\cP^*_f)$ is a $\big(t_0,t_1,t_2,\eps_k,\eps(t_1),f_k(t_1),f(t_2),p_i\big)$-strengthened pair for $G_i$.
\end{lemma}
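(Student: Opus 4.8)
The plan is to prove Lemma~\ref{lem:ssshrl} in two stages, mirroring the derivation announced in the text. In the first stage we establish a \emph{dense} strengthened version of the Strong Hypergraph Regularity Lemma (the case $p_i=1$): starting from the version of R\"odl and Schacht~\cite{RSch} we run the standard ``iterated regularity'' argument of Alon, Fischer, Krivelevich and Szegedy~\cite{AFKS}. Concretely, we regularise with a very small regularity parameter to obtain a candidate coarse family $\cP^*_c$ with some number $t_1$ of parts, then regularise a refinement of $\cP^*_c$ with regularity parameter $f_k(t_1)$, $f(t_2)$ much smaller still (using that the regularity lemma lets us demand arbitrarily strong regularity for the fine partition once the coarse one is fixed) to obtain $\cP^*_f$; if property~\ref{S:dens} fails — i.e.\ a non-negligible fraction of coarse polyads have densities that shift by more than $\eps_k$ when passing to the fine partition — then a defect-form Cauchy–Schwarz / mean-square argument shows that a suitable ``index'' (a weighted sum of squared relative densities over the polyads) strictly increases by a constant depending only on $\eps_k$. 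Since this index is bounded in $[0,1]$, the iteration terminates after a bounded number of rounds, producing $\cP^*_c$ and $\cP^*_f$ with all of \ref{S:refine}--\ref{S:dens} for $p_i=1$; running this simultaneously for $G_1,\dots,G_s$ is routine since one can just take common refinements at each step, paying only a bounded blow-up in the number of parts.

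In the second stage we transfer to the sparse setting via the weak sparse regularity lemma of Conlon, Fox and Zhao~\cite{CFZ}, following the scheme of~\cite{ADS}. The idea is that $(\eta,p_i)$-upper regularity of $G_i$ is exactly the hypothesis needed so that, after passing to the family of partitions $\cP^*_c$, $\cP^*_f$ produced by the dense argument applied to the (bounded) ``reduced'' data, each sparse hypergraph $\tfrac1{p_i}G_i$ behaves like a dense hypergraph at the level of polyad densities: the CFZ counting/transference machinery lets us replace ``$(\eps_k,1)$-regular'' statements about a dense model by ``$(\eps_k,p_i)$-regular'' statements about $G_i$, with the error controlled by $\eta$. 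Thus one applies the dense strengthened lemma to obtain the partition structure, and the upper-regularity hypothesis guarantees that regularity of $G_i$ with respect to the resulting polyads is inherited. One also needs the starting family $\cQ^*$ to be refined by the output; this is arranged by feeding $\cQ^*$ into the regularity lemma as the initial partition, which the standard proof allows. The quantifier order in the statement — $\eps_k$ fixed first, then $\eta, T, n_0$ chosen, with $\eps(\cdot), f_k(\cdot), f(\cdot)$ allowed to depend on the eventual numbers of parts — matches exactly what the iterated-regularity proof delivers, since at each round the new regularity parameters are chosen \emph{after} the current partition (hence the current part-count) is known.

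The main obstacle, and the place where care is genuinely required, is the ``index increment'' step in stage one: one must define the right monotone quantity (the mean square of the relative $p$-densities $d_p(G\mid\hat P(Q))$ over coarse polyads $Q$, appropriately weighted by the number of $k$-sets each polyad supports) and show that a violation of~\ref{S:dens} forces it to increase by at least some $c(\eps_k)>0$. This is the hypergraph analogue of the classical ``energy increment'', but it is fiddly because the polyads of $\cP^*_f$ refining a given polyad of $\cP^*_c$ do not partition a vertex set but rather a set of $k$-cliques in the $(k-1)$st layer, so one must track the number of such cliques and use the $(\mathbf d,\eps,\eps,1)$-regularity of the complexes $\cJ(Q)$ to ensure these counts are all within a $(1\pm o(1))$ factor of a common value — precisely the reason the hypothesis ``$\eps$ small enough that every $k$-polyad supports very close to the same number of $k$-edges'' is imposed. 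A secondary technical point is bookkeeping the chain of dependencies between $t_0,t_1,t_2,T$ and the functions $\eps,f_k,f$ so that the final bound $T$ is finite; this is routine once the increment step is in hand, since the number of iterations is at most $c(\eps_k)^{-1}$. We defer the full details, which are not new in spirit, to Appendix~\ref{sec:proofreg}.
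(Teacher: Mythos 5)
Your proposal is correct and takes essentially the same two-stage approach as the paper: a dense strengthened regularity lemma via iterated application of R\"odl--Schacht with an energy-increment (mean-square) argument, followed by transfer to the sparse setting via the Conlon--Fox--Zhao weak regularity lemma as in~\cite{ADS}. One detail worth making explicit, which you gloss over: the dense stage requires the $G_i$ to be \emph{edge-disjoint}, and the sparse transfer supplies this by first applying CFZ to obtain weighted dense models and then randomly rounding to produce pairwise edge-disjoint unweighted hypergraphs $G_i''$ (with Chernoff ensuring the discrepancy bound survives), rather than by ``replacing regular statements about a dense model'' in the abstract — the discrepancy-pair mechanism is what propagates the regularity and density conclusions from the $G_i''$ back to the $G_i$.
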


We prove this lemma in Appendix~\ref{sec:proofreg}. Note that the case $k=2$ will not be used here; and in this setting the `families of partitions' are simply vertex set partitions and the functions $\eps$ and $f$ play no r\^ole.

Given a $(t_0,t_1,t_2,\eps_k,\eps,f_k,f,p)$-strengthened pair $(\cP^*_c,\cP^*_f)$ for $G$, recall that $\cP^*_c$ has the structure of a multicomplex. We denote by $\cR_{\eps_k}(G;\cP^*_c,\cP^*_f)$ the \emph{$\eps_k$-reduced multicomplex of $G$ with respect to $(\cP^*_c,\cP^*_f)$}, which is the (unique) maximal submulticomplex of $\cP^*_c$ which has the following properties.
\begin{enumerate}[label=(RG\arabic*)]
 \item\label{rg:k} Every $k$-edge of $\cR_{\eps_k}(G;\cP^*_c,\cP^*_f)$ is regular.
 \item\label{rg:lower} For each $1\le i\le k-1$, each $i$-edge of $\cR_{\eps_k}(G;\cP^*_c,\cP^*_f)$ is in the boundary of at least 
 \[\Big(1-2^{i+2}\eps_k^{1/k}\Big)t\prod_{j=2}^{i+1}d_j^{-\binom{i}{j-1}} \quad\text{ if $i<k-1$, and }\quad\Big(1-2^{k+1}\eps_k^{1/k}\Big)t\prod_{j=2}^{k-1}d_j^{-\binom{k-1}{j-1}}\quad\text{ if $i=k-1$}\]
 $(i+1)$-edges of $R_{\eps_k}(G;\cP^*_c,\cP^*_f)$.
\end{enumerate}

The existence and uniqueness of the reduced multicomplex are trivial: we obtain it by simply iteratively removing from the multicomplex $\cP^*_c$ edges which either fail one of~\ref{rg:k} or~\ref{rg:lower}, or from whose boundary we removed edges (so that they are no longer supported and cannot be in the multicomplex). It is easy, but not quite trivial, to show that most of the vertices of $\cR_{\eps_k}(G;\cP^*_c,\cP^*_f)$ (i.e. the parts of $\cP_c$) are also $1$-edges of $\cR_{\eps_k}(G;\cP^*_c,\cP^*_f)$. Now given $d>0$, we let $\cR_{\eps_k,d}(G;\cP^*_c,\cP^*_f)$ be the (unique) submulticomplex of $\cR_{\eps_k}(G;\cP^*_c,\cP^*_f)$ obtained by removing all $k$-edges corresponding to polyads whose relative $p$-density is less than $d$.
We call $\cR_{\eps_k,d}(G;\cP^*_c,\cP^*_f)$ the \emph{$(\eps_k,d)$-reduced multicomplex of $G$ with respect to $(\cP^*_c,\cP^*_f)$}.

In Appendix~\ref{sec:proofreg} we show the following lemma.

\begin{lemma}
	\label{lem:goodconnected}
	Given $k\in\mathbb{N}$ and $d>0$ suppose that $t_0\in\mathbb{N}$ is sufficiently large. Given any constants $\delta,\eps_k,\nu>0$, any function $\eps:\mathbb{N}\to(0,1]$ which tends to zero sufficiently fast, any $t_1,t_2\in\mathbb{N}$, any $0<f_k\le\eps_k^2$ and any $f>0$, there exists $\eta>0$ such that the following holds for any sufficiently large $n$ and any $p>0$. Suppose $G$ is an $n$-vertex hypergraph which is $(\eta,p)$-upper regular and every $(k-1)$-set in $V(G)$ is contained in at least $\delta p n$ edges. Suppose that $(\cP^*_c,\cP^*_f)$ is a $(t_0,t_1,t_2,\eps_k,\eps(t_1),f_k,f,p)$-strengthened pair for $G$.
	
  Let $\cR=\cR_{\eps_k,d}(G;\cP^*_c,\cP^*_f)$ be the $(\eps_k,d)$-reduced multicomplex of $G$, and suppose that $\cP^*_c$ has $t$ clusters and density vector $\mathbf{d}=(d_{k-1},\dots,d_2)$. Then $\cR$ contains at least $\big(1-4\eps_k^{1/k}\big)t$ $1$-edges, and every $(k-1)$-edge of $\cR$ is contained in at least
	\[\big(\delta-2d-2^{k+2}\eps_k^{1/k}\big)t\cdot \prod_{i=2}^{k-1}d_i^{-\binom{k-1}{i-1}}\]
	$k$-edges of $\cR$.
	
	Finally, if $\delta>\tfrac12+2d+2^{k+2}\eps_k^{1/k} + \nu$, then any induced subcomplex of $\cR$ on at least $(1-\nu)t$ $1$-edges is tightly linked.
\end{lemma}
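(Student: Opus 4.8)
Write $\cR'$ for the induced subcomplex of $\cR$ under consideration, on a cluster set $S$ with $|S|\ge(1-\nu)t$; thus $\cR'$ is obtained from $\cR$ by deleting at most $\nu t$ clusters. Put $Q:=\prod_{i=2}^{k-1}d_i^{-\binom{k-1}{i-1}}$ and $\eps':=\delta-\tfrac12-2d-2^{k+2}\eps_k^{1/k}-\nu$, which is positive by the hypothesis. Since $\cP^*_c$ is $\mathbf{d}$-equitable, any $(k-1)$-edge $x$ and any cluster $z\notin\mathrm{vertices}(x)$ span at most $Q$ $k$-edges of $\cR$ (those on $\mathrm{vertices}(x)\cup\{z\}$ with $x$ in their boundary — in $\cP^*_c$ there are exactly $Q$), so deleting at most $\nu t$ clusters destroys at most $\nu tQ$ $k$-edges through any fixed $(k-1)$-edge. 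With the second assertion of the lemma this gives the only property of $\cR'$ we shall use: \emph{every $(k-1)$-edge of $\cR'$ lies in at least $(\tfrac12+\eps')tQ$ $k$-edges of $\cR'$.}

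Let $\mathbf{u}=(u_1,\dots,u_{k-1})$ and $\mathbf{v}=(v_1,\dots,v_{k-1})$ be ordered $(k-1)$-edges of $\cR'$. The plan is to build a tight link iteratively, maintaining for $j=0,1,\dots,k-1$: distinct clusters $w_1,\dots,w_j$; ordered $(k-1)$-cells $\mathbf{p}_j,\mathbf{q}_j$ of $\cR'$ on the cluster sequences $(u_{j+1},\dots,u_{k-1},w_1,\dots,w_j)$ and $(v_{j+1},\dots,v_{k-1},w_1,\dots,w_j)$; and $k$-edges $e_{1,u},\dots,e_{j,u},e_{1,v},\dots,e_{j,v}$ of $\cR'$; with $\mathbf{p}_0=\mathbf{u}$, $\mathbf{q}_0=\mathbf{v}$, $\mathbf{p}_i\in\partial e_{i,u}$ for $1\le i\le j$, $\partial e_{i-1,u}\cap\partial e_{i,u}\ni\mathbf{p}_{i-1}$ for $2\le i\le j$, $\mathbf{u}\in\partial e_{1,u}$ when $j\ge1$, and the same on the $v$-side. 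The essential extra invariant is that $\mathbf{p}_j$ and $\mathbf{q}_j$ have the same \emph{restriction} $\delta_j$ to $\{w_1,\dots,w_j\}$, meaning the unique $j$-cell on $\{w_1,\dots,w_j\}$ in the down-closure of $\mathbf{p}_j$ equals that of $\mathbf{q}_j$ (vacuous for $j=0$). When $j=k-1$ the set $\{w_1,\dots,w_{k-1}\}$ is the whole cluster set of $\mathbf{p}_{k-1}$ and of $\mathbf{q}_{k-1}$, so the invariant forces $\mathbf{p}_{k-1}=\mathbf{q}_{k-1}$; this supplies the remaining tight-link condition $\partial e_{k-1,u}\cap\partial e_{k-1,v}\ne\emptyset$, and then $w_1,\dots,w_{k-1}$ together with the $e_{i,u},e_{i,v}$ form a tight link from $\mathbf{u}$ to $\mathbf{v}$ in $\cR'$ (one checks the $k$ clusters underlying each of these edges are automatically distinct), completing the proof.

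For the step from $j$ to $j+1$, where the degree bound enters: given a cluster $w_{j+1}\notin\mathrm{vertices}(\mathbf{p}_j)\cup\mathrm{vertices}(\mathbf{q}_j)$ and a $(j+1)$-cell $\delta_{j+1}$ on $\{w_1,\dots,w_{j+1}\}$ restricting to $\delta_j$ on $\{w_1,\dots,w_j\}$, call the pair $(w_{j+1},\delta_{j+1})$ \emph{$u$-good} if some $k$-edge of $\cR'$ on $\{u_{j+1},\dots,u_{k-1},w_1,\dots,w_{j+1}\}$ has $\mathbf{p}_j$ in its boundary and restriction $\delta_{j+1}$ to $\{w_1,\dots,w_{j+1}\}$, and \emph{$v$-good} similarly with $\mathbf{q}_j$. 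Each $k$-edge of $\cR'$ containing $\mathbf{p}_j$ automatically has restriction $\delta_j$ to $\{w_1,\dots,w_j\}$ (since $\mathbf{p}_j$ does) and so certifies a $u$-good pair; a routine count in the $\mathbf{d}$-equitable multicomplex $\cP^*_c$ shows that at most $\prod_{i=2}^{k-1}d_i^{-(\binom{k-1}{i-1}-\binom{j}{i-1})}$ such $k$-edges certify any one pair, while the number of pairs $(w_{j+1},\delta_{j+1})$ of the allowed form is at most $t\prod_{i=2}^{j+1}d_i^{-\binom{j}{i-1}}$. Since the number of $k$-edges of $\cR'$ through $\mathbf{p}_j$ is at least $(\tfrac12+\eps')tQ$ and $Q/\prod_{i=2}^{k-1}d_i^{-(\binom{k-1}{i-1}-\binom{j}{i-1})}=\prod_{i=2}^{j+1}d_i^{-\binom{j}{i-1}}$ by Pascal's identity, there are at least $(\tfrac12+\eps')t\prod_{i=2}^{j+1}d_i^{-\binom{j}{i-1}}$ $u$-good pairs, strictly more than half of all allowed pairs, and likewise for $v$-good pairs; hence some pair $(w_{j+1},\delta_{j+1})$ is both $u$-good and $v$-good, and as $t$ is large we may in addition keep $w_{j+1}$ off any prescribed finite set of clusters. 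Fix such a pair and witnessing edges $e_{j+1,u},e_{j+1,v}$, and let $\mathbf{p}_{j+1},\mathbf{q}_{j+1}$ be the members of $\partial e_{j+1,u},\partial e_{j+1,v}$ omitting $u_{j+1},v_{j+1}$ respectively: then $\partial e_{j,u}\cap\partial e_{j+1,u}\ni\mathbf{p}_j$ (similarly for $v$), and $\mathbf{p}_{j+1},\mathbf{q}_{j+1}$ both restrict to $\delta_{j+1}$ on $\{w_1,\dots,w_{j+1}\}$, so the invariant survives; at $j=k-2$ the cell $\delta_{k-1}$ is already the common end $(k-1)$-cell $\mathbf{p}_{k-1}=\mathbf{q}_{k-1}$.

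The step I expect to be the real work is the ``routine count'': one must evaluate, in a $\mathbf{d}$-equitable $k$-multicomplex, the number of $k$-edges extending a fixed $(k-1)$-edge together with a fixed sub-cell on a fixed sub-cluster-set, and the number of $(j+1)$-cells on a fixed cluster set restricting to a fixed cell on one facet, and check — via $\binom{k-1}{i-1}=\binom{k-2}{i-2}+\binom{k-2}{i-1}$, so that the exponents telescope — that these fit together to convert ``more than half the $k$-edges through $\mathbf{p}_j$'' into ``more than half the admissible continuations''. Everything else is bookkeeping; the one structural idea is the invariant binding the $\mathbf{u}$- and $\mathbf{v}$-sides on $\{w_1,\dots,w_j\}$, without which, at the final step, the two sides would be expanding into disjoint families of $(k-1)$-cells with no reason to meet.
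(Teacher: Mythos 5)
Your argument proves only the final assertion of the lemma (that any induced subcomplex $\cR'$ on at least $(1-\nu)t$ clusters is tightly linked), and takes the first two assertions for granted: you write ``with the second assertion of the lemma this gives the only property of $\cR'$ we shall use,'' invoking the codegree bound on $(k-1)$-edges of $\cR$ as a black box, and you give no argument at all for the first assertion, that $\cR$ retains at least $\big(1-4\eps_k^{1/k}\big)t$ $1$-cells. These are genuine content of the statement. In the paper, the first assertion is a marking-bad-edges argument built on Proposition~\ref{prop:fewirreg} together with the consistency conditions~\ref{rg:k} and~\ref{rg:lower}; the second is the one place $(\eta,p)$-upper regularity and the Dense Counting Lemma actually enter, converting the hypothesis that every $(k-1)$-set of $V(G)$ lies in at least $\delta pn$ edges of $G$ into a codegree statement inside the reduced multicomplex, and it is the only step where the output constant $\eta$ plays any role. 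Omitting both leaves a derivation of the third conclusion from the first two, not a proof of the lemma.

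For the third assertion your argument is essentially the paper's. The reduction from the codegree bound plus $|V(\cR)\setminus V(\cR')|\le\nu t$ to ``every $(k-1)$-edge of $\cR'$ lies in strictly more than $\tfrac12 t\prod_{i=2}^{k-1}d_i^{-\binom{k-1}{i-1}}$ $k$-edges of $\cR'$'' is exactly the one in the paper, and your iterative one-cluster-at-a-time majority step, with the invariant that $\mathbf{p}_j$ and $\mathbf{q}_j$ have a common restriction $\delta_j$ on $\{w_1,\dots,w_j\}$, is the same as the paper's invariant that $w_{[j]}$ lies in the $(k-j)$-times-iterated boundaries of both $e_{j,u}$ and $e_{j,v}$. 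The ``routine count'' you flag is carried out in the paper in just this form, and your bookkeeping (each $k$-edge through $\mathbf{p}_j$ certifies exactly one pair $(w_{j+1},\delta_{j+1})$, at most $\prod_{i=2}^{k-1}d_i^{-\binom{k-1}{i-1}+\binom{j}{i-1}}$ of them per pair, so a strict majority survives division) is correct.
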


The next lemma, often called the Dense Counting Lemma, is a straightforward generalisation of the well-known graph Counting Lemma (in contrast to the so-called Sparse Counting Lemma, which is much harder; the difference being that in the Dense Counting Lemma the parameter $\eps$ of regularity is much smaller than all the density parameters).
We state the special case of counting  $(k-1)$- and $k$-cliques in $(k-1)$-uniform hypergraphs.
The version that we need works with parts of different sizes, but this can be easily derived from the version with parts of the same size from~\cite[Theorem 6.5]{KRS}.

\begin{lemma}[Dense Counting Lemma]\label{lem:DCL}
	For all integers $k\ge2$ and constants $\alpha,\gamma,d_0>0$, there exists $\eps>0$ such that the following holds. Let $\mathbf{d}=(d_{k-1},\dots,d_2)$ be a vector of real numbers with $d_i\ge d_0$ for each $2\le i\le k-1$, and let $G$ be a $k$-partite $(k-1)$-complex which is $(\mathbf{d},\eps,\eps,1)$-regular 
	 and has parts $V_1,\dots,V_k$ of size at least $m \ge \alpha^{-1} \eps^{-1} $. Then for $V_i' \subseteq V_i$ of size $|V_i'| \ge \alpha |V_i|$ for $i=1,\dots,k$ the number of copies of the $k$-vertex complete $(k-1)$-complex in $G[V_1' , \dots , V_k']$ is
	\[\big(1\pm\gamma\big) \prod_{i=1}^k |V_i'| \prod_{i=2}^{k-1}d_i^{\binom{k}{i}}\,,\]
	and the number of copies of the $(k-1)$-vertex complete $(k-1)$-complex in $G[V_1' , \dots , V_{k-1}']$ is
	\[\big(1\pm\gamma\big) \prod_{i=1}^{k-1} |V_i'| \prod_{i=2}^{k-1}d_i^{\binom{k-1}{i}}\,.\]
\end{lemma}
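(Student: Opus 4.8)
The plan is to deduce both counts from the equal-part Dense Counting Lemma~\cite[Theorem~6.5]{KRS} by reducing to the case of equal part sizes. The two assertions are essentially the same statement --- a count of transversal copies of a complete $(k-1)$-complex, on $k$ classes in the first case and on $k-1$ classes in the second --- and are proved in the same way, so I describe only the first; the second is the same argument applied to the $(k-1)$-partite $(k-1)$-complex $G[V_1,\dots,V_{k-1}]$. Here a copy of the $k$-vertex complete $(k-1)$-complex in $G[V_1',\dots,V_k']$ means a set $\{v_1,\dots,v_k\}$ with $v_i\in V_i'$, all of whose $j$-subsets are edges of $G^{(j)}$ for every $2\le j\le k-1$; this is the quantity the displayed formula counts.

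Since restricting a $(\mathbf d,\eps,\eps,1)$-regular $k$-partite $(k-1)$-complex to subclasses of relative size at least $\alpha$ again yields a regular complex --- with each density moved by at most $\eps$ and the regularity parameter worsened only by a factor depending on $\alpha$, with no tower-type loss --- we may first pass from $G$ to $G[V_1',\dots,V_k']$; write $W_i:=V_i'$, so it now suffices to count all transversal copies of the complete $(k-1)$-complex in a regular $k$-partite $(k-1)$-complex on classes $W_1,\dots,W_k$ of possibly unequal sizes. Put $L:=\prod_j|W_j|$ and form the blow-up $\tilde G$ in which each vertex of $W_i$ is replaced by $L/|W_i|\in\NATS$ copies, a $j$-set of copies being an edge of $\tilde G^{(j)}$ exactly when the underlying $j$-set of original vertices is an edge. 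All blow-up classes $\tilde W_i$ have size exactly $L$, and every transversal copy in the complex on $W_1,\dots,W_k$ has precisely $\prod_i(L/|W_i|)$ preimages that are transversal copies in $\tilde G$ (and every transversal copy of $\tilde G$ arises this way), so the number of transversal copies in $\tilde G$ is $\big(\prod_i L/|W_i|\big)$ times the number in the original complex. A blow-up of a regular complex is again regular, with the same density vector and only a mild worsening of the regularity parameter --- checked layer by layer; the one thing to note is that a blow-up is in the first instance only regular in the weaker ``additive $\eps|W_i||W_j|$'' sense rather than with the same $\eps$ in the usual ``$\eps$-fraction of subsets'' sense, but the two are equivalent up to a power of $\eps$, which is harmless here. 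So $\tilde G$ is regular with density vector $\mathbf d$ perturbed by a small amount $\eps'$ and with regularity parameter a small function of $\eps$ and $\alpha$. Since $L\ge(\alpha m)^k$ is enormous once $\eps$ is small, \cite[Theorem~6.5]{KRS} applies to $\tilde G$ with its equal classes of size $L$ and gives that the number of transversal copies in $\tilde G$ equals $(1\pm\tfrac\gamma2)\,L^k\prod_{i=2}^{k-1}(d_i\pm\eps')^{\binom{k}{i}}$. Dividing by $\prod_i(L/|W_i|)=L^k/\prod_i|W_i|$, and using that $d_i\ge d_0$ so that for $\eps$ small enough $\prod_{i=2}^{k-1}(d_i\pm\eps')^{\binom{k}{i}}=(1\pm\tfrac\gamma2)\prod_{i=2}^{k-1}d_i^{\binom{k}{i}}$, we obtain that the number of copies in $G[V_1',\dots,V_k']$ is $(1\pm\gamma)\prod_i|V_i'|\prod_{i=2}^{k-1}d_i^{\binom{k}{i}}$, as required.

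There is no conceptual difficulty; the content is entirely in the two standard regularity-preservation facts used above --- that passing to relatively dense subclasses, and that taking blow-ups, each preserve regularity of complexes up to controlled (polynomially bounded) losses in $\eps$ and in the densities --- together with the routine bookkeeping of nested constants. Concretely one chooses $\eps$ small in terms of $k,\alpha,\gamma,d_0$: small enough that the degraded regularity parameter of $\tilde G$ lies below the threshold that~\cite[Theorem~6.5]{KRS} needs to conclude with multiplicative error $\gamma/2$; small enough that $m\ge\alpha^{-1}\eps^{-1}$ forces $L$ above its (polynomial-in-$1/\eps$) size threshold; and small enough that the perturbed densities $d_i\pm\eps'$ stay within multiplicative distance $\gamma/2$ of $d_i$ in the relevant product. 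The one point needing a little care is the additive-versus-relative formulation of regularity under blow-ups, flagged above; this matches the paper's remark that Lemma~\ref{lem:DCL} ``can be easily derived'' from~\cite[Theorem~6.5]{KRS}. Alternatively, when the classes $V_i$ are themselves balanced one can avoid blow-ups altogether: tile each $V_i'$ with $\lfloor|V_i'|/m_0\rfloor$ blocks of a common size $m_0$ equal to a small constant fraction of $\min_j|V_j'|$, apply~\cite[Theorem~6.5]{KRS} to each grid of blocks, sum, and absorb the $O(m_0/\min_j|V_j'|)$-fraction of leftover vertices into the error term.
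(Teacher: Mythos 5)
Your route is valid but genuinely different from the paper's. The paper avoids blow-ups entirely: it chooses subsets $V_i''\subseteq V_i'$ of size exactly $\alpha m$ \emph{uniformly at random}, uses Lemma~\ref{lem:RRL} to see that $G[V_1'',\dots,V_k'']$ is $(\mathbf d,\sqrt\eps,1)$-regular so that Lemma~\ref{lem:DCL-general} counts the transversal copies there, and then recovers the count $E$ in $G[V_1',\dots,V_k']$ by linearity of expectation, since each copy in $G[V_1',\dots,V_k']$ survives into the random restriction with probability exactly $\prod_i\alpha m/|V_i'|$. Your tiling remark at the end is a deterministic approximation of exactly this averaging; the random version is cleaner because there are no leftover vertices to absorb. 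The blow-up route you lead with is also reasonable, but it rests on the claim that the blow-up $\tilde G$ of a $(\mathbf d,\eps,\eps,1)$-regular complex is again regular in the fraction-of-subsets sense that the cited Theorem~6.5 of~\cite{KRS} requires, with only a controlled degradation. You correctly flag the subtlety (subsets of the blow-up need not respect the blow-up structure at any level), but you do not prove it or give a reference; for hypergraph complexes this is a nonstandard layer-by-layer fact that would need its own argument, not least because one must control the number of supported $j$-cliques in an arbitrary sub-$j$-polyad of the blow-up, not just the density. What the paper's random-restriction device buys is precisely that it sidesteps this: the only regularity-preservation fact it needs is the Regular Restriction Lemma, which is simpler, already stated, and reused elsewhere in the paper.
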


Note that with $\alpha=1$ this is the Dense Counting Lemma with parts of the same size.
We give the proof for this generalisation in Appendix~\ref{sec:proofreg}.
If we do not remove too many vertices from the $1$-cells we still have a regular complex with slightly different parameters.
We will use this to prove Lemma~\ref{lem:DCL}, but also need it in our arguments.

\begin{lemma}[Regular Restriction Lemma~{\cite[Lemma 28]{ABCM}}]
	\label{lem:RRL}
	For all integers $k\ge2$ and constants $\alpha,d_0>0$, there exists $\eps>0$ such that the following holds. Let $\mathbf{d}=(d_{k-1},\dots,d_2)$ be a vector of real numbers with $d_i\ge d_0$ for each $2\le i\le k-1$, and let $G$ be a $k$-partite $(k-1)$-complex with parts $V_1,\dots,V_k$ of size $m\ge\eps^{-1}$ which is $(\mathbf{d},\eps,\eps,1)$-regular. 
	Choose any $V_i' \subseteq V_i$ of size at least $\alpha m$ for $i=1,\dots,k$.
	Then the induced subcomplex $G[V_1' , \dots , V_k']$ is $(\mathbf{d},\sqrt{\eps},1)$-regular.
\end{lemma}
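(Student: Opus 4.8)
The plan is to reduce the statement to the well-established fact that restricting a single $(d_i,\eps,1)$-regular $i$-partite $i$-graph to large subsets of its vertex classes preserves regularity with slightly worsened parameters, and then to propagate this layer by layer up the complex. First I would recall the standard one-layer statement: if $H_i$ is $(d_i,\eps,1)$-regular with respect to $H_{i-1}$ and we pass to subsets $V_j'\subseteq V_j$ with $|V_j'|\ge\alpha m$, then $H_i[V'_{A}]$ is $(d_i,\eps',1)$-regular with respect to $H_{i-1}[V'_A]$, where $\eps'$ degrades in a controlled way depending on $\alpha$ and the lower densities — this is exactly the content of the cited Lemma~28 of~\cite{ABCM}, so in our setting I may simply invoke it. The only genuine work is to check that the hypotheses of that lemma are met at every layer of $G$ simultaneously, and that the final parameter can be bounded by $\sqrt{\eps}$.

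The key steps, in order, are as follows. (1) Fix $\alpha,d_0>0$ and choose $\eps$ small enough (depending on $\alpha$, $d_0$ and $k$) that all the degradations below stay under control; in particular $\eps\ll d_0^{2^k}$ and small relative to $\alpha$. (2) Work upward through $i=2,\dots,k-1$. For each $i$ and each $A\in\binom{[k]}{i}$, the complex $G$ is $(d_i,\eps,1)$-regular with respect to $G^{(i-1)}[V_A]$; applying the one-layer restriction fact with the subsets $V'_j$, $j\in A$, which have relative size at least $\alpha$, we get that $G^{(i)}[V'_A]$ is $(d_i,g(\eps,\alpha,d_0),1)$-regular with respect to $G^{(i-1)}[V'_A]$ for some explicit function $g$ with $g(\eps,\alpha,d_0)\to 0$ as $\eps\to0$. (3) Observe that the induced complex $G[V'_1,\dots,V'_k]$ is again a $k$-partite $(k-1)$-complex (down-closure is inherited), so assembling the bounds from step~(2) over all $2\le i\le k-1$ gives $(\mathbf{d},\eps'',\eps'',1)$-regularity where $\eps''$ is the worst of the finitely many degraded parameters. (4) Finally, having chosen $\eps$ small enough in step~(1), bound $\eps''\le\sqrt{\eps}$; the size condition $m\ge\eps^{-1}$ ensures the restricted parts $|V'_j|\ge\alpha m$ are still large enough for the one-layer fact to apply (one needs roughly $|V'_j|\gg (\eps'')^{-1}$, which follows since $\alpha m\ge\alpha\eps^{-1}$ and $\eps$ was chosen small relative to $\alpha$).

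I do not expect a serious obstacle here: the statement is essentially bookkeeping on top of the single-layer regular-restriction fact, which is genuinely standard (and already cited). The only mildly delicate point is the order of quantifiers — one must choose the single $\eps$ at the outset so that it dominates all the layer-by-layer degradations $g(\cdot,\alpha,d_0)$ for $i=2,\dots,k-1$ at once, and so that the worst output parameter is at most $\sqrt{\eps}$ rather than merely $o(1)$; since there are only $k-2$ layers and each $g$ is an explicit elementary function of $\eps$ (polynomial-type), this is routine. In fact, since the lemma is quoted verbatim from~\cite[Lemma~28]{ABCM}, in the paper itself one would simply cite it; the sketch above is how one would reconstruct the proof if needed, and I would present it in Appendix~\ref{sec:proofreg} only to the extent of noting that it follows by iterating the one-layer restriction estimate across the $k-2$ nontrivial layers.
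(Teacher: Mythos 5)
The paper does not give a proof of this lemma at all: it is stated with the attribution ``Regular Restriction Lemma~\cite[Lemma~28]{ABCM}'' and is simply quoted, not proved, in the appendix. You correctly flag this at the end of your write-up, so there is nothing to compare against. For what it is worth, your reconstruction is on the right track, but note one subtlety that it glosses over: the ``one-layer'' restriction step at level $i$ is not self-contained, because passing a $\sqrt\eps$-fraction threshold in $G^{(i-1)}[V'_A]$ back to an $\eps$-fraction threshold in $G^{(i-1)}[V_A]$ requires a lower bound on $|K_i(G^{(i-1)}[V'_A])|$ relative to $|K_i(G^{(i-1)}[V_A])|$, which one obtains by counting $K_i^{(i-1)}$-copies using the already-established regularity of the \emph{restricted} lower layers. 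Your bottom-up iteration $i=2,\dots,k-1$ implicitly supplies this, but it is the crux of why the statement is a lemma rather than mere bookkeeping, and your phrase ``exactly the content of the cited Lemma~28'' describing the single-layer estimate is inaccurate --- \cite[Lemma~28]{ABCM} is the full complex statement, i.e.\ the lemma itself.
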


We will also need the following two lemmas that follow from the Dense Counting Lemma and the Regular Restriction Lemma.
The first allows us to control the `degree' of most tuples within the $(k-1)$-complex.
For $k=3$ this basically says that most edges are contained in the correct number of triangles.

\begin{lemma}[Degree Counting Lemma]
	\label{lem:DCL-variant}
	For all integers $k\ge2$ and constants $\alpha,\gamma,d_0>0$, there exists $\eps>0$ such that the following holds.
	Let $\mathbf{d}=(d_{k-1},\dots,d_2)$ be a vector of real numbers with $d_i\ge d_0$ for each $2\le i\le k$, and let $G$ be a $k$-partite $(k-1)$-complex with parts $V_1,\dots,V_k$ of size $m\ge \alpha^{-1} \eps^{-1}$ which is $(\mathbf{d},\eps,\eps,1)$-regular.
	Choose any $V_i' \subseteq V_i$ of size at least $\alpha m$ for $i=1,\dots,k$ and let $G'=G[V_1' , \dots , V_k']$.
	Then at least a $(1-\gamma)$-fraction of the $(k-1)$-tuples in $G[V_1' , \dots , V_{k-1}']$ is contained in
	\[\big(1\pm\gamma\big)|V_k'| \prod_{i=2}^{k-1} d_i^{\binom{k-1}{i-1}}\]
	copies of the $k$-vertex complete $(k-1)$-complex in $G'$.
	Furthermore, the $\gamma$-fraction of $(k-1)$-tuples in most copies of the $k$-vertex complete $(k-1)$-complex in $G'$ contain in total at most
	\[\tfrac52 \gamma \prod_{i=1}^k |V_i'| \prod_{i=2}^{k} d_i^{\binom{k}{i-1}}\]
	copies of the $k$-vertex complete $(k-1)$-complex in $G'$.
	Similarly, at least a $(1-\gamma)$-fraction of the $(k-2)$-tuples in $G[V_1' , \dots , V_{k-2}']$ is contained in
	\[\big(1\pm\gamma\big)|V_{k-1}'| \prod_{i=2}^{k-1} d_i^{\binom{k-2}{i-1}}\]
	copies of the $(k-1)$-vertex complete $(k-1)$-complex in $G'$ together with a vertex from $V_{k-1}'$.
\end{lemma}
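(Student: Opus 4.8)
The plan is to identify the ``degree'' of a typical $(k-1)$-tuple as a ratio of two clique counts supplied by Lemma~\ref{lem:DCL}, and then to upgrade this average statement to a ``most tuples'' statement by a second moment argument. First I would compute the target value. Since $|V_i'|\ge\alpha m$, Lemma~\ref{lem:DCL} (with parameter $\alpha$) gives that the number $M$ of copies of the complete $(k-1)$-complex on $k$ vertices in $G'$ is $(1\pm\gamma_0)\prod_{i=1}^{k}|V_i'|\prod_{i=2}^{k-1}d_i^{\binom{k}{i}}$, and the number $N$ of copies of the complete $(k-1)$-complex on $k-1$ vertices in $G[V_1',\dots,V_{k-1}']$ is $(1\pm\gamma_0)\prod_{i=1}^{k-1}|V_i'|\prod_{i=2}^{k-1}d_i^{\binom{k-1}{i}}$, where $\gamma_0$ can be made as small as desired by taking $\eps$ small. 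Writing $\deg(T)$ for the number of $v\in V_k'$ with $T\cup\{v\}$ a copy of the complete $(k-1)$-complex on $k$ vertices (so $\sum_T\deg(T)=M$, the sum over $(k-1)$-edges $T$ of $G[V_1',\dots,V_{k-1}']$), Pascal's identity $\binom{k}{i}=\binom{k-1}{i}+\binom{k-1}{i-1}$ shows the average $\bar d:=M/N$ equals $(1\pm 3\gamma_0)|V_k'|\prod_{i=2}^{k-1}d_i^{\binom{k-1}{i-1}}$, which is exactly (up to the $\gamma_0$ error) the value in the statement; note also that $\bar d\ge d_0^{O(1)}\eps^{-1}$ is large.

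Second, I would prove concentration of $\deg(\cdot)$ around $\bar d$ via Chebyshev, i.e.\ bound $\sum_T\deg(T)^2$. This sum counts triples $(T,v,v')$ with $T$ a $(k-1)$-edge and $v,v'\in V_k'$ such that $T\cup\{v\}$ and $T\cup\{v'\}$ are both copies of the complete $(k-1)$-complex on $k$ vertices; the diagonal $v=v'$ contributes $M$, and the off-diagonal part counts ``double cones'': vertices $t_1,\dots,t_{k-1}$ spanning $V_1',\dots,V_{k-1}'$ forming a complete $(k-1)$-complex, together with $v,v'\in V_k'$ each completing $\{t_1,\dots,t_{k-1}\}$ to a complete $(k-1)$-complex on $k$ vertices, with no condition on edges meeting both $v$ and $v'$. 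A counting lemma for this configuration (see the last paragraph) gives that the number of double cones is $(1\pm\gamma_1)N\bar d^{\,2}$ for a small constant $\gamma_1$; since $\bar d\to\infty$ the diagonal term $M=N\bar d$ is negligible, so $\sum_T\deg(T)^2=(1\pm 2\gamma_1)N\bar d^{\,2}$ and hence $\tfrac1N\sum_T\big(\deg(T)-\bar d\big)^2\le 3\gamma_1\bar d^{\,2}$. Chebyshev then yields that all but at most $\tfrac{3\gamma_1}{\gamma^2}N$ of the $(k-1)$-edges $T$ have $\deg(T)=(1\pm\gamma)\bar d$; choosing $\gamma_0,\gamma_1$ small in terms of $\gamma$ gives the first conclusion.

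The ``furthermore'' follows at once: the at most $\gamma N$ bad $(k-1)$-edges (those not satisfying the first conclusion) satisfy $\sum_{T\ \mathrm{bad}}\deg(T)=M-\sum_{T\ \mathrm{good}}\deg(T)\le M-(1-\gamma)N\cdot(1-\gamma)\bar d$, and since $M=N\bar d$ this is at most $(2\gamma+O(\gamma_0))M\le\tfrac52\gamma M$, i.e.\ at most $\tfrac52\gamma$ times the total number of copies of the complete $(k-1)$-complex on $k$ vertices. Finally, the statement about $(k-2)$-tuples is proved by the identical two steps, with $k-1$ in place of $k$, $V_{k-1}$ in place of $V_k$, and the lower $(k-2)$-complex $G^{(\le k-2)}$ (which is $(\mathbf{d}',\eps,\eps,1)$-regular with $\mathbf{d}'=(d_{k-2},\dots,d_2)$, by Lemma~\ref{lem:RRL} together with the hypothesis on $G$) in place of $G$ in the relevant clique counts; another application of Pascal's identity shows the resulting average equals $|V_{k-1}'|\prod_{i=2}^{k-1}d_i^{\binom{k-2}{i-1}}$.

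The main obstacle is the double-cone count used in the second moment. The double cone is not a complete complex, so Lemma~\ref{lem:DCL} does not apply verbatim; in the natural auxiliary $(k+1)$-partite complex one takes two copies $A,B$ of $V_k'$, but the pair of parts $\{A,B\}$ carries no edges (as $A,B\subseteq V_k$), so that complex has a density-$0$ layer and is not $(\mathbf{d},\eps,\eps,1)$-regular for any single $\mathbf{d}$. What is needed is a counting lemma that counts a prescribed down-closed configuration using only the layers that configuration actually meets (and allowing different densities on different part-subsets); this is a routine strengthening of the proof of Lemma~\ref{lem:DCL}, equivalently obtained by building the double cone up one vertex at a time, at each step using regularity restricted to the polyads relevant to the new edges together with Lemma~\ref{lem:RRL} to keep the working subsets large. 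This is the only point in the proof where more than a black-box use of Lemmas~\ref{lem:DCL} and~\ref{lem:RRL} is required.
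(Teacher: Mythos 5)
Your first‐moment/second‐moment/Chebyshev scheme for the main assertion is exactly the paper's argument, including the observation that the double cone is not a complete complex and so needs a counting lemma beyond a black-box Lemma~\ref{lem:DCL}. One remark: your worry about a ``density-$0$ layer'' comes from setting up the double cone as a $(k+1)$-partite complex; the paper instead invokes Lemma~\ref{lem:DCL-general}, which already allows $H$ to be a $k$-partite complex with two vertices in one class (here both cone apices live in the class of $V_k$), so no new degenerate layer appears — the only extra work is the same random-restriction trick used to pass from equal to unequal part sizes.

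There are, however, two genuine gaps in the rest. First, for the ``furthermore'' clause you have mis-read the statement: the set in question is the $\gamma$-fraction of $(k-1)$-edges that lie in the \emph{most} copies (i.e.\ the highest-degree tuples; the paper's proof fixes $S$ to be exactly this set), not the set of ``bad'' tuples failing the first conclusion. Your bound $\sum_{T\,\mathrm{bad}}\deg(T)\le M-(1-\gamma)^2N\bar d$ is correct for the bad tuples, but the top-$\gamma$-fraction $S$ also contains good tuples of degree close to $(1+\gamma)\bar d$, and the naive combination $\sum_{T\in S}\deg(T)\le \sum_{B_+}\deg(T)+|S\setminus B_+|(1+\gamma)\bar d$ only gives about $3\gamma M$, not $\tfrac52\gamma M$. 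To obtain the stated constant the paper uses a small convexity (Jensen) argument: if $\mathbb{E}[X_e\mid e\in S]\ge 2\mathbb{E}[X_e]$, then $\mathbb{E}[X_e^2]\ge(1+\gamma)\mathbb{E}[X_e]^2$, contradicting the second-moment bound; this gives $\sum_{e\in S}X_e<2|S|\mathbb{E}[X_e]$ directly for the top set. Second, your route to the $(k-2)$-tuple statement, running the same two steps inside $G^{(\le k-2)}$, never sees the $(k-1)$-edges and therefore cannot produce the factor $d_{k-1}^{\binom{k-2}{k-2}}=d_{k-1}$ that appears in the target count $(1\pm\gamma)|V_{k-1}'|\prod_{i=2}^{k-1}d_i^{\binom{k-2}{i-1}}$; Pascal's identity alone will not supply it. The paper instead first applies the degree statement to the $(k-2)$-complex restricted to $V_1,\dots,V_{k-1}$ and then passes a $(d_{k-1}\pm\eps)$-fraction of the extensions through the $(k-1)$-layer using $(d_{k-1},\eps,1)$-regularity, which is where the extra $d_{k-1}$ comes from.
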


The second looks a bit more complicated, but we only need the variant with all parts of the same size.
For $k=3$ this implies that if many vertices have high degree into two different $2$-cells, then they will also support many triangles.

\begin{lemma}[Minimum Degree Lemma]
	\label{lem:MDL}
	For all integers $k\ge 3$ and constants $\gamma,\delta, d_0>0$, there exists $\eps>0$ such that the following holds.
	Let $\mathbf{d}=(d_{k-1},\dots,d_2)$ be a vector of real numbers with $d_i\ge d_0$ for each $2\le i\le k-1$, and let $G$ be a $k$-partite $(k-1)$-complex with parts of size $m\ge\eps^{-1}$ which is $(\mathbf{d},\eps,\eps,1)$-regular.
	Moreover, with integers $a,b,c$ such that $a+b-c=k$, let $A$ be part of an $a$-cell, $B$ be part of a $b$-cell, and $C$ be part of a $c$-cell such that the tuples from $C$ have degree $(\delta\pm\gamma) m^{a-c} \prod_{i=2}^{k-1}d_i^{\binom{a}{i}-\binom{c}{i}}$ into $A$. Suppose that every edge of $B$ contains an edge of $C$, and that $|B|\ge\gamma m^{b} \prod_{i=2}^{k-1}d_i^{\binom{b}{i}}$.
	Then there are
	\[ |B| (\delta\pm2\gamma)m^{a-c} \prod_{i=2}^{k-1}d_i^{\binom{k}{i}-\binom{b}{i}}\]
	copies of the $k$-vertex complete $(k-1)$-complex in $G$ supported by $A$ and $B$.
\end{lemma}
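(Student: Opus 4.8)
The plan is to count copies of the complete $(k-1)$-complex $K$ on $k$ vertices by extending edges of $C$ into $A$, and then counting how many such extensions are compatible with an edge of $B$. Fix constants; we will choose $\eps$ small enough to invoke the Dense Counting Lemma (Lemma \ref{lem:DCL}) and the Regular Restriction Lemma (Lemma \ref{lem:RRL}) with appropriate parameters, in particular small enough that $\sqrt{\eps}$ is still much smaller than $\min_i d_i$, and small enough that the error terms below can all be absorbed into $\gamma$. Write $X:=\mathrm{vertices}(A)$, $Y:=\mathrm{vertices}(B)$, $Z:=\mathrm{vertices}(C)$ for the index sets, so $|X|=a$, $|Y|=b$, $|Z|=c$, $Z\subseteq X\cap Y$ (since every edge of $B$ contains an edge of $C$ and $C$ sits below both — here one checks $X\cup Y=[k]$ and $X\cap Y=Z$ from $a+b-c=k$ together with the family-of-partitions structure), and $X\cup Y$ is the full index set of $k$ clusters. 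A copy of $K$ supported by $A$ and $B$ is then determined by choosing a $c$-tuple in an edge of $C$, an $(a-c)$-tuple extending it to an edge of $A$, and a $(b-c)$-tuple extending it to an edge of $B$, such that all the cross-edges are present — i.e.\ a copy of $K$ in the induced $(k-1)$-complex on the clusters $V_{X\cup Y}$, with the $Z$-coordinates ranging only over $\mathrm{vertices}(C)$.

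The key steps, in order. First, restrict: by Lemma \ref{lem:RRL}, after passing to the subcomplex on the actual cells $A$, $B$, $C$ (which have size at least $\gamma$ times the generic cell sizes $m^{a}\prod d_i^{\binom ai}$ etc., using the hypothesis $|B|\ge \gamma m^b\prod d_i^{\binom bi}$ and the analogous automatic bounds for $A$ and $C$ coming from equitability/regularity), the induced complex is still $(\mathbf d,\sqrt\eps,1)$-regular with the same density vector. Second, count the "$A$-side" extensions: for each $c$-tuple $\tpl z$ lying in an edge of $C$, the number of ways to complete it to a copy of the complete $(a-1)$-complex-extended-structure inside $A$ — equivalently the number of copies of $K^{(a)}$ (the complete $(a-1)$-complex on $a$ vertices) through $\tpl z$ — is, by the hypothesis on degrees of $C$-tuples into $A$, exactly $(\delta\pm\gamma)m^{a-c}\prod_{i=2}^{k-1}d_i^{\binom ai-\binom ci}$; this is given. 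Third, count the "$B$-side": by the Dense Counting Lemma applied to the $b$-cluster complex restricted to $B$ and to the relevant subcell, almost every $c$-tuple in $C$ extends to $(1\pm\gamma)m^{b-c}\prod d_i^{\binom bi-\binom ci}$ copies of $K^{(b)}$ inside $B$; discard the $\gamma$-fraction of bad $c$-tuples (they contribute a lower-order term, absorbed by replacing $\gamma$ with $2\gamma$). Fourth, and crucially, combine the two sides: given an edge of $C$ on $\tpl z$, an $A$-extension $\tpl a$ and a $B$-extension $\tpl b$, the full set $\tpl z\cup\tpl a\cup\tpl b$ of $k$ vertices is a copy of $K$ in $G$ iff the remaining cross-edges between the $A$-only part and the $B$-only part are present — but those are precisely governed by regularity of $G$ on the corresponding polyad, so by another application of the Dense Counting Lemma (now on all $k$ clusters, with the $A$-only and $B$-only parts restricted to $\tpl a$ and $\tpl b$ being single vertices or cells of the right size) the number of completions is the "expected" one. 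Fifth, multiply through: summing over $c$-tuples of $C$, the count is $|C|\cdot(\delta\pm\gamma)m^{a-c}\prod d_i^{\binom ai-\binom ci}\cdot$ (the $B$-side and cross factors), and using $|C|=(1\pm\eps)m^c\prod d_i^{\binom ci}$ and $a+b-c=k$ the density exponents telescope: $\binom ci+\binom ai-\binom ci+\binom bi-\binom ci + (\text{cross terms }\binom ki-\binom ai-\binom bi+\binom ci)=\binom ki$, and re-expressing $|C|\cdot(\text{stuff})$ in terms of $|B|$ via $|B|=(1\pm\eps)m^b\prod d_i^{\binom bi}$ gives the claimed $|B|(\delta\pm2\gamma)m^{a-c}\prod_{i=2}^{k-1}d_i^{\binom ki-\binom bi}$.

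The main obstacle I expect is the fourth step — bookkeeping the "cross-edges" correctly and justifying that regularity of $G$ (more precisely of the reduced-complex edges / polyads) really does let us count completions of a partially-specified tuple, when the specified part lies in small sub-cells $\tpl a$, $\tpl b$ rather than full cells. This is where one must be careful that the relevant induced sub-complexes still have size at least a constant fraction of the generic size (so that Lemma \ref{lem:RRL} and Lemma \ref{lem:DCL} apply with the $\alpha$ we fixed), and that all the index-set arithmetic $a+b-c=k$, $X\cup Y=[k]$, $X\cap Y=Z$ is consistent with the fact that $C$ sits in the boundary of both $A$ and $B$. The density-exponent telescoping in step five is routine once the combinatorial picture is pinned down, but getting the binomial identities exactly right (and checking the $d_k$ versus $d_{k-1}$ indexing, since the statement's product runs to $k-1$ while the hypothesis on $C$'s degrees involves $d_i$ for $i$ up to $k-1$ as well) requires care. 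Everything else is a direct application of the already-stated dense-regularity machinery with $\eps$ chosen at the end to beat all accumulated $\gamma$'s.
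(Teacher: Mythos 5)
The approach you propose — fix a $c$-tuple $\tpl z\in C$, count $A$- and $B$-extensions separately, then combine by counting cross-edges — is genuinely different from the paper's, and the step you yourself flag as ``the main obstacle'' is in fact a real gap that your stated tools do not resolve. The Dense Counting Lemma and the Regular Restriction Lemma count copies of $K_k^{(k-1)}$ in complexes induced by \emph{vertex restrictions} $V_i'\subseteq V_i$; but the objects you need to handle in step four are the set of $A$-extensions of a fixed $\tpl z$ and the set of $B$-extensions of that same $\tpl z$, which are arbitrary subsets of $(a-c)$-tuples and $(b-c)$-tuples, not sets cut out by restricting each cluster. ``Restricting the $A$-only and $B$-only parts to $\tpl a$ and $\tpl b$ being single vertices'' is not a legal input to Lemma~\ref{lem:DCL} (a single vertex is not an $\alpha$-fraction of a cluster), and once $\tpl z,\tpl a,\tpl b$ are all specified there is nothing left to count: the cross-edges are either there or not. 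To make your plan rigorous you would need a counting statement valid when one side of a regular pair is restricted to an arbitrary subgraph of the lower level, applied iteratively — and once you unwind that, you arrive essentially at the paper's argument. For the same reason, your step one is misstated: Lemma~\ref{lem:RRL} restricts vertex classes, whereas $A$, $B$, $C$ are subsets of $a$-, $b$-, $c$-cells (sets of edges, not of vertices), so no application of the Regular Restriction Lemma is available or needed there.

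The paper avoids the cross-edge problem entirely by a different decomposition. It defines $F_0$ to be the $k$-vertex $(k-1)$-complex whose edges are exactly those forced by having one $a$-edge (in $A$), one $b$-edge (in $B$) overlapping it on a $c$-edge, and their down-closures — that is, $F_0$ has no cross-edges at all. The number of copies of $F_0$ supported by $A$ and $B$ is then read off directly from the degree hypothesis as $|B|(\delta\pm\gamma)m^{a-c}\prod_{i}d_i^{\binom{a}{i}-\binom{c}{i}}$, with no regularity needed. One then passes from $F_0$ to the complete $(k-1)$-complex $F_t$ by adding the missing (cross-)edges one at a time, $F_0\subsetneq F_1\subsetneq\cdots\subsetneq F_t$, and shows inductively that each addition multiplies the count by $(d_{|e|}\pm\eps)$ up to a controlled error. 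The crucial point that makes this work is precisely the point that blocks your plan: at each inductive step, for each base $(k-|e|)$-tuple $x$ one looks at the set of $|e|$-tuples completing $x$ to a copy of $F_{j-1}$, and as long as that set has size at least $\eps m^{|e|}$, the $(d_{|e|},\eps,1)$-regularity of the level-$|e|$ graph with respect to the level-$(|e|-1)$ graph applies to this arbitrary subgraph and gives the factor $d_{|e|}\pm\eps$. This one-edge-at-a-time use of raw $\eps$-regularity on arbitrary sub-$(|e|-1)$-graphs is exactly the tool you are missing; trying to do all the cross-edges in one shot via a vertex-restricted Dense Counting Lemma will not go through without it.
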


All of these lemmas are broadly standard, and hence we prove them in the appendix.

\subsection{Properties of the random hypergraph}

We use the following standard versions of the Chernoff bound.

\begin{theorem}
 Let $X$ be a random variable with distribution $\Bin(n,p)$. Then for any $\eps>0$ we have
 \[\Pr(X\ge pn+\eps n)\le\exp(-D(p+\eps||p)n)\quad \text{ and }\quad \Pr(X\le pn-\eps n)\le\exp(-D(p-\eps||p)n)\,,\]
 where $D(x||y)$ is the Kullback-Leibler divergence. From this it follows
 \[\Pr(|X-pn|>\eps pn)<2\exp\big(-\tfrac{\eps^2pn}{3}\big)\quad \text{ for any $\eps\le\tfrac32$}\]
 and if $t\ge 6pn$ we have
 \[Pr(X\ge pn+t)<\exp(-t)\,.\]
\end{theorem}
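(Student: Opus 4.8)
The plan is to prove the two sharp Kullback--Leibler tail bounds by the standard exponential moment method, and then to deduce the two more convenient corollaries by elementary one-variable estimates on $D$.

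For the first pair of inequalities, write $X=X_1+\dots+X_n$ with the $X_i$ independent $\Bin(1,p)$, and apply Markov's inequality to $e^{\lambda X}$. For $\lambda>0$ this gives $\Pr(X\ge qn)\le e^{-\lambda qn}\big(1-p+pe^{\lambda}\big)^{n}$ with $q:=p+\eps$; minimising the exponent over $\lambda>0$ (the optimum is $e^{\lambda}=\tfrac{q(1-p)}{p(1-q)}$, which is $\ge1$ exactly when $q\ge p$) produces precisely $\exp\big(-D(q\|p)n\big)$. The lower-tail bound is the identical computation with $\lambda<0$ and $q=p-\eps$. (If $q\notin[0,1]$ the probability in question is $0$, so there is nothing to prove.)

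To obtain the second inequality I bound each tail separately using the bounds just proved. For the upper tail, apply the first bound with $\eps$ replaced by $\eps p$; using $\ln(1-x)\le-x$ one gets $D\big(p(1+\eps)\|p\big)\ge p\big((1+\eps)\ln(1+\eps)-\eps\big)$, and the scalar inequality $(1+\eps)\ln(1+\eps)-\eps\ge\eps^{2}/3$ holds on $(0,\tfrac32]$ — check that $g(\eps):=(1+\eps)\ln(1+\eps)-\eps-\eps^{2}/3$ satisfies $g(0)=g'(0)=0$, do a sign analysis of $g''$, and verify numerically that $g(\tfrac32)>0$. For the lower tail: if $\eps>1$ the event is empty, if $\eps=1$ one has $(1-p)^{n}\le\exp(-pn/3)$ directly, and if $\eps<1$ the standard elementary estimate $D\big(p(1-\eps)\|p\big)\ge\eps^{2}p/2\ge\eps^{2}p/3$ finishes it. Adding the two tail bounds yields the factor $2$. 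For the last inequality I would not go through $D$ at all, but return to the moment bound above with the fixed choice $\lambda=2$:
\[\Pr(X\ge pn+t)\le e^{-2(pn+t)}\big(1-p+pe^{2}\big)^{n}\le\exp\big((e^{2}-3)pn-2t\big)\le e^{-t}\,,\]
where the middle step uses $1+x\le e^{x}$ and the final step uses $(e^{2}-3)pn<6pn\le t$.

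There is essentially no obstacle here; the whole statement is a standard exercise. The only point requiring a moment's care is that the estimate $D\big(p(1+\eps)\|p\big)\ge\eps^{2}p/3$ is nearly tight at $\eps=\tfrac32$, so one must keep the explicit check $g(\tfrac32)>0$ rather than substitute a cruder global bound; everything else is routine calculus together with the observation that out-of-range tail events are empty.
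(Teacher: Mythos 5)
The paper does not actually prove this theorem: it is quoted as a standard tool and no proof is given, so there is no paper argument to compare against. Your proof is correct and is the standard exponential-moment argument, with the sharp tails coming from optimising $\lambda$ and the two corollaries from elementary comparisons of the rate function. One small point worth flagging in the derivation of $D\bigl(p(1+\eps)\,\|\,p\bigr)\ge p\bigl((1+\eps)\ln(1+\eps)-\eps\bigr)$: the negative term is $(1-p(1+\eps))\ln\tfrac{1-p(1+\eps)}{1-p}$, and to get a lower bound one applies the elementary inequality in the form $\ln u\ge 1-1/u$ to $u=\tfrac{1-p(1+\eps)}{1-p}$ (equivalently, $\ln(1+y)\le y$ to $1/u$), rather than applying $\ln(1-x)\le -x$ directly to $u$, which would only give an upper bound on that term; these are of course the same inequality $\ln u\le u-1$ read in two ways, so this is a matter of bookkeeping rather than a gap. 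The rest — the convexity/sign analysis of $g$, the cutoff at $\eps=\tfrac32$, the lower-tail bound via $(1-\eps)\ln(1-\eps)+\eps\ge\eps^2/2$, and the crude $\lambda=2$ bound for the third inequality using $e^2-3<6$ — all check out.
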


\begin{lemma}\label{lem:upperreg}
 Given $\eta>0$, $k \in \mathbb{N}$ there exists $C$ such that if $p\ge\tfrac{C}{n}$, then $\Gamma=G^{(k)}(n,p)$, and all its subgraphs, are a.a.s.\ $(\eta,p)$-upper regular. 
\end{lemma}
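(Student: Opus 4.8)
The plan is as follows. First I would observe that it suffices to prove that $\Gamma=G^{(k)}(n,p)$ itself is a.a.s.\ $(\eta,p)$-upper regular: if $G$ is a subgraph of $\Gamma$ then $E(G)\subseteq E(\Gamma)$, so for every (not necessarily distinct) $E_1,\dots,E_k\in\binom{[n]}{k-1}$ we have $\bigl|E(G)\cap K_k(E_1,\dots,E_k)\bigr|\le\bigl|E(\Gamma)\cap K_k(E_1,\dots,E_k)\bigr|$, and hence the upper regularity of $\Gamma$ is inherited by every subgraph. So from now on I fix a tuple $E_1,\dots,E_k$ and analyse $X:=\bigl|E(\Gamma)\cap K_k(E_1,\dots,E_k)\bigr|$; if $K_k(E_1,\dots,E_k)=\emptyset$ then $X=0$ and the required inequality is trivial, so assume it is nonempty.

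The key point is that $K_k(E_1,\dots,E_k)$ is a fixed collection of $k$-subsets of $[n]$, of some size $m$ with $1\le m\le\binom{n}{k}$ depending only on the $E_i$ and not on the randomness of $\Gamma$; since each $k$-set is present in $\Gamma$ independently with probability $p$, we have $X\sim\Bin(m,p)$. Writing $t:=p\eta n^k$ for the deviation we must control, I would apply the Chernoff bounds stated just before the lemma. If $t\ge 6pm$ then $\Pr(X\ge pm+t)<\exp(-t)\le\exp(-\eta pn^k)\le\exp(-\eta Cn^{k-1})$, using $p\ge C/n$. If instead $t<6pm$, set $\eps:=t/(pm)<6$; when $\eps\le\tfrac32$ one gets $\Pr(X\ge pm+t)\le\Pr(|X-pm|>\eps pm)<2\exp(-\eps^2pm/3)=2\exp\bigl(-t^2/(3pm)\bigr)$, while when $\tfrac32<\eps<6$ one instead bounds $\Pr(X\ge pm+t)\le\Pr\bigl(|X-pm|>\tfrac32 pm\bigr)<2\exp(-3pm/4)$. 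In every case, using $pm\le p\binom{n}{k}\le pn^k/k!$ and $pm\ge t/6=\eta pn^k/6$ together with $p\ge C/n$, the bound simplifies to
\[\Pr\bigl(X\ge pm+p\eta n^k\bigr)\le 2\exp\bigl(-c(\eta,k)\,C\,n^{k-1}\bigr)\]
for some constant $c(\eta,k)>0$ depending only on $\eta$ and $k$.

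Finally I would take a union bound over all tuples $(E_1,\dots,E_k)$: there are at most $2^{k\binom{n}{k-1}}\le\exp\bigl(\tfrac{k\ln 2}{(k-1)!}n^{k-1}\bigr)$ of them, so the probability that $\Gamma$ fails to be $(\eta,p)$-upper regular is at most $2\exp\bigl(\bigl(\tfrac{k\ln 2}{(k-1)!}-c(\eta,k)C\bigr)n^{k-1}\bigr)$, which is $o(1)$ once $C=C(\eta,k)$ is chosen large enough that $c(\eta,k)C>\tfrac{k\ln 2}{(k-1)!}$. There is no serious obstacle here: the point is simply that for $p\ge C/n$ the slack $t=p\eta n^k$ in the definition of upper regularity has order $Cn^{k-1}$, the same polynomial order as the logarithm of the number of constraints in the union bound, so taking $C$ large wins. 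The only mild care needed is in the Chernoff step, since $t$ need not exceed $6pm$ and the relative deviation $\eps=t/(pm)$ can be as large as (just under) $6$, so no single one of the quoted Chernoff inequalities applies uniformly and the short case split above is required.
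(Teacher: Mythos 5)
Your proof is correct and follows essentially the same approach as the paper: reduce to $\Gamma$ itself, fix $E_1,\dots,E_k$, apply Chernoff bounds to the binomial variable $X$ (splitting on whether $|K_k(E_1,\dots,E_k)|$ is small or large relative to $\eta n^k$), and finish with a union bound over the $2^{k\binom{n}{k-1}}$ choices of $(E_1,\dots,E_k)$. The only cosmetic difference is in the large-$m$ case: the paper notes that $p\eta n^k\ge \eta p|K_k|$ always holds, so failure implies $X>(1+\eta)p|K_k|$ and a single Chernoff application with relative error $\eta$ suffices, whereas you instead case-split on the ratio $t/(pm)$; both work.
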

\begin{proof}
	Observe that if $\Gamma=G^{(k)}(n,p)$ is $(\eta,p)$-upper-regular, then automatically all its subgraphs are also. We assume without loss of generality that $\eta<1$, and set $C=18k\eta^{-3}$.
	
	Given any $E_1,\dots,E_k\subset \binom{[n]}{k}$, we aim to estimate the probability that $E_1,\dots,E_k$ witness the failure of $G^{(k)}(n,p)$ to be $(\eta,p)$-upper regular. The expected number of edges of $G^{(k)}(n,p)$ which appear on the sets $K_k(E_1,\dots,E_k)$ is $p\big|K_k(E_1,\dots,E_k)\big|$, and the distribution is binomial, so we may apply the Chernoff bound.
	
	If $\big|K_k(E_1,\dots,E_k)\big|\le\tfrac16\eta n^k$, then failure to be $(\eta,p)$-upper regular means that the number of $k$-edges appearing on $K_k(E_1,\dots,E_k)$ is at least seven times the expected number; by the Chernoff bound the probability of this event is less than $\exp(-p\eta n^k)<\exp(-kn^{k-1})$.
	
	If $\big|K_k(E_1,\dots,E_k)\big|\ge\tfrac16\eta n^k$, then the probability that more than $(1+\eta)p\big|K_k(E_1,\dots,E_k)\big|$ edges appear is at most
	\[\exp\big(-\frac{\eta^2p\big|K_k(E_1,\dots,E_k)\big|}{3}\big)\le\exp\big(-\frac{\eta^2\tfrac{C}{n}\eta n^k}{18}\big)=\exp\big(-kn^{k-1}\big)\,.\]
	Since there are at most $2^{\binom{n}{k-1}}$ choices for each $E_i$, by the union bound the probability that $G^{(k)}(n,p)$ is not $(\eta,p)$-upper regular is at most
	\[2^{k\binom{n}{k-1}}\exp\big(-kn^{k-1}\big)\]
	which tends to zero as $n$ tends to infinity.
\end{proof}

Given a set $S\subseteq V(\Gamma)$, we say a $(k-1)$-set $x$ is \emph{$(\eps,p,1)$-good for $S$} if we have
\[\big|\big\{s\in S:x\cup\{s\}\in E(\Gamma)\big\}\big|=p|S|\pm\eps p n\,.\]
For each $\ell\ge 2$, we say inductively that a $(k-1)$-set $x$ is \emph{$(\eps,p,\ell)$-good for $S$} if it is $(\eps,p,\ell-1)$-good for $S$ and there are at most $\eps p n$ edges of $\Gamma$ which contain $x$ and in addition contain a set which is not $(\eps,p,\ell-1)$-good for $S$.

\begin{lemma}
	\label{lem:goodtuples}
 Given $\eps>0$, $k \in \mathbb{N}$ there exists $C$ such that if $p\ge\tfrac{C\log n}{n}$, then $\Gamma=G^{(k)}(n,p)$ a.a.s.\ has the following property. For each set $S\subset V(\Gamma)$, and each $1\le \ell\le \tfrac1C \log \log n$, there are at most $o(n)$ $(k-1)$-sets in $V(\Gamma)$ outside $S$ which are not $(\eps,p,\ell)$-good for $S$.
\end{lemma}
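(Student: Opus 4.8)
The plan is to fix a set $S$ and a level $\ell$ and control, by a union bound over all $S$, the number of $(k-1)$-sets outside $S$ that are not $(\eps,p,\ell)$-good for $S$; the key technical point is that "bad" $(k-1)$-sets are rare enough that the probability of seeing more than $o(n)$ of them is doubly-exponentially small in $n$, which beats the $2^n$ choices of $S$ (and the $O(\log\log n)$ choices of $\ell$). We proceed by induction on $\ell$. For the base case $\ell=1$, a $(k-1)$-set $x$ disjoint from $S$ has $\big|\{s\in S: x\cup\{s\}\in E(\Gamma)\}\big|\sim\Bin(|S|,p)$, so by the Chernoff bound the probability it fails to be $(\eps,p,1)$-good for $S$ is at most $2\exp(-\eps^2 p|S|/3)$ once $|S|$ is not tiny (and if $|S|\le\eps n$ the condition holds trivially since the count is at most $|S|\le\eps n\le\eps pn/p$... more carefully: if $|S|$ is small the deviation $\eps pn$ dominates $p|S|$ so $x$ is automatically good). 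So for a fixed $S$, the number of $(k-1)$-sets failing to be $(\eps,p,1)$-good is a sum of (essentially) independent indicator variables — independence across different $x$ requires a little care since the edges $x\cup\{s\}$ for distinct $x$ and the same $s$ overlap, but any fixed edge $e$ of $\Gamma$ lies in at most $k$ sets of the form $x\cup\{s\}$ with $s\in e$, so by revealing edges one can bound the failure count by a sum of independent Bernoullis with the stated tiny expectation; its expectation is $o(n)$ and a second Chernoff bound gives that it exceeds, say, $\delta n$ with probability at most $\exp(-c\delta n)\le\exp(-c'\delta n)$. Since this beats $2^{\binom{n}{k-1}}\cdot\tfrac1C\log\log n$ — wait, it does not beat $2^{\binom{n}{k-1}}$ — so this naive bound fails and the real argument must be different: the union bound over $S$ needs the per-$S$ probability to be at most $\exp(-n^{k-1})$-ish, not just $\exp(-n)$.

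The fix is to not union-bound over the deviation event directly but to first expose $\Gamma$ and then argue deterministically about all $S$ at once. Concretely: reveal $\Gamma$. A.a.s., by Lemma~\ref{lem:upperreg} and standard codegree concentration, $\Gamma$ has the property that \emph{every} $(k-2)$-set has the "right" number of extensions, every $(k-1)$-set is in $(1\pm\eps/2)pn$ edges, and more generally every pair of a $(k-1)$-set $x$ and a set $S$ behaves correctly — but we cannot afford all $S$. Instead, observe that whether $x$ is $(\eps,p,1)$-good for $S$ depends on $S$ only through the edge-neighbourhood of $x$, i.e. through $N(x):=\{s: x\cup\{s\}\in E(\Gamma)\}$; and $\big||N(x)\cap S| - p|S|\big|$ is controlled if $\Gamma$ restricted to $(k-1)$-sets is a good \emph{pseudorandom} object. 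The right statement: a.a.s. for every $(k-1)$-set $x$ and every $S$, $|N(x)\cap S| = p|S|\pm \tfrac{\eps}{2}pn + (\text{error depending on } x)$, where the error is $0$ for all but $o(n)$ of the $x$. This is exactly a statement about the bipartite "inclusion" graph between $(k-1)$-sets and vertices being jumbled, which follows from a Chernoff bound over $x$ only (there are $\binom{n}{k-1}\le 2^{n^{k-2}}$ — still too many... hmm).

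The correct approach, and the one I expect the paper to take: do \textbf{not} union bound over arbitrary $S$; instead prove the statement for the \textbf{single} "worst" configuration by a direct first-moment/concentration argument where the bad set of $(k-1)$-tuples is revealed, using that $\Gamma$ is fixed. Precisely, reveal $\Gamma$; it a.a.s. satisfies: (i) $(\eta,p)$-upper regularity for suitable $\eta\ll\eps$; (ii) every $(k-1)$-set is in $(1\pm\eps/100)pn$ edges and the bipartite incidence structure is "$(\eta,p)$-jumbled" so that $|N(x)\cap S|=p|S|\pm\eps pn/2$ for all $x$ and all $S$ \emph{with at most $o(n)$ exceptional $x$ per $S$} — this last piece is the crux. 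Given (i)–(ii), the inductive step is purely deterministic counting: if $x$ fails to be $(\eps,p,\ell)$-good, then either it fails at level $\ell-1$ (only $o(n)$ such $x$, by induction) or more than $\eps pn$ edges through $x$ hit a set that is not $(\eps,p,\ell-1)$-good; but there are only $o(n)$ vertices $v$ such that some $(k-1)$-set containing $v$... one controls this via a double-counting: the total number of (edge, bad-$(k-2)$-face) incidences is small by the $\ell-1$ bound plus upper-regularity, so only $o(n)$ tuples $x$ can be "heavy". \textbf{The main obstacle} is precisely establishing the level-$1$ jumbledness uniformly over all $2^n$ choices of $S$: this cannot come from a union bound, and must instead be deduced from upper-regularity — one shows that if many $x$ were bad for some $S$, one could take $E_1=\cdots=E_{k-1}=\{$bad $x$'s$\}$ and $E_k$ encoding $S$ to violate $(\eta,p)$-upper regularity (or its lower counterpart, which for $\Gamma=G^{(k)}(n,p)$ also holds a.a.s.). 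Making this translation precise — handling the rainbow condition, the fact that upper-regularity is one-sided, and that we need it for all $\ell$ simultaneously — is where the real work lies; everything after level $1$ is routine double-counting against the already-established upper-regularity.
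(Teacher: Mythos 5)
Your proposal abandons the direct Chernoff-plus-union-bound route because of a miscount, and the upper-regularity replacement you sketch is the wrong road. The key error is the sentence ``it does not beat $2^{\binom{n}{k-1}}$'': the set $S$ you are union-bounding over is a subset of the \emph{vertex set} $V(\Gamma)$, so there are only $2^n$ choices of $S$, not $2^{\binom{n}{k-1}}$. A per-$S$ failure probability of $\exp(-cn)$ therefore \emph{does} suffice, and the direct approach goes through. Indeed one can do even better: the Chernoff bound on a sum of $\binom{n}{k-1}$ independent Bernoulli indicators with success probability at most $n^{-k}$ (so expectation $O(n^{-1})$) gives that the probability of seeing $t$ or more bad $(k-1)$-sets is roughly $\exp(-\Omega(t\log n))$; taking $t=4n/\log n$ yields $\exp(-\Omega(n))$, which comfortably beats $2^n$.

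A second, smaller gap is the independence you hedge on. You write that the events for distinct $x$ overlap ``since the edges $x\cup\{s\}$ for distinct $x$ and the same $s$ overlap'' and propose a patch via a bounded-degree argument. In fact no patch is needed: if $x\neq x'$ are both disjoint from $S$, an equality $x\cup\{s\}=x'\cup\{s'\}$ with $s,s'\in S$ forces $e:=x\cup\{s\}$ to satisfy $e\cap S=\{s\}$ (from $x\cap S=\emptyset$) and $e\cap S=\{s'\}$ (from $x'\cap S=\emptyset$), hence $s=s'$ and $x=x'$. So the relevant edge sets for distinct $x$ are genuinely disjoint, the failure indicators are independent, and you can apply Chernoff directly to their sum. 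With the level-$1$ bound in hand, your description of the level-$\ell$ step as a deterministic propagation (here using only a uniform bound of, say, $2pn$ on all codegrees rather than full upper-regularity) is correct and matches the paper: each bad $(k-1)$-set at level $\ell-1$ can ``infect'' only $O(pn)$ neighbouring $(k-1)$-sets, and a set must absorb $\eps pn$ units of infection to become bad at level $\ell$, so the bad count grows by at most a constant factor $K$ per level, which stays $o(n)$ for $\ell\le\tfrac1C\log\log n$.

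The jumbledness/upper-regularity detour you propose as a replacement is not only unnecessary but, as you yourself note, not actually carried through: you do not prove the ``$|N(x)\cap S|=p|S|\pm\eps pn/2$ for all $x$ with $o(n)$ exceptions, for all $S$ simultaneously'' statement, and the translation from rainbow $(\eta,p)$-upper regularity to this codegree statement would require nontrivial additional work (upper-regularity is one-sided, and the exceptional tuples would need to be controlled uniformly over $S$ and $\ell$). The fix is simply to restore the union bound with the correct count $2^n$.
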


\begin{proof}
 Given $S$, we first estimate the number of $(k-1)$-sets $x$ which are outside $S$ and not $(\eps,p,1)$-good for $S$.
 
 If $|S|<\tfrac16\eps n$, then failure of a given $x$ to be $(\eps,p,1)$-good for $S$ means $x$ forms an edge with at least $7p|S|$ vertices in $S$, the probability of which is by the Chernoff bound at most $\exp(-\eps p n)$, which for large enough $C$ is smaller than $n^{-k}$. If on the other hand $|S|\ge\tfrac16pn$, then the probability that $x$ does not form an edge with $(1\pm\eps)p|S|$ vertices of $S$ is at most $2\exp\big(\tfrac{-\eps^2 p|S|}{3}\big)<n^{-k}$ for large enough $C$. We see that in either case, the probability that $x$ is not $(\eps,p,1)$-good for $S$ is at most $n^{-k}$. Now if $x$ and $x'$ are two different $(k-1)$-sets outside $S$, then the events of $x$ and of $x'$ being not $(\eps,p,1)$-good for $S$ are independent, so again using the Chernoff bound we can estimate the likelihood of many sets being bad for $S$. The expected number of bad sets for $S$ is at most $n^{k-1}\cdot n^{-k}=n^{-1}$. 
 Therefore, for any $t \ge 1$, we can bound the probability that there are $t$ or more bad $(k-1)$-sets for $S$ by
\[ \exp\left(-D(n^{-k}+tn^{1-k} || n^{-k} ) n^{k-1} \right)  \le \exp\left(-\frac{t \log n}{2}  \right)\, . \]
 In particular, taking $t=4n/\log n$ and using the union bound, the probability that there exists a set $S$ for which more than $4n/\log n$ $(k-1)$-sets are not $(\eps,p,1)$-good is at most $2^{-n}$. Suppose that $\Gamma$ is such that this good event occurs, and in addition that every $(k-1)$-set of vertices of $\Gamma$ is contained in at most $2pn$ edges of $\Gamma$.
 
 Let $K=2k\eps^{-1}$. Now given $S$ and $\ell\ge 1$, we claim that the number of $(k-1)$-sets outside $S$ which are not $(\eps,p,\ell)$-good for $S$ is at most $4n\cdot K^{\ell-1}/\log n$. We prove this by induction on $\ell$; the base case $\ell=1$ is the assumption on $\Gamma$. Suppose $\ell\ge 2$, and that the number of $(k-1)$-sets outside $S$ which are not $(\eps,p,\ell-1)$-good for $S$ is at most $4n\cdot K^{\ell-2}/\log n$. For each $(k-1)$-set $x$ outside $S$ which is not $(\eps,p,\ell-1)$-good for $S$, we assign to each $(k-1)$-set $y$ such that $x\cup y$ is an edge of $\Gamma$ one unit of badness. Observe that the total number of units of badness assigned is at most $(k-1)\cdot 2pn\cdot 4n\cdot K^{\ell-2}/\log n$. On the other hand, a set $y$ which is $(\eps,p,\ell-1)$-good for $S$ can only fail to be $(\eps,p,\ell)$-good for $S$ if it is assigned at least $\eps p n$ units of badness. It follows that the total number of such sets is at most $2(k-1)\eps^{-1} \cdot 4n K^{\ell-2}/\log n$, and so the number of $(k-1)$-sets outside $S$ which are not $(\eps,p,\ell)$-good for $S$ is at most
 \[2(k-1)\eps^{-1} \cdot 4n K^{\ell-2}/\log n+4n\cdot K^{\ell-2}/\log n\le 4n\cdot K^{\ell-1}/\log n\,,\]
 as desired. In particular, this formula is in $o(n)$ for all $1\le\ell\le\tfrac1C \log\log n$ with $C$ large enough.
\end{proof}

For a given $(k-1)$-tuple, we will find many paths starting from there.
To get expansion we need to ensure that they have many different end-tuples.

\begin{lemma}
	\label{lem:goodexpansion}
	For $\gamma>0$, $k \geq 3$, any fixed integer $\ell > \frac{k-1}{\gamma}+k-1$, and any $\mu>0$ a.a.s.\ in~$\Gamma = G^{(k)}(n,p)$ with $p=n^{-1+\gamma}$ the following holds.
	For any $(k-1)$-tuple $\tpl{x}$ in $V(\Gamma)$ and a set $\mathcal{P}$ of at least $(\mu p n)^\ell$ tight paths in $\Gamma$ with $\ell+(k-1)$ vertices and rooted at $\tpl{x}$, the number of end $(k-1)$-tuples of the paths in $\mathcal{P}$ is at least $\frac{\mu^{2 \ell}}{8(2 \ell)!} n^{k-1}$.
	Moreover, when $(k-1) | \ell$, the same holds for spike paths rooted at $\tpl{x}$.
\end{lemma}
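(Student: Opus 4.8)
The plan is to reduce, via Cauchy--Schwarz, to a uniform upper bound on the number of tight $\ell$-paths joining two fixed ordered $(k-1)$-tuples, and to prove that bound by polynomial concentration. Write $\Gamma=G^{(k)}(n,p)$. For ordered $(k-1)$-tuples $\tpl x,\tpl y$ of $V(\Gamma)$, let $N_{\tpl x,\tpl y}$ be the number of tight paths in $\Gamma$ on $\ell+k-1$ vertices whose first $k-1$ vertices are $\tpl x$ in order and whose last $k-1$ vertices are $\tpl y$ in order (so $N_{\tpl x,\tpl y}=0$ unless $\tpl x\cap\tpl y=\emptyset$; note $\ell>\tfrac{k-1}{\gamma}+k-1\ge k-1$ forces $\ell\ge k$, so the two ends of such a path are disjoint), and let $M_{\tpl x}=\sum_{\tpl y}N_{\tpl x,\tpl y}$ be the total number of tight $\ell$-paths rooted at $\tpl x$. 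Given $\tpl x$ and a family $\mathcal P$ of at least $(\mu pn)^\ell$ such paths, group the paths of $\mathcal P$ by their end $(k-1)$-tuple; writing $T$ for the number of distinct end-tuples occurring, Cauchy--Schwarz together with the fact that $\mathcal P$ has at most $N_{\tpl x,\tpl y}$ paths with end-tuple $\tpl y$ gives
\[
 T\ \ge\ \frac{|\mathcal P|^2}{\sum_{\tpl y}N_{\tpl x,\tpl y}^2}\ \ge\ \frac{(\mu pn)^{2\ell}}{M_{\tpl x}\cdot\max_{\tpl y}N_{\tpl x,\tpl y}}\,,
\]
using $\sum_{\tpl y}N_{\tpl x,\tpl y}^2\le M_{\tpl x}\max_{\tpl y}N_{\tpl x,\tpl y}$. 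It therefore suffices to show that a.a.s.\ $M_{\tpl x}\le(2pn)^\ell$ for every $\tpl x$ and $N_{\tpl x,\tpl y}\le 2p^\ell n^{\ell-(k-1)}$ for every $\tpl x,\tpl y$, for then
\[
 T\ \ge\ \frac{(\mu pn)^{2\ell}}{(2pn)^\ell\cdot 2p^\ell n^{\ell-(k-1)}}\ =\ \frac{\mu^{2\ell}n^{k-1}}{2^{\ell+1}}\ \ge\ \frac{\mu^{2\ell}}{8(2\ell)!}\,n^{k-1}\,.
\]

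For the bound on $M_{\tpl x}$, observe that a.a.s.\ $\Delta_{k-1}(\Gamma)\le 2pn$: each $(k-1)$-set has expected degree $(1+o(1))pn$ with $pn=n^{\gamma}\gg\log n$, so this follows from the Chernoff bound and a union bound over the $n^{k-1}$ many $(k-1)$-sets, and on this event any tight $\ell$-path rooted at $\tpl x$ is built by adding $\ell$ vertices one at a time, each from the at most $2pn$-element neighbourhood of the current final $(k-1)$-set. For the bound on $N_{\tpl x,\tpl y}$, by symmetry $\mathbb{E}[N_{\tpl x,\tpl y}]=(1+o(1))p^\ell n^{\ell-(k-1)}=(1+o(1))n^{\gamma\ell-(k-1)}$ for every disjoint pair, and the hypothesis $\ell>\tfrac{k-1}{\gamma}+k-1$ is exactly the statement $\gamma\ell-(k-1)>\gamma(k-1)>0$, so this expectation is a genuinely growing power of $n$. (This is where the hypothesis is needed: if $\gamma\ell\le k-1$ then already $M_{\tpl x}\le(2pn)^\ell\le 2^\ell n^{\gamma\ell}$, so $T\le M_{\tpl x}=o(n^{k-1})$ and the conclusion is false.) View $N_{\tpl x,\tpl y}$ as a polynomial of degree $\ell$ in the edge-indicators of $\Gamma$. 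By the rigidity of tight paths (consecutive windows share $k-1$ vertices), pinning any $j\ge1$ of the $\ell$ edges of such a path pins at least $j$ free internal vertices while removing $j$ edge-conditions, and since each such step costs a factor $n^{-1}/p=n^{-\gamma}$, the maximal expected number $\mathbb{E}_j$ of tight paths with ends $\tpl x,\tpl y$ through a fixed $j$-edge-set satisfies $\mathbb{E}_j\le C_\ell\, n^{-\gamma j}\,\mathbb{E}[N_{\tpl x,\tpl y}]$ for a constant $C_\ell$ depending only on $\ell$.

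In the notation of the Kim--Vu polynomial concentration inequality this gives $\mathcal{E}:=\max_{j\ge0}\mathbb{E}_j=\mathbb{E}[N_{\tpl x,\tpl y}]$ and $\mathcal{E}':=\max_{j\ge1}\mathbb{E}_j\le C_\ell n^{-\gamma}\mathbb{E}[N_{\tpl x,\tpl y}]$. Applying that inequality with deviation parameter $\lambda=\Theta_\ell\big(n^{\gamma/(2\ell)}\big)$, the deviation term $(\mathcal{E}\mathcal{E}')^{1/2}\lambda^\ell$ is $o\big(\mathbb{E}[N_{\tpl x,\tpl y}]\big)$ while the failure probability is $\exp\big(-\Omega(\lambda)+O_\ell(\log n)\big)=\exp\big(-n^{\Omega_{\gamma,\ell}(1)}\big)$; a union bound over the at most $n^{2(k-1)}$ pairs $(\tpl x,\tpl y)$ then yields $N_{\tpl x,\tpl y}\le 2\,\mathbb{E}[N_{\tpl x,\tpl y}]\le 2p^\ell n^{\ell-(k-1)}$ for all $\tpl x,\tpl y$ a.a.s., which together with the maximum-degree bound proves the statement for tight paths. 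Finally, when $(k-1)\mid\ell$ a spike path on $\ell+k-1$ vertices likewise has exactly $\ell$ edges and $\ell-(k-1)$ internal vertices, is built one vertex at a time subject to $\ell$ edge-conditions, and the spike-path analogue of $N_{\tpl x,\tpl y}$ (with prescribed first and last spikes) is a degree-$\ell$ polynomial with the same expectation and the same bounds on $\mathbb{E}_j$, so the identical argument gives the spike-path statement.

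The main obstacle is the concentration step, and within it the verification that every lower-order expectation $\mathbb{E}_j$ with $1\le j\le\ell$ is smaller than $\mathbb{E}[N_{\tpl x,\tpl y}]$ by an honest power of $n$ --- this is the rigidity bookkeeping for tight and spike paths, and it is exactly what forces the quantitative hypothesis $\ell>\tfrac{k-1}{\gamma}+k-1$; the remaining estimates are routine.
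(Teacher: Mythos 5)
Your overall plan --- Cauchy--Schwarz to reduce the number of distinct end-tuples to a uniform upper bound on $N_{\tpl x,\tpl y}$ --- is a genuinely different organisation from the paper's argument, which instead counts copies of the `theta-graph' structures $\mathcal{D}_\ell$ (pairs of tight paths glued along both their end-tuples, rooted only at $\tpl x$) and applies Jensen plus an upper tail bound on $Y_{H_{D_\ell}}$. Both routes ultimately rest on Kim--Vu, and your final arithmetic, if the two auxiliary bounds were available, does give the required constant. However, there is a real gap in the concentration step.

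The flaw is the claim that pinning any $j\ge 1$ of the $\ell$ edges of a tight path from $\tpl x$ to $\tpl y$ pins at least $j$ free internal vertices. This is false, and the reason is exactly that both end-tuples are rooted in your random variable. Any set $A$ of $j$ edges covers $v_A=\bigl|\bigcup A\bigr|\ge k+j-1$ vertices, but up to $2(k-1)$ of these can be root vertices: for instance $A=\{e_1,\dots,e_\ell\}$ covers all $2(k-1)$ roots and only $\ell-k+1<\ell=j$ internal vertices, and more generally any long run $\{e_1,\dots,e_a\}$ with $a>\ell-k+1$ has internal count strictly less than $j$. Consequently the clean chain $\mathcal{E}_j\le C_\ell\,n^{-\gamma j}\,\mathbb{E}[N_{\tpl x,\tpl y}]$ does not follow from ``one new vertex per edge''. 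One must instead keep track of how many roots $A$ can cover; the worst case is $j$ near $\ell$, where one finds $\mathcal{E}_\ell/\mathcal{E}_0\lesssim n^{(k-1)-\gamma\ell}$, and it is here --- not merely in ensuring the expectation is a growing power of $n$ --- that the hypothesis $\ell>\tfrac{k-1}{\gamma}+k-1$ is forced. Your proof asserts the bound by an appeal to ``rigidity'' which does not survive the two-sided rooting.

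The paper's formulation sidesteps this: $\mathcal{D}_\ell$-copies are rooted only at $\tpl x$, so the bound $v_A\ge k+i-1$ together with at most $k-1$ roots immediately gives that pinning $i\ge 1$ edges removes at least $i$ free vertices, and hence $\mathcal{E}'(H_F)\le n^{-\gamma/2}\,\mathbb{E}[Y_{H_F}]$ holds without any case analysis. Your route is likely salvageable with a careful accounting of how the fixed set of $j$ edges can meet the two root windows (roughly, the internal-vertex deficit $j-I_A$ is only positive when the edge set spans from near $e_1$ to near $e_\ell$, at which point $j$ is large enough for $\gamma j$ to absorb the deficit), but as written this step is a genuine gap, and the claimed $n^{-\gamma}$ decay of $\mathcal{E}'$ is not justified.
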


To prove Lemma~\ref{lem:goodexpansion} we need a concentration result of Kim and Vu \cite{KimVu}. We first give some definitions and then state the result.
Let $m$ be a positive integer and $H$ be a hypergraph with $|V(H)| = m$ and each edge has at most $r$ vertices. Let $p \in [0,1]$ and let $X_i, i \in V(H)$ be independent random variables with $\mathbb{P}[X_i = 1] = p$ and $\mathbb{P}[X_i = 0] = 1-p$.
We define the random variable 
\[
Y_H = \sum_{f \in E(H)} \prod_{i \in f} X_i.
\]
For each subset $A \subseteq V(H)$, we define the $A$-truncated subgraph $H(A)$ of $H$ to be the subgraph of $H$ with $V(H(A)) = V(H) \setminus A$ and $E(H(A)) = \{f \subseteq V(H(A)) \colon f \cup A \in E(H)\}$. Hence
\[
Y_{H(A)} = \sum_{\substack{f \in E(H) \\ A \subseteq f}} \prod_{i \in f \setminus A} X_i.
\]
Now, for $0 \leq i \leq r$, we set $\mathcal{E}_i(H) = \max_{A \subseteq V(H), |A| = i} \mathbb{E}[Y_{H(A)}]$. Note that $\mathcal{E}_0(H) = \mathbb{E}[Y_H]$. Finally, we let $\mathcal{E}(H) = \max_{0 \leq i \leq r} \mathcal{E}_i(H)$ and $\mathcal{E}'(H) = \max_{1 \leq i \leq r} \mathcal{E}_i(H)$. 

\begin{theorem}[Kim-Vu polynomial concentration \cite{KimVu}]
	\label{thm:Kim-Vu}
	In this setting we have 
	\[
	\mathbb{P}[|Y_H - E(Y_H)| > a_r (\mathcal{E}(H)\mathcal{E}'(H))^{1/2}\lambda^r)] = O(\exp(-\lambda + (r-1)\log m))
	\]
	for any $\lambda > 1$ and $a_r = 8^r r!^{1/2}$.
\end{theorem}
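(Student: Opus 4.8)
The plan is to convert ``$\mathcal P$ contains many rooted paths'' into ``$\mathcal P$ reaches many end-tuples'' via a Cauchy--Schwarz argument, with the Kim--Vu inequality (Theorem~\ref{thm:Kim-Vu}) supplying the two facts about random-hypergraph path counts that Cauchy--Schwarz requires. Write $pn=n^{\gamma}$, and for ordered $(k-1)$-tuples $\tpl x,\tpl y$ of vertices of $\Gamma$ let $Z_{\tpl x,\tpl y}$ be the number of tight paths in $\Gamma$ with $\ell+k-1$ vertices rooted at $\tpl x$ and with end $(k-1)$-tuple $\tpl y$, and set $W_{\tpl x}=\sum_{\tpl y}Z_{\tpl x,\tpl y}$. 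Such a path has exactly $\ell$ edges and exactly $\ell-(k-1)$ vertices outside $\tpl x\cup\tpl y$, so $\mathbb{E}[Z_{\tpl x,\tpl y}]\le n^{\ell-k+1}p^{\ell}=n^{\ell\gamma-k+1}$ and $\mathbb{E}[W_{\tpl x}]\le n^{\ell}p^{\ell}=n^{\ell\gamma}$; the hypothesis $\ell>\tfrac{k-1}{\gamma}+(k-1)$ guarantees $\ell\gamma-k+1>(k-1)\gamma>0$ (in particular these expectations tend to infinity), which is just what will be needed for the final count $\tfrac{\mu^{2\ell}}{8(2\ell)!}n^{k-1}$ to be consistent with the fact that there are only $O(n^{\ell\gamma})$ rooted paths at all. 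Since a tight path rooted at $\tpl x$ is determined by its set of edges, each of $Z_{\tpl x,\tpl y}$ and $W_{\tpl x}$ is a polynomial $Y_H$ in the edge-indicators of $\Gamma$ of exactly the form treated in Theorem~\ref{thm:Kim-Vu}, with ground set the $m=\binom nk$ potential $k$-edges of $\Gamma$ and degree $r=\ell$.

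To apply Kim--Vu I would bound the partial expectations $\mathcal E_i$ using the observation that fixing $i$ of the edge-variables constrains the conditional count least when the $i$ fixed edges form the initial segment of the path (the edges incident to the root), pinning down only $\min\{i,\ell-k+1\}$ of the otherwise-free vertices; a short case analysis then yields $\mathcal E_i(H)\le n^{(\ell-i)\gamma-k+1}$ for the $Z$-hypergraph and $\mathcal E_i(H)\le n^{(\ell-i)\gamma}$ for the $W$-hypergraph while the right-hand side exceeds $1$, and $\mathcal E_i(H)=O_{k,\ell}(1)$ for larger $i$. Hence $\mathcal E(H)\le n^{\ell\gamma-k+1}$ and $\mathcal E'(H)\le n^{(\ell-1)\gamma-k+1}$ for $Z_{\tpl x,\tpl y}$, and $\mathcal E(H)\le n^{\ell\gamma}$, $\mathcal E'(H)\le n^{(\ell-1)\gamma}$ for $W_{\tpl x}$. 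Taking $\lambda=K\log n$ for a sufficiently large constant $K=K(k,\ell)$, the Kim--Vu deviation $a_{\ell}(\mathcal E(H)\mathcal E'(H))^{1/2}\lambda^{\ell}$ is $O\!\big(n^{(\ell-1/2)\gamma-k+1}(\log n)^{\ell}\big)$ for $Z_{\tpl x,\tpl y}$ and $O\!\big(n^{(\ell-1/2)\gamma}(\log n)^{\ell}\big)$ for $W_{\tpl x}$ --- in each case $o(n^{\ell\gamma-k+1})$, resp.\ $o(n^{\ell\gamma})$, since the surplus factor $n^{\gamma/2}$ swamps $(\log n)^{\ell}$ --- while the failure probability $O\!\big(\exp(-K\log n+(\ell-1)\log m)\big)$ is, once $K$ is large, summable over the at most $n^{2(k-1)}$ pairs $(\tpl x,\tpl y)$ and the at most $n^{k-1}$ singletons $\tpl x$. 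So a.a.s.\ in $\Gamma$ we have simultaneously $Z_{\tpl x,\tpl y}\le \mathbb E[Z_{\tpl x,\tpl y}]+o(n^{\ell\gamma-k+1})\le 2n^{\ell\gamma-k+1}$ for all $\tpl x,\tpl y$, and $W_{\tpl x}\le 2n^{\ell\gamma}$ for all $\tpl x$.

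Now assume $\Gamma$ has these properties, fix $\tpl x$ and any set $\mathcal P$ of at least $(\mu pn)^{\ell}=\mu^{\ell}n^{\ell\gamma}$ tight paths with $\ell+k-1$ vertices rooted at $\tpl x$, and for each $(k-1)$-tuple $\tpl y$ write $c_{\tpl y}$ for the number of paths of $\mathcal P$ with end-tuple $\tpl y$; then $c_{\tpl y}\le Z_{\tpl x,\tpl y}$, $\sum_{\tpl y}c_{\tpl y}=|\mathcal P|$, and $N:=\big|\{\tpl y:c_{\tpl y}>0\}\big|$ is the number of distinct end-tuples. By Cauchy--Schwarz,
\[
|\mathcal P|^{2}=\Big(\sum_{\tpl y}c_{\tpl y}\Big)^{2}\le N\sum_{\tpl y}c_{\tpl y}^{2}\le N\sum_{\tpl y}Z_{\tpl x,\tpl y}^{2}\le N\Big(\max_{\tpl y}Z_{\tpl x,\tpl y}\Big)W_{\tpl x}\le 4N\,n^{2\ell\gamma-k+1},
\]
whence $N\ge\mu^{2\ell}n^{2\ell\gamma}\big/\big(4n^{2\ell\gamma-k+1}\big)=\tfrac{\mu^{2\ell}}{4}n^{k-1}\ge\tfrac{\mu^{2\ell}}{8(2\ell)!}\,n^{k-1}$, which is the assertion. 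When $(k-1)\mid\ell$, a spike path with $\ell+k-1$ vertices likewise has exactly $\ell$ edges and exactly $\ell-(k-1)$ vertices outside its first and last spikes, is again determined by its edge set, and fixing an initial segment of $i$ of its edges again pins down $\min\{i,\ell-k+1\}$ free vertices; so all the expectation and $\mathcal E_i$ estimates carry over unchanged, and the identical argument gives the spike-path statement.

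The step I expect to be the real work is the $\mathcal E_i$ bookkeeping: one must check that no choice of $i$ fixed edge-variables constrains fewer than $\min\{i,\ell-k+1\}$ free path-vertices --- so that an initial segment of edges is the extremal configuration --- and hence that each $\mathcal E_i$ lies below $\mathcal E_0$ by roughly a factor $n^{i\gamma}$. That gap is exactly what renders the Kim--Vu deviation negligible against the mean, and it depends on $p=n^{-1+\gamma}$ being polynomially large; everything after the $\mathcal E_i$ estimates --- the union bound and the single Cauchy--Schwarz step --- is routine.
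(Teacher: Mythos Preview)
Your proposal is not a proof of Theorem~\ref{thm:Kim-Vu} at all --- that result is quoted from Kim and Vu and the paper does not attempt to prove it. What you have actually written is a proof of Lemma~\ref{lem:goodexpansion}, which \emph{uses} Theorem~\ref{thm:Kim-Vu} as a black box. Taking that as your intended target, your argument is correct and follows a genuinely different route from the paper's.

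The paper bounds, for each root $\tpl{x}$, the total number of \emph{pairs} of rooted paths with a common end-tuple: two such paths together form some $F\in\mathcal D_\ell$, and Kim--Vu (applied to each fixed $F$, rooted only at $\tpl{x}$) shows this second-moment quantity is at most $2p^{2\ell}n^{2\ell-(k-1)}$; Jensen's inequality then yields the end-tuple count. You instead bound $\max_{\tpl y}Z_{\tpl x,\tpl y}$ and $W_{\tpl x}$ separately and combine them by Cauchy--Schwarz. Your route avoids the case analysis over degenerate members of $\mathcal D_\ell$ and gives a better constant; the cost is that the $\mathcal E_i$ bookkeeping for the \emph{doubly}-rooted $Z_{\tpl x,\tpl y}$ is genuinely more delicate than the paper's --- the universal bound $v_A\ge k+i-1$ which suffices when only $k-1$ vertices are rooted does not immediately show that $A$ pins down $i$ \emph{free} vertices once $2(k-1)$ vertices are rooted. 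This is exactly the step you flag, and it does go through: each free vertex $v_m$ lies in exactly $k$ consecutive edges $e_{m-k+1},\dots,e_m$, so if $|S|=i\le\ell-k+1$ then the number of free $v_m$ missed by $S$ is at most $|S^c|-(k-1)=\ell-i-(k-1)$, giving $|N(S)|\ge i$ as needed. One small simplification: since $|\mathcal P|=\sum_{\tpl y}c_{\tpl y}\le N\cdot\max_{\tpl y}Z_{\tpl x,\tpl y}$, you obtain $N\ge|\mathcal P|/\max_{\tpl y}Z_{\tpl x,\tpl y}\ge(\mu^\ell/2)n^{k-1}$ directly --- Cauchy--Schwarz and $W_{\tpl x}$ are not needed.
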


Moreover, we will need the following definitions. Let $k \geq 3$, $\ell \geq k-1$, and $\gamma >0$.
We define $D_\ell$ to be the $k$-graph obtained from two vertex-disjoint tight paths on $\ell + k -1$ vertices by identifying the end $(k-1)$-tuples. Let $\mathcal{D}_\ell$ be the set of hypergraphs obtained from $D_\ell$ by additionally identifying some (or none) of the not yet identified vertices from the first tight path with such vertices from the second without completely collapsing it into a tight path. More precisely, we let $U = \{u_1, \dots, u_{\ell + k-1}\}$ and $W = \{w_1, \dots, w_{\ell + k -1}\}$ be two sets of vertices that are disjoint except that $\tpl{x} = (u_1, \dots, u_{k-1}) = (w_1, \dots, w_{k-1})$ and $\tpl{y} = (u_{\ell + k-1}, \dots, u_{\ell +1}) = (w_{\ell + k-1}, \dots, w_{\ell +1})$. Then $D_\ell$ is the hypergraph with vertex set $U \cup W$ and edge set
\[
\{\{u_{i}, \dots, u_{i+k-1}\} \colon i \in [\ell]\} \cup \{\{w_{i}, \dots, w_{i+k-1}\} \colon i \in [\ell]\}.
\]
For $0 \leq j \leq \ell - (k-1)$, we denote by $\mathcal{D}_\ell^{j}$ the graphs obtained from $D_\ell$ by taking sets $I_1, I_2 \subseteq \{k, \dots, \ell\}$ each of size $j$ and a bijection $\sigma \colon I_1 \rightarrow I_2$ and  identifying $u_i$ with $w_{\sigma(i)}$ for all $i \in I_1$, where, if $j = \ell -(k-1)$, then we do not allow $\sigma$ to be the identity (since that would collapse $D_\ell$ into a tight path). We say that such a graph $F \in \mathcal{D}_\ell^{j}$ is rooted at $\tpl{x}$.
Finally, we let $\mathcal{D}_\ell = \bigcup_{0 \leq j \leq \ell -(k-1)} \mathcal{D}_\ell^j$.

We now prove the following lemma which we will use to prove Lemma~\ref{lem:goodexpansion}.
\begin{lemma}
	\label{lem:D_ell}
	For $\gamma > 0$, $k \geq 3$, and any fixed integer $\ell > \frac{k-1}{\gamma}+k-1$ a.a.s.\ in~$\Gamma = G^{(k)}(n,p)$ with $p = n^{-1+\gamma}$ the following holds. For all $(k-1)$-tuples $\tpl{x}$ in $V(\Gamma)$, the number of copies of elements of $\mathcal{D}_\ell$ in $\Gamma$ that are rooted at $\tpl{x}$ is at most $2p^{2\ell}n^{2\ell - (k-1)}$.
\end{lemma}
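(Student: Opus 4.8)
The plan is to fix an ordered $(k-1)$-tuple $\tpl{x}$ of distinct vertices, show that the number $Y=Y(\tpl{x})$ of copies of elements of $\mathcal{D}_\ell$ rooted at $\tpl{x}$ is at most $2p^{2\ell}n^{2\ell-(k-1)}$ with probability $1-o(n^{-(k-1)})$, and then union bound over the at most $n^{k-1}$ choices of $\tpl{x}$. Write $Y=\sum_{F\in\mathcal{D}_\ell}Y_F$, where $Y_F$ counts injective copies of $F$ rooted at $\tpl{x}$; since $|\mathcal{D}_\ell|$ depends only on $k$ and $\ell$ it suffices to control each $Y_F$. Set $\mu:=p^{2\ell}n^{2\ell-(k-1)}=n^{2\gamma\ell-(k-1)}$, which tends to infinity since the hypothesis gives $\ell>\tfrac{k-1}{2\gamma}$. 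For the first moment, a graph $F\in\mathcal{D}_\ell^{j}$ has $2\ell-j$ vertices and, viewed as a simple hypergraph, $2\ell-c(F)$ edges, where $c(F)$ counts the edges of the first tight path that, after the identifications defining $F$, coincide with an edge of the second; since such a collapse forces all internal vertices of the collapsing edge to have been identified, one checks $c(F)\le j$, and hence $\mathbb{E}[Y_F]\le n^{2\ell-j-(k-1)}p^{2\ell-c(F)}\le\mu\,n^{-j}p^{-j}=\mu\,n^{-\gamma j}$. In particular $\mathbb{E}[Y_{D_\ell}]\le\mu$, $\mathbb{E}[Y_F]\le\mu n^{-\gamma}$ for $j\ge 1$, and so $\mathbb{E}[Y]\le(1+o(1))\mu$.

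For concentration I would apply the Kim--Vu inequality (Theorem~\ref{thm:Kim-Vu}) to each $Y_F$, viewed as a polynomial of degree $r=e(F)\le 2\ell$ in the $m=\binom nk$ edge-indicators of $\Gamma$; the associated hypergraph $H_F$ has vertex set $\binom{[n]}{k}$ and edge set the rooted copies of $F$ (each identified with its set of $\Gamma$-edges), so $(r-1)\log m=O(\log n)$. Then $\mathcal{E}_0(H_F)=\mathbb{E}[Y_F]\le\mu$, while for $1\le i\le r$, pinning a set $A$ of $i$ edges of $\Gamma$ determines, up to $O_{k,\ell}(1)$ choices, a set $B$ of $i$ distinct edges of $F$ and the images of the vertices of $B$, so that $\mathcal{E}_i(H_F)\le O_{k,\ell}(1)\,n^{v(F)-(k-1)-q_F(i)}p^{e(F)-i}$, where $q_F(i)$ is the minimum number of non-root vertices covered by $i$ distinct edges of $F$. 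The crucial combinatorial estimate is $q_F(i)\ge(1-\gamma)i+\gamma$ for all $i\ge1$: because each path of $F$ is a tight path, $i$ edges that stay within one path, or that avoid the shared tuple $\tpl{y}$ and the identified internal vertices, cover at least $i$ non-root vertices, while a union bound over the two paths always gives at least $i-(k-1)-j$, and this latter, weaker bound is attained only when $i$ is large; the two regimes fit together precisely because the hypothesis says $\ell-(k-1)>\tfrac{k-1}{\gamma}$. Substituting $p=n^{-1+\gamma}$ and $c(F)\le j$ into the bound on $\mathcal{E}_i(H_F)$ then yields $\mathcal{E}_i(H_F)\le O_{k,\ell}(1)\,\mu\,n^{-\gamma}$ for every $i\ge1$, so $\mathcal{E}'(H_F)\le O_{k,\ell}(1)\mu n^{-\gamma}$ and $\mathcal{E}(H_F)\le O_{k,\ell}(1)\mu$.

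Now take $\lambda=2k\ell\log n$, so that $\exp(-\lambda+(r-1)\log m)=O(n^{-k})=o(n^{-(k-1)})$ while $\lambda^r=\mathrm{polylog}(n)$. Theorem~\ref{thm:Kim-Vu} gives, with probability $1-o(n^{-(k-1)})$, that
\[
Y_F\le\mathbb{E}[Y_F]+a_r\big(\mathcal{E}(H_F)\mathcal{E}'(H_F)\big)^{1/2}\lambda^r\le\mathbb{E}[Y_F]+O_{k,\ell}(1)\,\mu\,n^{-\gamma/2}\,\mathrm{polylog}(n)=\mathbb{E}[Y_F]+o(\mu).
\]
Summing over the boundedly many $F\in\mathcal{D}_\ell$ and using the first-moment bound gives $Y\le(1+o(1))\mu<2p^{2\ell}n^{2\ell-(k-1)}$ with probability $1-o(n^{-(k-1)})$, and a union bound over $\tpl{x}$ completes the proof.

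The step I expect to be the main obstacle is the uniform combinatorial estimate $q_F(i)\ge(1-\gamma)i+\gamma$. It has to hold for each of the (boundedly many, but structurally quite varied) graphs $F\in\mathcal{D}_\ell$ and every $1\le i\le e(F)$, and the delicate cases are those in which the chosen edges meet the root $\tpl{x}$, the shared tuple $\tpl{y}$, or many of the identifications $u_i\sim w_{\sigma(i)}$; handling them needs a careful interval-covering argument on the two tight paths together with the observation that two edges which overlap too much are forced to coincide --- and it is exactly here that the bound $\ell>\tfrac{k-1}{\gamma}+k-1$ is used. (The first-moment claim $c(F)\le j$ requires the same kind of bookkeeping, but is easier.)
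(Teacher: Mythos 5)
Your overall strategy is the one the paper uses: you fix the root $\tpl{x}$, encode the rooted $F$-copies as a polynomial $Y_{H_F}$ in the edge indicators, control $\mathcal{E}$ and $\mathcal{E}'$, and apply Kim--Vu. The first-moment part, and the reduction of the expectation claim to the inequality $e(F)\ge 2\ell-j$ (up to an $O(k)$ correction, which does not matter), also matches the paper's calculation in spirit.

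The gap is exactly where you anticipated it, but it is more serious than a delicate case analysis: the estimate $q_F(i)\ge(1-\gamma)i+\gamma$ is simply \emph{false} for some $F\in\mathcal{D}_\ell$, already for $i=3$. Take $j=\ell-(k-1)$, $I_1=I_2=\{k,\dots,\ell\}$, and $\sigma$ the transposition $k\leftrightarrow k+1$ fixing everything else, so that $w_k=u_{k+1}$, $w_{k+1}=u_k$, and $w_m=u_m$ otherwise. Then
\[
e_1=\{u_1,\dots,u_k\},\qquad e_2=\{u_2,\dots,u_{k+1}\},\qquad e'_1=\{u_1,\dots,u_{k-1},u_{k+1}\}
\]
are three pairwise distinct edges of $F$ (the third is the first edge of the second path; it is not a first-path edge since it skips $u_k$), yet their union is $\{u_1,\dots,u_{k+1}\}$, which contains only two non-root vertices. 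So $q_F(3)\le 2<3-2\gamma=(1-\gamma)\cdot3+\gamma$ whenever $\gamma<1/2$. Neither of the two heuristics you sketch covers this $B=\{e_1,e_2,e'_1\}$: it does not stay within one path, it does not avoid identified vertices, and $i=3$ is small, so the ``union bound'' regime $q\ge i-(k-1)-j$ is vacuous. The obstruction is precisely the one you should worry about: an identification near the root can force a ``triangle-like'' configuration of $k+2$ or fewer edges inside a $k+1$-vertex set at essentially no cost in $i$, so the dichotomy ``either $q\ge i$ or $i$ is huge'' fails. Consequently, the chain $\mathcal{E}_i(H_F)\le O(1)\,n^{v(F)-(k-1)-q_F(i)}p^{e(F)-i}\le O(1)\mu n^{-\gamma}$ is not justified by the argument you give.

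Two remarks that may help you repair this. First, what the Kim--Vu application actually requires is much weaker than $q_F(i)\ge(1-\gamma)i+\gamma$; after writing $v(F)=2\ell-j$, $e(F)=2\ell-c(F)$ and $p=n^{-1+\gamma}$, the bound $\mathcal{E}_i\le O(1)\mu n^{-\gamma}$ is equivalent to $q_F(i)\ge(1-\gamma)(i+c(F))-j+\gamma$, and in the counterexample above one has $j=\ell-(k-1)$ and $c(F)=\ell-2$, so this weaker inequality \emph{does} hold once $\ell>(k-1)/\gamma+k-1$; but you have not argued that the weaker inequality holds uniformly over all $F$, $i$ and $B$, and it will not follow from the two bounds you state. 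Second (for your own calibration), the paper's proof of this lemma makes a closely related slip at the same point --- it asserts that any $i$ distinct $k$-sets cover at least $k+i-1$ vertices, which fails for the same $\{e_1,e_2,e'_1\}$ --- and then concludes $\mathcal{E}(H_F)=\mathbb{E}[Y_{H_F}]$ and $\mathcal{E}'(H_F)\le n^{-\gamma/2}\mathbb{E}[Y_{H_F}]$, both of which are false for such $F$; the published argument survives only because Kim--Vu needs $(\mathcal{E}\mathcal{E}')^{1/2}\le n^{-c}\mu$ rather than the stated per-$i$ inequality relative to $\mathbb{E}[Y_{H_F}]$. So you should not try to prove the clean bound $q_F(i)\ge(1-\gamma)i+\gamma$; prove the $(i,c(F),j)$-dependent inequality above (or, more directly, bound $(\mathcal{E}\mathcal{E}')^{1/2}/\mu$), and expect the argument to genuinely use the trade-off that configurations with small $q_F(i)$ force $j$ to be large, which in turn makes $v(F)$ and hence $\mu/\mathbb{E}[Y_{H_F}]$ large.
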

\begin{proof}
	Fix a $(k-1)$-tuple $\tpl{x}$ in $V(\Gamma)$ and an integer $\ell > \frac{k-1}{\gamma} +k-1$. Let $F \in \mathcal{D}_\ell$ and consider the complete $k$-graph $K_n^{(k)}$ on $n$ vertices. We define a hypergraph $H_F$ as follows. Let $V(H_F) = E(K_n^{(k)})$ and let 
	\[
	E(H_F) = \left\{\mathcal{F} \in \binom{E(K_n^{(k)})}{e(F)} \colon \mathcal{F} \text{ spans a copy of $F$ in $K_n^{(k)}$ rooted at $\tpl{x}$}\right\}.
	\]
	Note that, since $e(F) \leq 2\ell$, each edge in $H_F$ has size at most $2 \ell$. For each $e \in V(H_F)$ let $X_e$ be the random variable for which $X_e = 1$ if $e$ is an edge of $\Gamma$ and $X_e = 0$ otherwise. Note that $\mathbb{P}[X_e =1] = p$. It is easy to see that with these definitions $Y_{H_F}$ is the number of copies of $F$ in $\Gamma$ rooted at $\tpl{x}$. Since $e(D_\ell) = 2 \ell$, $v(D_\ell) = 2\ell$, and $k-1$ vertices are rooted, we have $\binom{n}{2\ell -(k-1)}p^{2\ell} \leq \mathbb{E}[Y_{H_{D_\ell}}] \leq p^{2\ell}n^{2\ell-(k-1)}$, in particular $\mathbb{E}[Y_{H_{D_\ell}}] = \Theta(p^{2\ell}n^{2\ell -(k-1)})$.
	\begin{claim}
		For $F \in \mathcal{D}_\ell \setminus \{D_\ell\}$, we have
		\[
		\mathbb{E}\left[Y_{H_F}\right] = o\left(\mathbb{E}\left[Y_{H_{D_\ell}}\right]\right).
		\]
	\end{claim}
	\begin{claimproof}
		We split the proof into two cases depending on the integer $j$ for which we have $F \in \mathcal{D}_\ell^j$.
		
		First suppose that $F \in \mathcal{D}_\ell^j$ for some $j \in \{1, \dots, \ell -2(k-1)\}$. Note that $v(F) = 2\ell -j$. We claim that $e(F) \geq 2\ell - j$. This can be seen as follows. Recall that $F$ is obtained from $D_\ell$ by identifying $j$ additional vertices from the first tight path in $D_\ell$ with vertices from the second. This leaves $\ell -(k-1)-j \geq k-1$ unidentified vertices in the first tight path. In addition to the $\ell$ edges in the second path, $F$ contains an edge ending in each of the unidentified vertices and one more additional edge starting with each of the last $k-1$ unidentified vertices (these edges cannot end in an unidentified vertex, so there is no double counting). Thus
		\[
		e(F) \geq \ell + \ell -(k-1) -j +(k-1) = 2 \ell -j.
		\]
		Hence, since $k-1$ vertices are rooted,
		\begin{align*}
		\mathbb{E}[Y_{H_F}] \leq p^{2\ell-j}n^{2\ell-j-(k-1)} = p^{2\ell}n^{2\ell-(k-1)}(pn)^{-j} = p^{2\ell}n^{2\ell-(k-1)}n^{-j\gamma} = o\left(\mathbb{E}\left[Y_{H_{D_\ell}}\right]\right).
		\end{align*}
		
		Now suppose that $F \in \mathcal{D}_\ell^j$ for some $j \in \{\ell -2(k-1)+1, \dots, \ell -(k-1)\}$.
		As in the previous case, in addition to the $\ell$ edges in the second path, $F$ contains an edge ending in each of the $\ell -(k-1) - j$ unidentified vertices. Thus $e(F) \geq 2\ell -(k-1) -j$.
		Hence, since $v(F) = 2\ell -j$, $k-1$ vertices are rooted, and $j > \ell - 2(k-1)$, we have
		\begin{align*}
		\mathbb{E}[Y_{H_F}] &\leq p^{2\ell-j- (k-1)}n^{2\ell-j-(k-1)} = p^{2\ell}n^{2\ell-(k-1)}(pn)^{-j}p^{-(k-1)} \\
		&=p^{2\ell}n^{2\ell-(k-1)} n^{-j\gamma +(k-1)-\gamma (k-1)} < p^{2\ell}n^{2\ell-(k-1)} n^{-\gamma (\ell - 2(k-1))+(k-1)-\gamma (k-1)} \\
		&=p^{2\ell}n^{2\ell-(k-1)} n^{-\gamma (\ell - (k-1))+(k-1)}
		= o\left(\mathbb{E}\left[Y_{H_{D_\ell}}\right]\right),
		\end{align*}
		since $\ell > \frac{k-1}{\gamma}+k-1$.
	\end{claimproof}
	Combining the claim with our bound on $\mathbb{E}[Y_{H_{D_\ell}}]$ we obtain
	\begin{align}
	\label{eq:Ebound}
	\sum_{F \in \mathcal{D}_\ell} \mathbb{E}[Y_{H_F}] \leq \frac{3}{2}p^{2\ell}n^{2\ell-(k-1)}.
	\end{align}
	Next we show that, for all $F \in \mathcal{D}_\ell$, the random variable $Y_{H_F}$ is concentrated around its expectation.
	\begin{claim}
		For all $F \in \mathcal{D}_\ell$, we have 
		\[
		\mathbb{P}\left[|Y_{H_F} - \mathbb{E}[Y_{H_F}]| > \frac{p^{2\ell}n^{2\ell-(k-1)}}{2|\mathcal{D}_\ell|}\right] =O\left(\exp(-n^{\gamma/10\ell})\right).
		\]
	\end{claim}
	\begin{claimproof}
		Let $F \in \mathcal{D}_\ell$. We first show that $\mathcal{E}(H_F) = \mathbb{E}[Y_{H_F}]$ and $\mathcal{E}'(H_F) \leq n^{-\gamma/2}\mathbb{E}[Y_{H_F}]$. Let $1 \leq i \leq 2\ell$ and $A \subseteq V(H_F) = E(K_n^{(k)})$ with $|A| = i$. Note that the number of vertices of $K_n^{(k)}$ covered by $A$ is $v_A = \left\lvert \bigcup A \right\rvert \geq k + i - 1$ since each edge beyond the first covers at least one additional vertex. Moreover, note that $Y_{H_F(A)}$ is the number of copies of $F$ in $\Gamma + A$ that are rooted at $\tpl{x}$ and contain $A$. 
		Thus
		\begin{align*}
		\mathbb{E}[Y_{H_F(A)}] &\leq n^{v(F) - v_A}p^{e(F)-i} \leq n^{v(F) - (k-1)} p^{e(F)}(np)^{-i} 
		\\ &= n^{v(F) - (k-1)}p^{e(F)}n^{-i\gamma} \leq n^{-\gamma/2}\mathbb{E}[Y_{H_F}], 
		\end{align*}
		since $\mathbb{E}[Y_{H_F}] = \Theta(n^{v(F)-(k-1)}p^{e(F)})$ and $i \geq 1$.
		Hence $\mathcal{E}(H_F) = \mathbb{E}[Y_{H_F}]$ and $\mathcal{E}'(H_F) \leq n^{-\gamma/2}\mathbb{E}[Y_{H_F}]$. This implies $(\mathcal{E}(H_F)\mathcal{E}'(H_F))^{1/2} \leq n^{-\gamma/4} \mathbb{E}[Y_{H_F}] \leq n^{-\gamma/4}p^{2\ell}n^{2\ell-(k-1)}$.
		Therefore, with 
		\[
		\lambda_F = \left(\frac{p^{2\ell}n^{2\ell-(k-1)}}{2|\mathcal{D}_\ell|a_{2\ell}(\mathcal{E}(H_F)\mathcal{E}'(H_F))^{1/2}}\right)^{1/2\ell} \geq \left(\frac{n^{\gamma/4}}{2|\mathcal{D}_\ell|a_{2\ell}}\right)^{1/2\ell} \geq n^{\gamma/9\ell},
		\]
		we have, by Theorem~\ref{thm:Kim-Vu}, 
		\begin{align*}
		\mathbb{P}\left[|Y_{H_F} - \mathbb{E}[Y_{H_F}]| > \frac{p^{2\ell}n^{2\ell-(k-1)}}{2|\mathcal{D}_\ell|}\right]
		&= \mathbb{P}[|Y_{H_F} - \mathbb{E}[Y_{H_F}]| > a_{2\ell}(\mathcal{E}(H_F)\mathcal{E}'(H_F))^{1/2} \lambda_F^{2\ell}] \\
		& = O\left(\exp\left(-\lambda_F+(2\ell -1)\log\binom{n}{k}\right)\right) \\
		& = O\left(\exp(-n^{\gamma/9\ell} + (2\ell -1) k \log n)\right)
		= O\left(\exp(-n^{\gamma/10\ell})\right).
		\end{align*}
	\end{claimproof}
	Now let $Z_{\tpl{x}}$ be the number of copies of elements of $\mathcal{D}_\ell$ in $\Gamma$ rooted at $\tpl{x}$. Note that $Z_{\tpl{x}} = \sum_{F \in \mathcal{D}_\ell} Y_{H_F}$. We have 
	\begin{align*}
	\mathbb{P}[Z_{\tpl{x}} > 2p^{2\ell}n^{2\ell - (k-1)}] &\leq 
	\mathbb{P}\left[\left\lvert\sum_{F \in \mathcal{D}_\ell}(Y_{H_F} - \mathbb{E}[Y_{H_F}])\right\rvert > 2p^{2\ell}n^{2\ell - (k-1)} - \sum_{F \in \mathcal{D}_\ell} \mathbb{E}[Y_{H_F}]\right] \\
	&\overset{\text{(\ref{eq:Ebound})}}{\leq} \mathbb{P}\left[\sum_{F \in \mathcal{D}_\ell}\left\lvert Y_{H_F} - \mathbb{E}[Y_{H_F}]\right\rvert > \frac{1}{2}p^{2\ell}n^{2\ell - (k-1)}\right] \\
	&\leq \sum_{F \in \mathcal{D}_\ell} \mathbb{P}\left[|Y_{H_F} - \mathbb{E}[Y_{H_F}]| > \frac{p^{2\ell}n^{2\ell-(k-1)}}{2|\mathcal{D}_\ell|}\right] \\
	&= O\left(|\mathcal{D}_\ell| \exp(n^{-{\gamma/10\ell}})\right).
	\end{align*}
	Finally, the result follows by the union bound over all $(k-1)$-tuples $\tpl{x}$ in $V(\Gamma)$.
\end{proof}

We are now ready to prove Lemma~\ref{lem:goodexpansion}.
\begin{proof}[Proof of Lemma~\ref{lem:goodexpansion}]
	Let $\tpl{x}$ be a $(k-1)$-tuple in $V(\Gamma)$ and $\mathcal{P}$ be a set of at least $(\mu pn)^\ell$ tight paths in $\Gamma$ with $\ell + (k-1)$ vertices and rooted at $\tpl{x}$. Let $Q$ be the set of end-tuples we reach from $\tpl{x}$ with paths in $\mathcal{P}$. For each $\tpl{q} \in Q$, let $\mathcal{P}_{\tpl{q}}$ be those paths in $\mathcal{P}$ that end in $\tpl{q}$. Note that, for $\tpl{q} \in Q$ and distinct elements $P, P' \in \mathcal{P}_{\tpl{q}}$, we have $P \cup P' \in \mathcal{D}_\ell$. 
	Thus for each $\tpl{q} \in Q$, there are at least $\frac{1}{(2\ell)!}\binom{|\mathcal{P}_{\tpl{q}}|}{2}$ copies of elements of $\mathcal{D}_\ell$ in $\Gamma$ rooted at $\tpl{x}$ and ending in $\tpl{q}$ (we divide by $(2\ell)!$ since there are at most $(2\ell)!$ ways the union of two paths in $\mathcal{P}$ could result in the same copy of an element of $\mathcal{D}_\ell$). Hence the number of copies of elements of $\mathcal{D}_\ell$ in $\Gamma$ rooted at $\tpl{x}$ is at least 
	\[
	\sum_{\tpl{q} \in Q}\frac{1}{(2\ell)!} \binom{|\mathcal{P}_{\tpl{q}}|}{2} \geq \frac{|Q|}{(2\ell)!}\binom{\frac{(\mu pn)^\ell}{|Q|}}{2} \geq \frac{(\mu pn)^{2\ell}}{4(2\ell)!|Q|},
	\]
	where the penultimate inequality follows by Jensen's inequality.
	Thus, by Lemma~\ref{lem:D_ell}, we have a.a.s.\
	\[
	2p^{2\ell}n^{2\ell-(k-1)} \geq \frac{(\mu pn)^{2\ell}}{4(2\ell)!|Q|}
	\]
	and thus 
	\[
	|Q| \geq \frac{\mu^{2\ell}}{8(2\ell)!}n^{k-1}.
	\]
	Moreover, an analogous argument shows the result for spike paths.
\end{proof}
Together with the definition of tuples that are $(\varepsilon,p,\ell)$-good for $S$, Lemma~\ref{lem:goodexpansion} implies the following.

\begin{corollary}
	\label{cor:goodpaths}
	For any $\gamma>0$ and any $0<\eps \le \tfrac14\gamma$, and integers $s$, $k \ge 3$, and $\ell > \frac{k-1}{\gamma} + k-1$, there exists $\nu>0$ such that in $\Gamma=G^{(k)}(n,p)$ a.a.s.\ the following holds when $p=n^{-1+\gamma}$. Let $G\subseteq\Gamma$ satisfy $\delta_{k-1}(G)\ge\big(\tfrac12+\gamma\big)pn$.
	Let $S,S' \subseteq V(\Gamma)$ be sets with $|S|\le \tfrac12n$ and $|S'| \le s$. Let $\tpl{x}$ be a $(k-1)$-tuple, which is $(\eps,p,\ell)$-good for $S$. Then there are at least $\nu n^{k-1}$ different $(k-1)$-tuples $\tpl{y}$, such that there exists a tight path in $G$ of length $\ell$ with ends $\tpl{x}$ and $\tpl{y}$ and no vertices of the path in $S \cup S'$ except for possibly some of the vertices in $\tpl{x}$.
	Moreover, when $(k-1) | \ell$, the same holds for spike paths in $G$ of length $\ell$.
\end{corollary}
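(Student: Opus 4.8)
The plan is to build, for the given $\tpl{x}$, a large family $\mathcal{P}$ of tight paths in $G$ rooted at $\tpl{x}$ and avoiding $S\cup S'$, and then feed $\mathcal{P}$ into Lemma~\ref{lem:goodexpansion}. Fix $\mu:=\gamma/3$ and set $\nu:=\mu^{2\ell}/\big(8(2\ell)!\big)$. The only probabilistic input is Lemma~\ref{lem:goodexpansion} applied with this $\mu$ and the given $\ell$ (and, when $(k-1)\mid\ell$, also its spike-path version); we intersect these finitely many a.a.s.\ events, and everything else is deterministic given the hypotheses on $G$ and $\tpl{x}$.

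To construct $\mathcal{P}$ I would grow the path one vertex at a time, as a branching tree, maintaining the invariant that after $m$ vertices have been added the $(k-1)$-set $\tpl{z}_m$ with which the next vertex must form an edge is $(\eps,p,\ell-m)$-good for $S$; this holds at the start since $\tpl{z}_0=\tpl{x}$ is $(\eps,p,\ell)$-good. Given $\tpl{z}_{m-1}$ which is $(\eps,p,\ell-m+1)$-good, the hypothesis $\delta_{k-1}(G)\ge(\tfrac12+\gamma)pn$ provides at least $(\tfrac12+\gamma)pn$ vertices $v$ with $\tpl{z}_{m-1}\cup\{v\}\in E(G)$, from which we delete: (i) those $v\in S$, of which at most $p|S|+\eps pn\le(\tfrac12+\eps)pn$ satisfy $\tpl{z}_{m-1}\cup\{v\}\in E(\Gamma)$ --- here we use that $\tpl{z}_{m-1}$ is in particular $(\eps,p,1)$-good and $|S|\le n/2$ --- hence at most that many with $\tpl{z}_{m-1}\cup\{v\}\in E(G)$; (ii) those $v$ in $S'$ or already on the path, at most $|S'|+\ell+k-1=O(1)$ many; (iii) when $m<\ell$, those $v$ for which the new $(k-1)$-set $\tpl{z}_m$ fails to be $(\eps,p,\ell-m)$-good for $S$ --- since $\tpl{z}_m$ is a $(k-1)$-subset of the edge $\tpl{z}_{m-1}\cup\{v\}$, such $v$ make this edge one of the at most $\eps pn$ edges of $\Gamma$ through $\tpl{z}_{m-1}$ containing a set that is not $(\eps,p,\ell-m)$-good, by $(\eps,p,\ell-m+1)$-goodness of $\tpl{z}_{m-1}$. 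Since $\eps\le\tfrac14\gamma$ and $pn=n^{\gamma}\to\infty$, at least $(\gamma-2\eps)pn-O(1)\ge\mu pn$ choices survive for $n$ large; each extends the path inside $G$, avoids $S\cup S'$, and maintains the invariant. Iterating for $m=1,\dots,\ell$ gives $|\mathcal{P}|\ge(\mu pn)^{\ell}$ distinct tight paths of length $\ell$ in $G$, rooted at $\tpl{x}$, whose only vertices possibly in $S\cup S'$ are those of $\tpl{x}$.

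Then $\mathcal{P}$ is a family of at least $(\mu pn)^{\ell}$ tight paths in $\Gamma$, and since $\ell>\tfrac{k-1}{\gamma}+k-1$, Lemma~\ref{lem:goodexpansion} yields at least $\tfrac{\mu^{2\ell}}{8(2\ell)!}n^{k-1}=\nu n^{k-1}$ distinct end $(k-1)$-tuples of paths in $\mathcal{P}$; each such $\tpl{y}$ is reached by a path of the required type, which is the assertion. For spike paths one runs the identical greedy argument with $\tpl{z}_m$ now the $(k-1)$-set dictated by the spike-path structure; a short case check (inside a spike, and at the two kinds of spike boundary) confirms that the edge $\tpl{z}_{m-1}\cup\{v\}$ just added always has $\tpl{z}_m$ among its $(k-1)$-subsets, so step (iii) still applies, and one invokes the ``moreover'' clause of Lemma~\ref{lem:goodexpansion}.

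The one delicate point --- and really the only thing needing care --- is the bookkeeping at each greedy step between the two subtractions of order $pn$: the $\approx\tfrac12 pn$ lost to avoiding $S$ (which requires a one-sided \emph{upper} bound on the codegree into $S$, supplied by $(\eps,p,1)$-goodness together with $|S|\le n/2$) and the extra $\eps pn$ lost to preserving goodness at one fewer level (which is also exactly why the goodness order $\ell$ of $\tpl{x}$ must match the path length). Because the minimum codegree of $G$ beats $\tfrac12 pn$ by the fixed amount $\gamma pn$ and $\eps\le\tfrac14\gamma$, these subtractions leave a positive proportion $\mu pn$ of valid extensions at every step, with the $O(1)$ corrections from $S'$ and from the path's own vertices harmlessly absorbed as $pn\to\infty$.
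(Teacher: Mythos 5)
Your proposal is correct and follows essentially the same argument as the paper's proof: a greedy vertex-by-vertex construction of at least $(\mu pn)^{\ell}$ tight paths avoiding $S\cup S'$, maintaining the invariant that the current end-tuple is $(\eps,p,\ell-m)$-good for $S$ (your steps (i)--(iii) are exactly the paper's count), followed by an application of Lemma~\ref{lem:goodexpansion}. The only cosmetic differences are the indexing and the choice $\mu=\gamma/3$ versus the paper's $\mu=\gamma/4$, both of which are valid under $\eps\le\tfrac14\gamma$.
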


\begin{proof}
 We only prove the statement for tight paths as it is easy to see that the proof can be adapted for spike paths. We set $\mu=\tfrac14\gamma$, and $\nu=\tfrac{\mu^{2\ell}}{8(2\ell)!}$. Suppose that the good event of Lemma~\ref{lem:goodexpansion}, with input $\gamma$, $k$, and $\mu$, holds for $\Gamma=G^{(k)}(n,p)$.
 
 Given $G$ and $\tpl{x}$ as in the lemma statement, let $\tpl{x} = \big(x_1, \dots, x_{k-1}\big)$. We construct tight paths $x_1 \dots x_{\ell + k-1}$ rooted at $\tpl{x}$ by choosing vertices $x_{k}, \dots, x_{\ell + k-1}$ one by one as follows.
	For each $k \leq i \leq \ell + k-1$, we choose $x_i$ such that $x_i \not\in S \cup S' \cup \{x_1, \dots, x_{i-1}\}$ and  $\{x_{i-k+1}, \dots, x_i\} \in E(G)$. If $i < \ell + k-1$, we insist in addition that $\{x_{i-k+2}, \dots, x_i\}$ is $(\eps, p, \ell - (i-k+1))$-good for $S$. Since $\tpl{x}$ is $(\eps, p, \ell)$-good for $S$, for each $k \leq i \leq \ell + k-1$, the number of choices for each $x_i$, such that $\{ x_{i-k+1},\dots,x_i \}$ is an edge of $G$, $x_i \not\in S \cup S' \cup \{ x_1,\dots,x_{i-1} \}$, and $\{x_{i-k+2},\dots,x_i\}$ is $(\eps,p,\ell - (i-k+1))$-good, is at least 
	\begin{align*}
		\big(\tfrac12+\gamma\big)pn-(p\s{S} + \eps pn) - s -\ell-(k-1) - \eps pn \ge (\gamma-2 \eps)pn-s-\ell-(k-1)\ge\tfrac14\gamma p n\,.
	\end{align*}
	Let $\mathcal{P}$ be the set of tight paths constructed in this way; then we have $|\mathcal{P}|\ge\big(\tfrac14\gamma p n\big)^\ell=(\mu p n)^\ell$. Since the good event of Lemma~\ref{lem:goodexpansion} holds, the number of end $(k-1)$-tuples of these paths is at least $\tfrac{\mu^{2\ell}}{8(2\ell)!}n^{k-1}=\nu n^{k-1}$, as desired.
\end{proof}

\subsection{Connecting lemma}

The next lemma will enable us to connect two $(k-1)$-tuples, which are $(\eps',p,\ell)$-good for some set $S$, by a path of length at most $\ell$ avoiding $S$.

\begin{lemma}
\label{lem:connecting}
Given $k \ge 3$, and $\gamma>0$, there exists an integer $\ell$
such that for any integer $s$ the following holds.
For any $d,\eta>0$, any $0<\eps' \le \tfrac14 \gamma$, any integer $t_0$, any small enough $\nu, \eps_k>0$, any functions $\eps,f,f_k:\mathbb{N}\to(0,1]$ which tend to zero sufficiently fast, and any large enough $t_1,t_2\in\mathbb{N}$, the following holds a.a.s.\ in~$\Gamma=G^{(k)}(n,p)$ with $p \ge n^{-1+\gamma}$.
Suppose $G \subseteq \Gamma$ is an $n$-vertex $k$-graph with $\delta_{k-1}(G)\ge\big(\tfrac12+\gamma\big)pn$, that $(\cP^*_c,\cP^*_f)$ is a $(t_0,t_1,t_2,\eps_k,\eps(t_1),f_k(t_1),f(t_2),p)$-strengthened pair for $G$, and that $t$ is the number of $1$-cells in $\cP_c^*$.
Let $\cR' \subseteq \cR =\cR_{\eps_k,d}(G;\cP^*_c,\cP^*_f)$ be an induced subcomplex of the $(\eps_k,d)$-reduced multicomplex of $G$ on at least $(1-\nu)t$ $1$-edges and assume that it is tightly linked.
Further, let $S \subseteq V(G)$ be such that $|S| \le \tfrac{1}{2} n$ and it intersects all $1$-cells of $\cR'$ in at most an $(1-\eta) $-fraction.
Then for any two $(k-1)$-tuples $\tpl{x}$ and $\tpl{y}$, which are $(\eps',p,\ell)$-good for $S$,
and any set $S'$ of size at most $s$, there exists a tight path of length $\ell$ with ends $\tpl{x}$ and $\tpl{y}$.
\end{lemma}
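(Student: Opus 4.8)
The plan is to build the path in three segments: expand a tight path out of $\tpl{x}$ and out of $\tpl{y}$ using Corollary~\ref{cor:goodpaths}, then bridge the two free ends through the tightly linked reduced multicomplex $\cR'$ by realising one of its tight links as an honest tight path of $G$. Fix first the constants. Let $\ell_0$ and $\rho$ be the tight-path length and permutation attached to tight links in Section~\ref{sec:tools}, choose an integer $\ell_1>\tfrac{k-1}{\gamma}+k-1$, and let $\ell$ be the length of a tight path formed by gluing a length-$\ell_1$ tight path, a length-$\ell_0$ tight path overlapping it in its last $k-1$ vertices, and a further length-$\ell_1$ tight path overlapping that in its last $k-1$ vertices; thus $\ell$ depends only on $k$ and $\gamma$. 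All remaining parameters are then taken small, respectively large, enough that Corollary~\ref{cor:goodpaths} applies with some expansion constant $\nu_0=\nu_0(\gamma,\ell_1)>0$ and avoidance-size parameter $s+2\ell_1+\ell_0+2k$, that a.a.s.\ every $(k-1)$-set of $\Gamma$ lies in at most $2pn$ edges, and that the Dense Counting Lemma~\ref{lem:DCL}, the Degree Counting Lemma~\ref{lem:DCL-variant} and the Regular Restriction Lemma~\ref{lem:RRL} hold for every polyad of $\cP^*_c$ with an error term $\gamma_0$ negligible against $\nu_0$, $d$ and $\eta$; we condition on all of these a.a.s.\ events.

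\emph{Expansion and selection.} Since $(\eps',p,\ell)$-goodness for $S$ implies $(\eps',p,\ell_1)$-goodness for $S$, Corollary~\ref{cor:goodpaths} gives, starting from $\tpl{x}$, at least $\nu_0 n^{k-1}$ distinct $(k-1)$-tuples reached by tight paths of length $\ell_1$ in $G$ whose internal vertices avoid $S\cup S'$. Call an ordered $(k-1)$-tuple \emph{$\cR'$-good} if it lies in some $(k-1)$-cell $u$ of $\cR'$ and is, in the sense of the Degree Counting Lemma, typical with respect to each of the boundedly many polyads of $\cR'$ that can occur in a tight link issuing from $u$ (including the relevant sub-$(k-2)$-tuple conditions). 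The point is that all but an $o(1)$-fraction of the $(k-1)$-tuples of $V(G)$ are $\cR'$-good: non-$\Part$-partite tuples contribute $O(t_0^{-1})$, tuples meeting a cluster not in $\cR'$ contribute $O(\nu)$, tuples in $\cR'$-clusters lying in no surviving $(k-1)$-cell contribute $O(\nu+\eps_k^{1/k})$ because the cell multiplicities $d_j^{-1}$ cancel the densities $d_j$ and only an $O(\eps_k^{1/k})$-fraction of cells are discarded, and atypical tuples inside a cell contribute $O(\gamma_0)$. As $\nu_0$ was fixed before $\nu,\eps_k,t_0,\gamma_0$, this fraction is below $\tfrac12\nu_0$, so at least $\tfrac12\nu_0 n^{k-1}$ of the reachable tuples are $\cR'$-good; fix one, $\tpl{a}$, in a cell $u$ of $\cR'$, together with its tight path $P_{\tpl{a}}$ from $\tpl{x}$. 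Applying Corollary~\ref{cor:goodpaths} again from $\tpl{y}$, now also avoiding $V(P_{\tpl{a}})$ (the size budget allows this), fix an $\cR'$-good $\tpl{b}$ in a cell $v$ of $\cR'$ with a tight path $P_{\tpl{b}}$ from $\tpl{y}$.

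\emph{Bridging, and the main obstacle.} As $\cR'$ is tightly linked, there is a tight link from $\mathbf{u}=(\tpl{a},\text{its ordering})$ to $\mathbf{v}=(\tpl{b},\text{a suitable ordering})$, and hence a homomorphism of the $\ell_0$-vertex tight path into $\cR'$ sending its first $k-1$ vertices to $\tpl{a}$ in order and its last $k-1$ to $\tpl{b}$ in the order $\rho$, using only $k$-edges of the tight link; each such $k$-edge, being a $k$-edge of $\cR=\cR_{\eps_k,d}(G;\cP^*_c,\cP^*_f)$, is a regular polyad of relative $p$-density at least $d$. \textbf{The heart of the argument is to realise this homomorphism as a genuine tight path of $G$ with both ends pinned}: one must choose vertices for the internal vertices of the $\ell_0$-vertex tight path, all lying in $V(G)\setminus(S\cup S'\cup\tpl{x}\cup\tpl{y}\cup V(P_{\tpl{a}})\cup V(P_{\tpl{b}}))$ and in the prescribed clusters, so that every consecutive $k$-set is an edge of $G$. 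I would extend along the tight path from the $\tpl{a}$-end and from the $\tpl{b}$-end and splice them in a middle polyad: at each step, typicality of the current end $(k-1)$-tuple together with $(d,\eps_k,p)$-regularity of $G$ with respect to the current polyad — restricted to the free vertices of the relevant cluster via the Regular Restriction Lemma, which is legitimate because at least an $\eta$-fraction of each cluster is free and $\eta$ dwarfs all regularity parameters — yields $\Omega(pn)$ admissible choices for the next vertex, while the Degree Counting Lemma ensures that all but a $\gamma_0$-fraction of them keep the new end $(k-1)$-tuple typical; the two partial paths are finally joined by counting common extensions inside the middle polyad, a $k$-edge of $\cR'$ of density at least $d$ in which both partial ends are typical. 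At each of the $O(1)$ steps the forbidden set has size $O(s+\ell_0+\ell_1)=o(pn)$, so an admissible choice always survives. Concatenating $P_{\tpl{a}}$, the middle tight path, and the reverse of $P_{\tpl{b}}$ produces the required tight path of length $\ell$ from $\tpl{x}$ to $\tpl{y}$. Besides this pinned realisation, the only other mildly delicate point is the counting showing that almost all $(k-1)$-tuples are $\cR'$-good, which is exactly what lets the fixed expansion constant $\nu_0$ drive the selection step.
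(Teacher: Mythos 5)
Your expansion and selection steps are fine and match the spirit of the paper, but the bridging step has a genuine gap, and it is exactly the part you flag as the heart of the argument.

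The problem is that you try to realise the tight link as a single tight path by extending one vertex at a time from a \emph{single} chosen tuple $\tpl{a}$. Sparse polyad regularity ($(d,\eps_k,p)$-regularity of $G$ with respect to $\hat P(Q;\cP^*_c)$) gives information only about subsets of the polyad whose density in $K_k(H_{k-1})$ exceeds $\eps_k$; it says nothing about a single $(k-1)$-tuple. Likewise the Degree Counting Lemma controls typicality \emph{in the dense $(k-1)$-complex}, not the degree of a tuple in $G$, and the minimum-degree hypothesis on $G$ gives $\Omega(pn)$ extensions of $\tpl{a}$ to edges of $G$ but does not place those extensions in the next cluster of the polyad, let alone on the polyad's supported $k$-sets. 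So there is nothing in your inventory that yields even one vertex $w$ in the prescribed cluster for which $\tpl{a}\cup\{w\}$ is simultaneously an edge of $G$ and supported by the polyad. For this reason the paper never works with a single tuple: Lemma~\ref{lem:connectpartition} (together with Lemma~\ref{lem:finconnect}) propagates an entire set of reachable tuples — a positive fraction of a coarse $(k-1)$-cell — from one cell to the next along the tight link, so that sparse regularity can be applied to a set of the right size.

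There is also a second, structural clue that the proposal is incomplete: you never use the fine partition $\cP^*_f$, i.e.\ the ``strengthened'' part of the strengthened-pair hypothesis, yet this hypothesis is essential. Even after fixing a cell $E_0$ in which a positive fraction of tuples are reachable, the constant $\nu_0$ from Corollary~\ref{cor:goodpaths} (and the $\delta$ used inside Lemma~\ref{lem:connectpartition}, which further depends on $\eps_k$ and $t_1$) can be much \emph{smaller} than $\eps_k$; the set of reachable tuples is therefore too sparse to engage the coarse polyad regularity at all. The paper resolves this precisely by passing to the fine partition, where the regularity parameter $f_k$ is chosen after (and far smaller than) this constant, and Lemma~\ref{lem:finconnect} does the $k-1$ fine-partition steps needed to recover almost all of the target $(k-1)$-cell. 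Any correct proof along your lines would have to incorporate this set-level propagation and the fine/coarse two-scale argument, not merely build one path greedily.
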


To prove this we use the following lemma, which allows us to connect a fraction of any good $(k-1)$-cell to a fraction of an adjacent good $(k-1)$-cell, where adjacency is with respect to regular polyads.

\begin{lemma}
\label{lem:connectpartition}
Given $k \ge 3$ and $\gamma>0$, there exists an integer $\ell$ such that for any integer $s$ the following holds.
For any $d,\eta,\nu>0$, any $t_0\in\mathbb{N}$, any small enough $\eps_k>0$, any functions $\eps,f:\mathbb{N}\to(0,1]$ which tend to zero sufficiently fast, any integers $t_2 \ge t_1 \ge t_0$, and any small enough $f_k>0$, the following holds a.a.s.\ in $\Gamma=G^{(k)}(n,p)$ with $p \ge n^{-1+\gamma}$.
Suppose $G \subseteq \Gamma$ is an $n$-vertex $k$-graph, that $(\cP^*_c,\cP^*_f)$ is a $(t_0,t_1,t_2,\eps_k,\eps(t_1),f_k,f(t_2),p)$-strengthened pair for $G$, let $H:=\hat{P}(Q;\cP^*_c)$ be a regular polyad in the reduced complex $\cR_{\eps_k,d}(G;\cP^*_c,\cP^*_f)$, $V_1,\dots,V_k$ its underlying $1$-cells, and $S \subseteq V(G)$ is a set intersecting each of these in at most an $(1-\eta)$-fraction.
Further, let $E_1$ and $E_k$ be the two $(k-1)$-cells of $H$ missing $V_1$ and $V_k$, respectively.

Let the tuples in $E_1$ and $E_k$ be ordered according to $V_1,\dots,V_k$.
Then there is $\overline{E}_k\subset E_k$ with $|\overline{E}_k|\ge (1-\nu)|E_k|$, such that for any $\tpl{x}\in\overline{E}_k$ and any set $S'$ of at most $s$ vertices there is a tight path from $\tpl{x}$ to $\tpl{y}$ of length $\ell$ with internal vertices not in $S \cup S'$ for a $(1-\nu)$-fraction of the tuples $\tpl{y} \in {E_1}$.
\end{lemma}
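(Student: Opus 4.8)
The plan is to keep the tight path inside $V_1\cup\dots\cup V_k$, cycling through the parts with period $k$ and with every $k$ consecutive vertices an edge of $G$ supported on $H$; write $G_H$ for this (relative $p$-density $\ge d$) set of edges. Fix once and for all a large $\ell$ with $\ell\equiv 1\pmod k$ and $\ell>2(k-1)/\gamma$. A tight path $z_1\dots z_{\ell+k-1}$ with $z_i\in V_{((i-1)\bmod k)+1}$ then has $\tpl x:=(z_1,\dots,z_{k-1})\in E_k$ and, after $\ell$ edges, $\tpl y:=(z_{\ell+1},\dots,z_{\ell+k-1})\in E_1$ (ordered according to $V_1,\dots,V_k$); building it inside $G_H$ automatically puts every intermediate $(k-1)$-subtuple into the appropriate cell $E_1,\dots,E_k$ of $H$. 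The internal vertices are $z_k,\dots,z_\ell$ — these cover all $k$ parts since $\ell\ge 2k-1$ — and they are the only ones we must keep out of $S\cup S'$.

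Everything then reduces to counting tight paths inside the regular $k$-complex $\mathcal{K}:=\cJ(Q)\cup G_H$ on $V_1,\dots,V_k$: its lower layers are $(\mathbf d,\eps(t_1),\eps(t_1),1)$-regular and its top layer is $(\eps_k,p)$-regular with respect to $H$, all densities are at least $d_0:=\min(d,1/t_1)>0$, and each part has size $m=\Theta(n/t)$ with $pm\to\infty$. Because sparse regularity does not survive restriction to subsets, to count inside $W_i:=V_i\setminus S$ (with $|W_i|\ge\eta m$) I use, in the standard way of~\cite{ABCM}, that $H$ is a \emph{regular} polyad of the reduced complex: all but an $\eps_k$-fraction of the fine sub-polyads $\hat P(Q';\cP^*_f)$ lying inside $H[W_1,\dots,W_k]$ are $f_k$-regular with relative density $d_p(G|H)\pm\eps_k$, so after passing to those fine pieces the restriction of $G_H$ still behaves like a $(\sqrt{\eps_k},p)$-regular $k$-graph of density $\approx d_p(G|H)$ over $\cJ(Q)[W_1,\dots,W_k]$, which is regular by the Regular Restriction Lemma~\ref{lem:RRL}. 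This is the only place where the full strengthened pair — and, through it, the a.a.s.\ upper-regularity of $\Gamma$ supplied by Lemma~\ref{lem:upperreg} — enters. Inside this restricted regular complex the Dense Counting Lemma~\ref{lem:DCL} and the Degree Counting Lemma~\ref{lem:DCL-variant} give the usual one-vertex extension estimates: for all but a small fraction of $(k-1)$-tuples of any relevant cell, the number of ways to extend to a $G_H$-edge through a new vertex of the appropriate $W_i$ is $\Theta(\eta p m)$, with the implied constant depending only on $\mathbf d$ and $d$.

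Iterating these extension estimates at most $\ell$ times — discarding, at each of the $\le\ell$ steps, the $(k-1)$-tuples that misbehave — I obtain the following. There is $\overline{E}_k\subseteq E_k$, depending only on $G$, $H$, $S$, with $|\overline{E}_k|\ge(1-\nu)|E_k|$, such that for every $\tpl x\in\overline{E}_k$ there are $\Theta\big((\eta pm)^\ell\big)$ tight paths of length $\ell$ from $\tpl x$ with internal vertices in $\bigcup_i(V_i\setminus S)$; moreover (this is the delicate point) for all but a $\tfrac12\nu$-fraction of $\tpl y\in E_1$ at least one of these paths ends at $\tpl y$. To get ``almost all of $E_1$'' rather than merely ``many end-tuples'' I use a forward/backward double count: since $\ell>(k-1)/\gamma$, the first part of the path — run only through the clean sets $W_i$, which it enters after $k-1$ steps — already covers all but a small $\rho$-fraction of the clean $(k-1)$-tuples of the middle cell (the dense, deterministic analogue of the expansion Lemma~\ref{lem:goodexpansion}, provable by counting $\mathcal{D}_\ell$-type configurations with the Dense Counting Lemma in place of Kim--Vu); meanwhile a typical $\tpl y\in E_1$ is reached backwards along $G_H$, in $k-1$ steps, from $\Theta\big(\eta^{k-1}(pm)^{k-1}\big)$ clean middle tuples that are regularly spread through that cell, so the number of $\tpl y$ we miss is at most $\rho$ times a bounded fraction of $|E_1|$, which is $\le\tfrac12\nu|E_1|$ once $\rho$ is small (and $\rho$ is controlled by the length of the middle portion and the Degree-Counting error parameters). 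The set $\overline{E}_k$ is just the set of $\tpl x$ that are ``good'' for these first forward steps against $S$, so it depends only on $G,H,S$.

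Finally, a forbidden set $S'$ with $|S'|\le s$ is handled at the very end without touching $\overline{E}_k$: among the $\Theta\big((\eta pm)^\ell\big)$ paths from $\tpl x$ to a good $\tpl y$, those meeting any one fixed vertex form at most an $O\big(1/(\eta pm)\big)$-fraction, so removing the $\le s$ vertices of $S'$ kills only an $O\big(s/(\eta pm)\big)=o(1)$-fraction and at least one tight path from $\tpl x$ to $\tpl y$ with internal vertices outside $S\cup S'$ survives, for each of the $\ge(1-\nu)|E_1|$ good $\tpl y$. \textbf{The main obstacle} is precisely the third paragraph: upgrading the easy ``many reachable $\tpl y$'' to ``all but $\nu|E_1|$'', which forces the two-sided (forward-from-$\tpl x$, backward-from-$\tpl y$) analysis, together with the bookkeeping needed to keep the sparse regularity of $G_H$ usable after restriction to $W_1,\dots,W_k$ via the fine partition $\cP^*_f$.
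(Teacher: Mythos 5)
Your overall shape (work inside $V_1\cup\dots\cup V_k$ cycling through the parts, expand forward from $\tpl x$, then upgrade ``many end-tuples'' to ``almost all of $E_1$'' via the fine partition) matches the paper's plan, and you correctly identified the third paragraph as the crux. But there is a genuine gap there, and it is not a matter of bookkeeping.

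The step you gloss as ``the dense, deterministic analogue of the expansion Lemma~\ref{lem:goodexpansion}, provable by counting $\mathcal{D}_\ell$-type configurations with the Dense Counting Lemma in place of Kim--Vu'' does not work. The Dense Counting Lemma and Degree Counting Lemma only give counting for the \emph{lower} layers $G^{(2)},\dots,G^{(k-1)}$, which are genuinely dense; the top layer $G_H$ is $(\eps_k,p)$-regular with relative $p$-density of order $d$, i.e.\ its actual edge density is $dp\to 0$, and there is no sparse counting lemma at this level of regularity (the paper notes explicitly that the Sparse Counting Lemma ``is much harder'' and does not use it). A $\mathcal{D}_\ell$-configuration has $2\ell$ sparse edges, so any such count would need a sparse counting lemma. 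The paper avoids exactly this: its coarse expansion step uses the minimum-degree condition (transferred through Claim~\ref{claim:highdgreecells}) to exhibit $(\mu pn)^{\ell_0}$ tight paths from $\tpl x$, and then invokes the Kim--Vu concentration result (Lemma~\ref{lem:goodexpansion}), an a.a.s.\ property of $\Gamma$, to conclude these paths reach $\Omega(n^{k-1})$ distinct end-tuples. That conclusion is a randomness statement; it cannot be replaced by a regularity count at $\eps_k$-precision, because $\eps_k$ is fixed while $(\mu pn)^\ell/m^{k-1}\to 0$, so the set of end-tuples is far below the regularity resolution. Only after you know the end-set has size $\Omega(n^{k-1})$ can averaging produce a fine $(k-1)$-cell with a positive fraction of end-tuples, at which point the much finer regularity $f_k$ (which you may choose $\ll\delta$) makes Lemma~\ref{lem:finconnect} applicable.

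A second, smaller issue: your paragraph 2 asserts that after passing to fine pieces the restriction of $G_H$ to $W_1,\dots,W_k$ ``still behaves like a $(\sqrt{\eps_k},p)$-regular $k$-graph.'' Lemma~\ref{lem:RRL} restricts only the $(k-1)$-complex; nothing in the hypotheses lets you restrict the sparse $k$-layer to arbitrary subsets of the clusters and retain $(\cdot,p)$-regularity, and the paper never claims this. It instead separates off small subsets $\overline{\cC}_i$ of the fine $1$-cells lying in $V_i\setminus S$, uses only those for the coarse expansion (together with a high-degree argument, Claim~\ref{claim:highdgreecells}, that appeals to the coarse polyad's $(\eps_k,1)$-regularity, not to restricted sparse regularity), and keeps the rest for the fine expansion via Lemma~\ref{lem:finconnect}. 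Without this split the forward and fine steps risk colliding in the same vertices, and without the high-degree claim you do not have a justified extension count. So the proposal's route fails at the step you flagged, and the missing ingredient is the a.a.s.\ random-hypergraph expansion (Kim--Vu), not a more careful application of the dense counting tools.
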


With this lemma and Corollary~\ref{cor:goodpaths} it is straightforward to prove Lemma~\ref{lem:connecting}.
We will prove both, Lemma~\ref{lem:connecting} and~\ref{lem:connectpartition}, in Section~\ref{sec:proofconnecting}.

\subsection{Fractional matchings}

While the clusters of a regular partition are all the same size, and are still about the same size after we remove the reservoir set, the reservoir path may intersect the clusters in very different amounts. When we extend the reservoir path to an almost-spanning path, this means we need to use different numbers of vertices in the different clusters. To guide the construction of the almost-spanning path, the following lemma returns a fractional matching in the cluster graph such that the total weight on each cluster is at most the fraction of vertices still to use in that cluster, and the total weight of the fractional matching is very close to $\tfrac{1}{k}$ times the fraction of vertices in total still to use.

\begin{lemma}
	\label{lem:fractional}
	Let $H$ be an $m$-vertex $k$-complex, and let $w:V(H)\to[0,1]$ be a weight function. Given $\eps>0$, suppose that $H$ has at least $(1-\eps)m$ edges of size $1$, and that for each $1\le i\le k-2$, each $i$-edge of $H$ is contained in at least $(1-\eps)m$ edges of size $i+1$. Finally suppose that each $(k-1)$-edge of $H$ is contained in at least $\big(\tfrac12+\gamma\big)m$ edges of size $k$, and suppose $\sum_{v\in V(H)}w(v)\ge(1-\gamma)m$. Then there is a weight function $w^*:E\big(H^{(k)}\big)\to[0,1]$ such that for each $v\in V(H)$ we have $\sum_{e\ni v}w^*(e)\le w(v)$ and $\sum_{e\in E\big(H^{(k)}\big)}w^*(e)\ge\Big(\sum_{v\in V(H)}w(v)-\eps m\Big)\cdot\tfrac{1}{k}$.
\end{lemma}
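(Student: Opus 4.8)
The plan is to use LP duality, as indicated in the overview. The quantity we must bound from below is the optimum of the linear program
\[ \max\Big\{\sum_{e\in E(H^{(k)})}w^*(e)\ :\ w^*\ge0,\ \sum_{e\ni v}w^*(e)\le w(v)\ \text{for all }v\in V(H)\Big\} \]
(the requirement $w^*(e)\le1$ is automatic, since every $v\in e$ has $w^*(e)\le\sum_{e'\ni v}w^*(e')\le w(v)\le1$). By LP duality this optimum equals $\min\{\sum_v w(v)y_v:\ y\ge0,\ \sum_{v\in e}y_v\ge1\ \text{for all }e\in E(H^{(k)})\}$, the minimum weight of a fractional vertex cover of $H^{(k)}$. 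So I would fix such a fractional cover $y$, write $W:=\sum_v w(v)$, and prove $\sum_v w(v)y_v\ge\tfrac1k(W-\eps m)$.

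I would prepare two ingredients, both coming from the density of the lower layers. \emph{(i)} For any $U\subseteq V(H)$ with $|U|>\eps m$, the hypergraph $H^{(k-1)}[U]$ contains an edge: start from a vertex of $U$ lying in a $1$-edge (at most $\eps m$ vertices fail this, as there are at least $(1-\eps)m$ $1$-edges and a vertex in no $1$-edge is, by down-closure, in no edge at all), and greedily extend $\{v_1\}$ to a $(k-1)$-edge inside $U$ — at each step an $i$-edge lies in at least $(1-\eps)m$ $(i+1)$-edges, of which at least $|U|-\eps m>0$ stay inside $U$. In particular $\alpha\big(H^{(k-1)}[U]\big)\le\eps m$ for every $U$ (for $m$ large enough, as it will be). \emph{(ii)} If $F$ is a $(k-1)$-graph on a vertex set $U$ with $\alpha(F)\le\eps m$ and $z:U\to\REALS_{\ge0}$ satisfies $\sum_{v\in e}z_v\ge1$ for every $e\in E(F)$, then $\sum_{v\in U}w(v)z_v\ge\tfrac1{k-1}\big(\sum_{v\in U}w(v)-\eps m\big)$. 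Indeed, for each $\tau\in(0,\tfrac1{k-1})$ the set $\{v\in U:z_v\ge\tau\}$ is a vertex cover of $F$ (a $(k-1)$-edge all of whose vertices have $z$-value below $\tau$ has $z$-sum $<1$), so its complement in $U$ is independent, hence of size at most $\eps m$, hence of $w$-weight at most $\eps m$; integrating $\sum_v w(v)z_v=\int_0^\infty\big(\sum_{v:z_v\ge\tau}w(v)\big)\,d\tau$ over $\tau\in(0,\tfrac1{k-1})$ gives the bound.

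The heart of the argument is then a dichotomy on $\mu:=\min_{f\in E(H^{(k-1)})}\sum_{v\in f}y_v$ (the minimum is over a nonempty set by density). If $\mu\ge\tfrac{k-1}{k}$, then $\tfrac{k}{k-1}y$ is a fractional cover of $H^{(k-1)}$, so \emph{(ii)} with $F=H^{(k-1)}$ gives $\tfrac{k}{k-1}\sum_v w(v)y_v\ge\tfrac1{k-1}(W-\eps m)$, i.e.\ the desired bound. If $\mu=0$, some $(k-1)$-edge $f^*$ has all its vertices of $y$-value $0$, so its at least $(\tfrac12+\gamma)m$ extensions all have $y$-value $\ge1$ and $\sum_v w(v)y_v\ge W-(\tfrac12-\gamma)m\ge\tfrac1k(W-\eps m)$. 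In the remaining case $0<\mu<\tfrac{k-1}{k}$, let $f^*$ attain $\mu$ and let $X$ be its set of extensions, so $|X|\ge(\tfrac12+\gamma)m$ and $|V\setminus X|\le(\tfrac12-\gamma)m$. The covering constraint on the $k$-edge $f^*\cup\{v\}$ gives $y_v\ge1-\mu$ for every $v\in X$, while $\tfrac1\mu y$ restricted to $V\setminus X$ is a fractional cover of $H^{(k-1)}[V\setminus X]$, so \emph{(ii)} applied there gives $\sum_{v\in V\setminus X}w(v)y_v\ge\tfrac{\mu}{k-1}(b-\eps m)$, where $b:=\sum_{v\in V\setminus X}w(v)\le|V\setminus X|\le(\tfrac12-\gamma)m$. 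Adding and writing $a:=\sum_{v\in X}w(v)=W-b$,
\[ \sum_v w(v)y_v\ \ge\ (1-\mu)a+\tfrac{\mu}{k-1}(b-\eps m)\ =\ (1-\mu)W-\Big(1-\mu-\tfrac{\mu}{k-1}\Big)b-\tfrac{\mu\eps m}{k-1}. \]
Since $\mu<\tfrac{k-1}{k}$ the coefficient of $b$ is negative, so substituting $b\le(\tfrac12-\gamma)m$, using $W\ge(1-\gamma)m$ and the identity $1-\mu-\tfrac1k=(k-1)\big(\tfrac1k-\tfrac{\mu}{k-1}\big)$, the required inequality reduces after dividing by the positive quantity $\tfrac1k-\tfrac{\mu}{k-1}$ to $(k-1)W+\eps m\ge k(\tfrac12-\gamma)m$, which holds with room to spare for $k\ge3$.

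The step needing care is this last case. Each of its two natural lower bounds in isolation — namely $(1-\mu)\cdot(\text{$w$-weight of }X)$, or $\tfrac{\mu}{k-1}(W-\eps m)$ coming from covering all of $H^{(k-1)}$ — only yields about $\tfrac1{k+1}(W-\eps m)$ in the worst case $\mu\approx\tfrac{k-1}{k+1}$, so one genuinely has to add the contributions from $X$ and from $V\setminus X$; this is legitimate precisely because $V\setminus X$ is small (at most $(\tfrac12-\gamma)m$ vertices) while $H^{(k-1)}$ is still dense there, and the cancellation $1-\mu-\tfrac1k=(k-1)\big(\tfrac1k-\tfrac{\mu}{k-1}\big)$ is exactly what makes the two pieces fit. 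Everything else — the LP duality setup, the independence bound \emph{(i)}, and the layer-cake inequality \emph{(ii)} — is routine, and notably the argument requires no relation between $\eps$ and $\gamma$.
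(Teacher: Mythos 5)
Your proof is correct, and it shares the LP-duality framing with the paper but then diverges in how it bounds the dual. After passing to a feasible dual solution $y$, the paper orders the vertices by decreasing $y$-value, greedily extracts a single $k$-edge $v_1,\dots,v_k$ whose first $k-1$ vertices sit at position at least $(1-\eps)m$ (using the density of layers $1$ through $k-1$) and whose last vertex sits at position at least $(\tfrac12+\gamma)m$ (using the codegree condition), and then reduces to minimizing $\alpha b+\beta a$ subject to $(k-1)a+b\ge1$ with $0\le a\le b\le1$, whose minimum $a=b=\tfrac1k$ produces the $(\alpha+\beta)/k$ bound directly. You instead split on $\mu=\min_{f\in E(H^{(k-1)})}\sum_{v\in f}y_v$, establish a small-independence-number bound $\alpha\big(H^{(k-1)}[U]\big)\le\eps m$ from the density of the lower layers, prove a layer-cake inequality for weighted fractional covers of $(k-1)$-graphs, and in the nontrivial regime $0<\mu<\tfrac{k-1}{k}$ combine the contribution from the extensions $X$ of an extremal $(k-1)$-edge (each of whose new vertices has $y\ge1-\mu$) with the covering bound on $V\setminus X$, using the algebraic identity $1-\mu-\tfrac1k=(k-1)\big(\tfrac1k-\tfrac{\mu}{k-1}\big)$ to make the two pieces close up. Both arguments use the same three hypotheses in essentially the same places; the paper's version is more economical because it packs the entire density argument into one greedy extraction and a one-line two-variable optimization, while yours is more modular and makes the separate roles of the $\tfrac12+\gamma$ codegree and the $\tfrac1k$ target more transparent, at the cost of a three-way case split and the auxiliary layer-cake lemma.
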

\begin{proof}
 Consider the linear program
 \[\text{maximise }\sum_{e\in E(H^{(k)})}w^*(e)\quad\text{ subject to }\quad\sum_{e\ni v}w^*(e)\le w(v)\text{ for each $v \in V(H)$ and }w^*(e)\ge0\,.\]
 The dual program has variables $y:V(H)\to[0,1]$ such that for each $e\in E\big(H^{(k)}\big)$ we have $\sum_{v\in e}y(v)\ge 1$, where we minimise $\sum_{v\in V(H)}y(v)w(v)$. Suppose that $y$ is a feasible solution to the dual program.
 
 We order $V(H)$ according to decreasing $y$. We find a $k$-edge of $H$ as follows. We take the last $v_1$ such that $\{v_1\}$ is a $1$-edge of $H$. Then for each $2\le i\le k$ in succession, we choose the last vertex $v_i$ such that $\{v_1,\dots,v_i\}$ is an $i$-edge of $H$.
 
 For each $1\le i\le k-1$, since by construction $\{v_1,\dots,v_{i-1}\}$ is an $(i-1)$-edge of $H$, there are at most $\eps m$ choices of $v_i$ which do not give an $i$-edge of $H$, and in particular $v_i$ will be at or after position $(1-\eps)m$ in the order. Finally since $\{v_1,\dots,v_{k-1}\}$ is a $(k-1)$-edge of $H$, necessarily $v_k$ will be at position at or after $\big(\tfrac12+\gamma\big)m$ in the order.
 
 Suppose that the vertex $v$ of $H$ at position $(1-\eps)m$ in the order satisfies $y(v)=a$, and let $y(v_k)=b$. Then we have $(k-1)a+b\ge \sum_{i=1}^ky(v_i)\ge 1$, where the second inequality is since $y$ is feasible for the dual program. On the other hand, let $\alpha$ denote the sum of $w(u)$ over vertices $u$ equal to or earlier in the order than $v_k$, and let $\beta$ denote the sum of $w(u)$ over vertices $u$ after $v_k$ but not after $v$ (where $v$ is at position $(1-\eps)m$ in the order). Then we have $\sum_{v\in V(H)}w(v)y(v)\ge \alpha b+\beta a$.
 
 We view this as an optimisation problem: given $0\le a\le b\le 1$ such that $(k-1)a+b\ge1$, minimise $\alpha b+\beta a$. Trivially we can assume the minimum occurs for $(k-1)a+b=1$, and since $k\ge2$ and $\alpha>\beta$, the unique minimum occurs when $a=b=\tfrac{1}{k}$.
 
 Thus we have $\sum_{v\in V(H)}w(v)y(v)\ge(\alpha+\beta)\cdot\tfrac{1}{k}$ for any feasible solution $y$ to the dual program, so the value of the dual program is at least $(\alpha+\beta)\cdot\tfrac{1}{k}$. By the Duality Theorem for linear programming, the value of the primal program is the same. Finally since $w(v)\in [0,1]$ we have $\sum_{v\in V(H)}w(v)\le\alpha+\beta+\eps m$, and the lemma follows. 
\end{proof}

\subsection{Reservoir path}

\begin{definition}[Reservoir path]
A \emph{reservoir path} $\Pres$ with a \emph{reservoir set} $R\subsetneq V(\Pres)$ is an $k$-uniform hypergraph with two $(k-1)$-tuples $\tpl{v}$ and $\tpl{w}$, such that for \emph{any} $R'\subseteq R$, $\Pres$ contains a tight path with the vertex set $V(\Pres)\setminus R'$ and end-tuples $\tpl{v}$ and $\tpl{w}$.
\end{definition}

\begin{lemma}[Reservoir Lemma]\label{lem:respath}
	Given $k \ge 3$, $\gamma>0$, and $\ell' \in \mathbb{N}$, there exist an integer $c$, such that for $0<\eps' \le \tfrac14\gamma$, $0<d \le \tfrac18 \gamma$, large enough $t_0$, small enough $\nu,\eps_k>0$, any functions $\eps,f_k,f:\mathbb{N}\to(0,1]$ which tend to zero sufficiently fast and any integers $t_2 \ge t_1 \ge t_0$ the following holds a.a.s.\ in~$\Gamma=G^{(k)}(n,p)$ with $p \ge n^{-1+\gamma}$.
	Suppose $G \subseteq \Gamma$ with $\delta_{k-1}(G) \ge (\frac{1}{2}+\gamma) pn$, that $(\cP^*_c,\cP^*_f)$ is a $\big(t_0,t_1,t_2,\eps_k,\eps(t_1),f_k(t_1),f(t_2),p\big)$-strengthened pair for $G$, and that $t$ is the number of $1$-cells in $\cP^*_c$.
	Let $\cR=\cR_{\eps_k,d}(G;\cP^*_c,\cP^*_f)$ be the $(\eps_k,d)$-reduced multicomplex of $G$ and let $S$ be the union of the $1$-cells that are not in $\cR$.
 	Then given $R \subseteq V(G)$ with $|R| \le \nu n$ there exists a reservoir path $\Pres$ in $G$ with reservoir set $R$ and ends $\tpl{v}$ and $\tpl{w}$, such that  $\tpl{v}, \tpl{w}$ are $(\eps',p,\ell')$-good for $S \cup V(\Pres)$ and $|V(\Pres)| \le c |R|$.
\end{lemma}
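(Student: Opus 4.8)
The plan is to use the absorbing implementation of the reservoir method: for each vertex of $R$ build a bounded-size gadget that admits a tight traversal both with and without that vertex, and then chain all these gadgets into one long tight path by repeated use of the Connecting Lemma.

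Setting up, write $R=\{r_1,\dots,r_m\}$ with $m\le\nu n$, and fix $\ell$ as demanded by Lemma~\ref{lem:connecting} (taking it also $\ge\ell'$ and large enough for Corollary~\ref{cor:goodpaths}). By Lemma~\ref{lem:goodconnected}, since $\tfrac12+\gamma>\tfrac12+2d+2^{k+2}\eps_k^{1/k}+\nu$ (using $d\le\tfrac18\gamma$ and $\eps_k,\nu$ small), $\cR=\cR_{\eps_k,d}(G;\cP^*_c,\cP^*_f)$ has at least $(1-\nu)t$ $1$-edges and is tightly linked, and $|S|\le4\eps_k^{1/k}n<\tfrac1{10}n$; thus $\cR'=\cR$ satisfies the hypothesis of Lemma~\ref{lem:connecting}. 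I would then, in $m$ rounds, greedily pick pairwise disjoint gadgets $A_1,\dots,A_m\subseteq G$, each on a bounded number of vertices, disjoint from $S\cup R$ and from $A_1\cup\dots\cup A_{i-1}$, with $r_i\in V(A_i)$, carrying distinguished $(k-1)$-tuples $\tpl{x}_i,\tpl{y}_i$ that are $(\eps',p,\ell)$-good for the (bounded-density) set of all vertices that will eventually be used, and such that $A_i$ contains a tight path from $\tpl{x}_i$ to $\tpl{y}_i$ on vertex set $V(A_i)$ and another on $V(A_i)\setminus\{r_i\}$. Then, in $m-1$ further rounds, I would apply Lemma~\ref{lem:connecting} to connect $\tpl{y}_i$ to $\tpl{x}_{i+1}$ by a length-$\ell$ tight path $C_i$ in $G$ avoiding $S$ and all previously used vertices (using the ``$S'$ of size at most $s$'' slack for the few already-chosen endpoints); throughout, I would keep the partially built structure spread roughly evenly over the $1$-cells of $\cR$, so that at every step the avoided set has size below $\tfrac12 n$ and meets every $1$-cell of $\cR$ in at most a $(1-\eta)$-fraction for a fixed $\eta$ (this forces $\nu$ small compared with $1/c$), keeping the hypotheses of Lemma~\ref{lem:connecting} and Corollary~\ref{cor:goodpaths} valid. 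Finally I set $\Pres=A_1\cup C_1\cup\dots\cup C_{m-1}\cup A_m$, and extend each of its two ends by one short tight path (via Corollary~\ref{cor:goodpaths}) to end-tuples $\tpl{v},\tpl{w}$ that are $(\eps',p,\ell')$-good for $S\cup V(\Pres)$ — these exist because at most $o(n)$ $(k-1)$-tuples fail to be good for any fixed set by Lemma~\ref{lem:goodtuples}, and goodness is robust under altering that set by $O(1)$ vertices. This gives $|V(\Pres)|\le c|R|$ for a suitable constant $c$, and for any $R'\subseteq R$ the required tight path on $V(\Pres)\setminus R'$ is obtained by using the $r_i$-free traversal of $A_i$ for $r_i\in R'$ and the full one otherwise, spliced together by the $C_i$; it has ends $\tpl{v},\tpl{w}$ as desired.

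The real obstacle is the first of these steps: producing the bounded-size absorbing gadget for a given $r\in R$ \emph{inside} the adversarial subgraph $G$. One cannot reach $r$ by blindly expanding from a good tuple, since the link of $r$ in $G$ — although it inherits minimum codegree $\ge(\tfrac12+\gamma)pn$ — occupies only a $\Theta(p)$-fraction of all $(k-1)$-tuples, so the $\nu n^{k-1}$ tuples output by Corollary~\ref{cor:goodpaths} may miss it entirely; and the local ``bypass'' of $r$ inside a tight path requires $\Theta(1)$ prescribed edges through $r$, a configuration that for a fixed choice of surrounding vertices is typically absent because $p^{2}n=n^{2\gamma-1}=o(1)$ when $\gamma<\tfrac12$. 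This is precisely where spike paths (introduced in Section~\ref{sec:tools} for constructing the reservoir) are needed: a spike-path gadget of suitably bounded length requires few enough edges relative to $p=n^{-1+\gamma}$ that many disjointly-placeable copies exist around any $r$, and one establishes this by a counting/concentration argument in the spirit of Lemma~\ref{lem:D_ell} and Corollary~\ref{cor:goodpaths}, passing from ``many copies in $\Gamma$'' to ``a copy inside $G$'' via the codegree conditions on $G$ and on the link of $r$. The remaining work — threading the goodness conditions through all $O(\nu n)$ construction steps and keeping $\Pres$ spread over the clusters so the avoided-set hypotheses never fail — is routine but must be organised with care, e.g.\ by always choosing the next endpoint from the $\ge\nu n^{k-1}$ options of Corollary~\ref{cor:goodpaths} so that it is good for a superset of everything that will be used.
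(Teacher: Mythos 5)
Your high-level plan is essentially the paper's: build a bounded-size absorbing gadget around each $r\in R$ that admits a tight traversal both with and without $r$, then chain the gadgets together with Lemma~\ref{lem:connecting}, alternating between the two ends so that the free end remains good; this is exactly how $\Pres$ is assembled in the paper. You also correctly identify the key obstacle as building the gadget inside the adversarial $G$, and correctly guess that spike paths are the right local structure.

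The gap is in your proposed method for actually producing the gadget. You suggest a counting/concentration argument ``in the spirit of Lemma~\ref{lem:D_ell} and Corollary~\ref{cor:goodpaths}, passing from many copies in $\Gamma$ to a copy inside $G$ via the codegree conditions.'' This does not work: Lemma~\ref{lem:D_ell} and Corollary~\ref{cor:goodpaths} count structures in the \emph{random} graph $\Gamma$, and abundance of copies in $\Gamma$ gives no control over what survives in an adversarial subgraph. The minimum codegree condition on $G$ lets you extend a tight path or a spike path greedily one vertex at a time, but it does not by itself force $G$ to contain a \emph{closed} gadget (two spike paths from $r$ meeting at a common $(k-1)$-tuple $\tpl{v}$, joined by $\Theta(\ell)$ ``rung'' paths) around a given vertex. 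Closing the loop is precisely where you need the machinery you have not invoked: the paper grows the two spike paths from $r$ greedily (codegree $+$ $(\eps,p,\ell)$-goodness), applies Lemma~\ref{lem:goodexpansion} to see that the end-tuples of these spike paths hit a positive \emph{density} of $(k-1)$-tuples, averages to find two disjoint coarse $(k-1)$-cells $C_x$ and $C_u$ of the reduced multicomplex $\cR$ each containing a positive fraction of reachable tuples, uses tight linkedness of $\cR$ to find a tight link between them, and then applies Lemma~\ref{lem:finconnect} twice (through restricted regular polyads, via Lemma~\ref{lem:RRL}) to pin down a common tuple $\tpl{v}$ that closes the two spike paths into the ladder of Figure~\ref{fig:reservoir}. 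Only after the spikes are in place does Lemma~\ref{lem:connecting} fill in the rungs. So the reduced complex, tight linkedness, and $\ref{lem:finconnect}$ are not just needed for the chaining step you describe --- they are essential inside the gadget construction itself, and no codegree-plus-counting shortcut replaces them.

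Two smaller points: (i) the paper avoids collisions between the two spike paths and the closing step by randomly tripartitioning $V(G)$ into $Z_1,Z_2,Z_3$ and growing each piece in a separate part, which you would also need; (ii) you should make explicit the alternation of ends ($\tpl{v}_i$ or $\tpl{w}_i$) when concatenating, as this is what keeps the exposed end-tuple good for the growing $V(\Pres)$.
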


We prove Lemma~\ref{lem:respath} in Section~\ref{sec:proofreservoir}.

\section{Proof of Theorem~\ref{thm:main}}
\label{sec:proofmain}

\begin{proof}
 Given $\gamma>0$ and $k\ge3$, let $\ell_\sublem{lem:connectpartition} \ge \tfrac{k-1}{\gamma}+k$ be returned by Lemma~\ref{lem:connectpartition} for input $k$, $0$, and $\gamma$.
 Similarly, let $\ell_\sublem{lem:connecting}$ be given by Lemma~\ref{lem:connecting} with input $k$ and $\tfrac12\gamma$.
 Let $\nu_\subcor{cor:goodpaths}$ be returned by Corollary~\ref{cor:goodpaths} for input $\gamma$, $\eps=\tfrac14\gamma$, $s=k$, $k$, and $\ell_\sublem{lem:connectpartition}$ and let $\nu_\sublem{lem:connectpartition}=\tfrac14\nu_\subcor{cor:goodpaths}$.
 Let $c$ be the integer returned by Lemma~\ref{lem:respath} for input $\gamma$, $k$ and $\ell_\sublem{lem:connectpartition}$ and then let $\tfrac18\gamma \ge d>0$.
 Let $t_0\ge k! \nu_\subcor{cor:goodpaths}^{-2}$ be sufficiently large for Lemma~\ref{lem:respath} with input as above, $\eps'=\tfrac14\gamma$ and $d$, for Lemma~\ref{lem:connecting} with input as above and $\eta=\tfrac12$, $\eps'=\tfrac18\gamma$, and $s=3 \ell_\sublem{lem:connecting} + 3k$, and for Lemma~\ref{lem:goodconnected} with input $k$ and $d$.

 We then choose $\nures<\tfrac{\gamma}{8c}$ such that $2\nures$ is sufficiently small for Lemma~\ref{lem:respath} with the given input and $\nu_\sublem{lem:connecting}>0$
 is small enough for Lemma~\ref{lem:connecting} with the given input. We let $\eta_\sublem{lem:connectpartition}=10^{-6}\ell_\sublem{lem:connecting}^{-1}\nures \nu_\sublem{lem:connecting}$.
 Next we choose $\eps_k\le 10^{-6}k^{-k}\nu_\subcor{cor:goodpaths}^k\nures^k\eta_\sublem{lem:connectpartition}^k$ small enough for Lemma~\ref{lem:connectpartition} with input as above and $s=\ell_\sublem{lem:connectpartition}$, $d$, $\eta_\sublem{lem:connectpartition}$, $\nu_\sublem{lem:connectpartition}$, and $\eps'=\tfrac18\gamma$, for Lemma~\ref{lem:respath} with input as above, and also such that $2\nures^{-k}\eps_k$ is small enough for Lemma~\ref{lem:connecting} with input as above.
 
 We choose functions $\eps,f_k,f:\mathbb{N}\to(0,1]$ such that $\sqrt{\eps}$, $2\nures^{-k}f_k$ and $\sqrt{f}$ are all smaller than $\eps_k$, small enough for each of Lemmas~\ref{lem:connecting},~\ref{lem:connectpartition} and~\ref{lem:respath} with the above inputs, and that for each $t$, both $\eps(t)$ and $f(t)$ are small enough for Lemma~\ref{lem:RRL} with input $k$, $\alpha=\tfrac12\nures$ and $d_0=\tfrac{1}{2t}$.
 Let $\eta_\sublem{lem:ssshrl}$ and $T_\sublem{lem:ssshrl}$ be returned by Lemma~\ref{lem:ssshrl} for input $k$, $t_0$, $s=1$, $\eps_k$, $\eps$, $f_k$, $f$.

 Given $n$, let $p\ge n^{-1+\gamma}$. Let $\tilde{L}$ be a set of at most $T_\sublem{lem:ssshrl}!-1$ vertices in $[n]$ such that $n-|\tilde{L}|$ is divisible by $T_\sublem{lem:ssshrl}!$. Suppose that $\tilde{\Gamma}=G^{(k)}(n,p)$ and its induced subgraph $\Gamma=\tilde{\Gamma}-\tilde{L}$ are in the good events of Corollary~\ref{cor:goodpaths}, Lemmas~\ref{lem:connectpartition} and~\ref{lem:respath} with inputs as above and Lemma~\ref{lem:goodtuples} with input $\tfrac14\gamma\nures$ and $k$.
 Suppose that $\Gamma$ and all its subgraphs are $(\eta_\sublem{lem:ssshrl},p)$-upper regular, which by Lemma~\ref{lem:upperreg} holds a.a.s. In addition, suppose that $\Gamma$ satisfies the following: if $R$ is a set of vertices chosen independently with probability $\nures$ from $V(\Gamma)$, then a.a.s.~$\Gamma[R]$ is in the good event of Lemma~\ref{lem:connecting} with input as above.
 Note that this last event occurs a.a.s.\ for the following reason: if we first choose $R$ randomly then expose the edges of $\Gamma$, a.a.s.\ we obtain a set $R$ of size $\big(1\pm\tfrac12\big)\nures n$, and given this Lemma~\ref{lem:connecting} states that a.a.s.~$\Gamma[R]$ will be in the good event. Thus the probability of obtaining a pair $(R,\Gamma)$ such that $\Gamma[R]$ is not in the good event of Lemma~\ref{lem:connecting} is $o(1)$, and it follows that, for any $\iota>0$, the probability of choosing $\Gamma$ such that $\big($ $\Gamma[R]$ has probability at least $\iota$ of not being in the good event of Lemma~\ref{lem:connecting} $\big)$, is $o(1)$.
 
 Given $\tilde{G}\subseteq\Gamma$ with $\delta_{k-1}(\tilde{G})\ge\big(\tfrac12+2\gamma\big)pn$, we remove $\tilde{L}$ to obtain an induced subgraph $G$ with $T_{\sublem{lem:ssshrl}}!|v(G)$. Observe that $\delta_{k-1}(G)\ge\big(\tfrac12+\gamma\big)pn$. We apply Lemma~\ref{lem:ssshrl} to $G$, with input as above, to obtain a $\big(t_0,t_1,t_2,\eps_k,\eps(t_1),f_k(t_1),f(t_2),p\big)$-strengthened pair $(\cP^*_c,\cP^*_f)$, where $t_0\le t_1\le t_2\le T_\sublem{lem:ssshrl}$. Let $t$ be the number of clusters of $\cP_c$; by definition we have $t_0\le t\le t_1$. Applying Lemma~\ref{lem:goodconnected}, we see that the $(\eps_k,d)$-reduced multicomplex $\cR$ of $G$, with respect to this strengthened pair, has at least $\big(1-4\eps_k^{1/k}\big)t$ $1$-edges, every $(k-1)$-edge of $\cR$ is contained in at least
 \[\big(\tfrac12+\gamma-2d-2^{k+2}\eps_k^{1/k}\big)t\prod_{i=2}^{k-1}d_i^{-\binom{k-1}{i-1}}\ge\big(\tfrac12+\tfrac12\gamma\big)t\prod_{i=2}^{k-1}d_i^{-\binom{k-1}{i-1}}\]
 $k$-edges, and every induced subcomplex of $\cR$ on at least $\big(1-\gamma+2d+2^{k+2}\eps_k^{1/k}\big)t<\big(1-\tfrac12\gamma\big)t$ vertices is tightly linked.
 
 We choose a subset $R$ of $[n]$ by selecting vertices uniformly at random with probability $\nures$. A.a.s.\ we have $|R|=\big(1+o(1)\big)\nures n$. By Chernoff's inequality and the union bound, a.a.s.\ for each $V$ which is a part of either $\cP_c$ or $\cP_f$, we have $|V\cap R|=\big(1\pm o(1)\big)\nures|V|$. Furthermore, for each $S$ which is the neighbourhood in $\tilde{G}$ or in $\Gamma$ of some $(k-1)$-set of vertices, we have $|S\cap R|=\big(1\pm o(1)\big)\nures|S|$ (recall that any such set $S$ has size at least $\tfrac12pn\ge n^{\gamma/2}$). Finally, by our assumption on $\Gamma$, we have a.a.s.\ that $\Gamma[R]$ is in the good event of Lemma~\ref{lem:connecting} with input as above. Suppose that $R$ is such that all of these likely events occur.
 
 We apply Lemma~\ref{lem:respath}, with inputs as above, to find a reservoir path $\Pres$ in $G$ with reservoir set $R$ whose ends are $\tpl{v}_\mathrm{res}$ and $\tpl{w}_\mathrm{res}$, such that $\tpl{v}_\mathrm{res}$ and $\tpl{w}_\mathrm{res}$ are both $(\tfrac14\gamma,p,\ell_\sublem{lem:connectpartition})$-good for $S\cup V(\Pres)$, where $S$ is the union of all $1$-cells not in $\cR$, and such that $\big|V(\Pres)\big|\le c|R|\le\tfrac18\gamma n$. 
 
  We now aim to extend $\Pres$, from its end $\tpl{w}_\mathrm{res}$, to a path $\Palm$ covering almost all vertices of $G$. To begin with, let $\cR'$ denote the complex on $V(\cR)$ obtained by letting $e'$ be an edge of $\cR'$ whenever there is an edge $e$ of $\cR$ such that $\mathrm{vertices}(e)=e'$. Thus the $1$-edges of $\cR$ and $\cR'$ are identical, and it follows inductively from the definition of an ($\eps_k,d)$-reduced multicomplex that for each $1\le i\le k-2$, each $i$-edge of $\cR'$ is contained in at least $\big(1-2^{i+2}\eps_k^{1/k}\big)t$ $(i+1)$-edges, and each $(k-1)$-edge of $\cR'$ is contained in at least $\big(\tfrac12+\tfrac12\gamma\big)t$ $k$-edges. We define a weight function $\omega$ on $V(\cR')$ as follows. Given a cluster $V_i\in V(\cR')$, if $|V_i\setminus V(\Pres)|<2\eta_\sublem{lem:connectpartition}\tfrac{n}{t}$, we set $\omega(V_i)=0$. Otherwise, we set
 \[\omega(V_i)=\frac{|V_i\setminus V(\Pres)|-2\eta_\sublem{lem:connectpartition}\tfrac{n}{t}}{(1-\nures)\tfrac{n}{t}}\,.\]
 Note that since $|V_i\setminus V(\Pres)|\le|V_i\setminus R|\le \big(1+o(1)\big)(1-\nures)\tfrac{n}{t}$, this weight function takes values in $[0,1]$. Furthermore, we have
 \[\sum_{V_i\in V(\cR')}\omega(V_i)=\frac{n-|V(\Pres)|- 2\eta_\sublem{lem:connectpartition} n}{(1-\nures)\tfrac{n}{t}}>\big(1-\tfrac12\gamma\big)t\,.\]
 This is the required setup to apply Lemma~\ref{lem:fractional}, with input $2^{k+2}\eps_k^{1/k}$ and $\tfrac12\gamma$. The result is a weight function $\omega^*:E(\cR'^{(k)})\to[0,1]$ such that for each $V_i\in V(\cR')$ we have $\sum_{e\ni V_i}\omega^*(e)\le\omega(V_i)$, and
 \begin{align*}
  \sum_{e\in\cR'^{(k)}}\omega^*(e)&\ge\tfrac{1}{k}\big(\sum_{V_i\in V(\cR')}\omega(V_i)-2^{k+2}\eps_k^{1/k}t\big)>\tfrac{1}{k}\cdot\frac{n-|V(\Pres)|- 2\eta_\sublem{lem:connectpartition} n-2^{k+3}\eps_k^{1/k}n}{(1-\nures)\tfrac{n}{t}}\\
  &>\tfrac{1}{k}\cdot\frac{n-|V(\Pres)|- 3\eta_\sublem{lem:connectpartition} n}{(1-\nures)\tfrac{n}{t}}\,.
 \end{align*}
 
 Recall that $\cR$ is tightly linked, even if an arbitrary set of $\tfrac12\gamma t$ vertices is removed. If a cluster of $\cP_c$ has $\omega$-weight zero, then it contains at least $\tfrac{n}{2t}$ vertices of $\Pres$, so there are at most $\tfrac{2c\nures n\cdot 2t}{n}=4c\nures t\le\tfrac12\gamma t$ clusters with $\omega$-weight zero. In particular, the submulticomplex of $\cR$ induced by removing clusters of $\omega$-weight zero is tightly linked.
 
  We next construct a path $\Palm$ extending $\Pres$ from $\tpl{w}_\mathrm{res}$ as follows. Recall that $\tpl{w}_\mathrm{res}$ is $\big(\tfrac14\gamma,p,\ell_\sublem{lem:connectpartition} \big)$-good for $S \cup V(\Pres)$. To begin with, we use Corollary~\ref{cor:goodpaths} to obtain a collection of $(k-1)$-tuples, of size at least $\nu_\subcor{cor:goodpaths}n^{k-1}$, each of which is the end-tuple of a path of length $\ell_\sublem{lem:connectpartition}$ starting at $\tpl{w}_\mathrm{res}$ whose vertices, other than those in $\tpl{w}_\mathrm{res}$, are disjoint from $V(\Pres)$. Note that all these tuples are by construction outside $V(\Pres)$ and so also outside $R$. By definition of a strengthened pair and $(\eps_k,d)$-reduced multicomplex, and choice of $t_0$ and $\eps_k$, at least half of these end-tuples are contained in $(k-1)$-cells of $\cP^*_c$ which are in $\cR$. In particular, by averaging there is a $(k-1)$-cell of $\cR$, with clusters in a given order, $f_0$, such that at least a $\tfrac12\nu_\subcor{cor:goodpaths}$-fraction of these end-tuples are in $f_0$ in the given order. Let $P_0=\Pres$, and let $Q_0$ denote the set of $(k-1)$-tuples in $f_0$ which are ends of paths of length $\ell_\sublem{lem:connectpartition}$ starting from $\tpl{w}_0:=\tpl{w}_\mathrm{res}$ whose vertices outside $\tpl{w}_0$ are disjoint from $P_0$.
  
  We order arbitrarily the $k$-edges of $\cR'$ with positive $\omega^*$-weight, and for each $j$, let $g_j$ be a $k$-edge of $\cR$ whose vertices are the same as the $j$th $k$-edge of $\cR'$; we let $\omega^*(g_j)$ be given by $\omega^*$ at the $j$th edge of $\cR'$. We now create a sequence $e_1,\dots$ of $k$-edges of $\cR$ as follows. To begin with, we choose a tight link in $\cR$ from $f_0$ to a $(k-1)$-tuple in $g_1$ using only clusters of positive weight, and we let the first edges $e_1,\dots$ be the edges of a homomorphic copy of a minimum length tight path following this tight link. We then repeat $g_1$ in the sequence
  \[\Big\lceil\frac{k(1-\nures)\tfrac{n}{t}\cdot\omega^*(g_1)}{\ell_\sublem{lem:connectpartition}} \Big\rceil\]
  times, follow a tight link to $g_2$, and so on. When we follow a tight link, we always do so such that the edges $e_1,\dots$ form a homomorphic copy of a tight path in $\cR$, using only vertices whose weight according to $\omega$ is positive; this is possible since the vertices of each $g_j$ have weight at least $\omega^*(g_j)>0$, and since the positive-weight induced submulticomplex of $\cR$ is tightly linked. Note that the number of repetitions of $g_1$ fixes the ordered $(k-1)$-cell in the boundary of $g_1$ from which we follow a tight link to $g_2$, and so on.
  
  Since $\cR$ is a bounded size multicomplex --- it contains in total at most $t_1^k\cdot t_1^{\binom{k}{2}}\dots t_1^{\binom{k}{k-1}}\le t_1^{2^k}$ edges of size $k$
  --- the total number of edges $e_i$ used in following tight links is at most $4k^3\cdot t_1^{2^{k+1}}$.
 
 We now use the following procedure repeatedly for $i\ge1$. We are given $P_{i-1}$ which is a path from $\tpl{v}_\mathrm{res}$ to $\tpl{w}_{i-1}$, an ordered $(k-1)$-cell $f_{i-1}$ of $\cR$ (which is contained in $e_{i-1}$ and also in $e_i$), and a set $Q_{i-1}$ of $(k-1)$-tuples in $f_{i-1}$ which are ends of paths of length $\ell_\sublem{lem:connectpartition}$ from $\tpl{w}_{i-1}$ whose vertices outside $\tpl{w}_{i-1}$ are disjoint from $P_{i-1}$. We suppose $Q_{i-1}$ contains at least a $2\nu_\sublem{lem:connectpartition}$-fraction of the $(k-1)$-tuples in $f_{i-1}$. We let $f_i$ be the $(k-1)$-cell in the boundary of $e_i$ on the last $k-1$ clusters of $e_i$, with the order inherited from $e_i$.
 
 By Lemma~\ref{lem:connectpartition}, with input as above, and $S=V(P_{i-1})$, and choice of $\nu_\sublem{lem:connectpartition}$ there is a tuple $\tpl{w}_i$ in $Q_{i-1}$ such that the following holds. Let $P_i$ denote the extension of $P_{i-1}$ to $\tpl{w}_i$ by adding a path of length $\ell_\sublem{lem:connectpartition}$ witnessing $\tpl{w}_i\in Q_{i-1}$; let $S'$ be the vertices $V(P_i)\setminus V(P_{i-1})$. There is a set $Q_i$ of $(1-\nu_\sublem{lem:connectpartition})$-fraction of the tuples of $f_i$, each of which is the end of a path of length $\ell_\sublem{lem:connectpartition}$ from $\tpl{w}_i$, whose vertices outside $\tpl{w}_i$ are disjoint from $V(P_{i-1})$ and from $S'$. Note that this is the setup required to iterate the application of Lemma~\ref{lem:connectpartition}, provided that we ensure that at no stage does $S=V(P_{i-1})$ intersect any cluster of $e_i$ in more than a $(1-\eta_\sublem{lem:connectpartition})$-fraction. This is guaranteed for the following reason. Given a cluster $V_j$ of $\cP_c$, if $\omega(V_j)=0$ then $V_j$ is not a vertex of any $e_i$. If on the other hand $\omega(V_j)>0$, then the total number of vertices used in $V_j$ is at most
 \[\ell_\sublem{lem:connectpartition} \cdot 4k^3\cdot t_1^{2^{k+1}} + 2\ell_\sublem{lem:connectpartition}\cdot\binom{t_1}{k-1} + \tfrac{\ell_\sublem{lem:connectpartition}}{k} \cdot \frac{k(1-\nures)\tfrac{n}{t}}{\ell_\sublem{lem:connectpartition}} \cdot \sum_{g_i\ni V_j}\omega^*(g_i)\,,\]
 where the first term counts vertices used in following tight links, the second accounts for the rounding up in the weighting at each edge $g_i$ containing $V_j$ and the (at most) one vertex per $g_i$
extra since the tight path may use one more vertex in some clusters than others (since $\tfrac{\ell_\sublem{lem:connectpartition}}{k}$ may not be an integer). Note that these first two terms are bounded from above by a constant. Since $\sum_{g_i\ni V_j}\omega^*(g_i)\le\omega(V_j)$, we see that the number of vertices used in $V_j$ is at most
 \[O(1)+ \tfrac{\ell_\sublem{lem:connectpartition}}{k} \cdot \frac{k(1-\nures)\tfrac{n}{t}}{\ell_\sublem{lem:connectpartition}} \cdot \omega(V_j) = O(1)+|V_j\setminus V(\Pres)|-2\eta_\sublem{lem:connectpartition}\tfrac{n}{t} \le \big|V_j\setminus V(\Pres)\big|-\eta_\sublem{lem:connectpartition}\tfrac{n}{t}\,,\]
 and in particular at all times at least $\eta_\sublem{lem:connectpartition}\tfrac{n}{t}$ vertices remain in $V_j$. We let $\Palm$ denote the final tight path from $\tpl{v}_\mathrm{res}$ to $\tpl{w}_\mathrm{alm}$ obtained by this procedure.
 
 Observe that, just counting repetitions of the $g_i$, the total number of vertices $\big|V(\Palm)\setminus V(\Pres)\big|$ is at least
 \begin{align*}
  \ell_\sublem{lem:connectpartition} \cdot\tfrac{k(1-\nures)\tfrac{n}{t}}{\ell_\sublem{lem:connectpartition}}\cdot \sum_{e\in\cR'^{(k)}}\omega^*(e) & > k(1-\nures)\tfrac{n}{t}\cdot \tfrac{1}{k}\cdot\frac{n-|V(\Pres)|- 3\eta_\sublem{lem:connectpartition} n}{(1-\nures)\tfrac{n}{t}}\\
  &= n-|V(\Pres)|-3\eta_\sublem{lem:connectpartition}n\,.
 \end{align*}
 It follows that $n-\big|V(\Palm)\big|\le 3\eta_\sublem{lem:connectpartition}n$. Let $L=\big(V(G)\setminus V(\Palm)\big)\cup\tilde{L}$. Recall that $\tilde{L}$ is the set of at most $T_\sublem{lem:ssshrl}!-1$ vertices we removed from $\tilde{G}$ in order to guarantee the required divisibility condition.
 
 Our final task is to extend $\Palm$, re-using some vertices of $R$, to cover the vertices of $L$ and connect the ends.
 Critically, observe that $|L|$ is much smaller than $|R|$, and that by assumption on $\Gamma$ and $R$, the good event of Lemma~\ref{lem:connecting} holds for $\Gamma[R]$, for the input given at the start of the proof. Recall that $G[R]$ has minimum codegree at least $\big(\tfrac12+\tfrac12\gamma\big)p|R|$. Let $\cP^*_{cr}$ and $\cP^*_{fr}$ denote the families of partitions obtained from $\cP^*_c$ and $\cP^*_f$ respectively by reducing each cell to only those elements contained in $R$. By Lemma~\ref{lem:RRL} and choice of $\eps_k$, $\eps$, $f_k$ and $f$, $(\cP^*_{cr},\cP^*_{fr})$ is a $(t_0,t_1,t_2,2\nures^{-k}\eps_k,\sqrt{\eps(t_1)},2\nures^{-k}f_k,\sqrt{f(t_2)},p)$-strengthened pair for $G[R]$. Let $\cR_r$ denote the multicomplex obtained from $\cR$ by replacing the cells of $\cP^*_c$ with those of $\cP^*_{cr}$. Note that $\cR_r$ is still the $(\eps_k,d)$-reduced multicomplex for this strengthened pair, so it is contained in the $(2\nures^{-k}\eps_k,d)$-reduced multicomplex.
 
 Let $S_{-1}=\emptyset$.
 We now construct for $i=0,1,\dots$ two disjoint tight paths $P_{v,i}$ and $P_{w,i}$ and $S_i=V(P_{v,i}) \cup V(P_{v,i})$, where one end of $P_{v,i}$ is $\rev{\tpl{v}_\mathrm{res}}$ and the other, $\tpl{v}_{i}$, 
 is $(\tfrac14\gamma\nures,p,\ell_\sublem{lem:connecting})$-good for $S_{i-1}$, and $P_{v,i}$ contains $i$ vertices of $L$ and all other vertices, except those of $\tpl{v}_\mathrm{res}$, are in $R$.
 Similarly one end of $P_{w,i}$ is $\tpl{w}_\mathrm{alm}$ and the other, $\tpl{w}_{i}$, is $(\tfrac14\gamma\nures,p,\ell_\sublem{lem:connecting})$-good for $S_{i-1}$, and $P_{w,i}$ contains $i$ vertices of $L$, not in $P_{v,i}$, and all other vertices, except those of $\tpl{w}_\mathrm{alm}$, are in $R$. We do this as follows. To begin with, we find a tight path $P_{v,0}$ of length $k-1$, one of whose end tuples is $\rev{\tpl{v}_\mathrm{res}}$ and the other of which, $\tpl{v}_{0}$, is contained in $R$.
 Recall that every $(k-1)$-set in $V(G)$ contains at least $\big(\tfrac12+\tfrac12\gamma\big)p|R|$ edges of size $k$ with the extra vertex in $R$, so in particular we can greedily build the required path of length $k-1$. We construct $P_{w,0}$ from $\tpl{w}_\mathrm{alm}$ to $\tpl{w}_{0}$ similarly.
 Observe that, by definition, both $\tpl{v}_{0}$ and $\tpl{w}_{0}$ are $(\tfrac14\gamma\nures,p,\ell_\sublem{lem:connecting})$-good for $S_{-1}$.
 
 Now suppose $i\ge1$ and that we have constructed tight paths $P_{v,i-1}$ and $P_{w,i-1}$ as above, whose ends $\tpl{v}_{i-1}$ and $\tpl{w}_{i-1}$ are both $(\tfrac14\gamma\nures,p,\ell_\sublem{lem:connecting})$-good for $S_{i-2}$, and we have $|S_{i-1}|\le 4 (i-1) \ell_\sublem{lem:connecting}$.
 We first extend $P_{v,i-1}$ to a path $P_{v,i}$ as follows.
 We choose any $u \in L\setminus S_{i-1}$ and vertices $v_1,\dots,v_{k-2}$ from $R \setminus S_{i-1}$ such that the tuple $(u,v_1,\dots,v_{k-2})$ is $(\tfrac14\gamma\nures,p,\ell_\sublem{lem:connecting}+k-1)$-good for $S_{i-1}$.
 This step always succeeds, as $o(n)$ of these tuples are not $(\tfrac14\gamma\nures,p,\ell_\sublem{lem:connecting}+k-1)$-good for $S_{i-1}$, by the good event of Lemma~\ref{lem:goodtuples} assumed above.
 Then we can easily choose additional vertices $v_{k-1}, u_1,\dots,u_{k-1}$ from $R \setminus S_{i-1}$ such that for $j=1,\dots,k$ there is a $k$-edge $\{ u_j,\dots,u_{k-1},u,v_{1},\dots,v_{j-1} \}$ and the tuples $\tpl{v}_{i}=(v_{1},\dots,v_{k-1})$ and $\tpl{u} = (u_{1},\dots,u_{k-1})$ are $(\tfrac14\gamma\nures,p,\ell_\sublem{lem:connecting})$-good for $S_{i-1}$.
 For example, there are at least $(\tfrac12+\tfrac14\gamma)p|R|$ edges $\{u,v_1,\dots,v_{k-1} \}$ in $G$ with $v_{k-1} \in R$, of which at most $p|S| + \tfrac14\gamma\nures pn \le \tfrac14 p|R|$ have $v_{k-1} \in S_{i-1}$ and at most $\tfrac14\gamma\nures pn$ are such that $(v_1,\dots,v_{k-1})$ is not $(\tfrac14\gamma\nures,p,\ell_\sublem{lem:connecting}+k-2)$-good for $S_{i-1}$.
 
 Next, with Lemma~\ref{lem:connecting}, we connect $\tpl{v}_{i-1}$ to $\tpl{u}$ with a tight path of length $\ell_\sublem{lem:connecting}$ and internal vertices not in $S_{i-1}$.
 Note that here we added the set $S'$ containing the vertices $V(P_{v,i-1}) \setminus V(P_{v,i-2})$,  $V(P_{w,i-1}) \setminus V(P_{w,i-2})$, and $\{ u,v_1,\dots,v_{k-1} \}$ and that $|S'| \le 2 \ell_\sublem{lem:connecting} + 2k$.
 To see that the conditions of Lemma~\ref{lem:connecting} are satisfied, recall that $|S_{i-1}|\le 4 (i-1) \ell_\sublem{lem:connecting} \le 6 \eta_\sublem{lem:connectpartition} \ell_\sublem{lem:connecting} n$.
 By the choice of $\eta_\sublem{lem:connectpartition}$, this is at most $\frac14|R|$ and there can bet at most $\nu_\sublem{lem:connecting}t$ $1$-cells in $\cR_r$ which intersect $S_{i-1}$ in at least a $\tfrac12$-fraction.
 We then let $P_{v,i}$ be the path obtained by concatenating $P_{v,i-1}$, the path from $\tpl{v}_{i-1}$ to $\tpl{u}$, and the path from $\tpl{u}$ via $u$ to $\tpl{v}_{i}$.
 
 If there remain uncovered vertices in $L$, we repeat the same procedure to extend $P_{w,i-1}$ to $P_{w,i}$, where in the last step we also add the vertices from $V(P_{v,i}) \setminus V(P_{v,i-1})$ to $S'$ and get $|S'| \le 3 \ell_\sublem{lem:connecting} + 3k$.
 Note that afterwards with $S_i = V(P_{v,i})\cup V(P_{w,i})$ we have $|S_i|\le 4 i \ell_\sublem{lem:connecting}$ and all conditions of $P_{v,i}$ and $P_{w,i}$ needed for the next iterations are satisfied.
 We stop this procedure as soon as all vertices of $L$ are used; we let $P_v$ denote the final $P_{v,i}$ with end tuple $\tpl{v}=\tpl{v}_{i}$, and $P_w$ denote either $P_{w,i}$ or $P_{w,i-1}$ (depending on whether $|L|$ is even or odd, respectively) with end tuple $\tpl{w}$ either $\tpl{w}_{i}$ or $\tpl{w}_{i-1}$, respectively. Finally we make a last use of Lemma~\ref{lem:connecting} to find a tight path in $R$ whose interior vertices are disjoint from $V(P_v)\cup V(P_w)$, and whose ends are $\rev{\tpl{v}}$ and $\tpl{w}$. This is possible for the same reasons as above. Concatenating these three tight paths, we obtain a tight path $\Pcover$ whose end tuples are $\tpl{w}_\mathrm{alm}$ and $\tpl{v}_\mathrm{res}$, such that $L\subset V(\Pcover)$, and such that all interior vertices of $\Pcover$ are contained in $L\cup R$.
 
 Let $R'$ denote the set of vertices $V(\Pcover)\cap R$. By the reservoir property of $\Pres$, there is a tight path $\Pres^*$ whose end tuples are identical to $\Pres$ and whose vertex set is $V(\Pres)\setminus R'$. We replace $\Pres$ with $\Pres^*$ in $\Palm$ to obtain a tight path $\Palm^*$ whose end tuples are identical to those of $\Palm$ and whose vertex set is $V(\Palm)\setminus R'=V(\tilde{G})\setminus (L\cup R')$. Concatenating $\Palm^*$ and $\Pcover$, we obtain the desired tight Hamilton cycle in $\tilde{G}$. 
\end{proof}

\section{Connecting within the partition}
\label{sec:proofconnecting}

In this section we prove Lemma~\ref{lem:connecting} and~\ref{lem:connectpartition}.
For the first, the strategy is to expand from the tuples $\tpl{x}$ and $\tpl{y}$ using Corollary~\ref{cor:goodpaths} and then connecting two of the many ends that we found with Lemma~\ref{lem:connectpartition} by following a tight link given by Lemma~\ref{lem:goodconnected}.

\begin{proof}[Proof of  Lemma~\ref{lem:connecting}]
Let $k \ge 3$ and $\gamma>0$.
Further let $\ell_\subcor{cor:goodpaths}$ be the smallest integer exceeding $\tfrac{k-1}{\gamma}+k-1$, and let $\ell_\sublem{lem:connectpartition}$ be given by Lemma~\ref{lem:connectpartition} on input $k$ and $\gamma$.
Let $h$ be the shortest length of a tight path which admits a homomorphism to the edges of a tight link with first $k-1$ vertices going to the start $(k-1)$-tuple of the tight link in order and last $k-1$ vertices going to the end $(k-1)$-tuple of the tight link; let $\rho$ be such that they are in the order $\rho$.
We let $\ell := h \ell_\sublem{lem:connectpartition} + 2 \ell_\subcor{cor:goodpaths} + 2$.
Further let $s$ be any integer, and set $s_\subcor{cor:goodpaths} = s + \ell_\sublem{lem:connectpartition}$ and $s_\sublem{lem:connectpartition}=\ell$.
Then let $d,\eta>0$, $0<\eps' \le \tfrac12\gamma$, and $\nu_\subcor{cor:goodpaths}$ be given by Corollary~\ref{cor:goodpaths} on input with $\gamma$, $\eps'$, $s_\subcor{cor:goodpaths}$, $k$, and $\ell_\subcor{cor:goodpaths}$.
Next, let $\eta_\sublem{lem:connectpartition}, \nu_\sublem{lem:connectpartition},\nu>0$ and an integer $t_0$ be such that
\begin{align*}
\eta_\sublem{lem:connectpartition} < \tfrac12\eta, \quad 6 \nu_\sublem{lem:connectpartition} < \eta_{\sublem{lem:connectpartition}}^{k-1}, \quad (k-1)! \nu_{\sublem{lem:connectpartition}} \le \tfrac12 \nu_{\subcor{cor:goodpaths}}, \quad \nu \le \tfrac14 \nu_{\subcor{cor:goodpaths}}, \quad \text{and} \quad 
\tfrac{k}{t_0} \le \tfrac18 \nu_{\subcor{cor:goodpaths}}.
\end{align*}
Then let $\eps_k>0$, functions $\eps, f : \mathbb{N} \mapsto (0,1])$, integers $t_1 ,t_2$, and $f_k>0$ be such that Lemma~\ref{lem:connectpartition} is applicable with input as above and $s_\sublem{lem:connectpartition}$, $d$, $\eta_\sublem{lem:connectpartition}$, $\nu_\sublem{lem:connectpartition}$, and $t_0$.
We additionally require that $\eps(t_1)$ is small enough for 
Lemma~\ref{lem:DCL} with input $k$, $\alpha = \eta_{\sublem{lem:connectpartition}}$, $\gamma_c=\tfrac12$, and $d_0=\tfrac{1}{t_0}$.
 
Given $n$, let $p \ge n^{-1+\gamma}$.
Suppose that $\Gamma=G^{(k)}(n,p)$ is in the good events of Corollary~\ref{cor:goodpaths} and Lemma~\ref{lem:connectpartition}.
Let $G \subseteq \Gamma$ be an $n$-vertex $k$-graph with $\delta_{k-1}(G) \ge (\tfrac{1}{2}+\gamma) pn$.
Further, suppose that $(\cP^*_c,\cP^*_f)$ is a $(t_0,t_1,t_2,\eps_k,\eps(t_1),f_k,f(t_2),p)$-strengthened pair for $G$, let $\cR=\cR_{\eps_k,d}(G;\cP^*_c,\cP^*_f)$ be the $(\eps_k,d)$-reduced multicomplex of $G$, and let $t$ be the number of $1$-cells.
Then let $\cR' \subseteq \cR$ be an induced subcomplex of $\cR$ on at least
$(1-\nu)t$ $1$-edges, assume that it is tightly linked,  and fix the coarse density vector $\tpl{d}=(d_{k-1},\dots,d_2)$ with $d_i \ge t_1^{-1}$ for $i=1,\dots,k-1$.
Further, let $S \subseteq V(G)$ with $|S| \le \tfrac{1}{2} n$ be such that it intersects every $1$-cell of $\cR'$ in at most an $(1-\eta)$-fraction.
Next, let $\tpl{x}=(x_1,\dots,x_{k-1})$ and $\tpl{y}=(y_1,\dots,y_{k-1})$ be two $(k-1)$-tuples, which are $(\eps',p,\ell_{\subcor{cor:goodpaths}})$-good for $S$ and also fix a set of vertices $S'$ of size at most $s$.

From now on we will solely work in $\cR'$.
To avoid clashes when constructing the paths we arbitrarily split the vertices in $V(G) \setminus (S \cup S')$ into two sets $T_{\tpl{x}}$ and $T_{\tpl{y}}$ such that each of them intersects every $1$-cell in the same number of vertices and, in particular, more than an $\eta_{\sublem{lem:connectpartition}}$-fraction.
We then define $S_{\tpl{x}}:= S \cup T_{\tpl{y}}$ and $S_{\tpl{y}}:= S \cup T_{\tpl{x}}$.
For any coarse $(k-1)$-cell $E$ we get by Lemma~\ref{lem:DCL} that $|E| \le \frac{3}{2} \left( \frac{n}{t} \right)^{k-1} \prod_{i=2}^{k-1}d_i^{\binom{k-1}{i}}$.
Similarly, applying Lemma~\ref{lem:DCL} to the $1$-cells restricted to $T_{\tpl{x}}$ and $T_{\tpl{y}}$ respectively, we get that $|E \setminus S_{\tpl{x}}^{k-1}|,|E \setminus S_{\tpl{y}}^{k-1}| \ge \frac{1}{2} \left( \eta_{\sublem{lem:connectpartition}} \frac{n}{t} \right)^{k-1} \prod_{i=2}^{k-1}d_i^{\binom{k-1}{i}}$.
Therefore, we have
\begin{align}
\label{eq:fraccells}
\frac{|E \setminus S_{\tpl{x}}^{k-1}|}{|E|} , \frac{|E \setminus S_{\tpl{y}}^{k-1}|}{|E|} \ge \frac{1}{3} \eta_{\sublem{lem:connectpartition}}^{k-1} > 2\nu_{\sublem{lem:connectpartition}} \quad \text{for all } (k-1)\text{-cells } E.
\end{align}

We apply Corollary~\ref{cor:goodpaths} with $S$, $S''=S' \cup \{ y_1,\dots,y_{k-1}\}$, and $\tpl{x}$ to obtain a set $X$ of $\nu_{\subcor{cor:goodpaths}} n^{k-1}$ different $(k-1)$-tuples $\tpl{x'}$ that are reachable from $\tpl{x}$ by a path of length $\ell_{\subcor{cor:goodpaths}}$ with no vertices in $S \cup S''$ but possibly some of $\tpl{x}$.
We now want to estimate how many of the tuples from $X$ we cannot use.
By assumption, in $\cR'$ there are at most $\nu t$ $1$-cells missing, which accumulate to at most $\nu t \tfrac nt n^{k-2} \le \tfrac14 \nu_{\subcor{cor:goodpaths}} n^{k-1}$ tuples.
Also, there can be at most $(k-1) \tfrac{n}{t} n^{k-2} \le \tfrac18 \nu_{\subcor{cor:goodpaths}} n^{k-1}$ tuples which are not crossing with respect to the partition.
Therefore, we have at least $\tfrac12 \nu_{\subcor{cor:goodpaths}} n^{k-1} \ge (k-1)! \nu_{\sublem{lem:connectpartition}} n^{k-1}$ usable tuples in $X$ and there exists a coarse $(k-1)$-cell $E_0$, such that at least a $\nu_{\sublem{lem:connectpartition}}$-fraction of this cell is contained in $X$ and they all have the same ordering.

As $\cR'$ is tightly linked, we can fix a $k$-edge $H_{\tpl{x}}:=\hat{P}(Q_{\tpl{x}},\cP_c^*)$ in $\cR'$ that contains $E_0$.
By Lemma~\ref{lem:connectpartition} applied with $S_{\tpl{x}}$ to $H_{\tpl{x}}$ and another $(k-1)$-cell $E_1$ from $H_{\tpl{x}}$, using~\eqref{eq:fraccells}, there exists a $(k-1)$-tuple $\tpl{x}_0 \in (X \cap E_0)$ such that for a $(1-\nu_{\sublem{lem:connectpartition}})$-fraction of the tuples $\tpl{x}' \in E_1$ there exists a tight path of length $\ell_{\sublem{lem:connectpartition}}$ from $\tpl{x}_0$ to $\tpl{x}'$ with internal vertices not in $S_{\tpl{x}} \cup S''$ for any set $S''$ of at most $s_{\sublem{lem:connectpartition}}$ vertices.

We repeat the above for $\tpl{y}$ with $S_{\tpl{y}}$ and $S''$ the union of $S'$ with the set of all vertices on the path from $\tpl{x}$ to $\tpl{x}_0$, where $|S''| \le s + \ell_{\sublem{lem:connectpartition}} = s_{\subcor{cor:goodpaths}}$.
From this we obtain $Y$, $E_{h+1}$, $E_{h}$, $H_{\tpl{y}}$, and $\tpl{x}_{h+1} \in Y \cap E_{h+1}$ with the analogous properties as above.
As the $1$-cells of $E_0$ are incident to at most $(k-1) \tfrac{n}{t} n^{k-2} \le \tfrac18 \nu_{\subcor{cor:goodpaths}} n^{k-1}$ $(k-1)$-tuples we can assume that $E_0$ and $E_{h+1}$ use a disjoint set of $1$-cells and, therefore, the paths starting in $\tpl{x}_0$ and $\tpl{x}_{h+1}$ do not overlap.

We will now use that, by assumption, $\cR'$ is tightly linked to connect $\tpl{x}_0$ to $\tpl{x}_{h+1}$. We apply tight linkedness with $\tpl{u}$ being the clusters of $E_1$ in the order induced by the order of $\tpl{x}_0$ on $H_{\tpl{x}}$, and $\rho^{-1}\big(\tpl{v})$ defined similarly on $H_{\tpl{y}}$.
By definition of $h$ and $\rho$, there exists a sequence of $h$ edges $E_1,\dots,E_{h}$ that give a homomorphism of a tight path, from $\tpl{u}$ to $\tpl{v}$ in that order, and the edges $E_{i}$ and $E_{i+1}$ are contained in a $k$-edge of $\cR' $ for $i=1,\dots,h-1$.

We will connect $\tpl{x}_0$ to $\tpl{x}_{h+1}$ by following $E_1,\dots,E_{h}$.
Assume that for some $i=0,\dots,h-2$ we have a tuple $\tpl{x}_i$ in $E_i$, tight paths from $\tpl{x}$ to $\tpl{x}_i$ and from $\tpl{y}$ to $\tpl{x}_{h+1}$ with no vertices in $S \cup S'$ but possibly some of $\tpl{x}$ and $\tpl{y}$, and denote the set of vertices of these paths by $S''$.
Further assume that for at least a $(1-\nu_{\sublem{lem:connectpartition}})$-fraction of the $(k-1)$-tuples $\tpl{x}'$ from $E_{i+1}$ there exists a tight path of length at most $\ell_{\sublem{lem:connectpartition}}$ from $\tpl{x}_i$ to $\tpl{x}'$ with no internal vertices in $S_{\tpl{x}} \cup S''$.
Then we apply Lemma~\ref{lem:connectpartition} with $S_\tpl{x} \cup S''$ to the $k$-edge of $\cR'$ containing $E_{i+1}$ and $E_{i+2}$ to obtain with~\eqref{eq:fraccells} that there exists a $(k-1)$-tuple $\tpl{x}_{i+1}$ from $E_{i+1} \setminus (S_{\tpl{x}} \cup S'')^{k-1}$ with a path from $\tpl{x}_i$ to $\tpl{x}_{i+1}$ of length $\ell_{\sublem{lem:connectpartition}}$ with no internal vertices in $S_{\tpl{x}} \cup S''$, such that for at least a $(1-\nu_{\sublem{lem:connectpartition}})$-fraction of the $(k-1)$-tuples $\tpl{x}'$ from $E_{i+2}$ there exists a tight path of length at most $\ell_{\sublem{lem:connectpartition}}$ from $\tpl{x}_{i+1}$ to $\tpl{x}'$ with no internal vertices in $S_{\tpl{x}} \cup S''$ for any set $S''$ of at most $s_{\sublem{lem:connectpartition}}$ vertices.
This implies the condition above for $i+1$ and, therefore, we can advance to the next step.

For the final step let $S''$ be the vertices on the tight paths from $\tpl{x}$ to $\tpl{x}_{h-1}$ and from $\tpl{y}$ to $\tpl{x}_{h+1}$ that we now have and note that $|S''| \le s_{\sublem{lem:connectpartition}}$.
Observe, that for a $(1-2\nu_{\sublem{lem:connectpartition}})$-fraction of the $(k-1)$-tuples $\tpl{x}'$ from $E_{h-1}$ there exist a tight path from $\tpl{x}_{h-1}$ and $\tpl{x}_{h+1}$ to $\tpl{x}'$ avoiding $S_{\tpl{x}} \cup S''$ and $S_{\tpl{y}} \cup S''$ respectively.
Then, by~\eqref{eq:fraccells}, there exist an $\tpl{x}_{h}$ in $E_{h-1} \setminus (S \cup S' \cup S'')^{k-1}$ and two paths from $\tpl{x}_{h-1}$ to $\tpl{x}_{h}$ and from $\tpl{x}_{h+1}$ to $\tpl{x}_{h}$ of length $\ell_{\sublem{lem:connectpartition}}$ that only overlap in $\tpl{x}_{h}$.
This finishes the path from $\tpl{x}_0$ to $\tpl{x}_{h+1}$ and, therefore, we have a path from $\tpl{x}$ to $\tpl{y}$, of length $\ell$ with no internal vertices from $S \cup S'$.
\end{proof}

The proof of Lemma~\ref{lem:connectpartition} is fairly long and intricate. Before explaining it, however, let us sketch an easier version. Suppose that $k=2$ (i.e.\ we are dealing with graphs, not hypergraphs) and rather than having two clusters which are adjacent in the reduced graph, we have a path of $\ell+1$ clusters $V_1,\dots,V_\ell,V_{\ell+1}$ in the reduced graph. We want to show that for most vertices $x\in V_1$, there is a path from $x$ to $y$ for most $y\in V_{\ell+1}$. To begin with, we look at the fine parts within $V_\ell$. We discard those fine parts which do not form $(f_2,\tfrac12d,p)$-regular pairs with most fine parts in $V_{\ell+1}$; by definition of the reduced graph, there are few such, and we let $X_\ell$ be the remaining subset of $V_\ell$. Next, for each $i=\ell-1,\dots,1$ we discard from $V_i$ those vertices with fewer than $(d-\eps_2)p|X_{i+1}|$ neighbours in $X_{i+1}$ to obtain $X_i$. Again, by regularity we discard few vertices at each step, so $X_1$ is most of $V_1$. Now if we choose any $x\in X_1$, we claim there is a path from $x$ to $y$ for most $y\in V_{\ell+1}$.

To see this, note that there are many paths which start at $x$ and go out to $X_\ell$: we can construct these paths greedily starting from $x$, and we have at least $\tfrac12dp|V_i|$ choices in each $X_i$. By Lemma~\ref{lem:goodexpansion} and choice of $\ell$, there are linearly many different endvertices of these paths in $X_\ell$. We call this the \emph{coarse expansion}. However the number of these endvertices will be much \emph{smaller} than $\eps_2|X_\ell|$, so we cannot use the coarse regularity to say anything about the set of endvertices. This is where we need the fine partition: we can ensure the fine regularity constant $f_2$ is so small that the number of endvertices is much larger than $f_2|X_\ell|$. By averaging, there is a fine part $Z$ contained in $X_\ell$ which contains a set $R_0$ of endvertices, where $|R_0|\ge f_2|Z|$. Now $Z$ forms a $(f_2,\tfrac12d,p)$-regular pair with most fine parts in $V_{\ell+1}$. For any such fine part $Z'$, by $(f_2,\tfrac12d,p)$-regularity, the set $\overline{R_1}$ of vertices in $Z'$ which we cannot reach, i.e.\ which do not send an edge to $R_0$, is of size at most $f_2|Z'|$. In other words, we have found, for most fine parts $Z'$ in $V_{\ell+1}$, a path from $x$ to most vertices of $Z'$; that is the desired paths to most vertices of $V_{\ell+1}$. We call this second step the \emph{fine expansion}.

\medskip

It is fairly easy to see that this strategy still works with sets $S$ and $S'$ to avoid. It is also not very hard to modify it to work with one regular pair rather than a path of regular pairs: we split off a small fraction of each cluster to use for the coarse expansion (and we do not reuse this part for the fine expansion). What is not, however, so easy is to make this argument work for $k$-graphs for $k\ge 3$. The coarse expansion step works much as described above, but the fine expansion requires more care. If we are given $k=3$ and a regular polyad on parts $(X,Y,Z)$, and a significant fraction of the $XY$ $2$-cell are marked as end-tuples of tight paths from some given $\tpl{x}$, then we cannot necessarily conclude that almost all pairs in the $YZ$ $2$-cell are end-tuples of tight paths from $\tpl{x}$. We can only conclude this for those pairs whose vertex in $Y$ is also in many marked pairs. However this does then imply that most vertices of $Z$ are in $YZ$ pairs which form an edge with a marked pair, and taking another step, using another regular polyad $(Y,Z,W)$, we can finally argue that most $ZW$ pairs are end-tuples of tight paths from $\tpl{x}$; so the $ZW$ pairs play the same role as $Z'$ in the argument sketched above. For higher uniformity, we generalise this argument; in uniformity $k$, we need $k-1$ steps.

Before we prove Lemma~\ref{lem:connectpartition}, we give the following lemma, which deals with the fine expansion mentioned above. We will also reuse it in proving Lemma~\ref{lem:respath}.

\begin{lemma}\label{lem:finconnect}
Given $k\ge3$ and $\delta$, $d_k>0$, for all sufficiently small $f'_k>0$ we have: given $d_0>0$, for all sufficiently small $f'>0$ and all sufficiently large $m$ the following holds.

Given a set $V$ of vertices, suppose that we have a ground partition $\cP=\{X_0,\dots,X_{2k-3}\}$ with $|X_i|=m$ for each $i$, and for each $2\le i\le k-1$ a $\cP$-partite $i$-graph $G_i$ on $V$ such that for each $Y\subset\{0,\dots,2k-3\}$ the graph $G_i\big[\prod_{y\in Y}X_y\big]$ is $(d_i,f',1)$-regular with respect to $G_{i-1}$ (where we assume $E(G_1)=V$). Furthermore suppose that we have a $\cP$-partite $k$-graph $G_k$, such that $G_k[X_j,\dots,X_{j+k-1}]$ is $(d_k,f'_k,p)$-regular with respect to $G_{k-1}$ for each $0\le j\le k-2$. Suppose that $d_i\ge d_0$ for each $2\le i\le k-1$. Suppose that for each $2\le i\le k$, all the edges of $G_i$ are supported by $G_{i-1}$.

Suppose that we are given a set $R_0\subset G_{k-1}[X_0,\dots,X_{k-2}]$ of size at least
\[\delta m^{k-1}\prod_{\ell=2}^{k-1}d_\ell^{\binom{k-1}{\ell}}\,.\]
Let $R_{k-1}\subset G_{k-1}[X_{k-1},\dots,X_{2k-3}]$ be those $(k-1)$-edges which are the end-tuples of some tight path in $G_k$ with one vertex in each of $X_0,\dots,X_{2k-3}$ and whose start $(k-1)$-tuple is in $R_0$.

Then we have $|R_{k-1}|\ge (1-\delta) m^{k-1}\prod_{\ell=2}^{k-1}d_\ell^{\binom{k-1}{\ell}}$.
\end{lemma}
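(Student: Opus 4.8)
The plan is to reveal the reachable tuples one cluster at a time. For $0\le j\le k-1$ let $R_j\subseteq G_{k-1}[X_j,\dots,X_{j+k-2}]$ be the set of $(k-1)$-edges that are the end-tuple of a tight path on $X_0,\dots,X_{j+k-2}$ (in this order) whose first $(k-1)$-tuple lies in $R_0$, so $R_0$ is as given and $R_{k-1}$ is the set in the statement. A tuple $(x_{j+1},\dots,x_{j+k-1})$ lies in $R_{j+1}$ exactly when some $x_j$ has $(x_j,\dots,x_{j+k-2})\in R_j$, spans a copy of $K^{(k-1)}_k$ on $X_j,\dots,X_{j+k-1}$ in $G_{k-1}$ together with $x_{j+k-1}$, and has $\{x_j,\dots,x_{j+k-1}\}\in G_k$; this single transition is controlled entirely by the $(d_k,f'_k,p)$-regularity of $G_k[X_j,\dots,X_{j+k-1}]$ with respect to $G_{k-1}$, and $j$ runs over exactly the windows $0,\dots,k-2$ for which this regularity is assumed. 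The workhorse of the whole argument is the following consequence of that regularity: if $H'$ is any sub-polyad of the window-$\{X_j,\dots,X_{j+k-1}\}$ polyad none of whose copies of $K^{(k-1)}_k$ lies in $G_k$, then $G_k$ has relative $p$-density $0$ with respect to $H'$, which is not $d_k\pm f'_k$, so necessarily $|K_k(H')|\le f'_k|K_k(\mathrm{polyad})|$. We always build $H'$ by putting $R_j$ on the $\{X_j,\dots,X_{j+k-2}\}$-face and a set of currently-unreachable new tuples on the $\{X_{j+1},\dots,X_{j+k-1}\}$-face, so that $H'$ indeed contains no $G_k$-edge among its $K^{(k-1)}_k$ copies.

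The invariant I would propagate is a ``spread'' property of $R_j$: for a $(k-2)$-tuple $\sigma$ meeting $X_{j+1},\dots,X_{j+k-2}$ write $\mathrm{fib}_j(\sigma)=\{x_j:(x_j,\sigma)\in R_j\}$ and $\mathrm{link}_j(\sigma)=\{x_j:(x_j,\sigma)\in G_{k-1}\}$; the invariant asks that for all but a small fraction of such $\sigma$ we have $|\mathrm{fib}_j(\sigma)|\ge\delta_j|\mathrm{link}_j(\sigma)|$, where $\delta_{k-1}$ will be close to $1$. Granting this for $j=k-1$, summing over $\sigma$ and using the Dense Counting Lemma~\ref{lem:DCL} estimates $|\mathrm{link}_{k-1}(\sigma)|=(1\pm o(1))m\prod_{\ell}d_\ell^{\binom{k-2}{\ell-1}}$ and $|G_{k-1}[X_{k-1},\dots,X_{2k-3}]|=(1\pm o(1))m^{k-1}\prod_\ell d_\ell^{\binom{k-1}{\ell}}$ (accurate because the dense-regularity parameter $f'$ is taken far smaller than $f'_k$) yields $|R_{k-1}|\ge(1-\delta)m^{k-1}\prod_\ell d_\ell^{\binom{k-1}{\ell}}$. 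The induction step $R_j\rightsquigarrow R_{j+1}$ is then as follows: were a non-negligible family $\mathcal B$ of $(k-2)$-tuples $\sigma'$ to violate the conclusion, each would have at least a $(1-\delta_{j+1})$-fraction of its $\mathrm{link}_{j+1}(\sigma')$ ``bad'', i.e.\ with no valid $x_j$; the sub-polyad $H'$ carrying $R_j$ and all these bad new tuples satisfies $|K_k(H')|\le f'_k|K_k(\mathrm{polyad})|$, whereas, using the invariant for $R_j$ together with the Dense and Degree Counting Lemmas~\ref{lem:DCL},~\ref{lem:DCL-variant} to check that each bad $x_{j+1}$ whose tuple $(x_{j+1},\dots,x_{j+k-2})$ has large $R_j$-fiber completes to $\gtrsim\tfrac12\delta_j|\mathrm{link}|$ many $K^{(k-1)}_k$ copies inside $H'$, one gets $|K_k(H')|\gtrsim|\mathcal B|(1-\delta_{j+1})\delta_j\,m^2$ up to a bounded density factor. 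Comparing the two bounds shows $\mathcal B$ is an $O\!\big(f'_k/((1-\delta_{j+1})\delta_j)\big)$-fraction of all such $(k-2)$-tuples, which is negligible once $f'_k$ is small relative to $\delta$ and $d_0$ — and this holds even with $\delta_{j+1}=1-\delta/10$, so one good step upgrades a weak spread to an almost-total one.

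The delicate point, which I expect to be the main obstacle, is getting the invariant started: the given $R_0$ is an \emph{arbitrary} $\delta$-fraction and may be badly concentrated onto low-dimensional faces — for instance $R_0$ might be all $(k-1)$-tuples whose $X_{k-2}$-coordinate lies in a fixed $\delta m$-set, in which case $R_1,\dots,R_{k-2}$ remain concentrated on that set and only a $\delta$-fraction of the relevant $\sigma$ are ``good'' for $R_j$ when $j\le k-3$. What rescues the argument is that such a concentration attached to a cluster $X_i$ is \emph{released} as soon as $X_i$ leaves the sliding window $\{X_j,\dots,X_{j+k-2}\}$, that is at step $i+1\le k-1$; after that the $G_k$-step spreads the reachable set out again (once more via the sub-polyad workhorse, now exploiting that the link of most new tuples into $X_i$ meets a $\delta m$-sized vertex subset in a $\gtrsim\delta$-fraction, by the Regular Restriction and Degree Counting Lemmas~\ref{lem:RRL},~\ref{lem:DCL-variant}). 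Thus the correct invariant is somewhat more elaborate than the one stated above: it must also record the constraints inherited from $R_0$ on the clusters still in the window, with the guarantee that $R_j$ contains almost every tuple compatible with those constraints and that the constraint on the leaving cluster disappears at each step; by step $k-1$ no constraint remains, which is precisely the $(1-\delta)$-spread. The remaining work is routine bookkeeping: fix the hierarchy $f'\ll f'_k\ll\delta,d_0$ and check that each invocation of Lemmas~\ref{lem:DCL},~\ref{lem:RRL},~\ref{lem:DCL-variant} has error tending to $0$ as $m\to\infty$. This same template, and in particular the sub-polyad/regularity workhorse, is what is reused for the fine-expansion step in the proof of Lemma~\ref{lem:respath}.
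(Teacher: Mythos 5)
Your high-level plan matches the paper's: you slide the window $X_j,\dots,X_{j+k-2}$, extract from $(d_k,f'_k,p)$-regularity the ``workhorse'' that any sub-polyad carrying $R_{j-1}$ on one face and $\overline{R_j}$ on the other supports few $K^{(k-1)}_k$'s (this is exactly the paper's bound \eqref{eq:finconnect:bound}), and you correctly locate the central obstacle --- that an arbitrary $\delta$-fraction $R_0$ can be concentrated onto a low-dimensional face of $X_0\times\dots\times X_{k-2}$, and that this concentration should be released one cluster at a time as clusters exit the window.

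However, there is a real gap between ``I expect the correct invariant is somewhat more elaborate'' and an actual proof, and you should not characterize what remains as ``routine bookkeeping.'' The elaborate invariant \emph{is} the proof. In the paper this is Claim~\ref{cl:reachR}: for each $j$ one constructs a hierarchy of sets $U_{j,j}\subset G_j[X_{k-1},\dots,X_{k+j-2}]$, $U_{j,j+1},\dots,U_{j,k-1}\subset R_j$, where $U_{j,j}$ is a $(1-j\eta)$-fraction of the full $j$-cell on the constraint-free clusters, and each $U_{j,i}$ branches into $U_{j,i+1}$ with controlled degree $\approx\eta^{j+1}m\prod d_\ell^{\binom{i}{\ell-1}}$. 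The base case ($j=0$) ``peels'' $R_0$ down to a genuinely spread seed $U_{0,1}\subset X_{k-2}$ of size $\eta m$, which is precisely how the paper absorbs the arbitrary concentration you worry about; the inductive step uses the workhorse together with the Minimum Degree Lemma (Lemma~\ref{lem:MDL}) --- which your proposal never invokes --- to count the $K^{(k-1)}_k$'s supported by a bounded-size cell ($R_{j-1}$ or $C$) on one side and a partial cell on the other, and the Degree Counting Lemma~\ref{lem:DCL-variant} to control the exceptional tuples. Your stated invariant (``for most $\sigma$, $|\mathrm{fib}_j(\sigma)|\ge\delta_j|\mathrm{link}_j(\sigma)|$'') cannot be the one propagated, since as you yourself observe it fails at $j=0$; the replacement you gesture at (``record constraints on clusters still in the window, $R_j$ contains almost every compatible tuple'') is not a well-defined statement about an arbitrary $R_0$, and making it precise and proving it closes over a $G_k$-step is exactly where the paper expends most of its effort. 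So the proposal is a correct strategy outline but is not yet a proof: the induction hypothesis and its verification are missing.
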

\begin{proof}
Given $k$ and $\delta,d_k>0$, we set $\eta=4^{-10k}\delta$, and we set $\gamma=\tfrac{1}{1000k^2}\eta^{2k^2}$. Suppose $0<f'_k<\tfrac{1}{100k}\gamma^3d_k$, and given $d_0$, let $f'>0$ be sufficiently small for all the below applications of Lemmas~\ref{lem:DCL},~\ref{lem:DCL-variant} and~\ref{lem:MDL} with input $k$, $\alpha=1$, $\gamma$, $\delta$, and $d_0$ as required.

For each $1\le j\le k-1$, we let $R_j$ be those $(k-1)$-edges which are the end-tuples of some tight path in $G_k$ with one vertex in each of $X_j,\dots,X_{k+j-2}$ and whose start $(k-1)$-tuple is in $R_0$. For each $1\le j\le k-1$, let $\overline{R_j}=E\big(G_{k-1}[X_j,\dots,X_{j+k-2}]\big)\setminus R_j$. That is, $\overline{R_j}$ is the part of the $j$th $(k-1)$-cell which we \emph{cannot} reach from $R_0$.

By definition, for each $1\le j\le k-1$, there is no edge of $G_k$ which contains both a $(k-1)$-set in $R_{j-1}$ and one in $\overline{R_j}$. Since $G_k[X_{j-1},\dots,X_{j+k-2}]$ is $(d_k,f'_k,p)$-regular with respect to $G_{k-1}$, we conclude that the number of copies of $K^{(k-1)}_k$ in $G_{k-1}$ which contain both an edge of $R_{j-1}$ and one of $\overline{R_j}$ is smaller than an $f'_k$-fraction of all the copies of $K^{(k-1)}_k$ in $G_{k-1}[X_{j-1},\dots,X_{j+k-2}]$, i.e.\ it is at most
\begin{equation}\label{eq:finconnect:bound}
 2 f'_k m^{k}\prod_{i=2}^{k-1}d_i^{\binom{k}{i}}\,.
\end{equation}

The remainder of the proof of this lemma will consist of repeatedly using this fact, together with counting in $G_{k-1}$, to argue that $R_{k-1}$ is necessarily large. We will not need to use $G_k$ (or $(d_k,f'_k,p)$-regularity) again. We do this by the following induction.

 Let $U_{0,0}=\{\emptyset\}$ (which we think of as the $0$-edge in a complex).

\begin{claim}\label{cl:reachR}
There exist sets with the following properties. For each $1\le j\le k-1$, $U_{j,j}$ is a subgraph of $G_j[X_{k-1},\dots,X_{k+j-2}]$, with
\[|U_{j,j}|=\big(1-j\eta\big)m^{j}\prod_{\ell=2}^jd_\ell^{\binom{j}{\ell}}\pm 1\,.\]
For each $0\le j< k-1$ and each $j<i\le k-1$, the set $U_{j,i}$ is a subgraph of $G_i[X_{k+j-i-1},\dots,X_{k+j-2}]$.
For each $0\le j\le k-2$ and each $j\le i\le k-2$, each edge of $U_{j,i}$ is contained in
\[\eta^{j+1} m\prod_{\ell=2}^{i+1}d_\ell^{\binom{i}{\ell-1}}\pm 1\]
edges of $U_{j,i+1}$.

Each edge of $U_{j,k-1}$ is an edge of $R_j$ for each $0 \leq j \leq k-1$.
\end{claim}
\begin{claimproof}
 We begin with the base case $j=0$. Let $U'_{0,k-1}=R_0$. For each $k-2\ge i\ge 1$ successively, we let $U'_{0,i}$ contain all the $i$-edges with one vertex in each of $X_{k-1-i},\dots,X_{k-2}$ which lie in
 \[\text{between}\quad\eta m\prod_{\ell=2}^{i+1}d_\ell^{\binom{i}{\ell-1}}\quad\text{and}\quad 2m\prod_{\ell=2}^{i+1}d_\ell^{\binom{i}{\ell-1}}\]
 edges of $U'_{0,i+1}$. We now check $|U'_{0,i}|$ is sufficiently large. By the third part of Lemma~\ref{lem:DCL-variant}, there are at most $\gamma m^i\prod_{\ell=2}^id_\ell^{\binom{i}{\ell}}$ elements of $G_i$ that violate the upper bound, which by the second part of Lemma~\ref{lem:DCL-variant}
 are in total contained in at most $3\gamma m^{i+1}\prod_{\ell=2}^{i+1}d_\ell^{\binom{i+1}{\ell}}$ edges of $U'_{0,i+1}$. By definition and by Lemma~\ref{lem:DCL}, the total number of edges of $U'_{0,i+1}$ containing edges of $G_i$ violating the lower bound is at most $(1+\gamma)\eta m^{i+1}\prod_{\ell=2}^{i+1}d_\ell^{\binom{i+1}{\ell}}$, and therefore there are at least
 \[|U'_{0,i+1}|-2\eta m^{i+1}\prod_{\ell=2}^{i+1}d_\ell^{\binom{i+1}{\ell}}\]
 edges of $U'_{0,i+1}$ which contain edges of $U'_{0,i}$. We check inductively that this is for each $i$ at least $\tfrac12|U'_{0,i+1}|$, and hence obtain
 \[|U'_{0,i}|\ge\frac{\tfrac12|U'_{0,i+1}|}{2m\prod_{\ell=2}^{i+1}d_\ell^{\binom{i}{\ell-1}}}\ge 4^{i+1-k}\delta m^i\prod_{\ell=2}^id_\ell^{\binom{i}{\ell}}\,.\]
 By choice of $\eta$, in particular we have $|U'_{0,1}|>\eta m$.

Now we let $U_{0,1}\subset U'_{0,1}$ be some set of size $\eta m\pm 1$, and for each $i\ge 1$ successively and each edge $e$ of $U_{0,i}$, we put into $U_{0,i+1}$ some
\[\eta m\prod_{\ell=2}^{i+1}d_\ell^{\binom{i}{\ell-1}}\pm 1\]
edges in $U'_{0,i+1}$ which contain $e$. These sets witness that Claim~\ref{cl:reachR} holds for $j=0$.
 
 Now suppose $1\le j\le k-1$. Given sets $U_{j-1,j-1},\dots,U_{j-1,k-1}$ as in the claim statement for $j-1$, let $C$ consist of those edges in $U_{j-1,k-2}$ which are contained in at least
 \[\gamma m\prod_{\ell=2}^{k-1}d_\ell^{\binom{k-2}{\ell-1}}\]
 members of $\overline{R_j}$. Let $A=R_{j-1}$ and let $B$ consist of those members of $\overline{R_j}$ which contain an edge of $C$. By Lemma~\ref{lem:MDL} (with input $\gamma/2$) and choice of $f'_k$, if $|C|\ge \gamma m^{k-2}\prod_{\ell=2}^{k-2} d_\ell^{\binom{k-2}{\ell}}$,
 then the number of copies of $K^{(k-1)}_k$ containing both an edge of $R_{j-1}$ and of $\overline{R_j}$ is in contradiction to~\eqref{eq:finconnect:bound}. We conclude
 \[|C|<\gamma m^{k-2}\prod_{\ell=2}^{k-2} d_\ell^{\binom{k-2}{\ell}}\,.\]
 By definition, we have
 \[\big|U_{j-1,k-2}\big|=\Bigg(\prod_{i=j-1}^{k-3}\big(\eta^{j} m\prod_{\ell=2}^{i+1}d_\ell^{\binom{i}{\ell-1}}\pm1\big)\Bigg)\cdot\big(1-(j-1)\eta\big)m^{j-1}\prod_{\ell=2}^jd_\ell^{\binom{j}{\ell}}\,,\]
 and hence
 \[\big|U_{j-1,k-2}\setminus C\big|\ge(1-(j-1)\eta)(1-2\gamma\eta^{-k^2})\eta^{(k-j-1)j}m^{k-2}\prod_{\ell=2}^{k-2}d_\ell^{\binom{k-2}{\ell}}\,.\]
 We let $U'_{j,k-1}$ be the set of all edges in $R_j$ which contain an edge of $U_{j-1,k-2}$. Suppose that a given edge $e\in U_{j-1,k-2}\setminus C$ is contained in at least $(1-\gamma)m\prod_{\ell=2}^{k-1}d_\ell^{\binom{k-2}{\ell-1}}$ edges of $G_{k-1}$ together with a vertex of $X_{k+j-2}$. Then by definition of $C$, at least $(1-2\gamma)m\prod_{\ell=2}^{k-1}d_\ell^{\binom{k-2}{\ell-1}}$ of these are edges of $R_j$. By the third part of Lemma~\ref{lem:DCL-variant} there are at most $\gamma m^{k-2} \prod_{\ell=2}^{k-2} d_\ell^{\binom{k-2}{\ell}}$ edges in $U_{j-1,k-2}$ that are contained in less than $(1-\gamma)m\prod_{\ell=2}^{k-1}d_\ell^{\binom{k-2}{\ell-1}}$ edges of $G_{k-1}$ and we conclude
 \begin{align}
  \nonumber\big|U'_{j,k-1}\big|&\ge (1-2\gamma)m\prod_{\ell=2}^{k-1}d_\ell^{\binom{k-2}{\ell-1}}\cdot (1-(j-1)\eta)(1-3\gamma\eta^{-k^2})\eta^{(k-j-1)j}m^{k-2}\prod_{\ell=2}^{k-2}d_\ell^{\binom{k-2}{\ell}}\\
  \label{eq:finconnect:sizeUjk-1}&\ge\big(1-5\gamma\eta^{-k^2}\big)(1-(j-1)\eta)\eta^{(k-j-1)j}m^{k-1}\prod_{\ell=2}^{k-1}d_\ell^{\binom{k-1}{\ell}}\,.
 \end{align}
 If $j=k-1$, we set $U_{k-1,k-1}=U'_{k-1,k-1}$ and by choice of $\gamma$ we are done, so we now suppose $1\le j\le k-2$.
 
 We next let for each $i=k-2,\dots,j$ in succession the set $U'_{j,i}$ consist of all those edges of $G_i$ with one vertex in each of $X_{k+j-i-1},\dots,X_{k+j-2}$ which contain at least
 \[\eta^{j+1} m\prod_{\ell=2}^{i+1}d_\ell^{\binom{i}{\ell-1}}\]
 edges of $U'_{j,i+1}$. Finally we let $U_{j,j}=U'_{j,j}$, and for each $j+1\le i\le k-1$, we create $U_{j,i}$ by, for each $e\in U_{j,i-1}$, putting in $\eta^{j+1} m\prod_{\ell=2}^{i+1}d_\ell^{\binom{i}{\ell-1}}\pm 1$ edges of $U'_{j,i}$ which contain $e$.
 
 We claim that these sets witness the $j$ case of Claim~\ref{cl:reachR}. To show this, we need to show $|U_{j,j}|$ is sufficiently large, and for that purpose we establish bounds on $|U'_{j,i}|$ for each $k-2\ge i\ge j$ in succession.
 
 Given $i$, consider the set of edges $Y_i$ in $G_i[X_{k+j-i-1},\dots,X_{k+j-2}]$ which contain an element of $U_{j-1,i-1}$. By the third part of Lemma~\ref{lem:DCL-variant} and because $U_{j-1,i-1}$ is sufficiently large, we have
 \[|Y_i|=(1\pm\gamma)\big|U_{j-1,i-1}|m\prod_{\ell=2}^id_\ell^{\binom{i-1}{\ell-1}}=(1\pm2\gamma)\eta^{(i-j)j}m^i\prod_{\ell=2}^id_\ell^{\binom{i}{\ell}}\,.\]
 By Lemma~\ref{lem:MDL} with input $\gamma/2$, for any subset $Y'$ of $Y_i$ with $|Y'|\ge\tfrac12|Y_i|$, the number of edges of $G_{i+1}$ which contain an edge of $Y'$ and an edge of $U_{j-1,i}$ is
 \[|Y'|\big(\eta^j\pm\gamma)m \prod_{\ell=2}^{i+1}d_\ell^{\binom{i}{\ell-1}}\,.\]
 Observe that by construction every edge of $U'_{j,i+1}$ contains an edge of $Y_i$ and an edge of $U_{j-1,i}$. Let $Y'$ contain all the edges of $U'_{j,i}$, and if necessary additional edges to match the lower bound of Lemma~\ref{lem:MDL}. Then putting the bound of Lemma~\ref{lem:MDL} together with the definition of $U'_{j,i}$, we have
 \begin{align*}
  |U'_{j,i+1}|&\le |U'_{j,i}|(\eta^j+\gamma)m\prod_{\ell=2}^{i+1}d_\ell^{\binom{i}{\ell-1}}+|Y_i\setminus U'_{j,i}|\eta^{j+1} m\prod_{\ell=2}^{i+1}d_\ell^{\binom{i}{\ell-1}}\\
  &\le \Big((1+\gamma \eta^{-j})\tfrac{|U'_{j,i}|}{|Y_i|}+\eta\big(1-\tfrac{|U'_{j,i}|}{|Y_i|}\big)\Big)\eta^{j}m|Y_i|\prod_{\ell=2}^{i+1}d_\ell^{\binom{i}{\ell-1}}\\
  &\le \Big(1+\gamma\eta^{-j}-(1-\eta)\big(1-\tfrac{|U'_{j,i}|}{|Y_i|}\big)\Big)\eta^{(i+1-j)j}m^{i+1}\prod_{\ell=2}^{i+1}d_\ell^{\binom{i+1}{\ell}}\,.
 \end{align*}
 Comparing the last line, in the case $i=k-2$, with~\eqref{eq:finconnect:sizeUjk-1}, we see
 \[(1-5\gamma\eta^{-k^2})\big(1-(j-1)\eta\big)\le \Big(1+\gamma\eta^{-j}-(1-\eta)\big(1-\tfrac{|U'_{j,k-2}|}{|Y_{k-2}|}\big)\Big)\]
 and hence
 \begin{align*}
  |U'_{j,k-2}|&\ge |Y_i|\Big(1-(j-1)\eta-2j\eta^2-9\gamma\eta^{-k^2}\Big)\\
  &\ge\Big(1-(j-1)\eta-2j\eta^2-10\gamma\eta^{-k^2}\Big)\eta^{(k-2-j)j}m^{k-2}\prod_{\ell=2}^{k-2}d_\ell^{\binom{k-2}{\ell}}\,.
 \end{align*}
 Repeating the same argument for $i=k-3,k-4,\dots,j$ in succession, we end up with
 \[|U'_{j,j}|\ge \Big(1-(j-1)\eta-2(k-1-j)j\eta^2-5\cdot 2^{k-1-j}\gamma\eta^{-k^2}\Big)m^{j}\prod_{\ell=2}^{j}d_\ell^{\binom{j}{\ell}}\,,\]
 which by choice of $\eta$ and $\gamma$ is as required. 
\end{claimproof}

The lemma follows from the case $j=k-1$ of Claim~\ref{cl:reachR} directly, by choice of $\eta$.
\end{proof}

The proof of Lemma~\ref{lem:connectpartition} proceeds as follows. We start by setting up the parameters and the graph together with the properties that we assume.
Afterwards, we choose the fine $(k-1)$-cells $\cC$ (Properties~\ref{fr:polyad} and~\ref{fr:cells}) that we would like to use later and show that these make up most of the coarse $(k-1)$-cell (Claim~\ref{claim:goodfinecells}).
Then we select a small fraction $\overline{\cC}$ of these $1$-cells, which gives a sufficiently large vertex set $\overline{V}$ for the coarse expansion.
Having this setup, we can use the coarse partition and $\overline{V}$ to expand (Claim~\ref{claim:highdgreecells}).
After this we reached a significant fraction of a coarse $(k-1)$-cell, carefully chosen to also give significant fraction of a well-behaved fine $(k-1)$-cell. We then use Lemma~\ref{lem:finconnect} to argue that from a significant fraction of this one fine cell, we can reach almost all of almost all fine $(k-1)$-cells, as required.

\begin{proof}[Proof of  Lemma~\ref{lem:connectpartition}]
\textbf{Setting the parameters.}
For $k \ge 3$ and $\gamma>0$, let $\ell$ be the smallest integer exceeding $\frac{k-1}{\gamma}+2k$, such that $\ell \equiv 1 \pmod{k}$
and let $s \ge 3k$.
Let $d,\eta,\nu > 0$, where w.l.o.g.~we can assume $d,\eta,\nu \le 1$, and $t_0$ be an integer.
We let $0<\gamma_c \le \frac{\nu}{10^{\ell}} t_0^{-2k2^k}$ and $\eps_k>0$ such that
\begin{align}
\label{eq:def_epsk}
(2k)^{3k} \eps_k^{1/2k} \le \frac{\nu^k \eta}{10^{\ell}}  \,.
\end{align}

Next, we let $\alpha_c=\tfrac{\nu}{10k}\eps_k^{1/k}$ and $\eps : \mathbb{N} \mapsto (0,1]$ tend to zero sufficiently fast, such that for any $t_1 \ge t_0$ we have that $\eps:=\eps(t_1)< 2^{-2k 2^k} t_1^{-1} \eps_k$ is small enough for Lemma~\ref{lem:DCL} on input $k$, $\alpha_c$, $d_0=t_0^{-1}$, and $\gamma_c$
and Lemma~\ref{lem:DCL-variant} on input $k$, $\alpha_c$, $\gamma_c$, $d_0=t_0^{-1}$.

We then define
\begin{align}
\label{eq:delta}
\mu=\frac{d \eps_k^{1/k} \nu}{t_1 20 k^2} \, t_1^{-2^k}, \quad
\delta = \frac{\mu^{2 \ell}}{10 (2\ell)!} \, \frac{20k^2 t_0}{\eps_k^{1/2k} \nu } \,  t_1^{-2^k},
\end{align}
and note that $\delta \le \nu$.
We let
\begin{align}
\label{eq:def_gammaf}
0<\gamma_f \le \frac{\delta}{10^k} \frac{\nu}{3^k} t_1^{-2k2^k}
\end{align}
and
\begin{align}
\label{eq:def_fk}
0<f_k \le \frac{\delta^3 \nu}{30^{3k}}\, .
\end{align}
Suppose $f_k$ is small enough for Lemma~\ref{lem:finconnect} on input $k$, $\tfrac12\delta$, $\tfrac12d$.

Next, we let $f : \mathbb{N} \mapsto (0,1]$ tend to zero sufficiently fast, such that for $t_2 \ge t_1$ we have that $f:=f(t_2)<\eps t_2^{-1}$ is small enough for Lemma~\ref{lem:DCL} on input $k$, $\alpha=1$,  $d_0=t_1^{-1}$, and $\gamma_f$ and Lemma~\ref{lem:RRL} on input $k$, $\alpha=\eps_k^{1/2k}$, and $d_0=t_1^{-1}$.
Additionally assume that $\sqrt{f}$ is small enough for Lemma~\ref{lem:finconnect} with input as above, $f_k$, and $d_0=\tfrac{1}{2t_2}$.
For convenience we summarise the relative order of the parameters (besides $\gamma$, $k$, $\ell$, $s$) in a simplified form
\[f \ll f_k, \gamma_f, t_2^{-1} \ll \delta, \eps \ll \eps_k, \gamma_c, t_1^{-1} \ll t_0^{-1}, d, \eta, \nu, \eps' \, .\]

Given $n$ we let $p \ge n^{-1+\gamma}$ and $\Gamma=G^{(k)}(n,p)$.
We assume that $\Gamma$ is in the good event
of Lemma~\ref{lem:goodexpansion} with $\gamma$, $k$, $\ell_0=\ell-3k+4$, and $\mu$.

We are given $G \subseteq \Gamma$ and the reduced complex $\cR_{\eps_k,d}(G;\cP^*_c,\cP^*_f)$ of a $(t_0,t_1,t_2,\eps_k,\eps^2,f_k,f^2,p)$-strengthened pair $(\cP^*_c,\cP^*_f)$ for $G$.
Fix the density vectors $\mathbf{d}=(d_{k-1},\dots,d_2)$ and $\mathbf{d_f}=(d_{k-1}^f,\dots,d_2^f)$ with $d_i \ge t_1^{-1}$ and $d_i^f \ge t_2^{-1}$ for $i=1,\dots,k$ and let $t$ and $t_f$ be the number of coarse and fine $1$-cells respectively.

Take a $k$-set $Q_0$ such that $H:=\hat{P}(Q_0;\cP^*_c)$ is in $\cR_{\eps_k,d}(G;\cP^*_c,\cP^*_f)$. For convenience, we equip the complex $H$ with a $k$-level consisting of all $k$-sets supported by the $(k-1)$-edges of $H$. By definition, the relative $p$-density of $G$ with respect to $H$ is at least $d$. Note that every coarse $1$-cell contains exactly $t_f/t$ fine $1$-cells.

We let $V_1,\dots,V_k$ be the $1$-cells of $H$ and $E_1,\dots,E_k$ be the $(k-1)$-cells of $H$, where $E_i\subset \prod_{j\not=i}V_j$ for $i=1,\dots,k$.
Further, let $S \subseteq V(G)$ be such that $|S \cap V_i| \le (1-\eta) n/t$.
By the good event of Lemma~\ref{lem:goodtuples} assumed above there are $o(n)$ $(k-1)$-sets in $V(\Gamma)$ outside of $S$ that are not $(f,p,\ell)$-good for $S$.
Before we can expand using the coarse partition we need to ensure that we use edges that behave well with respect to the fine cells.

\textbf{Preparing the fine cells.}
The polyad $H$ corresponds to a $(k-1)$-complex $\mathcal{H}(G;H)$ in the multicomplex of the family of partitions $\cP_c^*$.
We denote by $\cH=\mathcal{H}(G;H,\cP_f^*)$ the multicomplex of the family of partitions $\cP_f^*$ restricted
to $\mathcal{H}(G;H)$, i.e.~we keep all the edges of the multicomplex of $\cP_f^*$ that correspond to cells that are contained in cells that correspond to edges of $\mathcal{H}(G;H)$.
For $2 \le j \le k-1$ and any $j$-set $Q$
supported by $\cH$, the number of fine $j$-cells supported by the polyad $\hat{P}(Q;\cP_c^*)$ is
\begin{align}
\label{eq:d_ioverd_if}
\frac{d_j \pm \eps}{d_j^f \pm f}=\left(1 \pm \frac{2\eps}{d_j}\right) \frac{d_j}{d_j^f}\,,
\end{align}
where the equality follows as $f< \eps t_2^{-1}$.
This follows by simple double counting of the $j$-sets supported by $\hat{P}(Q;\cP_c^*)$ that are contained in $\cH$.
This allows us for each $1 \le i \le k-1$ and any $i$-edge of $\cH$ to control the number of $(i+1)$-edges it is contained in.
But we have to carefully select the edges of $\cH$, which we are using.

For this, we denote by $\cH_{\eps_k}=\mathcal{H}_{\eps_k}(G;H,\cP_f^*)$ the $\eps_k$-reduced multicomplex of $G$ with respect to $(H,\cP^f)$, which is the (unique) maximal subcomplex of $\cH$ which has the following properties:
\begin{enumerate}[label=(FR\arabic*)]
\item\label{fr:polyad} For every $k$-edge $\hat{P}(Q;\cP^*_f)$ of $\mathcal{H}_{\eps_k}(G;H,\cP_f^*)$ we have that $G$ is $\big(f_k,p\big)$-regular with respect to $\hat{P}(Q;\cP^*_f)$ and  $d_p\big(G\big|\hat{P}(Q;\cP^*_f)\big) = d_p\big(G\big|H\big)\pm\eps_k$.
\item\label{fr:cells} For each $1 \le i \le k-1$, each $i$-edge of $\mathcal{H}_{\eps_k}(G;H,\cP_f^*)$ is in the boundary of at least
 \begin{align*}
\left(1-k 2^{i+4} \varepsilon_k^{1/k}\right)\frac{t_f}{t} \prod_{j=2}^{i+1} \left( \frac{d_j}{d_j^f} \right)^{\binom{i}{j-1}} \quad\text{ if $i<k-1$,} \\
\text{and }\quad\left(1-k 2^{i+4} \varepsilon_k^{1/k}\right)\frac{t_f}{t} \prod_{j=2}^{k-1} \left( \frac{d_j}{d_j^f} \right)^{\binom{i}{j-1}}\quad\text{ if $i=k-1$}
\end{align*}
$(i+1)$-edges of $\mathcal{H}_{\eps_k}(G;H,\cP_f^*)$ with respect to any other $1$-cell of $H$.
\end{enumerate}
We will show below that by this construction we keep sufficiently many $1$-edges and $(k-1)$-edges.

\begin{claim}
\label{claim:goodfinecells}	
There are at most $6 \eps_k^{1/k} \tfrac{t_f}{t}$ $1$-edges removed from $\mathcal{H}$ to get $\mathcal{H}_{\eps_k}$.
Furthermore, for any $i=1,\dots,k$, there are at least
\[(1-k\,  k!\eps_k^{1/k}) \left(\frac{t_f}{t}\right)^{k-1} \prod_{j=2}^{k-1}  \left(\frac{d_j}{d_j^f}\right)^{\binom{k-1}{j}}\]
$(k-1)$-edges in $\mathcal{H}_{\eps_k}$ that are also contained in $E_i$.
\end{claim}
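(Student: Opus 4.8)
The plan is to treat $\cH_{\eps_k}$ as a localised, one-level-finer analogue of the reduced multicomplex $\cR_{\eps_k,d}$: the fine cells contained in the fixed coarse polyad $H$ play exactly the role that the cells of $\cP^*_c$ play inside $\cR$, and the proof is the same iterated cleaning procedure used to establish the corresponding bounds in Lemma~\ref{lem:goodconnected}. The starting point, the $k$-level, is already controlled because $H$ is a \emph{regular} polyad: unwinding the definition of a regular polyad for the strengthened pair $(\cP^*_c,\cP^*_f)$, for all but a $2\eps_k$-fraction of the $k$-sets $Q'$ supported on $H$ we have simultaneously that $G$ is $\big(f_k,p\big)$-regular with respect to $\hat{P}(Q';\cP^*_f)$ and $d_p\big(G\,\big|\,\hat{P}(Q';\cP^*_f)\big)=d_p\big(G\,\big|\,H\big)\pm\eps_k$, i.e.\ \ref{fr:polyad} holds. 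So the first step deletes at most a $2\eps_k$-fraction of the $k$-edges of $\cH$.

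Next I would clean downward, for $i=k-1,k-2,\dots,1$ in turn. Having removed only a small fraction of the $(i+1)$-edges of $\cH$ at the previous step, an averaging argument shows that only a small fraction of the $i$-edges of $\cH$ can lie in the boundary of fewer $(i+1)$-edges than \ref{fr:cells} requires in a fixed direction; I delete those, together with any $i$-edge that is now unsupported because an $(i-1)$-edge beneath it was deleted, and move on. The structural input that converts ``few $(i+1)$-edges removed'' into ``\ref{fr:cells} fails for few $i$-edges'' is \eqref{eq:d_ioverd_if}, iterated: it pins down the number of fine cells of each uniformity inside a given coarse cell, in a fixed direction, up to a factor $1\pm O(\eps/d)$. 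Propagating the error terms through the $k$ levels --- with the factors $2^i$ and the eventual $1/k$-power entering exactly as in \ref{rg:lower} and in the proof of Lemma~\ref{lem:goodconnected} --- leaves all but at most $6\eps_k^{1/k}\tfrac{t_f}{t}$ of the fine $1$-cells of $\cH$, which is the first assertion.

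For the second assertion, fix $i$ and consider the coarse $(k-1)$-cell $E_i$ on the clusters $V_j$, $j\ne i$. Iterating \eqref{eq:d_ioverd_if} upward from the $1$-cells shows that the total number of fine $(k-1)$-cells contained in $E_i$ is $\big(1\pm O(\eps/d)\big)(t_f/t)^{k-1}\prod_{j=2}^{k-1}(d_j/d_j^f)^{\binom{k-1}{j}}$. A fine $(k-1)$-cell inside $E_i$ is absent from $\cH_{\eps_k}$ only if it was itself deleted at level $k-1$ (there are at most $O(\eps_k^{1/k})$ times the total many such, by the previous paragraph), or one of its at most $\sum_{j=1}^{k-2}\binom{k-1}{j}$ boundary cells was deleted at some level $j$. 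Bounding, for each $j$, the number of fine $(k-1)$-cells of $E_i$ through a single deleted fine $j$-cell by the obvious product count and summing over $j=1,\dots,k-1$ yields a total deficit of at most $k\,k!\,\eps_k^{1/k}$ times the total, which is the claimed bound on the number of surviving $(k-1)$-edges of $\cH_{\eps_k}$ inside $E_i$.

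The hard part will be none of the ideas but the numerical bookkeeping of the $\eps_k^{1/k}$-type error terms across the $k$-fold iteration and the precise translation between fine-cell counts and the thresholds in \ref{fr:cells}; this is routine and mirrors the proof of Lemma~\ref{lem:goodconnected} in Appendix~\ref{sec:proofreg}, so most of it can be cited rather than repeated.
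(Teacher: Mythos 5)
Your plan is correct and follows essentially the same route as the paper: exploit the regularity of the polyad $H$ to bound the number of fine $k$-polyads failing~\ref{fr:polyad}, propagate this downward to bound how many cells are lost at each uniformity, and then count how many fine $(k-1)$-cells of $E_i$ can touch a lost cell of any uniformity, summing a geometric series in $\eps_k^{1/k}$. The one imprecision worth flagging is in the downward ``delete as you go'' description: a single pass for $i=k-1,\dots,1$ does not literally produce the maximal submulticomplex $\cH_{\eps_k}$, since deleting $i$-edges makes some $(i+1)$-edges unsupported and can retroactively spoil degree checks already performed at level $i+1$ (and the phrase ``unsupported because an $(i-1)$-edge beneath it was deleted'' is inconsistent with a decreasing pass, where $(i-1)$-edges have not yet been touched). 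The paper avoids this ordering headache by marking a \emph{static} ``bad'' set in one top-down pass, bounding the bad $1$-edges by the chain double-counting argument (assuming many bad $1$-edges and multiplying up the degree thresholds to force too many bad $k$-edges), and then proving \emph{separately} that an edge is absent from $\cH_{\eps_k}$ iff it is bad or contains a bad edge; that equivalence, established by looking at the first edge removed which is neither bad nor contains a bad edge, is the step that converts the bad-edge count into the stated bound on removed $1$-edges, and it should be stated explicitly rather than folded into ``routine bookkeeping.''
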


\begin{claimproof}[Proof of Claim~\ref{claim:goodfinecells}]
Since $H$ is a regular polyad, for at most $3\eps_k ( \tfrac{n}{t} )^k  \prod_{j=2}^{k} d_j^{\binom{k}{j}}$ of the $k$-sets $Q'$ supported on $H$, the fine polyad $\hat{P}(Q';\cP^*_f)$ fails~\ref{fr:polyad}.
As any $k$-edge
in $\cH$ supports at least $\tfrac{3}{4} (\tfrac{n}{t_f})^k \prod_{j=2}^{k} (d_j^f)^{\binom{k}{j}}$ of these $Q'$, we have at most
\begin{align}
\label{eq:number_bad_edges}
4 \eps_k \left(\frac{t_f}{t}\right)^k  \prod_{j=2}^{k} \left(\frac{d_j}{d_j^f}\right)^{\binom{k}{j}}
\end{align}
$k$-edges in $\mathcal{H}$ failing~\ref{fr:polyad}.
We mark all these $k$-edges as \emph{bad}.
Then, for each $i=k-1,\dots,1$, in succession, we mark as \emph{bad} all $i$-edges which are contained in the boundary of at least
\begin{align*}
6\eps_k^{1/k} (k-i) \frac{t_f}{t} \prod_{j=2}^{i+1} \left( \frac{d_j}{d_j^f} \right)^{\binom{i}{j-1}}
\end{align*}
bad $(i+1)$-edges.

Now consider the following construction.
We begin by taking any bad $1$-edge, then any bad $2$-edge containing it, and so on until we obtain a bad $k$-edge together with an order on its vertices.
Clearly we obtain any given bad $k$-edge in at most $k!$ ways by following this process (since a $k$-edge together with an order determines the chosen edges in the process).
If there are more than $6\eps_k^{1/k}\frac{t_f}{t}$ bad $1$-edges, it follows that the number of bad $k$-edges is at least
 \[\tfrac{1}{k!}\prod_{i=0}^{k-1}\left( 5\eps_k^{1/k} (k-i) \frac{t_f}{t}\prod_{j=2}^{i+1}\left( \frac{d_j}{d_j^f} \right)^{\binom{i}{j-1}}\right)= 5^k\eps_k \left(\frac{t_f}{t}\right)^k\prod_{i=2}^k\left( \frac{d_i}{d_i^f} \right)^{\binom{k}{i}}\,, \]
which is a contradiction.

We claim that any edge of $\cP_f^*$ which is not in $\cH_{\eps_k}(G;H,\cP^*_f)$ is either bad or contains a bad edge.
To see this, consider the process of obtaining $\cH_{\eps_k}(G;H,\cP^*_f)$ by successively removing edges which either fail one of~\ref{fr:cells} or~\ref{fr:polyad}, or which contain a removed edge.
Suppose for a contradiction that at some stage in this process we remove an edge which is neither bad nor contains a bad edge; let $e$ be the first such edge removed. Observe that $e$ cannot have been removed for failing~\ref{fr:polyad}, since edges which fail this condition are bad.
Furthermore $e$ cannot have been removed for being unsupported, because all edges previously removed either were bad or contain bad edges, and by assumption $e$ contains no bad edges.
So $e$ was removed for failing~\ref{fr:cells}.
In other words, we have $|e|\le k-1$ and $e$ is neither bad nor contains a bad edge, but nevertheless there are many $(|e|+1)$-edges containing $e$ which either are bad or contain a bad edge.

Suppose that $f$ is a bad edge such that $|f\setminus e|=1$.
If $|f|=1$, then there are at most $6\eps_k^{1/k}\frac{t_f}{t}$ choices of $f$, each of which, by~\eqref{eq:d_ioverd_if}, is contained in at most
\begin{align}
\label{eq:badedgesin1cell}
\prod_{i=2}^{|e|+1} \left( \left( 1 + \frac{2\eps}{d_i} \right) \frac{d_i}{d_i^f} \right)^{\binom{|e|+1}{i}- \binom{|e|}{i}}  \le 2 \prod_{i=2}^{|e|+1} \left(\frac{d_i}{d_i^f}\right)^{\binom{|e|}{i-1}}
\end{align}
edges of $\cP^*_f$ of uniformity $|e|+1$ which contain $e$.
If $|f|>1$, then $f\cap e$ is non-empty and not a bad edge. There are at most $\binom{|e|}{\ell}$ choices of $f\cap e$ with $\ell$ elements, each of which by definition is contained in less than
 \[6\eps_k^{1/k}(k-\ell)\frac{t_f}{t}\prod_{i=2}^{\ell+1}\left( \frac{d_i}{d_i^f} \right)^{\binom{\ell}{i-1}}\]
 bad edges.
 Thus there are at most
 \[\binom{|e|}{\ell}\cdot 6\eps_k^{1/k}(k-\ell)\frac{t_f}{t}\prod_{i=2}^{\ell+1}\left( \frac{d_i}{d_i^f} \right)^{\binom{\ell}{i-1}}\]
 choices of $f$, each of which is contained in at most
 \[2 \prod_{i=2}^{|e|+1}\left(\frac{d_i}{d_i^f}\right)^{\binom{|e|}{i-1}-\binom{\ell}{i-1}}\]
 edges of $\cP^*_f$ of uniformity $|e|+1$ which contain $e$.
 
 Summing up, the number of $(|e|+1)$-edges containing $e$ which are either bad or contain a bad edge is at most
 \begin{align*}
  &6\eps_k^{1/k}\frac{t_f}{t}\cdot 2 \prod_{i=2}^{|e|+1}\left(\frac{d_i}{d_i^f}\right)^{\binom{|e|}{i-1}}\\
  &+\sum_{\ell=1}^{|e|}\binom{|e|}{\ell}\cdot 6\eps_k^{1/k} (k-\ell) \frac{t_f}{t}\left(\prod_{i=2}^{\ell+1}\left(\frac{d_i}{d_i^f}\right)^{\binom{\ell}{i-1}}\right)\cdot 2 \prod_{i=2}^{|e|+1}\left(\frac{d_i}{d_i^f}\right)^{\binom{|e|}{i-1}-\binom{\ell}{i-1}}\\
  &\le k 16\eps_k^{1/k}\frac{t_f}{t}\Bigg(\sum_{\ell=0}^{|e|}\binom{|e|}{\ell}\Bigg)\prod_{i=2}^{|e|+1}\left(\frac{d_i}{d_i^f}\right)^{\binom{|e|}{i-1}}=k 2^{|e|+4}\eps_k^{1/k}\frac{t_f}{t}\prod_{i=2}^{|e|+1}\left(\frac{d_i}{d_i^f}\right)^{\binom{|e|}{i-1}}\,.
 \end{align*}
 This last equation simply states that $e$ does \emph{not} fail~\ref{fr:cells}, which is our desired contradiction.
 In particular, every $1$-edge which is not bad is in $\cH_{\eps_k}(G;H,\cP^*_f)$.

With this at hand we want to estimate the number of $(k-1)$-edges of $\cH_{\eps_k}$ that are not in $E_1$ (the same argument applies to any other).
A bad $(k-1)$-edge is in the boundary of at least
\[6 \varepsilon_k^{1/k} \frac{t_f}{t} \prod_{j=2}^{k-1} \left(\frac{d_j}{d_j^f}\right)^{\binom{k-1}{j-1}}\]
bad $k$-edges
and a $k$-edge supports exactly $k$ different $(k-1)$-edges.
With the bound on the number of bad $k$-edges in~\eqref{eq:number_bad_edges} it then follows that there are at most
\[k \varepsilon_k^{(k-1)/k} \left(\frac{t_f}{t}\right)^{k-1} \prod_{j=2}^{k-1} \left(\frac{d_j}{d_j^f}\right)^{\binom{k-1}{j}}\]
bad $(k-1)$-edges.

More generally, for $i=k-2,\dots,1$, a bad $i$-edge is in the boundary of at least
\[6 \varepsilon_k^{1/k} (k-i) \frac{t_f}{t} \prod_{j=2}^{i+1} \left(\frac{d_j}{d_j^f}\right)^{\binom{i}{j-1}}\]
bad $(i+1)$-edges
and an $(i+1)$-edge supports exactly $i+1$ different $i$-edges.
It follows that there are at most
\[k^{\underline{k-i}} \varepsilon_k^{i/k} \left(\frac{t_f}{t}\right)^{i} \prod_{j=2}^{i+1} \left(\frac{d_j}{d_j^f}\right)^{\binom{i}{j}}\]
bad $i$-edges.
In particular, there are at most $k! \eps_k^{1/k} \frac{t_f}{t}$ $1$-edges removed from $\mathcal{H}$ to get $\mathcal{H}_{\eps_k}$.

Then, with~\eqref{eq:d_ioverd_if}, for $i=1,\dots,k-2$, there are at most
\begin{align*}
2 \cdot k^{\underline{k-i}} \eps_k^{i/k} \left(\frac{t_f}{t}\right)^{i} \prod_{j=2}^{i+1}  \left(\frac{d_j}{d_j^f}\right)^{\binom{i}{j}} \cdot \left(\frac{t_f}{t}\right)^{k-1-i} \prod_{j=2}^{k-1}  \left(\frac{d_j}{d_j^f}\right)^{\binom{k-1}{j}-\binom{i}{j}} \\
\le 2 \cdot k! \eps_k^{i/k} \left(\frac{t_f}{t}\right)^{k-1} \prod_{j=2}^{k-1}  \left(\frac{d_j}{d_j^f}\right)^{\binom{k-1}{j}}
\end{align*}
fine $(k-1)$-edges in $\mathcal{H}$ supported by a bad $i$-edge, but no bad $i'$-edge for $1\le i'\le i-1$.
Combining this with the number of $(k-1)$-edges of $\cH$ that are in $E_1$, which we can derive from~\eqref{eq:d_ioverd_if} and the bound on $\eps$,
we get that there are at least
\[ \left(1- \eps_k - 2 \sum_{i=1}^{k-1} k! \eps_k^{i/k} \right) \left(\frac{t_f}{t}\right)^{k-1} \prod_{j=2}^{k-1}  \left(\frac{d_j}{d_j^f}\right)^{\binom{k-1}{j}} \ge (1- k k! \eps_k^{1/k}) \left(\frac{t_f}{t}\right)^{k-1} \prod_{j=2}^{k-1}  \left(\frac{d_j}{d_j^f}\right)^{\binom{k-1}{j}}\]
$(k-1)$-edges in $\mathcal{H}_{\eps_k}$ that are also contained in $E_1$.
\end{claimproof}

\textbf{Partitioning the vertex sets.}
For $i=1,\dots,k$ we let $\mathcal{C}_i$ be the family of fine $1$-cells in $V_i$ that correspond to $1$-edges of $\mathcal{H}_{\eps_k}(G;H,P_f^*)$.
For building the paths we have to avoid $S$.
To avoid clash of vertices during the expansion in the coarse partition,
we set aside a small fraction of the fine cells that do not overlap with $S$ too much.
For $i=1,\dots,k$ let $\overline{\mathcal{C}_i} \subseteq \mathcal{C}_i$
be selections of fine $1$-cells $C$ with $|C \setminus S| \ge \eps_k^{1/2k} \tfrac{n}{t_f}$ such that
\[\sum_{C\in\overline{\mathcal{C}_i}} |C| = \frac{\nu}{16 k^2} |V_i|\pm \frac{t|V_i|}{t_f}\,.\]
This is possible, because by the condition on $S$ and $\eps_k$ in~\eqref{eq:def_epsk} we have $|V_i \setminus S| \ge \eta \tfrac nt \ge 2 \eps_k^{1/2k} \tfrac nt$ and there can be at most $\frac{t_f}{t} \eps_k^{1/2k} \frac{n}{t_f} = \eps_k^{1/2k} \frac{n}{t}$ vertices of $V_i \setminus S$ in $1$-cells $C \in C_i$ with $|C \setminus S|<\eps_k^{1/2k} \tfrac{n}{t_f}$.
Then for $i=1,\dots ,k$ we define $\overline{V}_i=\cup_{C \in \overline{\mathcal{C}_i}} C\setminus S$ and get
\begin{align}
|\overline{V}_i| \ge \eps_k^{1/2k} \frac{\nu}{16 k^2} |V_i| -\frac{t|V_i|}{t_f}\ge \frac{\eps_k^{1/2k} \nu}{20 k^2} \frac{n}{t}\,.
\end{align}
We will use $\overline{\mathcal{C}_i}$ for the expansion and then in the end use $\mathcal{C}_i$ to reach a $(1-\nu)$-fraction of $E_k$.

We now want to argue that the fine $1$-cells from the $\mathcal{C}_i$ are enough to reach a large fraction of any $E_i$.
For this we let $\cH'=\cH_{\eps_k}'(G;H,\cP_f^*,\mathcal{C}_1,\dots,\mathcal{C}_k)$ be the sub-multicomplex of $\cH_{\eps_k}$ induced by $\cC_j$, $j=1,\dots,k$.
For any $i=1,\dots,k$, by Claim~\ref{claim:goodfinecells}, there are at most
\begin{equation}
\label{eq:goodfineHp}
2 k k! \eps_k^{1/k} \left(\frac{n}{t}\right)^{k-1} \prod_{j=2}^{k-1}  d_j^{\binom{k-1}{j}} \le \nu \frac{\eps_k \nu^{k}}{10^{\ell} 20^k k^{2k} 2} \left(\frac{n}{t}\right)^{k-1} \prod_{j=2}^{k-1}  d_j^{\binom{k-1}{j}}
\end{equation}
$(k-1)$-tuples in $E_i$ that are not contained in a $(k-1)$-cell of $\mathcal{H}_{\eps_k}(G;H,\cP_f^*)$, where the inequality follows from the choice of $\eps_k$ in~\eqref{eq:def_epsk}.
Therefore, for $i=1,\dots,k$ at least $(1-\tfrac \nu4)|E_i|$ tuples from $E_i$ are contained in fine $(k-1)$-cells of $\cH'$.
This justifies that it will be sufficient to restrict the fine expansion to $\cH'$.

For the coarse expansion, similarly to $\cH'$, let $\overline{\cH}=\overline{\cH_{\eps_k}}(G;H,\cP_f^*,\overline{\mathcal{C}_1},\dots,\overline{\mathcal{C}_k})$ be the sub-multicomplex of $\cH_{\eps_k}$ induced by $\overline{\cC_j}$, $j=1,\dots,k$.
Then, let $\overline{E}_1,\dots,\overline{E}_k$ be the $(k-1)$-cells of $H$ restricted to the $1$-cells $\overline{V}_i$, where $\overline{E}_i\subset \prod_{j\not=i}\overline{V}_j$ for $i=1,\dots,k$.
Then, for $i=1,\dots,k$, we have with Lemma~\ref{lem:DCL}
\[ |\overline{E}_i| \ge \frac{1}{2} \prod_{j \not= i} |\overline{V}_i| \prod_{j=2}^{k-1} d_j^{\binom{k-1}{j}} \ge \frac{\eps_k \nu^{k}}{20^k k^{2k} 2}  \left(\frac{n}{t}\right)^{k-1} \prod_{j=2}^{k-1}  (d_j)^{\binom{k-1}{j}},\]
which with~\eqref{eq:goodfineHp} gives that at least 
\begin{equation}
\label{eq:goodfineEip}
(1-\nu 10^{-\ell})|\overline{E}_i| \text{ tuples from } \overline{E}_i  \text{ are contained in fine }(k-1) \text{-cells of } \overline{\cH}.
\end{equation}
This will be essential for the coarse expansion.

\textbf{Preparing for coarse expansion.}
We will construct paths starting in a $(1-\nu)$-fraction of the tuples from $E_k$. In order to do this, we need to know that most of these tuples have high degree into $\overline{C}_k$, and that the tuples we then reach have high degree into $\overline{C}_1$, and so on. The following claim allows us to get this.

\begin{claim}
	\label{claim:highdgreecells}
	Let $\nu' \in (0,1)$ with $\nu' \ge \tfrac14 \eps_k^{1/k}$ and $\nu' \ge 3 \gamma_c \prod_{i=2}^{k-1} d_i^{-\binom{k}{i}}$.
	Let $U_i \subseteq V_i$ for $i=1,\dots,k$ be subsets of size at least $\frac{\eps_k^{1/2k} \nu}{20 
	k^2} \frac{n}{t}$ and for $i=1,\dots,k$ let $F_i$ be the $(k-1)$-edges of $\cH'$ in $\prod_{j \not= i} U_j$.
	Then for any $i_1 \not=i_2$ the number of $(k-1)$-tuples from $F_{i_1}$ which are contained in less than
	\begin{equation}
	\label{eq:highdegree}
	\frac{p}{2}  |U_{i_1}| d \prod_{j=2}^{k-1} d_j^{\binom{k-1}{j-1}}
	\end{equation}
	edges of $G$ that are supported by $\cH_{\eps_k}$ and any $(1-\nu')$-fraction $\hat{F}_{i_2}$ of $F_{i_2}$ is at most a $10 \nu'$-fraction of $F_{i_1}$.
\end{claim}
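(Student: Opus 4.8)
The plan is to count, over all $(k-1)$-tuples of $F_{i_1}$, the number of incident $k$-edges of $G$ supported by $\cH_{\eps_k}$ that also meet a fixed "good" fraction $\hat F_{i_2}$, and to show this count is so large that only few tuples can fall below the stated threshold. First I would use the Dense Counting Lemma (Lemma~\ref{lem:DCL}) and the Degree Counting Lemma (Lemma~\ref{lem:DCL-variant}), applied to the coarse $(k-1)$-complex underlying $H$ restricted to the $U_j$ (legal because $|U_j|\ge \tfrac{\eps_k^{1/2k}\nu}{20k^2}\tfrac nt \ge \alpha_c |V_j|$ and $\eps=\eps(t_1)$ was chosen small enough for these lemmas with parameters $\alpha_c$, $\gamma_c$, $d_0=t_0^{-1}$), to establish two facts: (i) all but a $\gamma_c$-fraction of tuples in $F_{i_1}$ extend to $(1\pm\gamma_c)|U_{i_1}|\prod_{j=2}^{k-1}d_j^{\binom{k-1}{j-1}}$ copies of the complete $k$-partite $(k-1)$-complex using the part $U_{i_1}$; and (ii) the number of $K^{(k-1)}_k$-copies in the complex on $U_1,\dots,U_k$ is $(1\pm\gamma_c)\prod_j|U_j|\prod_{j=2}^{k-1}d_j^{\binom kj}$. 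Among these $K^{(k-1)}_k$-copies, by $(d_k,f_k,p)$-regularity of $G$ with respect to $H$ (inherited by $\cH_{\eps_k}$, where the relative $p$-density is at least $d$) all but at most an $f_k$-fraction carry the correct number $\approx d\,p\cdot(\text{\#copies})$ of edges of $G$; and since $H$ is a regular polyad, the number of copies of $K^{(k-1)}_k$ that are supported on a $k$-set outside $\cH_{\eps_k}$ is at most an $O(\eps_k^{1/k})$-fraction of all of them, by Claim~\ref{claim:goodfinecells} combined with~\eqref{eq:d_ioverd_if}.

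Next I would bound the loss from restricting to $\hat F_{i_2}\subseteq F_{i_2}$: since $|F_{i_2}\setminus\hat F_{i_2}|\le \nu'|F_{i_2}|$ and (by Lemma~\ref{lem:DCL-variant}) all but a $\gamma_c$-fraction of these "bad" tuples lie in at most $(1+\gamma_c)\prod_j|U_j|\prod_{j=2}^{k-1}d_j^{\binom kj}\cdot(\text{normalisation})$ copies each, the $k$-edges of $G$ that use a tuple in $F_{i_2}\setminus\hat F_{i_2}$ number at most roughly $\tfrac52\nu' \cdot p\prod_j|U_j|\prod_{j=2}^{k-1}d_j^{\binom kj}\cdot(\text{leading constant})$; this is where the precise shape of the Degree Counting Lemma's second bullet is used. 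Subtracting this, and the $O(\eps_k^{1/k})$ and $O(\gamma_c\prod_i d_i^{-\binom ki})$ error contributions (both of which are $\le\nu'$ by the hypotheses $\nu'\ge\tfrac14\eps_k^{1/k}$ and $\nu'\ge 3\gamma_c\prod_i d_i^{-\binom ki}$), from the total count gives a lower bound of the form $(1-c\nu')\cdot d p\prod_j|U_j|\prod_{j=2}^{k-1}d_j^{\binom kj}$ for the number of $G$-edges supported by $\cH_{\eps_k}$, incident to $F_{i_1}$, and meeting $\hat F_{i_2}$.

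Finally I would run the standard double-counting/averaging argument: if a tuple $\tpl z\in F_{i_1}$ lies in fewer than $\tfrac p2|U_{i_1}|d\prod_{j=2}^{k-1}d_j^{\binom{k-1}{j-1}}$ such edges, it is "deficient"; each deficient tuple contributes a deficit of at least $\tfrac p2|U_{i_1}|d\prod_{j=2}^{k-1}d_j^{\binom{k-1}{j-1}}$ below the per-tuple average $(1\pm\gamma_c)p|U_{i_1}|d\prod_{j=2}^{k-1}d_j^{\binom{k-1}{j-1}}$ implied by fact (i) above, while the total deficit is at most $O(\nu')\cdot p|F_{i_1}||U_{i_1}|d\prod_{j=2}^{k-1}d_j^{\binom{k-1}{j-1}}/(\text{\#tuples})$-type quantity; dividing, the number of deficient tuples is at most $10\nu'|F_{i_1}|$ for a suitable absorption of constants, which is the claim. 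The main obstacle will be the bookkeeping in the second step: carefully separating, among the $k$-edges of $G$ incident to a deficient tuple, the contributions that are (a) not supported by $\cH_{\eps_k}$, (b) supported but landing in $F_{i_2}\setminus\hat F_{i_2}$, and (c) genuinely absent, and checking that each of the error terms is comfortably dominated by the budget $\nu'$ — in particular keeping track of the many density factors $\prod d_j^{\binom{k-1}{j-1}}$ versus $\prod d_j^{\binom kj}$ as one passes between "number of tuples" and "number of $K^{(k-1)}_k$-copies" normalisations, and ensuring the $f_k$-irregularity and $\eps_k^{1/k}$-unsupportedness errors (controlled via $f_k\le \delta^3\nu/30^{3k}$ and~\eqref{eq:def_epsk}) are small relative to $\nu'\ge\tfrac14\eps_k^{1/k}$.
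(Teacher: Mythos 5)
Your overall bookkeeping (controlling the $\eps_k$, $\gamma_c$ and $\nu'$ error budgets, and passing between tuple-counts and $K^{(k-1)}_k$-copy counts) matches the paper, but the final step — the ``standard double-counting/averaging argument'' — does not go through, and this is where the real content of the claim lives. You propose to bound the number of deficient tuples by comparing the total count of relevant $G$-edges against the product (number of tuples)$\times$(expected degree). But sparse regularity gives no per-tuple concentration: the $(\eps_k,p)$-regularity of $G$ with respect to $H$ says nothing about the degree of an individual $(k-1)$-tuple into $G$, only about densities over subgraphs of the polyad that support a constant fraction of the $K^{(k-1)}_k$-copies. Consequently a few tuples can absorb enormous surplus degree, and the identity $\sum_x(\bar d - \deg(x)) = E - T$ only bounds (deficit $-$ surplus), not the deficit itself; a lower bound on $T$, or even a matching upper bound from $(\eta,p)$-upper regularity, does not rule out a large deficit cancelled by a large surplus. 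So ``total deficit is at most $O(\nu')\cdot\ldots$'' is not justified.

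The paper resolves this by turning the local question into a global one: it assumes for contradiction that $|\hat F_1|\ge 10\nu'|F_1|$, uses Lemma~\ref{lem:DCL-variant} to show $\hat F_1$ supports a \emph{constant fraction} of the $K^{(k-1)}_k$-copies of the polyad that also meet $\hat F_2$ and lie in $\cH_{\eps_k}$, and then observes that by definition of $\hat F_1$ these copies carry so few $G$-edges that the relative $p$-density of $G$ over that sub-polyad is at most $\tfrac23 d$. Since the set of copies is larger than an $\eps_k$-fraction, this contradicts the $(\eps_k,p)$-regularity of $G$ with respect to $H$ (whose relative $p$-density is at least $d$). The contradiction argument is precisely the device that lets you use a density statement over a large subgraph where no per-tuple statement is available; you would need to replace your averaging step with this contradiction structure, after which your error-budget accounting in the first two paragraphs does slot in correctly.
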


\begin{claimproof}[Proof of Claim~\ref{claim:highdgreecells}]
W.l.o.g.~let $i_1=1$ and $i_2=2$.
First we note that analogous to~\eqref{eq:goodfineEip} with~\eqref{eq:goodfineHp} we get that $F_1$ is at least a $(1-\tfrac{\nu'}{10})$-fraction of all $(k-1)$-tuples of $\cH$ supported by the respective $1$-cells.
Now fix any $(1-\nu')$-fraction $\hat{F}_2$ of $F_2$.
Next let $\hat{F}_1 \subseteq F_1$ be those $(k-1)$-tuples in $F_1$ which are contained in less than
\[\frac{p}{2} |U_1| d \prod_{i=2}^{k-1} d_i^{\binom{k-1}{i-1}}\]
edges of $G$ that are supported by $\cH$ and $\hat{F}_2$ and assume for a contradiction that
\[|\hat{F}_1| \ge 10 \nu' |F_1| \ge \frac{89}{10} \nu' \prod_{i=2}^k |U_i| \prod_{i=2}^{k-1} d_i^{\binom{k-1}{i}},\]
where the latter inequality holds by Lemma~\ref{lem:DCL} and as $10 (1-\gamma_f)(1-\tfrac{\nu'}{10}) \ge \tfrac{89}{10}$.
By the first part of Lemma~\ref{lem:DCL-variant} at least $(1-\tfrac{1}{100})|\hat{F}_1|$ of these are contained in at least $(1-\tfrac{1}{100})|U_1| \prod_{i=2}^{k-1}d_i^{\binom{k-1}{i-1}}$ $k$-vertex complete $(k-1)$-graphs in ${\cH}$ using vertices from $U_1$.

On the other hand, with Lemma~\ref{lem:DCL}, there are at most
\[\nu'|F_2| \le  \left(1+\frac{1}{100}\right) \nu' \prod_{i\not=2} |U_i| \prod_{i=2}^{k-1}d_i^{\binom{k-1}{i}}\]
$(k-1)$-tuples in $F_2 \setminus \hat{F}_2$.
By the first part of Lemma~\ref{lem:DCL-variant} all but $2\gamma_c |F_2|$ of these are contained in at most $(1+\tfrac{1}{100}) |U_2| \prod_{i=2}^{k-1}d_i^{\binom{k-1}{i-1}}$ $k$-vertex complete $(k-1)$-graphs.
By Claim~\ref{claim:goodfinecells}, the first part of Lemma~\ref{lem:DCL-variant},~\eqref{eq:def_epsk}, and the bound on $\gamma_c$ there are at most \[4 k^2 6^k k! \eps_k^{1/k} \left( \frac{n}{t} \right)^k \prod_{i=2}^{k-1} d_i^{\binom{k}{i}} + \gamma_c k^2 6^k k! \eps_k^{1/k} \left( \frac{n}{t} \right)^k \prod_{i=2}^{k-1} d_i^{\binom{k-1}{i}} \le \frac{\nu'}{2} \prod_{i=1}^k |U_i| \prod_{i=2}^{k-1} d_i^{\binom{k}{i}} \]
$k$-vertex complete $(k-1)$-graphs supported in $\cH$ but not in $\cH_{\eps_k}$.
So with $2\gamma_c (1+\gamma_c) \le \frac{\nu'}{2} \prod_{i=2}^{k-1} d_i^{\binom{k}{i}}$ this gives us at least
\begin{align*}
\left(1-\frac{1}{100}\right)^2|\hat{F}_1||U_1| \prod_{i=2}^{k-1}d_i^{\binom{k-1}{i-1}} -\left( 1+ \left(1+\frac{1}{100}\right)^2\right) \nu' \prod_{i=1}^{k} |U_i| \prod_{i=2}^{k-1}d_i^{\binom{k}{i}} \ge \frac{3}{4}|\hat{F}_1| |U_1| \prod_{i=2}^{k-1}d_i^{\binom{k-1}{i-1}}
\end{align*}
$k$-vertex complete $(k-1)$-graphs in $\cH_{\eps_k}$ that are also in $\hat{F_2}$.

By definition of $\hat{F}_1$, they support at most
\[ |\hat{F}_1| \cdot \frac{p}{2} |U_1| d \prod_{i=2}^{k-1} d_i^{\binom{k-1}{i-1}}\]
edges of $G$.
This gives a $p$-density of at most $\tfrac d2 (\tfrac 34)^{-1}=\frac{2}{3}d$ and, therefore, by $(\varepsilon_k,1)$-regularity of $H$ (recall that the relative $p$-density of $G$ with respect to $H$ is at least $d$), we have
\[\frac{3}{4}  |\hat{F}_1| |U_1|  \prod_{i=2}^{k-1} d_i^{\binom{k-1}{i-1}}  \le \varepsilon_k \left( \frac{n}{t} \right)^{k} \prod_{i=2}^{k-1} d_i^{\binom{k}{i}},\]
which with~\eqref{eq:def_epsk}  and $\nu' \ge \tfrac14 \eps_k^{1/k}$ implies that
\[|\hat{F}_1| < 2 \varepsilon_k \left(\frac{20k^2}{\eps_k^{1/2k} \nu}\right)^{k} \prod_{i=2}^k |U_i| \prod_{i=2}^{k-1} d_i^{\binom{k-1}{i}}
\le 2 \eps_k^{1/k} \prod_{i=2}^k |U_i| \prod_{i=2}^{k-1} d_i^{\binom{k-1}{i}}
\le 8 \nu' \prod_{i=2}^k |U_i| \prod_{i=2}^{k-1} d_i^{\binom{k-1}{i}}\]
and we have the desired contradiction.
\end{claimproof}

We let $\ell_0 = \ell- k+1$. As discussed, there are two stages to building our paths. Given a $(k-1)$-tuple $\tpl{x}$, we first look at all the ways to add $\ell_0$ vertices, all contained in the $\overline{V}_i$, to get tight paths. We then use the fine partition to complete these by adding $k-1$ vertices to get to a $(1-\tfrac \nu4)$-fraction of most fine $(k-1)$-cells in $E_1$, which then gives a $(1-\nu)$-fraction of $E_1$.

We define the sequence $r_i$ for $i=0,\dots,\ell$ with $r_0=k$ and $r_{i+1} \equiv r_i+1 \pmod{k}$ for $i=0,\dots,\ell-1$.
As $\ell \equiv 1 \pmod{k}$, we have $r_{\ell}=1$ and $r_{\ell_0} = 2$.

Let $F_{\ell_0}\subset \overline{E}_2$ denote those $(k-1)$-edges which are contained in $(k-1)$-edges of $\overline{\cH}$. That is, every $(k-1)$-edge of $F_{\ell_0}$ has vertices in $\overline{V}_1,\overline{V}_3,\dots,\overline{V}_k$ and is in a fine $(k-1)$-cell which (and all of whose supporting $j$-cells) satisfies~\ref{fr:cells}.

Let $\nu_0=\nu$, for $i=1,\dots,\ell_0-1$ let $\nu_i=\nu_{i-1}/10$, and observe that with~\eqref{eq:def_epsk} and the bounds on $\nu$ and $\gamma_c$ we have that $\nu_i$ satisfies the requirements of Claim~\ref{claim:highdgreecells} for $i=1,\dots,\ell_0-1$.
We apply Claim~\ref{claim:highdgreecells} to obtain that a $(1-\nu_{\ell_0+2k-4})$-fraction $F_{\ell_0-1}$ of the edges of $\overline{E}_{r_{\ell_0-1}}$ have high degree as in~\eqref{eq:highdegree} into $\overline{V}_{r_{\ell_0+2k-4}}$ with respect to $F_{\ell_0}$.
Repeating this for $i=\ell_0-2,\dots,2$ we get that a $(1-\nu_i)$-fraction $F_{i}$ of the edges of $\overline{E}_{r_{i}}$ have high degree as in~\eqref{eq:highdegree} into $\overline{V}_{r_i}$ with respect to $F_{i+1}$. Finally, repeating the same procedure except replacing $\overline{E}_k$ with $E_k$, we
arrive at a $(1-\nu)$-fraction $F_0$ of the tuples from $E_k$. This set $F_0$ is the set in the lemma statement from which we can construct paths, and we have verified that it is sufficiently large for the lemma statement. We now need to justify that we can indeed construct paths from any tuple in $F_0$ of length $\ell$ to most of $E_1$ avoiding any given small $S'$.

\textbf{Coarse expansion.}
We fix any tuple $\tpl{x} \in F_0$ and any set $S'$ of size at most $s$. We let $P$ be the set of tight paths starting at $\tpl{x}=(x_1,\dots,x_{k-1})$ which can be constructed as follows.

For each $1\le j\le \ell_0$ in succession, we pick a vertex $x_{k+j-1}$ such that $\{x_j,\dots,x_{k+j-1}\}$ is an edge of $G$, and $\{x_{j+1},\dots,x_{k+j-1}\}$ is an edge of $F_j$. In addition, we insist that $x_{k+j-1}$ is not in $S'$, nor equal to $x_i$ for any $i<k+j-1$.

Because of our choice of $F_{j-1}$ to satisfy~\eqref{eq:highdegree}, at any given step $j$, the number of choices for $x_{k+j-1}$ is at least
\[\frac{p}{2}\big|\overline{V}_{r_j}\big|d\prod_{i=2}^{k-1}d_i^{\binom{k-1}{i-1}}-s-k-\ell_0\ge\frac{p}{4}\big|\overline{V}_{r_j}\big|d\prod_{i=2}^{k-1}d_i^{\binom{k-1}{i-1}}\,.\]
It follows that
\[|P|\ge\prod_{j=1}^{\ell_0} \left(\frac{p}{4}\big|\overline{V}_{r_j}\big|d\prod_{i=2}^{k-1}d_i^{\binom{k-1}{i-1}} \right) \ge \left( \frac{1}{4} pd \frac{\eps_k^{1/k} \nu n}{20 k^2t} \prod_{i=2}^{k-1} d_i^{\binom{k-1}{i-1}}  \right)^{\ell_0} \ge (\mu p n)^{\ell_0}\]
paths starting in $\tpl{x}$ and ending with tuples in $F_{\ell_0}\subset \overline{E}_{2}$, where the last inequality follows from~\eqref{eq:delta}.
By the good event of Lemma~\ref{lem:goodexpansion}, since $\ell_0 \ge \frac{k-1}{\gamma}$ we get~that for all $\tpl{x}$ there are at least 
\[\frac{\mu^{2\ell_0}}{8(2\ell_0)!} n^{k-1} \ge \delta \prod_{j\in\{1,3,\dots,k\}}|\overline{V}_j| \prod_{i=2}^{k-2}d_i^{\binom{k-1}{i}}\]
different end-tuples from the end-tuples above, where the lower bound follows from~\eqref{eq:delta}.

\textbf{Expansion in the fine partition.}
We pick a $(k-1)$-cell $\hat{C}_0$ of $\overline{\cH}$ such that at least $\delta|\hat{C}_0|$ of the $(k-1)$-edges in $\hat{C}_0$ are end-tuples of paths in $P$, which is possible by averaging. Let $R'_0$ denote the subset of $\hat{C}_0$ which are end $(k-1)$-tuples of paths in $P$.

We now consider all the $(k-1)$-cells $\hat{C}_{k-1}$ in $\cH'$, whose $1$-cells are contained in 
$\cC_2 \setminus \overline{\cC}_2, \dots,\cC_k \setminus \overline{\cC}_k$
respectively, which we can obtain by the following procedure.
For each $1\le j\le k-1$ in succession, we pick a $(k-1)$-cell $\hat{C}_{j}$ in $\cH'$ such that $\hat{C}_{j-1}$ and $\hat{C}_j$ are in the boundary of some $k$-edge of $\cH'$ and the new $1$-cell is in $\cC_{j+1} \setminus \overline{\cC}_{j+1}$.

For any such choice of $\hat{C}_{k-1}$, we do the following. Let $m=\eps_k^{1/2k}\tfrac{n}{t_f}$. We choose $X_0,\dots,X_{k-2}$ subsets of the $1$-edges of $\hat{C}_0$, each of size $m$, which contain a maximum number of the sets $R'_0$, and let the contained sets be $R_0$. By averaging, we have
\[|R_0|\ge\tfrac12\delta m^{k-1}\prod_{i=2}^{k-1}d_i^{\binom{k-1}{i}}\,.\]
We choose any $X_{k-1},\dots,X_{2k-3}$ subsets of the $1$-edges of $\hat{C}_{k-1}$ each of size $m$. To complete the setup for Lemma~\ref{lem:finconnect}, we put for each $2\le i\le k-1$ an $i$-graph consisting of the restrictions of the $i$-cells in the union of the $\hat{C}_j$ to the $X_j$. We obtain the required $(\tpl{d}_f,\sqrt{f},1)$-regularity from Lemma~\ref{lem:RRL}; and we let $G_k$ of Lemma~\ref{lem:finconnect} be the supported subgraph of $G$, which as previously observed is $\big(\tfrac{d}{2},f_k,p\big)$-regular with respect to each of the required polyads. Thus Lemma~\ref{lem:finconnect} which returns a set $R_{k-1}\subset \hat{C}_{k-1}[X_{k-1},\dots,X_{2k-3}]$ of size at least $(1-\delta)\big|\hat{C}_{k-1}[X_{k-1},\dots,X_{2k-3}]\big|$ of $(k-1)$-tuples which, together with some tuple of $R_0$, make a tight path in $G$. Apart from the at most $kn^{k-1}$ of these tuples which share a vertex with $\tpl{x}$, these are end-tuples of tight paths of length $\ell$ from $\tpl{x}$ whose internal vertices avoid $S\cup S'$. By averaging, we conclude that at least $(1-2\delta)\big|\hat{C}_{k-1}\big|$ of the edges of $\hat{C}_{k-1}$ 
are ends of tight paths of length $\ell$ from $\tpl{x}$ whose internal vertices avoid $S\cup S'$.

\begin{claim}\label{cl:connpart:valid}
There are at least
\[\big(1-\tfrac{\nu}{2k}\big)\big(\tfrac{t_f}{t}\big)^{k-1}\prod_{i=2}^{k-1}\left( \frac{d_i}{d_i^f}\right)^{\binom{k-1}{i}}\]
valid choices of $\hat{C}_{k-1}$.
\end{claim}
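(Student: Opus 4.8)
First I would unwind the combinatorics of the procedure. Writing $\ell_0=\ell-k+1$ and reading cluster indices cyclically, the conditions $\ell\equiv 1\pmod k$ and $\hat C_0\subseteq\overline{E}_2$ force $\hat C_{j-1}$ to omit cluster $j+1$ and $\hat C_j$ to omit cluster $j+2$, so step $j$ of the procedure deletes the $1$-cell on cluster $j+2$ and inserts a fresh $1$-cell $D_{j+1}$ on cluster $j+1$, required to lie in $\cC_{j+1}\setminus\overline{\cC}_{j+1}$, while the cells of $\hat C_j$ among the clusters not yet touched are simply inherited from $\hat C_0$. In particular $\hat C_{k-1}$ has $1$-cells $D_2,\dots,D_k\in\cC\setminus\overline{\cC}$ on clusters $2,\dots,k$ (none inherited), so a valid $\hat C_{k-1}$ is exactly one arising as the final cell of a walk $\hat C_0,e_1,\hat C_1,\dots,e_{k-1},\hat C_{k-1}$, where each $e_i$ is a $k$-edge of $\cH'$ with $\hat C_{i-1}$ and $\hat C_i$ in its boundary; here $e_i\in\cH'\iff e_i\in\cH_{\eps_k}$, since a $k$-edge of $\cH_{\eps_k}$ automatically has all its $1$-cells in $\bigcup_i\cC_i$.

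The plan is then to prove, by induction on $0\le j\le k-1$, that the number of cells $\hat C_j$ reachable in $j$ steps is at least $(1-j\theta)N_j$, where $N_j$ is the number of fine $(k-1)$-cells of $\cH_{\eps_k}$ on the appropriate $k-1$ clusters whose restriction to the not-yet-inserted clusters equals that of $\hat C_0$ and whose $1$-cells on the inserted clusters avoid $\overline{\cC}$, and $\theta$ is a suitable constant multiple of $\eps_k^{1/k}$. For $j=k-1$ there are no inherited clusters, so $N_{k-1}$ counts all fine $(k-1)$-cells of $\cH_{\eps_k}$ refining $E_1$ whose $1$-cells avoid $\overline{\cC}$; by Claim~\ref{claim:goodfinecells},~\eqref{eq:d_ioverd_if} and $|\overline{\cC}_i|\le\tfrac{\nu}{16k^2}\tfrac{t_f}{t}+t$ this gives $N_{k-1}\ge\big(1-\tfrac{\nu}{4k}\big)\big(\tfrac{t_f}{t}\big)^{k-1}\prod_{i=2}^{k-1}(d_i/d_i^f)^{\binom{k-1}{i}}$, and since $(k-1)\theta<\tfrac{\nu}{4k}$ by~\eqref{eq:def_epsk}, the induction yields at least $\big(1-\tfrac{\nu}{2k}\big)\big(\tfrac{t_f}{t}\big)^{k-1}\prod_{i=2}^{k-1}(d_i/d_i^f)^{\binom{k-1}{i}}$ valid $\hat C_{k-1}$, as required.

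The inductive step is the heart of the matter and is where I expect the difficulty to lie. Given a reachable $\hat C_{j-1}\in\cH_{\eps_k}$, property~\ref{fr:cells} with $i=k-1$ provides at least $(1-k2^{k+3}\eps_k^{1/k})\tfrac{t_f}{t}\prod_{m=2}^{k-1}(d_m/d_m^f)^{\binom{k-1}{m-1}}$ $k$-edges of $\cH_{\eps_k}$ whose new vertex lies on cluster $j+1$, of which all but at most $|\overline{\cC}_{j+1}|\prod_{m=2}^{k-1}(d_m/d_m^f)^{\binom{k-1}{m-1}}$ have new $1$-cell outside $\overline{\cC}_{j+1}$. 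One then has to convert these $k$-edges into distinct cells $\hat C_j\in\cH_{\eps_k}$ and show that almost all of $N_j$ is covered; the subtle points are (i) that the number of $e_j$ producing a fixed $\hat C_j$ as its ``miss cluster $j+2$'' boundary cell is itself $\prod_{m=2}^{k-1}(d_m/d_m^f)^{\binom{k-1}{m-1}}$ by~\eqref{eq:d_ioverd_if} (it equals the number of ways to fill in the cluster-$(j+2)$-involving cells), so this factor and the $\overline{\cC}$-correction both cancel against the corresponding factors in the ratio $N_j/N_{j-1}$, leaving only the multiplicative loss $1-\theta$ from the $\eps_k^{1/k}$-slack in~\ref{fr:cells}; and (ii) that the needed estimate is a deficiency bound, namely that at most $\theta N_j$ of the $\hat C_j$ counted by $N_j$ are missed, for which one must run the count over the union of \emph{all} reachable $\hat C_{j-1}$ in the style of the proof of Claim~\ref{claim:goodfinecells}, rather than merely bounding the number of continuations of a single $\hat C_{j-1}$. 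Beyond this the argument uses only~\ref{fr:cells},~\eqref{eq:d_ioverd_if}, Claim~\ref{claim:goodfinecells} and the parameter hierarchy fixed at the start of the proof (in particular~\eqref{eq:def_epsk} and~\eqref{eq:delta}) to verify that the total loss stays below $\tfrac{\nu}{2k}$; one also has to check the harmless point that the lower-order terms $O(t)$ in $|\overline{\cC}_i|$ are negligible against $\tfrac{t_f}{t}$.
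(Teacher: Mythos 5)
Your combinatorial unwinding of the procedure is correct, and you have correctly identified the relevant tools ([FR2], equation~\eqref{eq:d_ioverd_if}, Claim~\ref{claim:goodfinecells}, and the bound on $\sum_{C\in\overline{\mathcal C}_i}|C|$); your bound on $N_{k-1}$ also matches the paper's estimate of the $\overline{\mathcal{C}}$-loss. However, your inductive framework differs from the paper's in a way that creates precisely the difficulty you flag in (ii) and then leave open — and resolving it is the entire content of the claim, so this is a genuine gap rather than an inessential detail.

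The paper does not count reachable $(k-1)$-cells $\hat C_j$. Instead, it observes that the procedure exposes $\hat C_{k-1}$ coordinate-by-coordinate: at step $j$ the only \emph{new} data relevant to $\hat C_{k-1}$ is the tuple $X_j$ of $i$-cells on $V_{j+1}$ together with $i-1$ clusters from $V_2,\dots,V_j$, and the map $(X_1,\dots,X_{k-1})\mapsto\hat C_{k-1}$ (with $\hat C_0$ fixed) is a bijection. The cells of $\hat C_j$ involving $V_1$ and an already-inserted cluster carry no information about $\hat C_{k-1}$, which is why counting the cells $\hat C_j$ themselves, as your $N_j$ does, is the wrong object. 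The key observation is then that for \emph{any} $\hat C_{j-1}$ in $\mathcal H'$, property~\ref{fr:cells} applied to $\hat C_{j-1}$ gives — uniformly, with no dependence on which $\hat C_{j-1}$ was reached — that all but a $k2^{k+5}\eps_k^{1/k}$-fraction of the possible $X_j$ are consistent with some $k$-edge of $\mathcal H'$ containing $\hat C_{j-1}$. (The intermediate step bounds, for each $X_j$, the number of $k$-polyads consistent with both $\hat C_{j-1}$ and $X_j$ by roughly $\prod_{i=2}^{k-1}(d_i/d_i^f)^{\binom{k-1}{i-1}-\binom{j-1}{i-1}}$, so that a large deficit of good $X_j$ would starve $\hat C_{j-1}$ of $k$-edges, contradicting~\ref{fr:cells}.) Because the fraction of good continuations is bounded below uniformly over $\hat C_{j-1}\in\mathcal H'$, the count of valid sequences $(X_1,\dots,X_{k-1})$ is simply the product over $j$, and no averaging over the set of reachable $\hat C_{j-1}$ is needed. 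Your proposal, by contrast, needs to show that for most $\hat C_j$ there exists \emph{some reachable} $\hat C_{j-1}$ extending to it; since the map $\hat C_{j-1}\mapsto\hat C_j$ is many-to-many and unreachable predecessors might conspire to cover a large set of successors, this requires an additional double-counting argument that you gesture at but do not supply. Switching to the paper's coordinate-wise count makes that argument unnecessary.

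Two smaller points: your quantity ``number of $e_j$ producing a fixed $\hat C_j$'' in (i) should carry an extra factor $t_f/t$ for the choice of the $1$-cell on the deleted cluster, and the paper's argument does not in fact compute or use this quantity — it instead bounds the number of polyads consistent with a fixed pair $(\hat C_{j-1},X_j)$, which is a different count.
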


Assuming this claim, by Lemma~\ref{lem:DCL} we conclude that there are at least
\[\big(1-\tfrac{\nu}{2k}\big)\big(\tfrac{t_f}{t}\big)^{k-1}\prod_{i=2}^{k-1} (\tfrac{d_i}{d_i^f})^{\binom{k-1}{i}}(1-2 \delta)(1-\gamma_f) \big(\tfrac{n}{t_f}\big)^{k-1}\prod_{i=2}^{k-1}(d_i^f)^{\binom{k-1}{i}}\ge \big(1-\tfrac{\nu}{2}\big)\big(\tfrac{n}{t}\big)^{k-1}\prod_{i=2}^{k-1}d_i^{\binom{k-1}{i}} \]
$(k-1)$-tuples in $E_1$ which are ends of tight paths from $\tpl{x}$ whose internal vertices avoid $S\cup S'$, as required.

\begin{claimproof}[Proof of Claim~\ref{cl:connpart:valid}]
Recall that for each $i\in[k]$ we have
\[\sum_{C\in \overline{\mathcal{C}}_i}|C|\le \tfrac{\nu}{16k^2}|V_i|+t|V_i|/t_f\,.\]
Thus the number of fine $1$-cells in $\overline{\mathcal{C}}_i$ is at most $\tfrac{\nu}{8k^2}\cdot \tfrac{t_f}{t}$ for each $i$, and so the number of $(k-1)$-cells of $\cH$ which cannot be $\hat{C}_{k-1}$ due to being supported by a $1$-cell in some $\overline{\mathcal{C}}_i$ is at most
\[2 k\tfrac{\nu}{8k^2}\big(\tfrac{t_f}{t}\big)^{k-1}\prod_{i=2}^{k-1}\big(\tfrac{d_i}{d^f_i}\big)^{\binom{k-1}{i}}\,.\]

We consider constructing the $\hat{C}_j$ in order $j=1,\dots,k-1$, ignoring the restriction of not using $1$-cells in the $\overline{\mathcal{C}}_i$, and at each step $j$ keep track of the number of ways to construct $\hat{C}_j$ which will lead to different choices for $\hat{C}_{k-1}$.

Given $j\ge1$, suppose we have fixed $\hat{C}_{j-1}$. This means that we have already fixed the $(j-1)$-cell supporting $\hat{C}_{k-1}$ on $V_2,\dots,V_j$, and when we choose $\hat{C}_j$ we will fix the $i$-cells supporting $\hat{C}_{k-1}$ for each $1\le i\le j$ that are on the vertex set $V_{j+1}$ and some $i-1$ of $V_2,\dots,V_j$. Suppose that we have a particular choice $X$ of all these $i$-cells with $1\le i\le j$ that are on the vertex set $V_{j+1}$ and some $i-1$ of $V_2,\dots,V_j$, which is consistent with the choice of $\hat{C}_{j-1}$ (i.e. each $i$-cell in $X$ has boundary whose $(i-1)$-cell not on $V_{j+1}$ is in the support of $\hat{C}_{j-1}$). Consider the number of $k$-polyads in $\cH$ on $V_1,\dots,V_k$ which are consistent with both $\hat{C}_{j-1}$ and $X$. The number of these is at most
\[2 \prod_{i=2}^{k-1}\big(\tfrac{d_i}{d^f_i}\big)^{\binom{k-1}{i-1}-\binom{j-1}{i-1}}\,.\]
Suppose that more than
\[k2^{k+4}\eps_k^{1/k}\big(\tfrac{t_f}{t}\big)\prod_{i=2}^{k-1}\big(\tfrac{d_i}{d^f_i}\big)^{\binom{j-1}{i-1}}\]
choices of $X$ are not consistent with any $k$-edge of $\cH'$ whose boundary contains $\hat{C}_{j-1}$. Then in particular $\hat{C}_{j-1}$ is contained in less than
\[\big(1-k2^{k+4}\eps_k^{1/k}\big)\big(\tfrac{t_f}{t}\big)\prod_{i=2}^{k-1}\big(\tfrac{d_i}{d^f_i}\big)^{\binom{k-1}{i-1}}\]
$k$-edges of $\cH'$, which is a contradiction to~\ref{fr:cells}.

We conclude that the number of ways to construct $\hat{C}_{k-1}$, ignoring the restriction to avoid the $\overline{\mathcal{C}}_i$, is at least
\[\prod_{j=1}^{k-1}\Bigg(\big(1-k2^{k+5}\eps_k^{1/k}\big)\big(\tfrac{t_f}{t}\big)\prod_{i=2}^{k-1}\big(\tfrac{d_i}{d^f_i}\big)^{\binom{j-1}{i-1}}\Bigg)\ge \big(1-k^22^{k+5}\eps_k^{1/k}\big)\big(\tfrac{t_f}{t}\big)^{k-1}\prod_{i=2}^{k-1}\big(\tfrac{d_i}{d^f_i}\big)^{\binom{k-1}{i}}\,.\]

Finally the number of valid choices of $\hat{C}_{k-1}$ is at least
\[\big(1-\tfrac{\nu }{4k}-k^22^{k+5}\eps_k^{1/k}\big)\big(\tfrac{t_f}{t}\big)^{k-1}\prod_{i=2}^{k-1}\big(\tfrac{d_i}{d^f_i}\big)^{\binom{k-1}{i}}\ge\big(1-\tfrac{\nu }{2k}\big)\big(\tfrac{t_f}{t}\big)^{k-1}\prod_{i=2}^{k-1}\big(\tfrac{d_i}{d^f_i}\big)^{\binom{k-1}{i}}\,,\]
as required.
\end{claimproof}
\end{proof}

\section{Proof of reservoir lemma}
\label{sec:proofreservoir}

In this section we prove Lemma~\ref{lem:respath}.
Before giving the details we outline the strategy of the proof.
We fix $G$ and let $R \subseteq V(G)$ be a set of size $r=|R| \le \nu n$.
For every $u \in R$ we need a reservoir path $P_u$ with reservoir set $\{u\}$ on a constant number of vertices with end-tuples $\tpl{v}_u$ and $\tpl{w}_u$; this is a tight path with end-tuples $\tpl{v}_u$ and $\tpl{w}_u$ and vertex set $V(P_u)$ such that there also exists a tight path with the same end-tuples and vertex set $V(P_u) \setminus \{ u \}$.

To build $\Pres$ we begin with an arbitrary $(k-1)$-tuple $\tpl{v}=\tpl{w}$, which is a reservoir path with an empty reservoir set, and call this $P_0$. 
Assume we have built a reservoir path $P_{i-1}$ with reservoir set $R' \subseteq R$ of size $i-1$ and end-tuples $\tpl{v}_{i-1}$ and $\tpl{w}_{i-1}$  such that $V(P_{i-1})$ does not intersect $R \setminus R'$.
Then, for some $u \in R \setminus R'$, we construct a reservoir path $P_u$ with end-tuples $\tpl{v}_u$ and $\tpl{w}_u$ that is disjoint from $P_{i-1}$.
If $i$ is odd we connect $\tpl{w}_{i-1}$ to $\tpl{v}_u$ by a tight path (using Lemma~\ref{lem:connecting}) and let $\tpl{w}_i=\tpl{w}_u$ and $\tpl{v}_i=\tpl{v}$. If $i-1$ is even we connect $\tpl{v}_{i-1}$ to $\tpl{w}_u$ by a tight path and let $\tpl{v}_i=\tpl{v}_u$ and $\tpl{w}_i=\tpl{w}$.
In both cases we obtain a reservoir path $P_i$ with reservoir set $R' \cup \{ u \}$, end-tuples $\tpl{v}_i$ and $\tpl{w}_i$, and continue.
By alternating between the endpoints we ensure that the end-tuples are always $(\eps',p,\ell')$-good for $V(\Pres')$.

Finally, let us sketch how we construct $P_u$, a picture of which (for $k=5$) is in Figure~\ref{fig:reservoir}. We begin by finding a $(2k-1)$-vertex tight path with $u$ its central vertex; this gives the spikes $\tpl{u}$ and $\tpl{x}_1$ in the figure. We look at all the ways to fill in the upper and lower spike paths in the figure. Using Lemma~\ref{lem:goodexpansion}, we see that from each we can get to a positive density of end-tuples. In particular, we can get to a positive density of each of two vertex-disjoint coarse $(k-1)$-cells in the regular partition, and two applications of Lemma~\ref{lem:finconnect} gives us the tuple $\tpl{v}$ connecting the paths, completing the spikes. We then use Lemma~\ref{lem:connecting} repeatedly to create the paths between pairs of spikes. The only point where we need to be a bit careful is to ensure that when creating the upper and lower spike paths, and when connecting them, we do not reuse vertices; for this purpose we randomly split the vertex set into three parts and use one for each of the upper spike path, the lower spike path, and the connection.

\begin{proof}[Proof of Lemma~\ref{lem:respath}]
Let $\gamma>0$, $k \ge 3$ and $\ell'$ be integers.
Then let $\ell$ be the smallest multiple of $k-1$ which is both larger than $\ell'$ and sufficiently large for Lemma~\ref{lem:connecting} with input $k$ and $\gamma$. Let $\delta=10^{-4\ell}/(32(2\ell)!k!)$.

Set $c = \tfrac{\ell}{k-1}\big(k-1+\ell)+2k-1+\ell$, and set $s=4c$. Let $0<\eps' \le \tfrac14\gamma$ and $0<d \le \tfrac{1}{8}\gamma$ be given.

Next, we let $t_0$ be large enough for Lemma~\ref{lem:goodconnected} with input $k$, $\gamma$, and $d$ and for Lemma~\ref{lem:connecting} with input as above, $d$, $\eta=\tfrac12$, and $\eps'$.
Now let $ 0< \nu_\sublem{lem:connecting}<\tfrac14 \gamma$ and $0 < \eps_k < (\gamma 2^{-k-4})^k$ be small enough for Lemma~\ref{lem:connecting} with input as above, and for $\sqrt{\eps_k}$ to play the role of $f'_k$ in Lemma~\ref{lem:finconnect} with input $k,\delta,d$. Let $\nu=\tfrac{1}{10c} \nu_\sublem{lem:connecting}$. In addition let $\eps,f,f_k:\mathbb{N}\to(0,1]$ tend to zero sufficiently fast for Lemma~\ref{lem:connecting} and such that $\eps(t)$ is small enough for Lemma~\ref{lem:RRL} with input $k$, $\alpha=\tfrac1{100}$ and density $d_0=\tfrac{1}{t}$, and such that for any $d_0$, if $1/t<d_0$ then $\sqrt{\eps(t)}$ is small enough to play the role of $f'$ in Lemma~\ref{lem:finconnect}, and be such that Lemma~\ref{lem:goodconnected} is applicable with inputs as above and minimum degree $\tfrac12+\gamma$, $\eps_k$, $10c\nu$. Finally let $\eta_\sublem{lem:goodconnected}$ be given by Lemma~\ref{lem:goodconnected}.

Given $n$, let $p \ge n^{-1+\gamma}$.
Suppose that $\Gamma=G^{(k)}(n,p)$ is in the good events of Lemma~\ref{lem:connecting} with input as above and Lemma~\ref{lem:goodtuples} with input $\tfrac12\eps'$ and $k$, and Lemma~\ref{lem:goodexpansion} with input $\gamma$, $k$, both $\ell$ and $\ell+1-k$, and $\mu=\tfrac1{100}$, with Lemma~\ref{lem:upperreg}, that $\Gamma$ and all of its subgraphs are $(\eta_{\sublem{lem:goodconnected}},p)$-upper regular.
Suppose $G \subseteq \Gamma$ with $\delta_{k-1}(G) \ge (\frac{1}{2} + \gamma) pn$, that $(\cP^*_c,\cP^*_f)$ is a $\big(t_0,t_1,t_2,\eps_k,\eps(t_1),f_k(t_1),f(t_2),p\big)$-strengthened pair for $G$, and that $t$ is the number of $1$-cells in $\cP^*_c$.
Let $\cR=\cR_{\eps_k,d}(G;\cP^*_c,\cP^*_f)$ be the $(\eps_k,d)$-reduced multicomplex of $G$ and $S$ be the union of the vertices that are not contained in $1$-cells of $\cR$. 
Then, by Lemma~\ref{lem:goodconnected}, $\cR$ contains at least $(1-4 \eps_k^{1/k})t$ $1$-edges and every induced subcomplex on at least $(1-10c\nu)t$ $1$-cells is tightly linked.
We let $S$ be the set of vertices of $V(G)$ that are not in $1$-edges of $\cR$ and note $|S| \le 4 \eps_k^{1/k} n \le \tfrac14 n$.
Finally let $R \subseteq V(G)$ with $|R| \le \nu n$.

Our goal is to construct a reservoir path $\Pres$ in $G$ with reservoir set $R$ and ends $\tpl{v}$ and $\tpl{w}$, such that $|V(\Pres)| \le c |R|$.
Moreover, we require that $\tpl{v}$ and $\tpl{w}$ are $(\eps',p,\ell')$-good for $V(\Pres) \cup S$.
For the construction of the path $\Pres$ we proceed as outlined above.
We start with an arbitrary $(k-1)$-tuple outside $R$ that is $(\tfrac12\eps',p,\ell)$-good for $S \cup R$ (which exists by the good event of Lemma~\ref{lem:goodtuples}), denote it by $\tpl{v}_0$, and set $\tpl{w}_0=\rev{\tpl{v}_0}$. Let this tuple be $P_0$, and let $S_{-1}:=S\cup R$.

\begin{figure}[htb]
	\begin{center} 
	\begin{tikzpicture}[ultra thick,scale=0.5, every node/.style={scale=0.5}]
	
	\node[circle,fill,draw,label=left:{\Huge \bf $u$}] (a) at (-10,1){};		
	\draw[rounded corners] (-9.5,-1.5) rectangle (-13.5,-0.5);	
	\draw[rounded corners] (9.5,0.5) rectangle (13.5,-0.5);	
	
	\foreach \a in {0,1,2,3}
	{
		\node[circle,fill,draw] (u\a) at (-10-\a,-1){};		
		\node[circle,fill,draw] (v\a) at (10+\a,0){};		
	}		
	
	\foreach \a in {0,...,5}
	{
		\draw[rounded corners] ({-8+\a*3},5.5) rectangle ({-7+\a*3},1.5);
		\draw[rounded corners] ({-8+\a*3},-5.5) rectangle ({-7+\a*3},-1.5);
		
		\foreach \b in {0,1,2,3}		
		{
			\node[circle,fill,draw] (x\a\b) at (-7.5+\a*3,5-\b){};
			\node[circle,fill,draw] (y\a\b) at (-7.5+\a*3,-5+\b){};
		}
	}
	
	\node at (-12,0){\Huge \bf $\tpl{u}$};
	\node at (11.5,1){\Huge \bf $\tpl{v}$};
	\node at (-6.5,0){\Huge \bf $P'_1$};
	\node at (-3.5,0){\Huge \bf $P'_2$};
	\node at (3.3,0){\Huge \bf $P'_{\ell^\ast-1}$};
	\node at (6.5,0){\Huge \bf $P'_{\ell^\ast}$};
	\node at (0,0){\Huge \bf $\dots$};	
	
	\node at (-7.5,6){\Huge \bf $\tpl{x}_1$};			
	\node at (-4.5,6){\Huge \bf $\tpl{x}_2$};		
	\node at (-1.5,6){\Huge \bf $\tpl{x}_3$};	
	\node at (1.5,6){\Huge \bf $\dots$};	
	\node at (4.5,6){\Huge \bf $\tpl{x}_{\ell^\ast-1}$};
	\node at (7.5,6){\Huge \bf $\tpl{x}_{\ell^\ast}$};
	
	\node at (7.5,-6){\Huge \bf $\tpl{y}_{\ell^\ast}$};
	\node at (4.5,-6){\Huge \bf $\tpl{y}_{\ell^\ast-1}$};
	\node at (1.5,-6){\Huge \bf $\dots$};	
	\node at (-1.5,-6){\Huge \bf $\tpl{y}_3$};
	\node at (-4.5,-6){\Huge \bf $\tpl{y}_2$};
	\node at (-7.5,-6){\Huge \bf $\tpl{y}_1$};

	\draw[line width=2pt] (u3)--(u2)--(u1)--(u0);
	\draw[line width=2pt] (v3)--(v2)--(v1)--(v0);
	
	\foreach \a in {0,...,5}
	{
		\draw[line width=2pt] (x\a0)--(x\a1)--(x\a2)--(x\a3)--(y\a3)--(y\a2)--(y\a1)--(y\a0);
	}
	
	\draw[line width=2pt] (x00)--(x10) (x20)--(x30) (x40)--(x50);
	\draw[line width=2pt] (y10)--(y20) (y30)-- (y40);
	
	\draw[line width=2pt] (y00)--(y10) (y20)--(y30) (y40)--(y50);
	\draw[line width=2pt] (x10)--(x20) (x30)-- (x40);
	
	\draw[line width=2pt] (u0)--(-10,-4.5) .. controls (-10,-5) ..(-9.5,-5)--(y00);
	\draw[line width=2pt] (v0)--(10,-4.5) .. controls (10,-5) .. (9.5,-5)--(y50);		
	
	\draw[line width=2pt] (u0) -- (a);
	\draw[line width=2pt] (a)--(-10,4.5) .. controls (-10,5) ..(-9.5,5)--(x00);
	\draw[line width=2pt] (v0)--(10,4.5) .. controls (10,5) .. (9.5,5)--(x50);

	\end{tikzpicture}
	\end{center}
	\caption{Reservoir structure in the case $k=5$ with $\ell^\ast = \ell/(k-1)$ for one vertex $u$ with two tight paths that both have end-tuples $\tpl{u}$ and $\tpl{v}$, where one is using all vertices and the other all but $u$.}
	\label{fig:reservoir}
\end{figure}
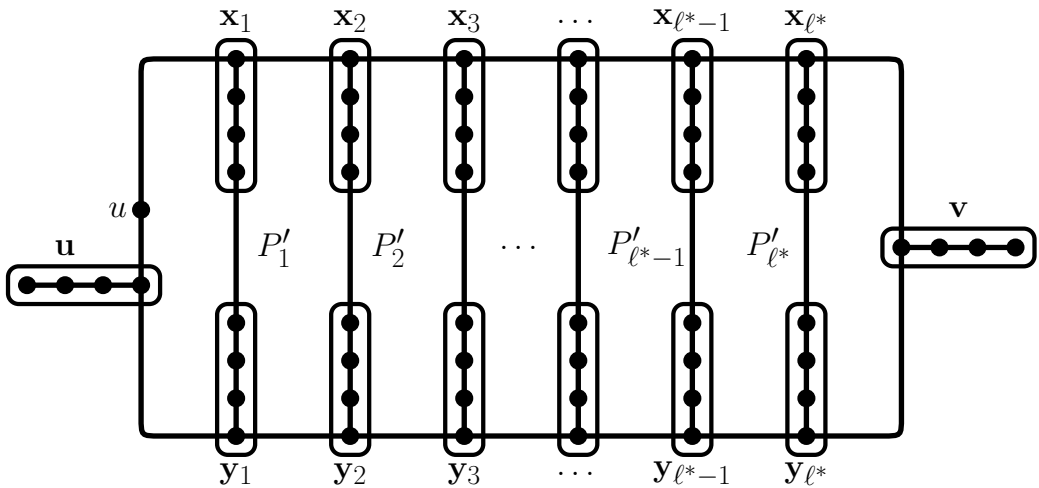

Suppose that for some $i \ge1$ we have constructed a reservoir path $P_{i-1}$ whose ends $\tpl{v}_{i-1}$ and $\tpl{w}_{i-1}$ are both $(\tfrac12\eps',p,\ell)$-good for $S_{i-2}$, with $|V(P_{i-1})|\le c(i-1)$, whose reservoir set is some $i$ vertices of $R$ and which does not intersect $R$ outside its reservoir set. Let $S_{i-1}:=S_{i-2}\cup V(P_{i-1})$.

We first choose any $u \in R\setminus S_{i-1}$ and construct a reservoir path $P_u$ with reservoir set $\{ u \}$ (see Figure~\ref{fig:reservoir}) disjoint from $S_{i-1} \setminus \{ u \}$.

Let $S^\ast$ be the set of vertices that are contained in $(k-1)$-tuples that are not $\big(\tfrac12\eps' , p, \ell\big)$-good for $S_{i-1}$. Note that we have $|S^\ast|=o(n)$ by the good event of Lemma~\ref{lem:goodtuples} assumed above. Let $S':=S_{i-1}\cup S^\ast$. We will construct $P_u$ disjoint from $S'\setminus \{u\}$. This automatically means that any $(k-1)$-tuple of vertices in $P_u$ is $\big(\tfrac12\eps',p,\ell\big)$-good for $S_{i-1}$.

We choose vertices $u_1,\dots,u_{k-2}$ from $V(G) \setminus (S^\ast \cup S_{i-1})$ such that the tuple $(u,u_1,\dots,u_{k-2})$ is $(\tfrac12 \eps',p,\ell_s+k-1)$-good for $S'$. This is possible since (however we choose $u_1,\dots,u_{k-3}$) when we come to choose $u_{k-2}$ we have at least $\tfrac12n$ vertices to choose from, and by the good event of Lemma~\ref{lem:goodtuples} at most $o(n)$ of these can give a $(k-1)$-tuple which is not $(\tfrac12 \eps',p,\ell_s+k-1)$-good for $S'$.

By the minimum degree of $G$, the $(k-1)$-tuple $(u,u_1,\dots,u_{k-2})$ is contained in at least $\tfrac12pn$ edges of $G$. Since $(u,u_1,\dots,u_{k-2})$ is $(\tfrac12\eps',p,\ell_s+k-1)$-good for $S'$, at most $p|S'|+\tfrac12\eps'pn$ of these edges go to vertices of $S'$, and at most another $\tfrac12\eps'pn$ of these are in $(k-1)$-tuples with any $k-2$ vertices of $(u,u_1,\dots,u_{k-2})$ that are not $(u,u_1,\dots,u_{k-2})$ is $(\tfrac12\eps',p,\ell_s+k-2)$-good for $S'$. Pick two of these vertices not in $S'$ that are in good tuples, and let them be $u_{k-1}$ and $x_{1,1}$.

We now, for each $2\le j\le k-1$ in succession, choose a vertex $x_{1,j}$ such that $\{u_{k-j-1}, \dots, u_1,u,\allowbreak x_{1,1},\allowbreak\dots,x_{1,j}\}$ is an edge of $G$, such that $x_{1,j}$ is not in $S'\cup\{u_1,\dots,u_k\}$, and such that if $j\le k-2$ then $\{u_{k-j-2},\dots, u_1,u,x_{1,1},\allowbreak\dots,x_{1,j}\}$ is $(\tfrac12\eps',p,\ell+k-j-1)$-good for $S'$, while if $j=k-1$ then $\{x_{1,1},\dots,x_{1,j}\}$ is $(\tfrac12\eps',p,\ell+k-j-1)$-good for $S'$. This is possible for each $j$ by the same argument as above.

At this point, we have constructed the top left part of Figure~\ref{fig:reservoir}: the spikes $\tpl{u}=(u_1,\dots,u_{k-1})$ and $\tpl{x}_1=(x_{1,1},\dots,x_{1,k-1})$. Our next goal is to construct the remaining spikes.

To begin with, consider constructing spike paths starting from $\tpl{x}_1$ of type $1$, with $\ell+1-k$ vertices. We can do this greedily, adding at each step $j$ one more vertex and one more edge. We insist on choosing our new vertex outside $S'$, and we insist on all the $(k-1)$-subsets of the $j$th edge being $(\tfrac12,p,\ell-j)$-good for $S'$. This is possible for each $j\le \ell+1-k$ by the same argument as above, and critically at each step we have a choice of at least $\tfrac{1}{10} pn$ vertices that satisfy these conditions. The same statement is true for spike paths of length $\ell$ starting from $\tpl{u}$.

To avoid clashes between the spike paths we construct, let $Z_1,Z_2,Z_3$ be subsets of $V(G)$ obtained by putting each vertex of $G$ independently into one of the three sets with equal probability $\tfrac13$. By the Chernoff bound and the union bound, with probability at least $1-n^k\exp\big(-\tfrac{1}{1600}p n\big)$, the following holds. When we construct spike paths from $\tpl{x}$ satisfying the above conditions and in addition with all new vertices in $Z_1$ greedily, at each step we have at least $\tfrac1{100} p n$ choices. Similarly, when we construct spike paths from $\tpl{u}$ satisfying the above conditions and in addition with all new vertices in $Z_2$ greedily, at each step we have at least $\tfrac1{100} p n$ choices. Finally, for each cluster $V'\in\cP_c$ such that $|V'\setminus S'|>\eps'|V'|$ and each $1\le j\le 3$, we have $|(V'\setminus S')\cap Z_j|\ge\tfrac14|V' \setminus S'|$.
Suppose that our choice of $Z_1,Z_2,Z_3$ is such that all these statements hold.

By Lemma~\ref{lem:goodexpansion}, there is a set $X$ of $(k-1)$-tuples which are end-tuples of spike-paths starting from $\tpl{x}_1$ of type $1$, with $\ell+1-k$ vertices other than those in $\tpl{x}_1$, none of whose vertices are in $S'$ and all of whose vertices outside $\tpl{x}_1$ are in $Z_1$, such that $|X|\ge \tfrac{10^{-4\ell-4+4k}}{8(2\ell+2-2k)!}n^{k-1}\ge\delta n^{k-1}$. Similarly, there is a set $U$ of $(k-1)$-tuples which are end-tuples of spike-paths starting from $\tpl{u}$ of type $1$, with $\ell$ vertices, none of whose vertices are in $S'$ and all of whose vertices outside $\tpl{u}$ are in $Z_2$, such that $|U|\ge \tfrac{10^{-4\ell}}{8(2\ell)!}n^{k-1}\ge\delta n^{k-1}$.

We now aim to find two disjoint $(k-1)$-cells $C_x$ and $C_u$ of $\cR$ with the following properties. At least $2k!\delta|C_x|$ of the $(k-1)$-tuples in $X$ are orderings of edges of $C_x$; and at least $2k!\delta|C_u|$ of the $(k-1)$-tuples in $U$ are orderings of edges of $C_u$; and for each cluster $V'$ supporting either $C_u$ or $C_x$, we have $|V'\setminus S'|\ge\tfrac12|V'|$.

Observe that since $|S'|\le 2c|R|$ and $|R|\le\nu n$ we have $|S'|\le 2c\nu n$, and consequently there are at most $5c\nu t-k$ clusters $V'$ of $\cR$ with $|V'\setminus S'|<\tfrac12|V'|$. The total number of $k$-sets intersecting $S'$ or $1$-cells with too many elements in $S'$ is at most $7c\nu n^k$, while by choice of $\nu$ the number of $k$-sets which are not supported by the $(k-1)$-cells of $\cR$ is at most $\nu n^k$. By choice of $\delta$, and since $|X|\ge4\delta\binom{n}{k-1}k!$, at least $3\delta\binom{n}{k-1}k!$ of the $(k-1)$-tuples in $X$ are on $(k-1)$-sets not intersecting $S'$, and by averaging the desired $C_x$ exists. A similar calculation, this time removing additionally elements of $U$ which lie in the clusters of $C_x$, gives $C_u$. Let $X''$ denote the tuples in $X$ which are orderings of edges of $C_x$. Choose an ordering $(V_{k-1},\dots,V_{1})$ of the clusters of $C_x$ which is consistent with most tuples of $X''$, and let $X'$ be the consistently ordered tuples of $X''$. Thus a spike path with end in $X'$ has its final vertex in $V_1$. We obtain $|X'|\ge2\delta|C_x|$. Similarly, we choose an ordering $(V_{2k-1},\dots,V_{3k-3})$ of the clusters of $C_u$ and let $U'$ be the consistently ordered tuples of $U$ in $C_u$, obtaining $|U'|\ge 2\delta|C_u|$.

Now by construction of $\cR$ there is a tight link between $C_x$ and $C_u$ with the given orderings that does not use any cluster $V'$ with $|V'\setminus S'|<\tfrac12|V'|$, because $|S'|<2c\nu n$ and hence less than $5c\nu t$ clusters are more than half covered by $S'$. Let the clusters witnessing this tight link be $V_k,\dots,V_{2k-2}$, and let $C'$ be the $(k-1)$-cell in the tight link whose clusters are $V_k,\dots,V_{2k-2}$. Thus for each $1\le j\le k-1$ the set $\{V_j,\dots,V_{j+k-1}\}$ and $\{V_{2k-1-j},\dots,V_k,V_{2k-1},\dots,V_{2k-2+j}\}$ are $k$-edges of $\cR$.

We now choose subsets $V'_1,\dots,V'_{3k-3}$ of $V_1,\dots,V_{3k-3}$ respectively, each of size $\tfrac{n}{10t}$ and disjoint from $S'$, with $V'_1,\dots,V'_{k-1}$ in $Z_1$, with $V'_k,\dots,V'_{2k-2}$ in $Z_3$, and with $V'_{2k-1},\dots,V'_{3k-3}$ in $Z_2$, and such that $\big|X'\cap C_x[V'_1,\dots,V'_{k-1}]\big|\ge2\delta\big| C_x[V'_1,\dots,V'_{k-1}]\big|$ and $\big|U'\cap C_u[V'_{2k-1},\dots,V'_{3k-3}]\big|\ge2\delta\big| C_u[V'_{2k-1},\dots,V'_{3k-3}]\big|$. Note that it is possible to find sets of the given sizes by choice of the $V_i$ and definition of the $Z_j$; while the condition about $X'$ and about $U'$ is satisfied on average and hence a choice exists. By Lemma~\ref{lem:RRL} all the $j$-cells with $2\le j\le k-1$ between these chosen sets are $(d_j,\sqrt{\eps},1)$-regular, and by definition of regularity it follows that the supported $k$-polyads corresponding to $k$-edges of $\cR$ are $(d,\sqrt{\eps_k},p)$-regular.

By Lemma~\ref{lem:finconnect}, with $R_0=X'\cap C_x[V'_1,\dots,V'_{k-1}]$, there is a set $R\subset C'$ with $|R|\ge(1-\delta)|C'|$ such that for each $(v_1,\dots,v_{k-1})\in R$
there is a tight path from a tuple of $C_x[V_1,\dots,V_{k-1}]$ to $(v_1,\dots,v_{k-1})$. Applying Lemma~\ref{lem:finconnect} again, with $R_0:=R$, there is a set $R'\subset C_u$ with $|R'|\ge(1-\delta)|C_u|$ such that for each $(y_{\ell/(k-1),1},\dots,y_{\ell/(k-1),k-1})\in R'$ there is a tight path from $R$ to $(y_{\ell/(k-1),1},\dots,y_{\ell/(k-1),k-1})$. In particular, we can choose $\tpl{y}_{\ell/(k-1)}:=(y_{\ell/(k-1),1},\dots,y_{\ell/(k-1),k-1})\in U' \cap R'$, and obtain the corresponding $\tpl{v}:=(v_1,\dots,v_{k-1})\in R$ which in turn gives us $\tpl{x}_{\ell/(k-1)}:=(x_{\ell/(k-1),1},\dots,x_{\ell/(k-1),k-1})\in X'$. This structure is the three right-hand-most spikes of Figure~\ref{fig:reservoir}. By definition of $X'$ and $U'$, and since $Z_1,Z_2,Z_3$ are by construction disjoint, there exist vertex-disjoint spike-paths completing the spikes of Figure~\ref{fig:reservoir}, none of whose vertices are in $S'$. That is, we find $(k-1)$-tuples $\tpl{x}_2,\dots,\tpl{x}_{\ell/(k-1)-1}$ and $\tpl{y}_1,\dots,\tpl{y}_{\ell/(k-1)-1}$ such that $\tpl{x}_1,\dots,\tpl{x}_{\ell/(k-1)}$ form a spike path, and $\tpl{y}_1,\dots,\tpl{y}_{\ell/(k-1)}$ form a spike path.

Next, for $j=1,\dots,\ell/(k-1)$, we use Lemma~\ref{lem:connecting} to connect the tuple $\tpl{x}_j$ to the tuple $\rev{\tpl{y}_j}$ by a tight path $P'_j$ of length $\ell$ with internal vertices not in $S \cup S_{i-1}$ and any of the previously chosen vertices.
Finally, we denote the whole structure by $P_u$ and note $|V(P_u)| = \tfrac{\ell}{k-1}\big(k-1+\ell)+2k-1$.
Observe (see Figure~\ref{fig:reservoir}) that there are two tight paths with end-tuples $\tpl{u}$ and $\tpl{v}$; one with vertex set $V(P_u)$ and one with vertex set $V(P_u) \setminus \{u\}$, i.e.~$P_u$ is a reservoir path with reservoir set $\{ u \}$.

To finish the step, we use Lemma~\ref{lem:connecting} to connect $\tpl{u}$ to one of the ends of $P_{i-1}$, either $\tpl{u}_{i-1}$ if $i$ is odd, or $\tpl{w}_{i-1}$ if $i$ is even. Repeating until $i=|R|$ proves the lemma.
\end{proof}

\appendix

\section{Regularity lemmas and properties}
\label{sec:proofreg}

We first prove Lemma~\ref{lem:ssshrl}. This we do in two steps: first, we prove the special case that $p_i=1$ for each $1\le i\le s$ and the $G_i$ are edge-disjoint (the \emph{dense disjoint case}), and then we deduce from this special case the general case. Note that in the dense case the assumption of upper regularity is trivially satisfied with $p_i=\eta=1$.

To prove the dense disjoint case of Lemma~\ref{lem:ssshrl}, we use a standard approach, borrowed from~\cite{AFKS}. That is, we begin with the input family of partitions $\cQ^*$, and iteratively apply the Strong Hypergraph Regularity Lemma of R\"odl and Schacht~\cite{RSch}, obtaining a collection of families of partitions $\cP_1^*$, $\cP_2^*$ and so on, where $\cP_1^*$ refines $\cQ^*$ and for each $j\ge2$ the family $\cP_j^*$ refines $\cP_{j-1}^*$. We choose parameters for these applications of the Regularity Lemma such that for each $j\ge1$ the pair $(\cP_j^*,\cP_{j+1}^*)$ satisfies the regularity properties of being a strengthened pair for each $G_i$. Then for each $j\ge1$, one of the following two things occurs. First, the density property of being a strengthened pair holds for each $G_i$. Second, there is some $G_i$ for which the density property does not hold. We will define an energy $\cE_j$ of the family of partitions $\cP_j^*$, and see that in the second case $\cE_{j+1}$ is significantly larger than $\cE_j$. Since we will see $\cE_j$ is bounded above by $s$, we conclude that the first case must occur for some bounded $j$, and the lemma follows. We now quote the Strong Hypergraph Regularity Lemma from~\cite{RSch}, and give the details.

\begin{lemma}[{\cite[Lemma 23]{RSch}}]\label{lem:shrl}
 Let $k\ge2$ be a fixed integer. For all positive integers $t_0$ and $s$, and all $\eps_k>0$ and functions $\eps:\mathbb{N}\to(0,1]$, there are integers $t_1$ and $n_0$ such that the following holds for all $n\ge n_0$ which are divisible by $t_1!$. Let $V$ be a vertex set of size $n$, let $G_1,\dots,G_s$ be edge-disjoint $k$-uniform hypergraphs on $V$, and suppose $\cQ^*$ is a $(1,t_0,\eta)$-equitable family of partitions on $V$. Then there exists a family of partitions $\cP^*$ refining $\cQ^*$ such that $\cP^*$ is $\big(t_0,t_1,\eps(t_1)\big)$-equitable, and for each $1\le i\le s$, the hypergraph $G_i$ is $(\eps_k,1)$-regular with respect to $\cP^*$.
\end{lemma}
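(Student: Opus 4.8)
The plan is to follow the standard energy-increment proof of the hypergraph regularity lemma; I only sketch it, since the argument is due to \cite{RSk,Gowers} and the precise formulation we invoke is that of \cite{RSch}. First I would attach to any $(k-1)$-family of partitions $\cP^*$ on $V$, together with the hypergraphs $G_1,\dots,G_s$, an \emph{energy}
\[
q(\cP^*) = \sum_{i=1}^{s} \sum_{j=2}^{k} \frac{1}{\binom{n}{j}} \sum_{\hat P(Q):\,Q\in\cross_j} |K_j(\hat P(Q))| \cdot d_1\big(G_i^{(j)} \mid \hat P(Q)\big)^2 ,
\]
where for $j \le k-1$ the symbol $G_i^{(j)}$ is read as the family of $j$-cells of $\cP^*$ (so that term measures the complexity of $\cP^{(j)}$ itself rather than of $G_i$), and for $j=k$ it is the induced $k$-graph of $G_i$. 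A defect Cauchy--Schwarz computation shows $0 \le q(\cP^*) \le sk$ and that $q$ is non-decreasing under refinement of the family of partitions.

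The main loop then runs as follows. Start with $\cP^*_0 := \cQ^*$, which is already $(1,t_0,\eta)$-equitable. Given $\cP^*_m$, perform one round of polyad refinement: if every $G_i$ is $(\eps_k,1)$-regular with respect to $\cP^*_m$ we stop; otherwise each polyad $\hat P(Q)$ with respect to which some $G_i$ fails to be $(\eps_k,1)$-regular comes with a sub-polyad $\hat P'$ containing more than an $\eps_k$-fraction of the copies of $K_j^{j-1}$ in $\hat P(Q)$ on which the relative density deviates by at least $\eps_k$. Pulling all these witnesses back through the hierarchy, and simultaneously re-refining every lower-level cell so that the family-of-partitions property (and the requirement $1/d_j\in\NATS$, via slicing) is restored, produces $\cP^*_{m+1}$ with $q(\cP^*_{m+1}) \ge q(\cP^*_m) + c(k,s,\eps_k)$ for a constant $c>0$ depending only on $k,s,\eps_k$. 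Since $q$ is bounded by $sk$, the loop terminates after at most $sk/c$ rounds. At the final step one cleans up: split cells to make the clusters exactly equal (here divisibility of $n$ by $t_1!$ is used), discard the bounded exceptional mass, and ensure each regularity instance $\cJ(Q)$ is $(\mathbf d,\eps,\eps,1)$-regular using the Dense Counting Lemma (Lemma~\ref{lem:DCL}) and the slicing lemma; this fixes the bound $t_1$ on the number of cells, and $n_0$ is taken large enough that all counting errors encountered are negligible.

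Two points require care and together form the main obstacle. The first is the hierarchical bookkeeping: refining a $j$-cell forces refinements of all $(j-1)$-cells beneath it, and one must check that regularity and the integrality $1/d_j\in\NATS$ survive this propagation down and then back up — this is exactly the technical heart of \cite{RSk,RSch}. The second, which is the point I expect to be genuinely delicate here, is the functional dependence of $\eps$ on $t_1$: since $\eps(\cdot)$ may shrink rapidly, one cannot fix the lower-level regularity parameter in advance, so the refinement must be run with a decreasing ladder $\eta=\eps(t_0)\gg\eps(t_1')\gg\cdots$ indexed by the (a priori unknown but boundedly increasing) cell counts, and one has to guarantee that the energy increment $c$ does not degrade to $0$ as the partition becomes finer. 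In \cite{RSch} this is handled by observing that only boundedly many values of $t_1$ can ever occur and taking $c$ to be the minimum of the (finitely many) corresponding increments; I would follow the same route.
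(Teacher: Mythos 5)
The paper does not prove Lemma~\ref{lem:shrl}: it is quoted verbatim as Lemma~23 of R\"odl and Schacht~\cite{RSch}, and the appendix simply imports it as a black box before proceeding to the (genuinely new) iterated-application argument for Lemma~\ref{lem:ssshrl}. So there is no in-paper proof to compare against. Your sketch is a reasonable high-level description of the standard energy-increment argument that underlies~\cite{RSch} (and~\cite{RSk,Gowers}): a mean-square density index bounded by $O(sk)$, a witness-extraction and common-refinement step that increases the index by a fixed amount depending on $\eps_k$, termination after boundedly many rounds, and a final cleanup via slicing to obtain a $(t_0,t_1,\eps(t_1))$-equitable family. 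You have also correctly identified the two genuinely delicate points --- downward/upward propagation of refinements through the levels while preserving $1/d_j\in\NATS$, and the functional dependence $\eps=\eps(t_1)$, which is handled by running the iteration against a descending ladder of regularity parameters and observing that $t_1$ can only take boundedly many values. I would flag one cosmetic issue: the displayed index functional, with $G_i^{(j)}$ ``read as the family of $j$-cells'' for $j\le k-1$, is not quite how~\cite{RSch} phrase their index (they keep track of mean-square densities of the cell systems, not of the $G_i$, at the lower levels), though the intent is the same. Since you are explicitly invoking the theorem from~\cite{RSch} rather than re-deriving it, none of this affects the paper's logic; the appropriate ``proof'' of this lemma in the present paper is just the citation.
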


Note that in~\cite{RSch} this lemma is stated for $k\ge3$; the case $k=2$ is the (much older) Szemer\'edi Regularity Lemma, in which the `families of partitions' are simply vertex partitions and the function $\eps$ plays no r\^ole.

\begin{proof}[Proof of Lemma~\ref{lem:ssshrl}, dense disjoint case]
 Given $k\ge2$, $q$, $t_0$, $s$ integers and $\eps_k>0$, and functions $f,f_k,\eps:\mathbb{N}\to(0,1]$, we define sequences of numbers $t_1,t_2,\dots$ and $n_1,n_2,\dots$ as follows. For each $j\ge1$, let $t_j$ and $n_j$ be returned by Lemma~\ref{lem:shrl} with input $k$, $t_{j-1}$, $s$, $\min\big(\eps_k,f_k(t_0)\big)$ and the function $\min\big(\eps,f\big)$.
 
 Let Lemma~\ref{lem:ssshrl} return the parameters $\eta=1$, $T=t_{s\eps_k^{-4}+2}$, and $n_0=\max(n_1,\dots,n_{s\eps_k^{-4}+2})$.
 Given an initial family of partitions $\cQ^*$ which is $(1,t_0,\eta)$-equitable and edge-disjoint $k$-uniform hypergraphs $G_1,\dots,G_s$, we proceed as follows.
 
 We apply Lemma~\ref{lem:shrl}, with input $k,t_0,s,\min\big(\eps_k,f(t_0)\big)$ and $\eps$, with input family of partitions $\cQ^*$, to the hypergraphs $G_1,\dots,G_s$. Let $\cP_1^*$ be the returned family of partitions.
 
 We now, for each $j\ge1$ successively such that the conditions are met, apply Lemma~\ref{lem:shrl}, with input $k$, $t_j$, $s$, $\min\big(\eps_k,f_k(t_j)\big)$ and $\min(\eps,f)$, with input family of partitions $\cP_j^*$, to the hypergraphs $G_1,\dots,G_s$. Let $\cP_{j+1}^*$ be the returned family of partitions.
 
 For each $j\ge1$ such that $\cP^*_j$ exists, we define
 \[\cE_j:=\binom{n}{k}^{-1}\sum_{i=1}^s\sum_{Q\in\cross_k(\cP_j)}d_1\big(G_i\big|\hat{P}(Q,\cP_j^*)\big)^2\,.\]
 Observe that if $j\ge1$ and $\cP^*_{j+1}$ exists, writing temporarily $D_{j,i}(Q):=d_1\big(G_i\big|\hat{P}(Q,\cP_{j}^*)\big)$ and $D_{j+1,i}(Q):=d_1\big(G_i\big|\hat{P}(Q,\cP_{j+1}^*)\big)$, we have
 \begin{align*}
  \cE_{j+1}-\cE_j&\ge\tbinom{n}{k}^{-1}\sum_{i=1}^s\sum_{Q\in\cross_k(\cP_j)}D_{j+1,i}(Q)^2-D_{j,i}(Q)^2\\
  &=\tbinom{n}{k}^{-1}\sum_{i=1}^s\sum_{Q\in\cross_k(\cP_j)}\big(D_{j,i}(Q)-(D_{j,i}(Q)-D_{j+1,i}(Q))\big)^2-D_{j,i}(Q)^2\\
  &=\tbinom{n}{k}^{-1}\sum_{i=1}^s\sum_{Q\in\cross_k(\cP_j)}-2D_{j,i}(Q)\big(D_{j,i}(Q)-D_{j+1,i}(Q)\big)+\big(D_{j,i}(Q)-D_{j+1,i}(Q)\big)^2\\
  &=\tbinom{n}{k}^{-1}\sum_{i=1}^s\sum_{Q\in\cross_k(\cP_j)}\big(D_{j,i}(Q)-D_{j+1,i}(Q)\big)^2\,,
 \end{align*}
 where the inequality comes from the fact that some $Q$ may be in $\cross_k(\cP_{j+1})$ but not $\cross_k(\cP_j)$, and the final equality is since (by definition of density) the sum of $D_{j,i}(Q)-D_{j+1,i}(Q)$ over any polyad of $\cP^*_j$ is zero, and $-2D_{j,i}(Q)$ is constant on any such polyad. From this we observe that $\cE_{j+1}-\cE_j$ is always nonnegative, and furthermore if there is some $1\le i\le s$ and some $\eps_k^2\binom{n}{k}$ choices of $Q$ such that $D_{j+1,i}(Q)\neq D_{j,i}(Q)\pm\eps_k$, then $\cE_{j+1}-\cE_j\ge\eps_k^4$.

 If for some $1\le j\le s\eps_k^{-4}+1$ the pair $(\cP_j^*,\cP_{j+1}^*)$ is a $(t_0,t_j,t_{j+1},\eps_k,\eps(t_j),f_k(t_j),f(t_{j+1}),1)$-strengthened pair for $G_i$ for each $1\le i\le s$, then by choice of $T$ we are done. It follows that for each $1\le j\le s\eps_k^{-4}+1$ the pair $(\cP_j^*,\cP_{j+1}^*)$ is not such a strengthened pair. By construction, the conditions~\ref{S:refine}--\ref{S:freg} in the definition of a strengthened pair are satisfied, and we conclude that~\ref{S:dens} fails, i.e.\ for some $1\le i\le s$ there are $\eps_k^2\binom{n}{k}$ elements $Q\in\cross_k(\cP_j)$ such that
 \[d_1\big(G_i\big|\hat{P}(Q,\cP_j^*)\big)\neq d_1\big(G_i\big|\hat{P}(Q,\cP_{j+1}^*)\big)\pm\eps_k\,.\]
 and so by our observation above $\cE_{j+1}-\cE_j\ge\eps_k^4$.
 
 Summing over $j$, we obtain $\cE_{s\eps_k^{-4}+1}-\cE_1\ge (s\eps_k^{-4}+1)\eps_k^4>s$, which by definition of $\cE_{s\eps_k^{-4}+1}$ is not possible, completing the proof.
\end{proof}

Next, we deduce the general case of Lemma~\ref{lem:ssshrl}, using the Weak Regularity Lemma of Conlon, Fox and Zhao~\cite{CFZ}. This follows the approach in~\cite{ADS}. Specifically, for each $G_i$ on vertex set $V$ we create a dense model $G''_i$ on $V$ by first using the Weak Regularity Lemma, which returns a dense model $G'_i$ of $G_i$ with weighted edges, and then randomly picking a $k$-edge into $G''_i$ with probability proportional to the weight of that edge in $G'_i$. The conclusion of the Weak Regularity Lemma, together with a simple application of the Chernoff bound, tell us that density and regularity properties with respect to $G''_i$ of any family of partitions $\cP^*$ on $V$ with sufficiently large parts carry over (with a small loss of parameters) to $G_i$. In particular, we can apply the dense disjoint case of Lemma~\ref{lem:ssshrl} to the $G''_i$ and the resulting strengthened pair is also a strengthened pair for the $G_i$. We now quote the Weak Hypergraph Regularity Lemma of~\cite{CFZ} from~\cite{ADS} (where a simplified statement which is all we need is given) and give the details. We need a couple of definitions.

 Let $V$ be a vertex set and let $k\ge2$. Let $g,h$ be two functions from $\binom{V}{k}$ to $\mathbb{R}_{\ge0}$, which we think of as weighted hypergraphs. Given any collection $F_1,\dots,F_k$ of $(k-1)$-uniform hypergraphs on $V$, let $S$ be the collection of $k$-sets in $V$ whose $(k-1)$-subsets can be labelled using each label from $1$ to $i$ exactly once, such that the label $i$ subset is in $F_i$ (we say the edges of $S$ are \emph{rainbow} for the $F_i$). If for any choice of the $F_i$ we have
 \[\Big|\sum_{e\in S}(g(e)-h(e))\Big|\le\gamma |V|^k,\]
 then $(g,h)$ is a \emph{$\gamma$-discrepancy pair}. In addition, given $\eta>0$, if for any choice of the $F_i$ we have
 \[\sum_{e\in S}\big(g(e)-1\big)\le \eta |V|^k,\]
 then we say $g$ is upper $\eta$-regular. Note that this definition is not quite the same as the previously defined $(\eta,p)$-upper regular; however if we have an $(\eta,p)$-upper regular hypergraph $G$, and we define a function $g$ by setting $g(e)=p^{-1}$ for edges $e$ of $G$, and $g(e)=0$ otherwise, then $g$ is upper $\eta$-regular.
 
\begin{theorem}[{\cite[Theorem~19]{ADS}, simplified from~\cite[Theorem 2.16]{CFZ}}]\label{thm:wrl}\label{thm:CFZweak}
 For any $k\ge2$ and $\gamma>0$ and $g:\binom{V}{k}\to\mathbb{R}_{\ge0}$ which is upper $\eta$-regular with $\eta\le 2^{-80k/\gamma^2}$, there exists $\tilde{g}:\binom{V}{k}\to[0,1]$ such that $(g,\tilde{g})$ form a $\gamma$-discrepancy pair.
\end{theorem}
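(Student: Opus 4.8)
The plan is to prove this the way Conlon, Fox and Zhao do: by a Frieze--Kannan style energy increment that builds a sequence of dense models $\tilde g_0,\tilde g_1,\dots:\binom Vk\to[0,1]$ which agree better and better with $g$ against the natural class of tests. For a tuple $\phi=(F_1,\dots,F_k)$ of $(k-1)$-uniform hypergraphs on $V$ I write $S_\phi$ for the set of $k$-sets of $V$ rainbow for $\phi$, and for $h:\binom Vk\to\mathbb R$ I put $T_\phi(h)=\sum_{e\in S_\phi}h(e)$, so that $(g,\tilde g)$ is a $\gamma$-discrepancy pair exactly when $|T_\phi(g-\tilde g)|\le\gamma|V|^k$ for every $\phi$. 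The only structural input is upper $\eta$-regularity, which in this notation says $T_\phi(g-1)\le\eta|V|^k$ for every $\phi$.

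\textbf{The iteration.} I would start from $\tilde g_0\equiv\min\{1,\bar g\}$ with $\bar g=\binom{|V|}k^{-1}\sum_e g(e)$, and at stage $i$, if $(g,\tilde g_i)$ is not yet a $\gamma$-discrepancy pair, fix witnesses $\phi_i$ with $|T_{\phi_i}(g-\tilde g_i)|>\gamma|V|^k$. Let $\mathcal Q_i$ be the partition of $\binom Vk$ into the atoms cut out by $S_{\phi_0},\dots,S_{\phi_{i-1}}$, and take $\tilde g_i$ to be the clamp to $[0,1]$ of the atom-wise average of $g$ over $\mathcal Q_i$; adding the cut $S_{\phi_i}$ refines $\mathcal Q_i$ to $\mathcal Q_{i+1}$ and gives $\tilde g_{i+1}$. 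The key computation I would carry out is that the energy $\mathcal E_i:=\sum_e\tilde g_i(e)^2$ grows by a definite amount, $\mathcal E_{i+1}\ge\mathcal E_i+c_k\gamma^2|V|^k$, because a discrepancy of size $\gamma|V|^k$ against a single new cut forces the relevant conditional average of $g$ to move by $\Omega(\gamma)$ on a set of $\Omega(\gamma|V|^k)$ $k$-sets. Since $\tilde g_i\in[0,1]$ we always have $\mathcal E_i\le|V|^k$, so the process halts after at most $C_k\gamma^{-2}$ steps, and the final $\tilde g_i$ is the model we want.

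\textbf{The main obstacle.} The hard part will be the interaction between the clamping to $[0,1]$ and the energy increment, since clamping cell averages could in principle wipe out the gain just produced. The plan is to absorb this using upper $\eta$-regularity: it guarantees that, apart from cells carrying at most a $\tfrac12\gamma$-fraction of the mass, no cell of $\mathcal Q_i$ has average appreciably above $1$, so that clamping perturbs $\tilde g_i$ negligibly at each of the $\Theta(\gamma^{-2})$ stages, and the increment survives. It is exactly the accumulation of this perturbation over the iterations, together with the refinement depth $2^{\Theta(\gamma^{-2})}$, that forces the requirement $\eta\le 2^{-80k/\gamma^2}$. A closely related point is that the energy bound $\mathcal E_i\le|V|^k$ is available only because the moving object is the bounded model $\tilde g_i$ and not $g$ itself --- upper regularity gives no control on $\sum_e g(e)^2$ --- so it is essential to keep $g$ fixed and increment $\tilde g_i$. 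Finally, deducing the general (sparse) case of Lemma~\ref{lem:ssshrl} from this theorem and the dense disjoint case is routine: for each $G_i$ set $g_i(e)=p_i^{-1}\mathbf 1[e\in E(G_i)]$, which is upper $\eta$-regular by hypothesis, apply the theorem to obtain $\tilde g_i$, sample a dense model $G_i''$ by including each $k$-set $e$ independently with probability $\tilde g_i(e)$, and use the Chernoff bound together with the $\gamma$-discrepancy property to transfer all relevant densities and regularities between $G_i$ and $G_i''$ with only a small loss, so that applying the dense disjoint case to the $G_i''$ yields a strengthened pair that also works for the $G_i$.
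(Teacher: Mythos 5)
The paper does not prove this lemma: it is quoted verbatim from~\cite{ADS}, which simplifies~\cite[Theorem~2.16]{CFZ}, so there is no in-paper proof to compare your sketch against. Your high-level plan --- an energy-increment or boosting iteration that maintains a bounded model $\tilde g_i$ and bounds the number of steps by a potential --- is the right skeleton and matches the dense-model-theorem literature, and you correctly locate the crux in the interaction between clamping to $[0,1]$ and the energy gain.

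However, the step where you absorb the clamping error --- the claim that upper $\eta$-regularity guarantees that cell averages of $\mathcal Q_i$ lie close to $1$ outside a $\tfrac12\gamma$-mass set --- does not follow from the hypothesis, and this is a genuine gap. The cells of $\mathcal Q_i$ are Boolean combinations (atoms) of the rainbow sets $S_{\phi_j}$, but upper $\eta$-regularity only controls $\sum_{e\in K_k(E_1,\dots,E_k)}(g(e)-1)$ for sets that are themselves of rainbow form, and it is a strictly one-sided bound. The family of rainbow sets is not closed under intersection or complement for $k\ge3$, and any inclusion--exclusion attempt to estimate $\sum_{e\in Q}(g(e)-1)$ over an atom $Q$ introduces subtracted rainbow-sums for which no matching lower bound exists (since $g\ge0$ they can be as negative as $-|V|^k$), so the resulting error is of order $|V|^k$, not $\eta|V|^k$. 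Concretely, running the associated boosting potential $\Phi_i=\|\tilde g_i\|_2^2-2\langle g,\tilde g_i\rangle$, the clamping contributes an error of order $\gamma\sum_{e\in S_{\phi_i}}\max(g(e)-1,0)$, which upper regularity alone bounds only by $O(\gamma)|V|^k$, swamping the $\Theta(\gamma^2)|V|^k$ unclamped decrease. Closing this requires a genuinely different device (a capped auxiliary function, a different potential, or the min-max/LP-duality proof of the dense model theorem), not a direct transfer of upper regularity to partition cells. Your closing remarks on deducing Lemma~\ref{lem:ssshrl} do match the paper's Appendix~\ref{sec:proofreg} argument, except that one must sample $G_i''$ with probability $s^{-1}\tilde g_i(e)$ rather than $\tilde g_i(e)$ so that the $G_i''$ are edge-disjoint, as required by the dense disjoint case.
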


The following proof is very similar to the proof of~\cite[Lemma~23, general case]{ADS} and we copy it from there, making the appropriate modifications, for completeness.

\begin{proof}[Proof of Lemma~\ref{lem:ssshrl}, general case]
 Given $k\ge2$, $q$, $t_0$, $s$ integers and $\eps_k>0$, and functions $f,f_k,\eps:\mathbb{N}\to(0,1]$, let $T$ and $n_0$ be returned by the dense disjoint case of Lemma~\ref{lem:ssshrl} for input as above but with $\tfrac1{2s}\eps_k$ and $\tfrac1{2s}f$ replacing $\eps_k$ and $f$. Without loss of generality, we can assume $\eps,f,f_k$ are decreasing functions.

We let $\gamma=\tfrac{1}{100s^4}\eps_k^2\eps(T)^2f(T)^2f_k(T)^2T^{-2^k}$, and $\eta=2^{-160k/\gamma^2}$, and let Lemma~\ref{lem:ssshrl} return $\eta$, $T$ and $n_0$.

Given an initial family of partitions $\cQ^*$ which is $(1,t_0,\eta)$-equitable and $k$-uniform hypergraphs $G_1,\dots,G_s$, where $G_i$ is $(\eta,p_i)$-upper regular for each $i$, we proceed as follows. 
First, for each $1\le i\le s$, let $g_i:\binom{[n]}{k}\to\mathbb{R}_{\ge0}$ be defined by $g_i(e)=p_i^{-1}$ if $e\in G_i$, and $g_i(e)=0$ otherwise. Observe that each $g_i$ is upper $\eta$-regular.

Applying Theorem~\ref{thm:CFZweak}, with input $\gamma$ separately to each $g_i$, we obtain functions $\tilde{g}_i:\binom{[n]}{k}\to[0,1]$, such that $(g_i,\tilde{g}_i)$ is a $\gamma$-discrepancy pair for each $i$. It follows that $(s^{-1}g_i,s^{-1}\tilde{g}_i)$ is also a $\gamma$-discrepancy pair for each $i$.
 
 We now create pairwise disjoint unweighted $k$-graphs $G''_i$ by, for each $e\in\binom{[n]}{k}$ independently, choosing to put $e$ into either exactly one of the $G''_i$, or into none of them, choosing to put $e$ in $G''_i$ with probability $s^{-1}\tilde{g}_i(e)$. Since $0\le \tilde{g}_i(e)\le 1$ for each $i$, we have $0\le \sum_{i\in[s]}\tilde{g}_i(e)\le s$, so that the distribution we just described is as required a probability distribution. By construction, the $G''_i$ are edge-disjoint.
 
  We claim that a.a.s.~$(s^{-1}\tilde{g}_i,G''_i)$ is a $\gamma$-discrepancy pair for each $i$ (where we temporarily abuse notation by equating $G''_i$ and the characteristic function of its edges). Indeed, suppose $i$ and unweighted $(k-1)$-graphs $F_1,\dots,F_k$ on $[n]$ are fixed before the sampling of the $G''_i$. The expected number of edges of $G''_i$ which are rainbow for $F_1,\dots,F_k$ is exactly equal to the sum of $g'_i(e)$ over $e$ rainbow for $F_1,\dots,F_k$. By the Chernoff bound, the probability of an additive error of $\gamma n^k$ is $o(2^{-kn^{k-1}})$. In other words, a given $F_1,\dots,F_k$ and $i$ witness the failure of our claim with probability $o(2^{-kn^{k-1}})$. Taking the union bound over the at most $s2^{kn^{k-1}}$ choices of $F_1,\dots,F_k$ and $i$, our claim fails with probability $o(1)$ as desired.
 
 We now apply the dense disjoint case of Lemma~\ref{lem:ssshrl} to the $G''_i$, with inputs as above. We obtain integers $t_1,t_2\le T$ and families of partitions $\cP_c^*$ and $\cP_f^*$, both refining $\cQ^*$, such that for each $1\le i\le s$ the pair $(\cP_c^*,\cP_f^*)$ is a $(t_0,t_1,t_2,\tfrac1{2s}\eps_k,\tfrac1{2s}\eps(t_1),f_k(t_1),f(t_2),1)$-strengthened pair for $G''_i$. We claim that this is the required strengthened pair for each $G_i$.

To see this, we need to check~\ref{S:creg},~\ref{S:freg} and~\ref{S:dens} hold. Expanding out the definition of regularity with respect to a family of partitions, each of these three statements boils down to a collection of claims of the following form. Given a $k$-polyad $\hat{P}$ (of either the coarse or fine partition) and a set of $(k-1)$-graphs $F_1,\dots,F_k$ which are subgraphs of $\hat{P}$, let $S$ be the set of $k$-edges rainbow for $F_1,\dots,F_k$. If $|S|$ is not too small, then $\tfrac{|E(G_i)\cap S|}{p_i|S|}$ is close to the relative density $d_{p_i}(G_i|\hat{P})$. Note that for each such condition, the corresponding statement for $G''_i$, namely that $\tfrac{|E(G''_i)\cap S|}{|S|}$ is close to $d_1(G''_i|\hat{P})$, is guaranteed by definition of a strengthened pair.

It is therefore enough to show the following: for any $(k-1)$-graphs $F_1,\dots,F_k$, letting $S$ be the set of $k$-edges rainbow for $F_1,\dots,F_k$, we have
 \[\big|E(G_i)\cap S\big|=p_is\big|E(G''_i)\cap S|\pm 2p_i s\gamma n^k\,.\]
 Note that this statement also implies $d_{p_i}(G_i|\hat{P})$ is close to $s d_1(G''_i|\hat{P})$ by taking the $F_j$ to be the $(k-1)$-partite subgraphs of $\hat{P}$, and the error term $2p_is \gamma n^k$ is genuinely small compared to the main term because of the requirement that $S$ is not too small in the definition of `regular' and by choice of $\gamma$.
 
 This last equation is immediate since $(s^{-1}g_i,s^{-1}\tilde{g}_i)$ and $(s^{-1}\tilde{g}_i,G''_i)$ are both $\gamma$-discrepancy pairs.
\end{proof}

In this second proof we could allow each $G_i$ to be a weighted $k$-uniform hypergraph (i.e.\ a function from $\binom{[n]}{k}$ to $\mathbb{R}_{\ge0}$) without any change; thus Lemma~\ref{lem:ssshrl} applies in the weighted setting of~\cite{ADS}. We will not need this strengthening here, but note it for future reference.

We now prove, as promised, that a strengthened partition contains few irregular polyads.

\begin{proposition}\label{prop:fewirreg}
	Given $k\in\mathbb{N}$, suppose that $t_0\in\mathbb{N}$ is sufficiently large. Given any constants $\delta,d,\eps_k>0$, any function $\eps:\mathbb{N}\to(0,1]$ which tends to zero sufficiently fast, any $t_1,t_2\in\mathbb{N}$, any $0<f_k\le\eps_k^2$ and any $f>0$, there exists $\eta>0$ such that the following holds for any sufficiently large $n$ and any $p>0$. Suppose $G$ is an $n$-vertex hypergraph. Suppose that $(\cP^*_c,\cP^*_f)$ is a $(t_0,t_1,t_2,\eps_k,\eps,f_k,f,p)$-strengthened pair for $G$.
 Suppose that $\cP^*_c$ has $t$ clusters and density vector ${\mathbf{d}}=(d_{k-1},
  \dots, d_2)$.
 
 Then the number of irregular polyads of $\cP^*_c$ for $G$ is at most
 \[4\eps_k\binom{t}{k}\prod_{i=2}^{k-1}d_i^{-\binom{k}{i}}\,.\]
\end{proposition}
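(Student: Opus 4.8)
The plan is to estimate separately, and then union‑bound, the number of $k$‑polyads of $\cP^*_c$ that fail each of the three conditions (i)--(iii) in the definition of an irregular polyad, showing that each class has size at most $\tfrac43\eps_k M$, where $M:=\binom{t}{k}\prod_{i=2}^{k-1}d_i^{-\binom{k}{i}}$ is the total number of $k$‑polyads of $\cP^*_c$. The parameters $\delta,d$ play no role and $\eta$ may be taken to be $1$; the only requirements are that $\eps=\eps(t_1)$ be small enough in terms of $t_1$ for one application of Lemma~\ref{lem:DCL}, that $f_k\le\eps_k^2$, and that $t_0$ and $n$ be large.

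First I will record two counting facts. Since $\cP^*_c$ is $(t_0,t_1,\eps)$‑equitable, its multicomplex is $\mathbf d$‑equitable, as noted after~\ref{equitfam:c}; a level‑by‑level count (exactly as in~\cite[Claim~32]{ABCM}) then yields precisely $M$ polyads, each of which — together with its lower cells — is a $(\mathbf d,\eps,\eps,1)$‑regular $(k-1)$‑complex on $k$ clusters of size $n/t$. Applying Lemma~\ref{lem:DCL} to this complex, with $\alpha=1$, $d_0=1/t_1$ and error $\gamma$ a small absolute constant (say $\gamma=\tfrac1{100}$), shows that every $Q\in\cross_k(\cP_c)$ lies on a polyad supporting $(1\pm\gamma)(n/t)^k\prod_{i=2}^{k-1}d_i^{\binom{k}{i}}$ $\cP_c$‑partite $k$‑sets; write $N:=(1-\gamma)(n/t)^k\prod_{i=2}^{k-1}d_i^{\binom{k}{i}}$ for the resulting lower bound. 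The polyads partition $\cross_k(\cP_c)$, and $|\cross_k(\cP_c)|=\binom{t}{k}(n/t)^k\le n^k/k!$, so $MN\le n^k/k!$; more importantly,
\[
\frac{\binom{n}{k}}{N}\ \le\ \frac{n^k/k!}{N}\ =\ \frac{t^k}{(1-\gamma)\,k!\,\prod_{i=2}^{k-1}d_i^{\binom{k}{i}}}\ =\ \frac{M}{1-\gamma}\cdot\frac{t^k}{t(t-1)\cdots(t-k+1)}\ \le\ \tfrac43 M ,
\]
the last step holding once $t_0$ is large enough (recall $t\ge t_0$). I will also use that every $\cP_c$‑partite $k$‑set supported on a coarse polyad is $\cP_f$‑partite — since $\cP^*_f$ refines $\cP^*_c$, its clusters subdivide those of $\cP^*_c$, so one vertex in each of $k$ coarse clusters means one vertex in each of $k$ distinct fine clusters — and that for such a $k$‑set $Q'$ supported on $\hat{P}(Q;\cP^*_c)$ one has $\hat{P}(Q';\cP^*_c)=\hat{P}(Q;\cP^*_c)$ (their $(k-1)$‑cells coincide).

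Now I bound the three classes. Condition (i): being $(\eps_k,p)$‑regular with respect to $\hat{P}(Q;\cP^*_c)$ depends only on the polyad of $Q$, so each of the $\ge N$ $\cP_c$‑partite $k$‑sets carried by a polyad failing (i) is counted among the $\le\eps_k\binom{n}{k}$ exceptions in property~\ref{S:creg}; hence there are at most $\eps_k\binom{n}{k}/N\le\tfrac43\eps_k M$ such polyads. Condition (iii): by property~\ref{S:dens}, all but $\le\eps_k^2\binom{n}{k}$ sets $Q'\in\cross_k(\cP_c)$ satisfy $d_p(G|\hat{P}(Q';\cP^*_c))=d_p(G|\hat{P}(Q';\cP^*_f))\pm\eps_k$, and using $\hat{P}(Q';\cP^*_c)=\hat{P}(Q;\cP^*_c)$ for $Q'$ supported on $\hat{P}(Q;\cP^*_c)$, a polyad failing (iii) carries more than $\eps_k N$ of these exceptional sets, so there are at most $\eps_k^2\binom{n}{k}/(\eps_k N)=\eps_k\binom{n}{k}/N\le\tfrac43\eps_k M$ of them. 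Condition (ii): property~\ref{S:freg} gives $\le f_k\binom{n}{k}$ sets $Q'\in\cross_k(\cP_f)$ for which $G$ is not $(f_k,p)$‑regular with respect to $\hat{P}(Q';\cP^*_f)$; every $Q'$ supported on a coarse polyad lies in $\cross_k(\cP_f)$, so a polyad failing (ii) carries more than $\eps_k N$ such sets, whence there are at most $f_k\binom{n}{k}/(\eps_k N)\le\eps_k^2\binom{n}{k}/(\eps_k N)=\eps_k\binom{n}{k}/N\le\tfrac43\eps_k M$ of them, using $f_k\le\eps_k^2$. Summing, the number of irregular polyads is at most $3\cdot\tfrac43\eps_k M=4\eps_k\binom{t}{k}\prod_{i=2}^{k-1}d_i^{-\binom{k}{i}}$, as claimed.

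The only real subtlety — and hence the ``main obstacle'', such as it is — is the bookkeeping that turns the three hypotheses, each of which bounds a number of \emph{bad $k$‑sets}, into bounds on the number of \emph{bad polyads}: this rests on the Dense Counting Lemma to guarantee that each polyad carries at least $N$ $\cP_c$‑partite $k$‑sets, on the fact that the relevant regularity and density notions for a $k$‑set depend only on its polyad, and on the compatibility of coarse‑ and fine‑partiteness. Once these observations are in place the estimates are elementary, and the factor $4$ is obtained with room to spare.
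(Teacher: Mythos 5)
Your proof is correct and follows essentially the same approach as the paper's: use the Dense Counting Lemma to lower-bound the number of $\cP_c$-partite $k$-sets supported on each polyad, then divide each budget of bad $k$-sets from~\ref{S:creg},~\ref{S:freg},~\ref{S:dens} by the per-polyad threshold, using $t_0$ large to absorb the $t^k/(t)_k$ correction into the final constant $4$. The only cosmetic difference is that you split the three failure modes into three terms each $\le\tfrac43\eps_kM$, whereas the paper splits into failures of~(i) (budget $\eps_k\binom{n}{k}$) versus failures of~(ii)~or~(iii) among remaining polyads (budget $2\eps_k^2\binom{n}{k}$), arriving at $1+2=3$ terms scaled by $(1-\tfrac{1}{100})^{-1}\le\tfrac43$; both yield the same bound.
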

\begin{proof}
 We require $t_0$ to be sufficiently large that $3\big(1-\tfrac{1}{100}\big)^{-1}\tfrac{t_0^k}{k!}\le 4\binom{t_0}{k}$ (and so the same holds for each $t\ge t_0$). We also require $\eps(t_1)$ to be small enough, and $n/t_1$ large enough, for Lemma~\ref{lem:DCL}, with $\alpha=1$, $d_0=t_1^{-1}$, and $\gamma=\tfrac1{100}$.

 Recall that a polyad $\hat{P}(Q;\cP^*_c)$ is irregular with respect to $G$ if either $G$ is not $(\eps_k,p)$-regular with respect to $\hat{P}(Q;\cP^*_c)$, or for more than an $\eps_k$-fraction of the $k$-sets $Q'$ supported on $\hat{P}(Q;\cP^*_c)$, $G$ is not $(f_k,p)$-regular with respect to $\hat{P}(Q';\cP^*_f)$, or for more than an $\eps_k$-fraction of the $k$-sets $Q'$ supported on $\hat{P}(Q;\cP^*_c)$, we have $d_p\big(G\big|\hat{P}(Q';\cP^*_f)\big)\neq d_p\big(G\big|\hat{P}(Q;\cP^*_c)\big)\pm\eps_k$.
 
 By definition and~\ref{S:creg}, there are at most $\eps_k\binom{n}{k}$ sets $Q$ in $\cross_k(\cP_c)$ such that $G$ is not $(\eps_k,p)$-regular with respect to $\hat{P}(Q;\cP^*_c)$. Since by Lemma~\ref{lem:DCL} for each $Q\in\cross_k(\cP_c)$ the number of $k$-sets supported by $\hat{P}(Q;\cP^*_c)$ is at least
 \[(1-\tfrac{1}{100})\big(\tfrac{n}{t}\big)^k\prod_{i=2}^{k-1}d_i^{\binom{k}{i}},\]
 it follows that there are at most
 \[\eps_k\binom{n}{k}(1-\tfrac{1}{100})^{-1}\big(\tfrac{t}{n}\big)^k\prod_{i=2}^{k-1}d_i^{-\binom{k}{i}}\le(1-\tfrac{1}{100})^{-1}\eps_k\tfrac{t^k}{k!}\prod_{i=2}^{k-1}d_i^{-\binom{k}{i}}\]
 $k$-polyads in $\cP^*_c$ with respect to which $G$ is not $(\eps_k,p)$-regular.
 
 Suppose that $\mathbf{d}^f=(d^f_{k-1},\dots,d^f_2)$ is the density vector of $\cP^*_f$, and $t_f$ is the number of its clusters. By definition and~\ref{S:freg}, there are at most $f_k\binom{n}{k}$ sets in $\cross_k(\cP_f)$ such that $G$ is not $(f_k,p)$-regular with respect to $\hat{P}(Q;\cP^*_f)$, and by~\ref{S:dens}, there are at most $\eps_k^2\binom{n}{k}$ sets $Q$ in $\cross_k(\cP_c)$ such that $d_p\big(G\big|\cP^*_c(Q)\big)\neq d_p\big(G\big|\cP^*_f(Q)\big)\pm\eps_k$. By choice of $f_k$, there are in total at most $2\eps_k^2\binom{n}{k}$ sets $Q$ in $\cross_k(\cP_f)$ which fail either condition, and so at most $2\eps_k^2\binom{n}{k}$ sets $Q$ in $\cross_k(\cP_c)$ which fail either condition. Now a polyad $\hat{P}(Q;\cP^*_c)$ which is irregular, but with respect to which $G$ is $(\eps_k,p)$-regular, by Lemma~\ref{lem:DCL} supports at least
 \[\eps_k(1-\tfrac{1}{100})\big(\tfrac{n}{t}\big)^k\prod_{i=2}^{k-1}d_i^{\binom{k}{i}}\]
 $k$-sets which fail one of these two conditions. It follows that the number of polyads $\hat{P}(Q;\cP^*_c)$ which are irregular, but with respect to which $G$ is $(\eps_k,p)$-regular, is at most
 \[2\eps_k^2\binom{n}{k}\cdot \eps_k^{-1}(1-\tfrac{1}{100})^{-1}\big(\tfrac{t}{n}\big)^k\prod_{i=2}^{k-1}d_i^{-\binom{k}{i}}\le 2\eps_k(1-\tfrac{1}{100})^{-1}\tfrac{t^k}{k!}\prod_{i=2}^{k-1}d_i^{-\binom{k}{i}}\,.\]
 Putting these together, we see that the total number of irregular $k$-polyads in $\cP^*_c$ is at most
 \[ 3\eps_k(1-\tfrac{1}{100})^{-1}\tfrac{t^k}{k!}\prod_{i=2}^{k-1}d_i^{-\binom{k}{i}}\le 4\eps_k\binom{t}{k}\prod_{i=2}^{k-1}d_i^{-\binom{k}{i}}\,.\]
\end{proof}

Building on this, we next prove Lemma~\ref{lem:goodconnected}.
\begin{proof}[Proof of Lemma~\ref{lem:goodconnected}]
 Given $k$ and $d$, we require $t_0>\tfrac{4k}{d}$, and we then choose $0<\gamma<\tfrac{1}{100}$ small enough so that
 \[\frac{\delta-\tfrac{2k}{t_0}-(1+\gamma)^4\cdot 2^{k+1}\eps_k^{1/k}-d}{(1+\gamma)^4}>\delta-2d-2^{k+2}\eps_k^{1/k}\,.\]
 
 We additionally require $t_0$ to be large enough, and $\eps(t_1)$ small enough, for Proposition~\ref{prop:fewirreg}, so that $\cP^*_c$ supports at most
 \[4\eps_k\binom{t}{k}\prod_{i=2}^{k-1}d_i^{-\binom{k}{i}}\]
 irregular $k$-polyads, and also $\eps(t_1)$ small enough, and $\tfrac{n}{t_1}$ large enough, for Lemma~\ref{lem:DCL} with $d_0=t_1^{-1}$ and $\gamma$. For convenience in this proof, we define $d_k=1$.
 
 We require $\eta$ to be sufficiently small compared to all the products of $d_i$ and $\tfrac{1}{t_1}$ which appear in the following proof: concretely, any
 $\eta\le 100t_1^{-2k2^k}$
 suffices.
 
 Our first aim is to show that $\cR_{\eps_k}(G;\cP^*_c,\cP^*_f)$ contains many $1$-edges. To start with, we mark the $k$-edges of the multicomplex $\cP^*_c$ which are irregular as \emph{bad}. We then, for each $i=k-1,\dots,1$ in succession, mark as bad all $i$-edges which are contained in at least 
 \[4\eps_k^{1/k}t\prod_{j=2}^{i+1}d_{j}^{-\binom{i}{j-1}}\]
 bad $(i+1)$-edges. Now consider the following construction. We begin by taking any bad $1$-edge, then any bad $2$-edge containing it, and so on until we obtain a bad $k$-edge together with an order on its vertices. Clearly we obtain any given bad $k$-edge in at most $k!$ ways by following this process (since a $k$-edge together with an order determines the chosen edges in the process). If there exist at least $4\eps_k^{1/k}t$ bad $1$-edges, it follows that the number of bad $k$-edges is at least
 \[\tfrac{1}{k!}\prod_{i=0}^{k-1}\Big( 4\eps_k^{1/k}t\prod_{j=2}^{i+1}d_j^{-\binom{i}{j-1}}\Big)\ge 4\eps_k\tfrac{t^k}{k!}\prod_{i=2}^kd_i^{-\binom{k}{i}}> 4\eps_k\binom{t}{k}\prod_{i=2}^kd_i^{-\binom{k}{i}}\,, \]
 which is in contradiction to Proposition~\ref{prop:fewirreg}. 
 
 We claim that any edge of $\cP^*_c$ which is not in $\cR_{\eps_k}(G;\cP^*_c,\cP^*_f)$ is either bad or contains a bad edge. To see this, consider the process of obtaining $\cR_{\eps_k}(G;\cP^*_c,\cP^*_f)$ by successively removing edges which either fail one of~\ref{rg:k} or~\ref{rg:lower}, or which contain a removed edge. Suppose for a contradiction that at some stage in this process we remove an edge which is neither bad nor contains a bad edge; let $e$ be the first such edge removed. Observe that $e$ cannot have been removed for failing~\ref{rg:k}, since edges which fail this condition are bad. Furthermore $e$ cannot have been removed for being unsupported, because all edges previously removed either were bad or contain bad edges, and by assumption $e$ contains no bad edges. So $e$ was removed for failing~\ref{rg:lower}. In other words, we have $|e|\le k-1$ and $e$ is neither bad nor contains a bad edge, but nevertheless there are many $(|e|+1)$-edges containing $e$ which either are bad or contain a bad edge.
 
 Suppose that $f$ is a bad edge such that $|f\setminus e|=1$. If $|f|=1$, then there are at most $4\eps_k^{1/k}t$ choices of $f$, each of which is contained in $\prod_{i=2}^{|e|+1}d_i^{-\binom{|e|}{i-1}}$ edges of $\cP^*_c$ of uniformity $|e|+1$ which contain $e$. If $|f|>1$, then $f\cap e$ is non-empty and not a bad edge. There are at most $\binom{|e|}{\ell}$ choices of $f\cap e$ with $\ell$ elements, each of which by definition is contained in less than
 \[4\eps_k^{1/k}t\prod_{i=2}^{\ell+1}d_{i}^{-\binom{\ell}{i-1}}\]
 bad edges. Thus, there are at most
 \[\binom{|e|}{\ell}\cdot 4\eps_k^{1/k}t\prod_{i=2}^{\ell+1}d_{i}^{-\binom{\ell}{i-1}}\]
 choices of $f$, each of which is contained in
 \[\prod_{i=2}^{|e|+1}d_i^{-\sum_{j=1}^{i-1}\binom{\ell}{i-j-1}\binom{|e|-\ell}{j}}\]
 edges of $\cP^*_c$ of uniformity $|e|+1$ which contain $e$.
 This holds as $\cP^*_f$ is $(t_0,t_1,\eps)$-equitable partition.
 
 Summing up, the number of $(|e|+1)$-edges containing $e$ which are either bad or contain a bad edge is at most
 \begin{align*}
  &4\eps_k^{1/k}t\cdot \prod_{i=2}^{|e|+1}d_i^{-\binom{|e|}{i-1}}+\sum_{\ell=1}^{|e|}\binom{|e|}{\ell}\cdot 4\eps_k^{1/k}t\Big(\prod_{i=2}^{\ell+1}d_{i}^{-\binom{\ell}{i-1}}\Big)\cdot \prod_{i=2}^{|e|+1}d_i^{-\sum_{j=1}^{i-1}\binom{\ell}{i-j-1}\binom{|e|-\ell}{j}}\\
  &=4\eps_k^{1/k}t\Bigg(\sum_{\ell=0}^{|e|}\binom{|e|}{\ell}\Bigg)\prod_{i=2}^{|e|+1}d_i^{-\binom{|e|}{i-1}}=2^{|e|+2}\eps_k^{1/k}t\prod_{i=2}^{|e|+1}d_i^{-\binom{|e|}{i-1}}\,.
 \end{align*}
 But this last equation simply states that $e$ does \emph{not} fail~\ref{rg:lower}, which is our desired contradiction. In particular, every $1$-edge which is not bad is in $\cR_{\eps_k}(G;\cP^*_c,\cP^*_f)$, and so also in $\cR_{\eps_k,d}(G;\cP^*_c,\cP^*_f)$. There are at least $\big(1-4\eps_k^{1/k}\big)t$ such $1$-edges.
 
 Next, we prove that every $(k-1)$-edge of $\cR_{\eps_k,d}(G;\cP^*_c,\cP^*_f)$ is contained in sufficiently many $k$-edges.
 
 By Lemma~\ref{lem:DCL}, given any $(k-1)$-edge $E$ of $\cR_{\eps_k,d}(G;\cP^*_c,\cP^*_f)$, we know that $E$ contains $e$ $(k-1)$-sets, where $e$ is in the range
 \[ (1 \pm \gamma) (\tfrac nk)^{k-1} \prod_{i=2}^{k-2} d_i^{\binom{k-1}{i}} \]
 $(k-1)$-sets; we need the lower bound to justify some applications of $(\eta,p)$-upper regularity. First, consider the edges of $G$ which contain a $(k-1)$-set in $E$ and another vertex from the clusters of $E$. These edges are all rainbow for $E$ together with $k-1$ copies of the complete $(k-1)$-graph on the union of the clusters of $E$, and any such edge contains at most two sets in $E$. The total number of $k$-sets which are rainbow for $E$ together with the $k-1$ complete graphs on the clusters of $E$ is at least $\tfrac{1}{2}e\cdot (\tfrac{n}{t_1}-1)>\eta n^k$, 
 by choice of $\eta$ and since every cluster has at least $\tfrac{n}{t_1}$ vertices.
 It is also at most $(k-1)\tfrac{n}{t_0}e$, since every cluster has at most $\tfrac{n}{t_0}$ vertices, and so by $(\eta,p)$-upper regularity of $G$, the total number of edges of $G$ which contain a $(k-1)$-set in $E$ and another vertex from the clusters of $E$ is at most $p\cdot (k-1)\tfrac{n}{t_0}e + p \eta n^k<kp\tfrac{n}{t_0}e$.
 
 Since each $(k-1)$-set of $E$ is contained in at least $\delta p n$ edges of $G$, it follows that the total number of edges of $G$ which consist of an $(k-1)$-set of $E$ together with a vertex not in the clusters of $E$ is at least
 \[\big(\delta-\tfrac{2k}{t_0}\big)epn\,.\]
 Now these edges of $G$ are partitioned according to the $k$-polyad of $\cP^*_c$ containing them. By Lemma~\ref{lem:DCL}, each of these $k$-polyads supports \[\big(1\pm \gamma\big)^3 e \tfrac{n}{t} \prod_{i=2}^{k-1}d_i^{\binom{k-1}{i-1}}\]
 $k$-sets, and hence by $(\eta,p)$-upper regularity of $G$, at most
 \[\big(1+ \gamma\big)^4 e p \tfrac{n}{t} \prod_{i=2}^{k-1}d_i^{\binom{k-1}{i-1}}\]
 edges of $G$. Since $E$ is in $\cR_{\eps_k,d}(G;\cP^*_c,\cP^*_f)$, at most
 \[2^{k+1}\eps_k^{1/k}t\prod_{i=2}^{k-1}d_i^{-\binom{k-1}{i-1}}\]
 of these polyads are not in $\cR_{\eps_k}(G;\cP^*_c,\cP^*_f)$, so the total number of edges of $G$ in such $k$-polyads of $\cP^*_c$ containing $E$ is at most
 \[2^{k+1}\eps_k^{1/k}t\Bigg(\prod_{i=2}^{k-1}d_i^{-\binom{k-1}{i-1}}\Bigg)\cdot \big(1+ \gamma\big)^4 e p \tfrac{n}{t}\prod_{i=2}^{k-1}d_i^{\binom{k-1}{i-1}}=(1+\gamma)^4\cdot 2^{k+1}\eps_k^{1/k} epn\,.\] 
  By definition, the number of edges of $G$ in $k$-polyads of $\cP^*_c$ containing $E$ with respect to which the $p$-density of $G$ is less than $d$ is at most $depn$. It follows that all the remaining edges of $G$ which contain a $k$-set of $E$ together with a vertex outside the clusters of $E$ are contained in edges of $\cR_{\eps_k,d}(G;\cP^*_c,\cP^*_f)$, and so there are at least
 \[\frac{\big(\delta-\tfrac{2k}{t_0}\big)epn-(1+\gamma)^4\cdot 2^{k+1}\eps_k^{1/k} epn-depn}{\big(1+ \gamma\big)^4 e p \tfrac{n}{t} \prod_{i=2}^{k-1}d_i^{\binom{k-1}{i-1}}}=\frac{\delta-\tfrac{2k}{t_0}-(1+\gamma)^4\cdot 2^{k+1}\eps_k^{1/k}-d}{(1+\gamma)^4}t\prod_{i=2}^{k-1}d_i^{-\binom{k-1}{i-1}}\]
 such edges, as desired.
 
  Finally, we need to show that if $\delta>\tfrac12+2d+2^{k+2}\eps_k^{1/k}+\nu$ then any induced subcomplex $\cR' \subseteq \cR$ on at least $(1-\nu)t$ $1$-edges is tightly linked. In other words, we need to show that if $\mathbf{u}$ on the vertices $(u_1,\dots,u_{k-1})$ and $\mathbf{v}$ on the vertices $(v_1,\dots,v_{k-1})$ are any two ordered $(k-1)$-edges of $\cR' \subseteq \cR= \cR_{\eps_k,d}(G;\cP^*_c,\cP^*_f)$, there is a tight link in $\cR'$ from $\mathbf{u}$ to $\mathbf{v}$. This proof does not require any further regularity theory; simply the properties of $\cR$ we already deduced.
  
  The critical observation we need is the following. If $f$ is any $(k-1)$-edge in $\cR'$, then it is an edge in $\cR$ and so is in at least
  \[\big(\delta-2d-2^{k+2}\eps_k^{1/k}\big)t\prod_{i=2}^{k-1}d_i^{-\binom{k-1}{i-1}}>\big(\tfrac12+\nu)t\prod_{i=2}^{k-1}d_i^{-\binom{k-1}{i-1}}\]
  $k$-edges of $\cR$. Of these, at most
  \[\nu t\prod_{i=2}^{k-1}d_i^{-\binom{k-1}{i-1}}\]
  use one of the $1$-edges not in $\cR'$, and so $f$ is in strictly more than
  \[\tfrac12t\prod_{i=2}^{k-1}d_i^{-\binom{k-1}{i-1}}\]
  $k$-edges of $\cR'$.
  
  We build up the desired tight link vertex by vertex. For each $1\le j\le k-1$, we will choose a $1$-cell $w_j$
   of $\cR'$, disjoint from $\{u_1,\dots,u_{k-1},v_1,\dots,v_{k-1}\}$. At the $j$th step, we in addition choose for each $S\subset [j-1]$ with $S \not= \emptyset$
   an $(|S|+1)$-cell
   $w_{S\cup\{j\}}$ of $\cR'$ supported by $w_S$ and the cells $w_{S'\cup\{j\}}$ for $S'\subset S$ with $|S'|=|S|-1$. Finally we choose two $k$-edges $e_{j,u}$ and $e_{j,v}$ of $\cR$, where $e_{j,u}$ has underlying $1$-cells $u_j,\dots,u_{k-1},w_1,\dots,w_j$ and $e_{j,v}$ has underlying $1$-cells $v_j,\dots,v_{k-1},w_1,\dots,w_j$ and where both $e_{j,u}$ and $e_{j,v}$ have $w_{[j]}$ in their $(k-j)$-times-iterated boundaries. We insist additionally that $\partial e_{1,u}$ contains the $(k-1)$-cell underlying $\mathbf{u}$, and $\partial e_{1,v}$ contains the $(k-1)$-cell underlying $\mathbf{v}$, and that for each $2\le j\le k-1$ we have $\big(\partial e_{j-1,u}\big)\cap\big(\partial e_{j,u}\big),\big(\partial e_{j-1,v}\big)\cap\big(\partial e_{j,v}\big)\neq\emptyset$.
  
  For $j=1$, consider $\tpl{u}$ and $\tpl{v}$ as $(k-1)$-edges of $\cR'$. We need to find a $1$-cell $w_1$ of $\cR'$ such that there are $k$-edges of $\cR'$ whose boundaries contain $\tpl{u}$ and $\tpl{v}$ respectively, and both of which use the $1$-edge $w_1$. If no such edges exist, then for one of $\tpl{u}$ and $\tpl{v}$ (without loss of generality, suppose it is $\tpl{u}$) there are at most $\tfrac12t$ $1$-edges which support edges of $\cR'$ using $\tpl{u}$. Thus $\tpl{u}$ is in at most
  \[\tfrac12t\prod_{i=2}^{k-1}d_i^{-\binom{k-1}{i-1}}\]
  $k$-edges of $\cR'$,
   which is a contradiction since $\tpl{u}$ is an edge of $\cR'$.
  
  Given $2\le j\le k-1$, suppose $w_S$ has been constructed for each $S\subset[j-1]$ with $S\neq\emptyset$. Consider the collection $W^*$ of $j$-edges in $\cR'$ whose boundary contains $w_{[j-1]}$. Any $k$-edge of $\cR'$ whose $(k+1-j)$-times-iterated boundary contains $w_{[j-1]}$ has $(k-j)$-times-iterated boundary containing some $w^*\in W^*$. In particular, consider the $(k-1)$-edges $e^*_u$ in $\partial e_{u,j-1}$ which does not use $u_{j-1}$, and $e^*_v$ in $\partial e_{v,j-1}$ which does not use $v_{j-1}$. Both these edges have $w_{[j-1]}$ in their $(k-j)$-times iterated boundary. What we want is some $w^*\in W^*$ such that there are $k$-edges $e_{u,j}$ whose boundary contains $e^*_u$, and $e_{v,j}$ whose boundary contains $e^*_u$, both of whose $(k-j)$-times-iterated boundaries contain $w^*$. Assuming such a $w^*$ exists, we can then let $w_{[j]}=w^*$ and for each $S \subsetneq [j-1]$ with $S \neq \emptyset$, we let $w_{S \cup \{j\}}$ be the unique $(|S|+1)$-edge on the vertices $\big\{w_i: i \in S \cup \{j\}\big\}$ that is in the $(j-|S|-1)$-iterated boundary of $w_{[j]}$.
  
  Suppose for a contradiction that no such $w^*$ exists. Then each $w^*\in W^*$ can be assigned to at most one of $u$ and $v$, according to whether it is in an edge with $e^*_u$ or with $e^*_v$. We have
  \[|W^*|\le t\prod_{i=2}^{j}d_i^{-\binom{j-1}{i-1}}\]
  and hence for one of $u$ and $v$, the number of elements of $W^*$ assigned to it is at most
  \[\tfrac12t\prod_{i=2}^{j}d_i^{-\binom{j-1}{i-1}}\,.\]
  Suppose without loss of generality this is $u$. For any given $w^*\in W^*$, the number of $k$-edges of $\cR$ whose boundary contains $e^*_u$ and whose $(k-j)$-times iterated boundary contains $w^*$ is at most
  \[\prod_{i=2}^{k-1}d_i^{-\binom{k-1}{i-1}+\binom{j-1}{i-1}}\]
  where we use the convention $\binom{j-1}{i-1}=0$ if $i>j$. Hence the number of $k$-edges of $\cR'$ whose boundary contains $e^*_u$ is at most
  \[\tfrac12 t\prod_{i=2}^{j}d_i^{-\binom{j-1}{i-1}}\cdot\prod_{i=2}^{k-1}d_i^{-\binom{k-1}{i-1}+\binom{j-1}{i-1}}=\tfrac12 t\prod_{i=2}^{k-1}d_i^{-\binom{k-1}{i-1}}\,.\]
  But $e^*_u$ is a $(k-1)$-edge of $\cR'$, so this is a contradiction. 
\end{proof}

To prove the remaining lemmas we first state the Dense Counting Lemma with parts of the same size for more general graphs.

\begin{lemma}[{\cite[Theorem 6.5]{KRS}}]
	\label{lem:DCL-general}
	For all integers $k\ge2$ and constants $\gamma,d_0>0$, there exists $\eps>0$ such that the following holds. Let $\mathbf{d}=(d_{k-1},\dots,d_2)$ be a vector of real numbers with $d_i\ge d_0$ for each $2\le i\le k-1$, let $G$ be a $k$-partite $(k-1)$-complex with parts of size $m\ge\eps^{-1}$ which is $(d_{k-1},\dots,d_2,\eps,1)$-regular, and let $H$ be any $k$-partite $(k-1)$-complex on at most $2k$ vertices with fixed partition classes.
	Then the number of copies of $H$ in $G$, with the vertices of the $i$th class of $H$ embedded into the $i$th class of $G$ for $i=1,\dots,k$, is
	\[\big(1\pm\gamma\big)m^{v(H)}\prod_{i=2}^{k-1}d_i^{e_i(H)}\, ,\]
	where $e_i(H)$ is the number of edges of size $i$ in $H$.
\end{lemma}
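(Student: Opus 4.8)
This is precisely \cite[Theorem~6.5]{KRS}, so I will only outline the standard argument rather than reproduce it. The plan is to embed the vertices of $H$ one at a time, respecting the partition, and to track the number of partition-respecting partial embeddings as we go, showing inductively that at each stage this count agrees with the expected one up to a small multiplicative error.

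Concretely, I would fix an ordering $x_1,\dots,x_{v(H)}$ of $V(H)$, say with $x_j$ in class $c(j)\in[k]$, and for $0\le j\le v(H)$ let $H_j$ be the subcomplex of $H$ induced on $\{x_1,\dots,x_j\}$. Writing $N_j$ for the number of injective maps $\phi\colon\{x_1,\dots,x_j\}\to V(G)$ with $\phi(x_i)\in V_{c(i)}$ for each $i$ and with every edge of $H_j$ mapped to an edge of $G$, the goal is to prove by induction on $j$ that $N_j=(1\pm\gamma_j)m^{j}\prod_{i=2}^{k-1}d_i^{e_i(H_j)}$, for a sequence $0=\gamma_0\le\gamma_1\le\dots\le\gamma_{v(H)}=\gamma$ in which each $\gamma_j$ is only slightly larger than $\gamma_{j-1}$; the case $j=v(H)$ is the assertion, and the base case $j\le1$ is trivial as $H_j$ then has no edge of size at least $2$.

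For the inductive step I would fix a valid partial embedding $\phi$ of $H_j$ and consider its \emph{link} $L(\phi)$: the set of $w\in V_{c(j+1)}$ such that $\phi(e\setminus\{x_{j+1}\})\cup\{w\}\in E(G)$ for every edge $e$ of $H$ with $x_{j+1}\in e\subseteq\{x_1,\dots,x_{j+1}\}$. Extensions of $\phi$ to $x_{j+1}$ correspond exactly to choices of $w\in L(\phi)$, and iterating the definition of a $(d_{k-1},\dots,d_2,\eps,1)$-regular complex shows that for all but a $\beta_j$-fraction of the valid $\phi$ one has $|L(\phi)|=(1\pm\beta_j)m\prod_{i=2}^{k-1}d_i^{e_i(H_{j+1})-e_i(H_j)}$, where $\beta_j$ is a small function of $\eps$; here the hypotheses $m\ge\eps^{-1}$ and $d_i\ge d_0$ are what allow the regularity of the successive neighbourhood complexes to be invoked. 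Summing $|L(\phi)|$ over the ``good'' $\phi$ and bounding the at most $\beta_j N_j$ ``bad'' $\phi$ trivially (each extends in at most $m$ ways) then yields $N_{j+1}=(1\pm\gamma_{j+1})m^{j+1}\prod_{i=2}^{k-1}d_i^{e_i(H_{j+1})}$, closing the induction.

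The hard part is the control of the ``bad'' $\phi$, i.e.\ showing that only a $\beta_j$-fraction of partial embeddings can have a link of the wrong size. The standard way to do this, which is what I would follow, is a Cauchy--Schwarz (second moment) argument — counting pairs $(w,w')$ of extensions of a partial embedding and comparing with the square of the count of single extensions — combined with repeated appeals to the $(d_i,\eps,1)$-regularity of each layer $G^{(i)}$ relative to $G^{(i-1)}$ to pin down the densities of the neighbourhood complexes that arise. Because we are in the dense regime, with $\eps$ chosen far smaller than every $d_i$, the multiplicative errors accumulated over the at most $2k$ embedding steps stay below $\gamma$; and since $v(H)\le2k$ and $d_i\ge d_0$, the quantities $\beta_j,\gamma_j$, and hence the admissible $\eps$, depend only on $k,\gamma,d_0$. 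The full details are in \cite[Theorem~6.5]{KRS}.
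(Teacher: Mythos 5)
The paper does not prove this lemma; it simply quotes it as \cite[Theorem~6.5]{KRS}, and your proposal does the same, correctly noting that the statement is taken verbatim from that reference and supplying only an outline. Your sketch of the standard vertex-by-vertex embedding argument with a second-moment bound on the bad partial embeddings is an accurate description of how such dense hypergraph counting lemmas are proved, so your attempt is consistent with what the paper does.
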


\begin{proof}[Proof of Lemma~\ref{lem:DCL}]
	Let $k \ge 2$ be an integer and $\alpha,\gamma,d_0>0$.
	Let $\eps>0$ be small enough got Lemma~\ref{lem:RRL} on input $k$, $\alpha$, and $d_0$.
	Additionally assume that $\sqrt{\eps}$ is small enough for Lemma~\ref{lem:DCL-general} on input $k$, $\gamma$, and $d_0$.
	Let $\mathbf{d}=(d_{k-1},\dots,d_2)$ be a vector of real numbers with $d_i\ge d_0$ for each $2\le i\le k$, and let $G$ be a $k$-partite $(k-1)$-complex with parts $V_1,\dots,V_k$ of size $m\ge \alpha^{-1} \eps^{-1}$ which is $(\mathbf{d},\eps,1)$-regular.
	
	Let $V_i' \subseteq V_i$ be of size $|V_i'| \ge \alpha |V_i|$ for $i=1,\dots,k$.
	Our goal for he first part is to estimate the number of copies of the $k$-vertex complete $(k-1)$-complex in $G[V_1' , \dots , V_k']$.
	We denote this number by $E$.
	Note that for any $V_i'' \subseteq V_i'$ each of size $\alpha m$ the induced subcomplex $G[V_1'' , \dots , V_k'']$ is $(\mathbf{d},\sqrt{\eps},1)$-regular by Lemma~\ref{lem:RRL}.
	Therefore, by Lemma~\ref{lem:DCL-general}, there number of copies of the $k$-vertex complete $(k-1)$-complex in $G[V_1'' , \dots , V_k'']$ is
	\[ (1 \pm \gamma) (\alpha m)^{k} \prod_{i=2}^{k-1} d_i^{\binom{k}{i}} \, . \]
	
	We choose sets $V_i'' \subseteq V_i$ of size $\alpha m$ uniformly at random and note that the expected number of copies of the $k$-vertex complete $(k-1)$-complex in $G[V_1'' , \dots , V_k'']$ is $E \prod_{i=1}^k \tfrac{\alpha m}{|V_i'|}$.
	With the estimate from above it follows that
	\[ E \prod_{i=1}^k \tfrac{\alpha m}{|V_i'|}= (1 \pm \gamma) (\alpha m)^{k} \prod_{i=2}^{k-1} d_i^{\binom{k}{i}} \]
	and rearranging this for $E$ finishes the proof of the first part.
	The second part follows analogously.
\end{proof}

\begin{proof}[Proof of Lemma~\ref{lem:DCL-variant}]
	We will prove the first statement, then deduce the second, and note that the third also follows.
	Let $k \ge 2$ be an integer and $\alpha,\gamma, d_0>0$.
	Let $0<\eps<\tfrac \gamma4$ be small enough for Lemma~\ref{lem:DCL} on input $k$, $\alpha$, $\gamma_0 = \tfrac{1}{36} \gamma^3$, and $d_0$.
	Additionally, assume that $\eps$ is small enough for Lemma~\ref{lem:DCL} with the same input and $k-1$ instead of $k$ and also small enough for the generalisation of Lemma~\ref{lem:DCL} for counting two copies of the $k$-vertex complete $(k-1)$-complex that overlap on the first $k-1$ vertices\footnote{This follows from Lemma~\ref{lem:DCL-general} by exactly the same argument as in our proof of Lemma~\ref{lem:DCL}.} with the same input as above.
	Let $\mathbf{d}=(d_{k-1},\dots,d_2)$ be a vector of real numbers with $d_i\ge d_0$ for each $2\le i\le k$, and let $G$ be a $k$-partite $(k-1)$-complex with parts $V_1,\dots,V_k$ of size $m\ge\alpha^{-1}\eps^{-1}$ which is $(\mathbf{d},\eps,1)$-regular.
	
	Let $V_i' \subseteq V_i$ be of size $|V_i'| \ge \alpha |V_i|$ for $i=1,\dots,k$ and $G'=G[V_1', \dots , V_k']$.
	For a random $(k-1)$-tuple $e$ from $G[V_1', \dots , V_{k-1}']$ we denote by $X_e$ the number of copies of the $k$-vertex complete $(k-1)$-complex in $G'$ that $e$ is contained in.
	With Lemma~\ref{lem:DCL} we can count the total number of copies of the $k$-vertex complete $(k-1)$-complex in $G'$ and the number of choices for $e$ to obtain
	\[\mathbb{E}[X_e] = \frac{(1\pm \gamma_0)}{(1 \mp \gamma_0)} \prod_{i=1}^{k}|V_i'| \prod_{i=2}^{k-1} d_i^{\binom{k}{i}} \cdot \prod_{i=1}^{k-1} |V_i'|^{-1} \prod_{i=2}^{k-1} d_i^{-\binom{k-1}{i}} = (1 \pm 3 \gamma_0) |V_k'| \prod_{i=2}^{k-1} d_i^{\binom{k-1}{i-1}}\,.\]
	Similarly, we can count the number of two copies of the $k$-vertex complete $(k-1)$-complex that overlap on the first $k-1$ vertices to get
	\[\mathbb{E}[X_e^2] = \frac{(1\pm \gamma_0)}{(1 \mp \gamma_0)} |V_k'| \prod_{i=1}^k |V_i'| \prod_{i=2}^{k-1} d_i^{\binom{k}{i}+\binom{k-1}{i-1}} \cdot \prod_{i=1}^{k-2}|V_i'| \prod_{i=2}^{k-1} d_i^{-\binom{k-1}{i}} = (1 \pm 3 \gamma_0) |V_k'|^2 \prod_{i=2}^{k-1} d_i^{2\binom{k-1}{i-1}}\,.\]
	This implies
	\[V[X_e] = \mathbb{E}[X_e^2] - \mathbb{E}[X_e]^2 \le  9 \gamma_0 |V_k'|^2 \prod_{i=2}^{k-1} d_i^{2\binom{k-1}{i-1}}\]
	and, with Chebyshev's inequality, we infer
	\[\mathbb{P}\left[ \left| X_e - \EE[X] \right| \ge \frac{\gamma}{2} |V_k'| \prod_{i=2}^{k-1} d_i^{\binom{k-1}{i-1}} \right] \le \frac{9 \gamma_0}{ (\tfrac \gamma2)^2} \le \gamma.\]
	Thus, at least a $(1-\gamma)$-fraction of the $(k-1)$-tuples in $G[V_1', \dots , V_{k-1}']$ is contained in
	\[\big(1\pm \tfrac \gamma2\big) (1 \pm 3 \gamma_0) |V_k'| \prod_{i=2}^{k-1} d_i^{\binom{k-1}{i-1}} = (1 \pm \gamma) |V_k'| \prod_{i=2}^{k-1} d_i^{\binom{k-1}{i-1}} \]
	copies of the $k$-vertex complete $(k-1)$-complex in $G'$.
	
	For the counting statement, let $S$ denote the $\gamma$-fraction of edges $e$ with largest $X_e$.
	Suppose that $\mathbb{E}\big[X_e|e\in S\big]\ge 2\mathbb{E}[X_e]$, as otherwise we get
	\[\sum_{e \in S} X_e \le |S| \cdot \mathbb{E}\big[X_e|e\in S\big] < 2 |S| \cdot \mathbb{E}[X_e] \le \tfrac52 \gamma \prod_{i=1}^k |V_i'| \prod_{i=2}^{k} d_i^{\binom{k}{i-1}}\, .\]
	Then we have
	\[\mathbb{E}[X_e^2]=\gamma \mathbb{E}\big[X_e^2|e\in S\big]+(1-\gamma)\mathbb{E}\big[X_e^2|e\not\in S\big]\ge \gamma \mathbb{E}\big[X_e|e\in S\big]^2+(1-\gamma)\mathbb{E}\big[X_e|e\not\in S\big]^2\]
	where the inequality is Jensen's inequality (since the second moment function is convex). Now we have $\mathbb{E}\big[X_e|e\in S\big]=\mathbb{E}\big[X_e\big]+c$ for some $c\ge\mathbb{E}\big[X_e\big]$, and $\mathbb{E}\big[X_e| e \not\in S\big]=\mathbb{E}\big[X_e\big]-\tfrac{\gamma}{1-\gamma}c$. Plugging these in we get
	\[\mathbb{E}[X_e^2]\ge (1+\gamma)\mathbb{E}[X_e]^2\] which is in contradiction to the above bound on $V[X_e]$.
	
	For the third statement we apply the first statement to the first $k-1$ parts $V_1,\dots,V_{k-1}$, which is possible by the choice of $\eps$ above.
	We find that a $(1-\gamma)$-fraction of the $(k-2)$-tuples in $G[V_1' , \dots , V_{k-2}']$ is contained in
	\[\big(1\pm \tfrac \gamma2\big)(1 \pm 3 \gamma_0) |V_{k-1}'| \prod_{i=2}^{k-2} d_i^{\binom{k-2}{i-1}}\]
	copies of the $(k-1)$-vertex $(k-2)$-complex in $G'$ together with a vertex from $V_{k-1}'$.
	Then, with $(d_{k-1},\eps,1)$-regularity, we get that a $(1 \pm \eps) d_{k-1}$-fraction of these give copies of the $(k-1)$-vertex $(k-1)$-complex in $G'$.
	The statement follows as $(1 \pm \tfrac \gamma2)(1 \pm 3 \gamma_0)(1 \pm \eps) = (1 \pm \gamma)$.
\end{proof}

\begin{proof}[Proof of Lemma~\ref{lem:MDL}]
Let $k\ge 3$ be an integer $\gamma,\delta, d_0>0$ and w.l.o.g.~assume $\delta \ge 2\gamma$.
We choose $0 <\gamma_0 < \gamma^2 \delta 2^{-k}$ and $0<\eps<\tfrac 12 \gamma_0 d_0^{10k2^{10k}}$.
Let $\mathbf{d}=(d_{k-1},\dots,d_2)$ be a vector of real numbers with $d_i\ge d_0$ for each $2\le i\le k-1$, and let $G$ be a $k$-partite $(k-1)$-complex with parts $V_1,\dots,V_k$ of size $m\ge\eps^{-1}$ which is $(\mathbf{d},\eps,1)$-regular.
Given integers $a,b,c$ such that $a+b-c=k$, let $A$ be part of an $a$-cell, $B$ be part of a $b$-cell of size $|B| \ge \gamma m^b \prod_{i=2}^{k-2}d_i^{\binom{b}{i}}$, and $C$ be part of a $c$-cell such that the tuples from $C$ have degree $(\delta\pm\gamma) m^{a-c} \prod_{i=2}^{k-1}d_i^{\binom{a}{i}-\binom{c}{i}}$ into $A$ and every tuple of $B$ is contained in a tuple of $C$.

We denote by $F_0$ the $k$-vertex $(k-1)$-complex obtained from the union of a complete $a$-vertex $a$-complex with a complete $b$-vertex $b$-complex identified on $c$ vertices.
We fix the canonical labelling $v_1,\dots,v_k$ of the vertices from $F_0$ such that the $v_i$ vertices of the $a$- and $b$-vertex graph constructed above correspond to the clusters $V_i$ of $G$ in $A$ and $B$, respectively.
Then each tuple from $B$ is contained in $(\delta\pm\gamma) m^{a-c} \prod_{i=2}^{k-1}d_i^{\binom{a}{i}-\binom{c}{i}}$ copies of $F_0$ supported by $A$ and $B$ and, therefore, there are 
\[|B| (\delta\pm\gamma) m^{a-c} \prod_{i=2}^{k-1}d_i^{\binom{a}{i}-\binom{c}{i}} = \eta m^k \prod_{i=2}^{k-1}d_i^{e_i(F_0)} \]
copies of $F_0$ in $G$ supported by $A$ and $B$, where $\eta = (\delta\pm\gamma) |B| m^{-b} \prod_{i=2}^{k-2}d_i^{-\binom{b}{i}} \ge \delta^2 \gamma_0$.

Starting from $F_0$ and adding edges of size at most $k-1$ that are supported we obtain a sequence $F_0,\dots,F_t$ of $k$-vertex $(k-1)$-complexes, where $F_t$ is the complete $k$-vertex $(k-1)$-complex.
Note that $t=2^k - 2 - e(F_0)$.
We show by induction on $j$ that for each $0\le j\le t$, we have
\[ \big(1 \pm j \gamma_0\big)\eta m^k \prod_{i=2}^{k-1}d_i^{e_i(F_j)} \]
copies of $F_j$ in $G$ supported by $A$ and $B$. The $j=t$ case, and the choice of $\eta$ and as $\gamma_0 \le \gamma^2 \delta/t$, proves the lemma.

For a given $1\le j\le t$, let $e$ be the edge in $E(F_j) \setminus E(F_{j-1})$. Let $F'$ be the subgraph of $F_{j-1}$ induced on $V(F)\setminus e$, and denote by $X$ the $(k-|e|)$-tuples in $G$ from the clusters $V_i$ such that $v_i \not\in e$ that are copies of $F'$.
For each $x \in X$, let $F_{j-1}(x)$ and $F_j(x)$ be the number of copies of $F_{j-1}$ and $F_j$, respectively, in $G$ supported by $A$ and $B$ that contain $x$.
Then, by assumption, we have that
\[ \sum_{x \in X} F_{j-1}(x) =(1\pm(j-1)\gamma_0)\eta m^k \prod_{i=2}^{k-1}d_i^{e_i(F_{j-1})} \, . \]
We let $X'\subseteq X$ be those tuples, for which $F_{j-1}(x)  \ge \eps m^{|e|} $.
Then for any fixed $x \in X'$ we get from $(\mathbf{d},\eps,1)$-regularity of $G$ that $F_j(x)= (d_{|e|}\pm\eps)F_{j-1}(x)$. For any $x\in X$, we additionally have $F_j(x)\le F_{j-1}(x)$.

Observe that we have
\[\sum_{x\in X\setminus X'}F_j(x)\le\sum_{x\in X\setminus X'}F_{j-1}(x)\le \eps m^k\,,\]
where we use the bound $|X\setminus X'|\le m^{k-|e|}$ and the definition of $X'$ for the final inequality. This gives
\[\sum_{x\in X'}F_{j-1}(x)=(1\pm\sqrt{\eps})\sum_{x\in X}F_{j-1}(x)\,.\]

We then have
\begin{align*}
 \sum_{x\in X}F_j(x)&=\sum_{x\in X'}F_j(x)+\sum_{x\in X\setminus X'}F_j(x)=\Bigg(\sum_{x\in X'}(d_{|e|}\pm\eps)F_{j-1}(x)\Bigg)\pm \eps m^k\\
 &=\Bigg((d_{|e|}\pm\eps)(1\pm\sqrt{\eps})\sum_{x\in X}F_{j-1}(x)\Bigg)\pm \eps m^k=\big(1\pm\tfrac12\gamma_0\big)d_{|e|}\sum_{x\in X}F_{j-1}(x)\,,
\end{align*}
as desired.
\end{proof}


\bibliographystyle{amsplain} 
\bibliography{TightResil}

\end{document}